\documentclass[10pt]{article}
\usepackage{xcolor}
\usepackage{tikz-cd}

\usepackage{persobase}
\usepackage{persomath}
\usepackage{persodrawing}

\usepackage{thm-restate}

\newcommand{\TV}{\operatorname{TV}}
\newcommand{\curve}{\mathcal{C}}

\title{Power quotients of surface groups and mapping class groups}

\author{R\'emi Coulon, Alessandro Sisto, Henry Wilton}

\newcommand{\Addresses}{{
  \bigskip
  \footnotesize
  
    \noindent
    \textsc{Université Bourgogne Europe, CNRS, IMB UMR 5584, 21000 Dijon, FR}\par\nopagebreak  \noindent
    \textit{E-mail address:} \texttt{remi.coulon@cnrs.fr}

    \bigskip
    \noindent
  \textsc{Department of Mathematics, Heriot-Watt University and Maxwell Institute for Mathematical Sciences, Edinburgh, UK}\par\nopagebreak \noindent
    \textit{E-mail address:} \texttt{a.sisto@hw.ac.uk}

\bigskip
\noindent
  \textsc{DPMMS, Centre for Mathematical Sciences, Wilberforce Road, Cambridge, CB3 0WB, UK}\par\nopagebreak \noindent
  \textit{E-mail address:} \texttt{h.wilton@maths.cam.ac.uk}\\[1ex]

}}

\setbftrue
\mcgfrenchfalse

\begin{document}

\maketitle

\begin{abstract}
Let $\Gamma$ be the fundamental group of a closed, orientable, hyperbolic surface $S$. The \emph{$n$-power quotient}, $\Gamma(n)$, is the quotient of $\Gamma$ by the $n$th powers of simple closed curves.  We prove an analogue of the Dehn--Nielsen--Baer theorem for suitable large values of $n$: the outer automorphism group of $\Gamma(n)$ is isomorphic to the quotient of the extended mapping class group of $S$ by $n$th powers of Dehn twists. There is also a corresponding description of the automorphism group as the quotient of the extended mapping class group of the corresponding once-punctured surface, and we relate these groups via  a Birman-type exact sequence. Along the way, and as consequences, we prove structural properties of $\Gamma(n)$ for suitable large values of $n$, including:  $\Gamma(n)$ is virtually torsion-free, acylindrically hyperbolic, infinitely presented, with solvable word problem and finite asymptotic dimension.
\end{abstract}

\tableofcontents

\section{Introduction}

\subsection{Main results}

Let $S$ be a connected, orientable, hyperbolic surface of finite type.
For any integer $n\geq 2$, the \emph{$n$-power quotient} of the fundamental group $\Gamma = \pi_1(S)$ is the quotient obtained by killing the $n$th power of every simple closed curve. 
We will denote it by $\Gamma(n)$. 
Analogously, we may define the \emph{$n$-power Dehn twist subgroup} $\DT^n(S)$ to be the subgroup of the (extended) mapping class group $\mcg[\pm] S$ generated by all $n$th powers of Dehn twists. 
The  \emph{$n$-power quotient} of $\mcg[\pm] S$ is then defined to be $\mcg[\pm] S/\DT^n(S)$.

Power quotients arise naturally in connection with Ivanov's notorious questions about the congruence subgroup property and the virtual first Betti numbers of mapping class groups \cite[Questions 1 and 7]{ivanov_fifteen_2006}. Much previous work focussed on representations of $\mcg[\pm] S/\DT^n(S)$, coming either from quantum representations \cite{aramayona_quotients_2019,funar_TQFT_1999,funar_profinite_2018,koberda_quotients_2016,masbaum_element_1999} or from the homology of finite covers of $S$ \cite{grunewald_arithmetic_2015,klukowski_simple_2025,malestein_simple_2019}. More recently, $\mcg[\pm] S/\DT^n(S)$ has been studied from a coarse-geometric perspective, as a Dehn filling of $\mcg[\pm] S$ \cite{BHMS,Dahmani:spinning,dahmani_dehn_2021,mangioni2024short,mangioni2025rigidity}.

We are motivated by the following analogy:  $\Gamma(n)$ is to the surface group $\pi_1(S)$ as $\mcg[\pm] S/\DT^n(S)$ is to the mapping class group of $S$.  
Thus, although these groups appear at first glance quite mysterious, we can hope to prove theorems about them by analogy with the comparatively well-understood case of mapping class groups.  Our techniques further develop the coarse-geometric perspective, bringing in methods from geometric small-cancellation theory, acylindrically and hierarchically hyperbolic groups, as well as the Rips--Sela machinery for studying automorphism groups \cite{Rips:1994jg}, especially the first author's work with Sela on automorphism groups of periodic groups \cite{Coulon:2021wg}. 

The Birman exact sequence explains the effect that puncturing a surface has on a mapping class group \cite{birman_mapping_1969}. Our first theorem is an analogue of the Birman exact sequence in this context. Throughout, $S_*$ denotes the punctured surface obtained by deleting a point from $S$.

\begin{theo}
\label{thm: Birman for quotients}
	Let $S$ be a closed, connected, orientable, hyperbolic surface with fundamental group $\Gamma$.
	There exists an exponent $N$ such that, for all integers $n \geq N$, there is an exact sequence
	\begin{equation*}
		1\to \Gamma(n)\to \mcg[\pm]{S_*}/\DT^n(S_*)\to\mcg[\pm] S/\DT^n(S)\to 1\,.
	\end{equation*}
\end{theo}

The exact sequence from the theorem is in fact a quotient of the classical Birman exact sequence. Aramayona--Funar state this theorem for all $n\geq 2$ \cite[Proposition 3]{aramayona_quotients_2019}.  However, their proof contains a gap -- for the argument to work one needs to know that the centre of $\Gamma(n)$ is trivial. (See their updated paper for a corrected statement \cite{aramayona_quotients_2024}.) This gap is filled by our \autoref{thm: 1st Structural results} for sufficiently large $n$. But it is not true in general that the centre of $\Gamma(n)$ is always trivial: $\Gamma(2)$ is a non-trivial finite abelian group; see \autoref{exa: Abelian power quotient}.

In the classical setting, the Dehn--Nielsen--Baer theorem identifies the outer automorphism group of the surface group with the extended mapping class group $\mcg[\pm]{S}$ \cite[Theorem 8.1]{farb_primer_2012}. Our second main theorem provides an analogue for power quotients, with more restrictive hypotheses on the exponent $n$.

\begin{theo}
\label{thm: DNB theorem for quotients}
	Let $S$ be a closed, connected, orientable, hyperbolic surface with fundamental group $\Gamma$.
	Then there exists $N>0$ such that, for all multiples $n$ of $N$, the natural maps 
	\begin{equation*}
		 \mcg[\pm]{S_*}\to\aut{\pi_1(S)} \quad \text{and} \quad \mcg[\pm]{S}\to \out{\pi_1(S)}
	\end{equation*}
	induce isomorphisms
	\begin{equation*}
		\aut{\Gamma(n)}\cong\mcg[\pm]{S_*}/\DT^n(S_*) \ \text{and} \ \out{\Gamma(n)}\cong\mcg[\pm]{S}/\DT^n(S)\,.
	\end{equation*}
\end{theo}

\begin{rema}
In fact, the maps $\mcg[\pm]{S_*}\to\aut{\Gamma(n)}$ and $\mcg[\pm]{S}\to \out{\Gamma(n)}$ are surjective for all sufficiently large $n$, by \autoref{thm:onto}. Our proof of injectivity, which follows from \autoref{res: kernels}, only holds for all multiplies of some constant $N$.
\end{rema}

Thus, a structural understanding of $\Gamma(n)$ is desirable. The nature of $\Gamma(n)$ as a group is, on the face of it, mysterious: it initially appears to occupy an intermediate status between a surface group and a Burnside group, and might appear to be closer to a Burnside group. We contribute some structural results that we hope will help to clarify its status, and which suggest that it is in fact closer to a surface group.

First, we obtain some algebraic information about $\Gamma(n)$.

\begin{theo}
\label{thm: 1st Structural results}
	Let $S$ be a closed, orientable, hyperbolic surface with fundamental group $\Gamma$.
	Then there exists an exponent $N>0$ such that, for all integers $n\geq N$, the following hold:
	\begin{enumerate}
	\item $\Gamma(n)$ is a direct limit of hyperbolic groups; \label{enu: Structure of Gamma(n) -- limit}
	\item $\Gamma(n)$ is infinite, with trivial centre, and every finite normal subgroup is trivial; \label{enu: Structure of Gamma(n) -- infinite, trivial centre}
	\item $\Gamma(n)$ is virtually torsion-free; \label{enu: Structure of Gamma(n) -- virtually torsion-free}
	\end{enumerate}	
\end{theo}

\begin{proof}
Item \ref{enu: Structure of Gamma(n) -- limit} is part of the statement of \autoref{res: approximating sequence}, item \ref{enu: Structure of Gamma(n) -- infinite, trivial centre} is \autoref{res: infinite + trivial finite radical}, and item \ref{enu: Structure of Gamma(n) -- virtually torsion-free} is \autoref{res: v torsion-free}.
\end{proof}

That $\Gamma(n)$ is infinite for large $n$ was already known by theorems of Ol'shanskii (for $n$ odd) \cite{olshanskii_periodic_1991} and Ivanov--Ol'shanskii (for $n$ even)  \cite{ivanov_hyperbolic_1996}, since $\Gamma(n)$ surjects the periodic quotient $\Gamma/\Gamma^n$.

More recent work on these groups has focussed on constructing linear representations with infinite image.  Koberda--Santharoubane constructed such using quantum SO(3) representations of mapping class groups \cite[Theorem 1.1]{koberda_quotients_2016}. A different construction, via the homology of finite covers, was given in the closed case by Klukowski \cite[Theorem 7]{klukowski_simple_2025}, building on an argument of Malestein--Putman in the case with punctures \cite{malestein_simple_2019}.  The profinite completion of $\Gamma(n)$ was studied by Funar--Lochak, again using quantum representations \cite{funar_profinite_2018}.

We will use the results mentioned so far to finally obtain some geometric information about $\Gamma(n)$.

\begin{restatable}[]{theo}{structural}
\label{thm: 2nd Structural results}
	Let $S$ be a closed, orientable, hyperbolic surface with fundamental group $\Gamma$.
	There is an integer $N>0$ such that, for all non-zero multiples $n$ of $N$, the following hold:
\begin{enumerate}
		\item $\Gamma(n)$ is acylindrically hyperbolic, but not lacunary hyperbolic (in particular it is not word-hyperbolic); \label{enu: Structure of Gamma(n) -- acylindrical hyperbolicity}
		\item $\Gamma(n)$ is infinitely presented; \label{enu: Structure of Gamma(n) -- infinitely presented}	
		\item $\Gamma(n)$ has a subgroup of finite index with infinite abelianisation; \label{enu: Structure of Gamma(n) -- vb1}		
		\item the word problem is solvable in $\Gamma(n)$; \label{enu: Structure of Gamma(n) -- solvable WP}
		\item $\Gamma(n)$ has finite asymptotic dimension; \label{enu: Structure of Gamma(n) -- finite asdim}
		\item $\Gamma(n)$ satisfies a strong version of the Tits Alternative -- every finitely generated subgroup either contains a non-abelian free group, is cyclic, or is infinite dihedral;\label{enu: Structure of Gamma(n) -- fg Tits alternative}
		\item there exists $T>0$ such that, for any generating set of $\Gamma(n)$, there are two elements of word length at most $T$ that freely generate a free group. \label{enu:growth}
	\end{enumerate}
\end{restatable}

In contrast to the known techniques for studying $\Gamma(n)$ via representations, which by definition cannot study the kernels of those representations, we hope that these results open the door to a genuine structural understanding of the whole of $\Gamma(n)$.

In order to prove \autoref{thm: 2nd Structural results}\ref{enu: Structure of Gamma(n) -- acylindrical hyperbolicity} we use that the groups $\mcg[\pm]{S_*}/\DT^n(S_*)$ are acylindrically hyperbolic for suitable values of $n$, which was proven in \cite{dahmani_dehn_2021}. In fact, these groups are also known to be hierarchically hyperbolic \cite{BHMS}, and  various items of \autoref{thm: 2nd Structural results} then follow immediately from known results. 

Using \autoref{thm: DNB theorem for quotients}, one might hope to translate an understanding of $\Gamma(n)$ back into an understanding of $\mcg[\pm] S/\DT^n(S)$. For instance:

\begin{coro}
\label{cor:residual}
	Let $S$ be a closed, connected, orientable, hyperbolic surface with fundamental group $\Gamma$.
	Then there exists an integer $N>0$ such that, for all non-zero multiples $n$ of $N$, the group $\Gamma(n)$ is residually finite if and only if $\mcg[\pm]{S_*}/\DT^n(S_*)$ is residually finite. 
	If $\Gamma(n)$ is conjugacy separable then $\mcg[\pm] S/\DT^n(S)$ is residually finite.
\end{coro}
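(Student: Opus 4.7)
The plan is to combine \autoref{thm: Birman for quotients} with the identifications from \autoref{thm: DNB theorem for quotients} to recognise the Birman-type sequence as the canonical inner-automorphism exact sequence
\begin{equation*}
1 \to \Gamma(n) \to \aut{\Gamma(n)} \to \out{\Gamma(n)} \to 1.
\end{equation*}
That the kernel really is isomorphic to $\Gamma(n)$ (and not merely a quotient of it) uses that the centre of $\Gamma(n)$ is trivial, which is part of \autoref{thm: 1st Structural results}\ref{enu: Structure of Gamma(n) -- infinite, trivial centre}. Note also that $\Gamma(n)$ is finitely generated, being a quotient of the surface group $\Gamma$.

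For the first equivalence, one direction is free: $\Gamma(n)$ embeds as a subgroup of $\mcg[\pm]{S_*}/\DT^n(S_*) \cong \aut{\Gamma(n)}$, and residual finiteness is inherited by subgroups. For the converse I would invoke the classical theorem of Baumslag that, whenever $G$ is finitely generated and residually finite, $\aut G$ is also residually finite.

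For the second implication, the natural tool is a theorem of Grossman: if $G$ is finitely generated, conjugacy separable, and every \emph{conjugating} automorphism of $G$ (i.e.\ one that sends every element to a conjugate of itself) is inner, then $\out G$ is residually finite. Applying this to $\Gamma(n)$, together with the identification $\out{\Gamma(n)} \cong \mcg[\pm]{S}/\DT^n(S)$, would give the claim. It therefore remains to check the hypothesis that every conjugating automorphism of $\Gamma(n)$ is inner, and I expect this to be the main obstacle.

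To handle that hypothesis, I would exploit the facts established earlier that $\Gamma(n)$ is acylindrically hyperbolic (\autoref{thm: 2nd Structural results}\ref{enu: Structure of Gamma(n) -- acylindrical hyperbolicity}) and has trivial finite normal subgroup (\autoref{thm: 1st Structural results}\ref{enu: Structure of Gamma(n) -- infinite, trivial centre}). For groups with this profile there are results in the literature (notably work of Antol\'{\i}n--Minasyan--Sisto on commensurating endomorphisms of acylindrically hyperbolic groups) showing that conjugating automorphisms are automatically inner: a conjugating automorphism $\varphi$ must send each loxodromic element to one sharing the same pair of boundary fixed points up to an inner twist, and the abundance of independent loxodromics in an acylindrically hyperbolic group with trivial finite radical forces a single inner conjugator to work for all of $\Gamma(n)$. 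Granted this input, Grossman's theorem yields the conclusion.
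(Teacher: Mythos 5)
Your proposal is correct and follows essentially the same path as the paper's proof: Baumslag's theorem (in biconditional form, via trivial centre) for the first equivalence, and Grossman's criterion plus the Antol\'{\i}n--Minasyan--Sisto result (that pointwise inner automorphisms of an acylindrically hyperbolic group with trivial finite radical are inner) for the second. The paper cites the latter directly as \cite[Corollary 1.5]{AMS} rather than sketching it, but the ingredients and their assembly are identical.
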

\begin{proof}Given a finitely generated group $G$ with trivial centre, $G$ is residually finite if and only if its automorphism group is \cite{baumslag_automorphism_1963}. 
    The first part then follows from Theorems \ref{thm: DNB theorem for quotients} and \ref{thm: 1st Structural results}.
    For the second part we need Grossman's criterion stating that if the group $G$ is conjugacy separable and a certain condition on automorphisms, sometimes called Grossman's property A, holds (namely, that every pointwise inner automorphism of $G$ is inner), then $\out G$ is residually finite \cite{Grossman}. 
    We have $\mcg[\pm] S/\DT^n(S)\cong \out{\Gamma(n)}$ by \autoref{thm: DNB theorem for quotients}, so we need to know that the condition on automorphisms holds for $\Gamma(n)$. This is because $\Gamma(n)$ is acylindrically hyperbolic and has no non-trivial finite normal subgroups by Theorems \ref{thm: 1st Structural results} and \ref{thm: 2nd Structural results}, so we can apply \cite[Corollary 1.5]{AMS}.
\end{proof}

In \cite{BHMS}, it is shown that if all hyperbolic groups are residually finite, then $\mcg[\pm]{\Sigma}/\DT^n(\Sigma)$ is residually finite for $\Sigma=S$ or $\Sigma=S_*$, for suitable $n$.  Hence, \autoref{cor:residual} immediately implies the following conditional result.

\begin{coro}
\label{cor:conditional residual}
	If every hyperbolic group is residually finite, then there exists an integer $N>0$ such that, for all non-zero multiples $n$ of $N$, the group $\Gamma(n)$ is residually finite.
\end{coro}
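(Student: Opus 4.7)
The plan is to chain together the two ingredients already stated in the excerpt. First, let $N_1$ be the constant produced by Corollary \ref{cor:residual}, so that for every non-zero multiple $n$ of $N_1$ the group $\Gamma(n)$ is residually finite if and only if $\mcg[\pm]{S_*}/\DT^n(S_*)$ is. Second, invoke the result of \cite{BHMS} recalled just before the statement: under the hypothesis that every hyperbolic group is residually finite, there is a constant $N_2$ such that $\mcg[\pm]{S_*}/\DT^n(S_*)$ is residually finite for every suitable $n$ (in particular, for every non-zero multiple of $N_2$).

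I would then simply set $N$ to be a common multiple of $N_1$ and $N_2$ (for instance $N = N_1 N_2$). For any non-zero multiple $n$ of $N$, the hypothesis of \cite{BHMS} yields residual finiteness of $\mcg[\pm]{S_*}/\DT^n(S_*)$, and Corollary \ref{cor:residual} transfers this to residual finiteness of $\Gamma(n)$.

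There is no real obstacle in this argument: the conceptual content has already been absorbed into Corollary \ref{cor:residual} (via the Baumslag theorem on automorphism groups of centreless residually finite groups together with \autoref{thm: DNB theorem for quotients} and the triviality of the centre of $\Gamma(n)$ from \autoref{thm: 1st Structural results}). The only point requiring mild care is book-keeping the constants so that both results apply simultaneously, which is handled by passing to a common multiple.
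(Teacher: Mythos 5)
Your proof is correct and is exactly the argument the paper intends: the paper states the corollary follows "immediately" from \autoref{cor:residual} together with the conditional residual finiteness of $\mcg[\pm]{S_*}/\DT^n(S_*)$ from \cite{BHMS}, and you have simply made the bookkeeping of the two constants explicit by passing to a common multiple.
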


By \autoref{thm: 1st Structural results}\ref{enu: Structure of Gamma(n) -- limit}, $\Gamma(n)$ is a limit of hyperbolic groups so, if every hyperbolic group is residually finite, then $\Gamma(n)$ is indeed a limit of finite groups in the space of marked groups. However, for infinitely presented groups, being a limit of finite groups does not imply residual finiteness.

\subsection{Questions}

Throughout this subsection, fix a closed surface $S$ of genus at least two.
There are several natural questions that arise. First of all, because of \autoref{cor:residual}, studying residual properties of $\Gamma(n)$ would yield important information on residual properties of quotients of mapping class groups by powers of Dehn twists.

\begin{ques}
    Is $\Gamma(n)$ residually finite for all sufficiently large $n$? Is it conjugacy separable?
\end{ques}

In light of \autoref{cor:conditional residual}, showing that the $\Gamma(n)$ are not residually finite would show that there exists a non-residually finite hyperbolic group. On the other hand, showing that the $\Gamma(n)$ are residually finite would go in the direction of removing the hypothesis of residual finiteness of (enough) hyperbolic groups from the results in \cite{BHMS} and \cite{wilton2024congruence}, and in particular it would be a step towards proving the congruence subgroup property for mapping class groups. 

In a different direction, $\Gamma(n)$ acts on a natural 2-dimensional CW-complex $X_n$ obtained by quotienting $\mathbb H^2$ by the kernel of $\Gamma \to \Gamma(n)$ and gluing in disks. If this complex were aspherical, one could obtain information about homological properties of $\Gamma(n)$ of potential interest, for instance because $\mcg S/\DT^n(S)$ acts on the homology of $\Gamma(n)$. We then ask:

\begin{ques}
    Is $X_n$ as above aspherical for all sufficiently large $n$? Is $X_n$ a classifying space for proper actions? 
\end{ques}

Note that, if $X_n$ is aspherical, then it is not hard to show that $H_2(\Gamma(n),\mathbb Q)$ is infinite dimensional. This would in particular show that $\Gamma(n)$ is infinitely presented for all sufficiently large $n$.

Some of our results hold for all sufficiently large $n$, while others only hold for all multiples of some large number. The underlying reason for this is that the results in \cite{Dahmani:spinning} (that \cite{dahmani_dehn_2021} relies on) are proven under these quantifiers, which are in fact typical of small-cancellation constructions over groups that contain torsion. However, we are not aware of a real obstruction for all our results (and the ones we rely on) to hold for all sufficiently large $n$, and we are investigating this.

Throughout this article we only consider surfaces with at most one puncture, but the Birman exact sequence holds for surfaces with more punctures as well. We expect that our results can be extended to these cases:

\begin{ques}
    Are there versions of the results in this paper that hold for surfaces with arbitrarily many punctures?
\end{ques}

\subsection{Automorphism groups of free groups}

Finally, we discuss the possibility of analogous results in the context of (outer) automorphisms of free groups. Denote by $F_r$ the free group or rank $r$ generated by $\{a_1, \dots, a_r\}$. An element of $F_r$ is \emph{primitive} if it belongs to a free basis of $F_r$. Primitive elements are sometimes seen as the analogues of simple closed curves in the context of free groups, and it is natural to study the quotient $F_r(n)$ of $F_r$ obtained by killing the $n$th power of every primitive element. The analogues of Dehn twists in this context are the \emph{transvections} $\lambda_{ij}$, sending $a_i$ to $a_ja_i$ and fixing $a_k$ for $k\neq i$. One can then define the group $\TV^n(F_r)$ to be the subgroup generated by $n$th powers of transvections, and consider the power quotient $\aut {F_r} /\TV^n(F_r)$.

As in the surface case, these groups have been studied extensively.  Bridson--Vogtmann proved that $\aut {F_r} /\TV^n(F_r)$ is infinite for $r\geq 3$ and all sufficiently large $n$, by noting a map to the automorphism group of the corresponding Burnside group $B(r,n)$, with a copy of  $B(r-1,n)$ in its image \cite{bridson_homomorphisms_2003}. For infinitely many $n$, Malestein--Putman exhibited a complex representation of $\aut {F_r} /\TV^n(F_r)$ with infinite image \cite{malestein_simple_2019}. Farb--Hensel approached the subject via homology of finite-index subgroups \cite{farb_moving_2017}.  So it is natural to wonder how many of our techniques apply in this setting.

A strategy similar to the one we used for $\Gamma(n)$ should prove an analogue of \autoref{thm: 1st Structural results}:  $F_r(n)$ is an infinite, virtually torsion-free limit of hyperbolic groups with trivial centre. However, beyond this we run into two major problems.

First, our proof that the natural map $\mcg[\pm]{S}/\DT^n(S)\to\out{\Gamma(n)}$ is injective relies on (\cite{dahmani_dehn_2021} which in turn relies on) \cite{Dahmani:spinning}, where all arguments rely on properties of annular subsurface projections in mapping class groups. There is no known analogue of those in the free group case.


Second, more seriously, our proof that $\mcg[\pm]{S}\to\out{\Gamma(n)}$ is surjective has no analogue, because $\aut{F_r}\to\aut{F_r(n)}$ is not surjective. 

\begin{exam}\label{exa: Surjectivity fails in the free case}
For any $p$ coprime to $n$, the map $\varphi$ fixing $a_2, \dots, a_r$ and sending $a_1$ to $a_1^p$ defines an automorphism of $F_r(n)$.  If $
\varphi$ descends from an automorphism $\tilde{\varphi}$ of $F_r$, then $\tilde{\varphi}$ acts as a linear map with determinant $p$ on the mod $n$ abelianisation $(\mathbb{Z}/n\mathbb{Z})^r$.
Thus,  if $p \neq \pm 1 \mod n$, this map does not come from an automorphism of $F_r$ (cf.\ \cite[Example~1.11]{Coulon:2021wg}).
\end{exam}

\subsection{Outline of the paper}

The arguments in this paper combine three different toolkits from geometric group theory: geometric small cancellation theory, acylindrical and hierarchical hyperbolicity, and Paulin--Rips--Sela's theory of limiting actions on $\R$-trees.

In \autoref{sec: Metabelian} we construct some useful finite metabelian quotients of $\Gamma(n)$. In \autoref{sec: Virtual first Betti number} we apply a theorem of Klukowski to prove that the virtual first Betti number of $\Gamma(n)$ is positive.
In \autoref{sec: hg} we recall the tools from geometric small-cancellation theory, which will be crucial for our study of $\Gamma(n)$.

The real work starts in \autoref{sec: approx periodic groups}, where we describe $\Gamma(n)$ as a limit of hyperbolic groups in \autoref{res: approximating sequence}. 
We use iterated geometric small cancellation theory to build by induction a sequence of non-elementary hyperbolic groups whose direct limit is exactly $\Gamma(n)$.
This construction is very similar to the one used to produce infinite Burnside quotients of a hyperbolic group, see \cite{olshanskii_periodic_1991, ivanov_hyperbolic_1996, Delzant:2008tu, Coulon:2014fr, Coulon:2018vp}.
Using this sequence of approximations, in \autoref{subsec: Gamma(n) prop} we derive the first main properties of $\Gamma(n)$, including \autoref{thm: 1st Structural results}.

The results of \autoref{subsec: Gamma(n) prop} are used as black boxes in \autoref{sec:diagram} and \autoref{sec: centralisers}; these two sections can be read independently of the previous ones. In \autoref{sec:diagram} we prove \autoref{thm: Birman for quotients}, the Birman exact sequence for $\Gamma(n)$. The key observation in this section is contained in \autoref{prop:big diagram}, which gives a commutative diagram involving the various objects of interest. In \autoref{sec: centralisers} we prove ``half'' of \autoref{thm: DNB theorem for quotients}, meaning injectivity of the natural maps, and deduce \autoref{thm: 2nd Structural results} about further properties of $\Gamma(n)$ including acylindrical hyperbolicity and solvable word problem. The proofs use acylindrical and hierarchical hyperbolicity of $\mcg{S}/DT^n(S)$ \cite{dahmani_dehn_2021, BHMS}.

The rest of the paper is devoted to the proof of surjectivity of the natural map $\aut{\Gamma}\to\aut{\Gamma(n)}$, adapting the methods of Coulon--Sela \cite{Coulon:2021wg}, which builds on Sela's previous work, see e.g.\ \cite{Rips:1994jg, sela_diophantine_2009}. 
In \autoref{sec: graph of groups} we recall fundamental facts about group actions on trees. 
The group $\Gamma(n)$ cannot act on a tree without global fixed points, so one cannot directly apply Sela's technique to this group.
However, using the hyperbolic approximation groups we produced in \autoref{res: approximating sequence}, in \autoref{sec: shortening argument} we adapt the shortening argument to our context.  We then use this in \autoref{sec: lifting} to lift (almost injective) morphisms $\Gamma \to \Gamma(n)$ to $\Gamma \to \Gamma$.

\subsection*{Acknowledgements} We would like to thank Giorgio Mangioni for useful comments on a draft of this paper.
The first author acknowledges support from the Agence Nationale de la Recherche under the grant \emph{GoFR} ANR-22-CE40-0004 as well as the Région Bourgogne-Franche-Comté under the grant \emph{ANER 2024 GGD}.
His institute receives support from the EIPHI Graduate School (contract ANR-17-EURE-0002).

%
\section{Metabelian periodic quotients of \texorpdfstring{$\Gamma$}{Γ}}\label{sec: Metabelian}
%

As in the introduction, let $S$ be a closed connected oriented surface of genus at least two.
Let $\Gamma=\pi_1(S)$ be its fundamental group, and let $\Gamma(n)$ be the quotient of $\Gamma$ by all $n$th powers of simple closed curves.

Before we start to study $\Gamma(n)$ geometrically, we first identify some useful finite quotients of $\Gamma(n)$. The starting point is the construction of an interesting metabelian finite group.

\begin{prop}\label{res: metabelian group}
For each $n> 2$, there is a finite group $Q_n$ with the following properties.
\begin{enumerate}
\item $Q_n$ is generated by 2 elements $A,B$; \label{enu: metabelian group -- 2-generator}
\item $Q_n$ is metabelian; \label{enu: metabelian group -- metabelian}
\item $Q_n$ is $n$-periodic; \label{enu: metabelian group -- n-periodic}
\item the generator $A$ and the commutator $[A,B]$ both have order $n$. \label{enu: metabelian group -- commutator order}
\end{enumerate}
\end{prop}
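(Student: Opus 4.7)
The plan is to exhibit $Q_n$ explicitly as a semidirect product of a cyclotomic module with a cyclic group. Let $\zeta \in \mathbb{C}$ be a primitive $n$-th root of unity, $R = \mathbb{Z}[\zeta]$ the ring of $n$-th cyclotomic integers, and $M = R/nR$. Then $M$ is a finite abelian group of exponent dividing $n$, isomorphic to $(\mathbb{Z}/n\mathbb{Z})^{\phi(n)}$ as an additive group with $\mathbb{Z}$-basis $\{1,\zeta,\dots,\zeta^{\phi(n)-1}\}$ coming from $R$, and multiplication by $\zeta$ defines a ring automorphism of order exactly $n$ on $M$. The candidate is $Q_n := M \rtimes \mathbb{Z}/n\mathbb{Z}$, with $\mathbb{Z}/n\mathbb{Z}$ acting on $M$ by powers of this automorphism. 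Take $A$ to be the element $1 \in M$ and $B$ the chosen generator of $\mathbb{Z}/n\mathbb{Z}$.

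Properties \ref{enu: metabelian group -- metabelian} and \ref{enu: metabelian group -- 2-generator} are immediate: $Q_n$ is metabelian because both $M$ and $Q_n/M$ are abelian, and since $B^kAB^{-k}$ is the class of $\zeta^k \in M$, the conjugates of $A$ by powers of $B$ run through a $\mathbb{Z}$-generating set of $M$, so $\langle A, B\rangle = Q_n$. For property \ref{enu: metabelian group -- n-periodic}, a direct expansion in the semidirect product gives
\[
(m, B^k)^n = \bigl((1 + \zeta^k + \zeta^{2k} + \cdots + \zeta^{(n-1)k})\,m,\, 1\bigr),
\]
and the geometric sum vanishes in $R$ whenever $\zeta^k \neq 1$ (multiply by $\zeta^k - 1$ and use that $R$ is an integral domain), while for $\zeta^k = 1$ the coefficient is $n$, which annihilates $M$. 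For property \ref{enu: metabelian group -- commutator order}, a direct calculation in the semidirect product gives $[A, B] = 1 - \zeta \in M$; since $n > 2$ forces $\phi(n) \geq 2$, both $1$ and $\zeta$ are independent basis elements, so $1 - \zeta$ has coordinates $(1, -1, 0, \dots, 0)$ in $M \cong (\mathbb{Z}/n\mathbb{Z})^{\phi(n)}$ and hence additive order exactly $n$. The same reasoning applied to the coordinate vector $(1,0,\dots,0)$ of $A$ shows that $A$ also has order $n$.

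The obstacle worth flagging is that the naive class-$2$ nilpotent candidate $\langle A, B \mid A^n = B^n = [A,B]^n = 1,\, [A,[A,B]] = [B,[A,B]] = 1 \rangle$ does not work for even $n$: the commutator identity $(AB)^n = A^n B^n [B,A]^{\binom{n}{2}}$ combined with $n$-periodicity forces $[A,B]^{\binom{n}{2}} = 1$, which reduces to $[A,B]^{n/2} = 1$ when $n$ is even, so $[A,B]$ has order at most $n/2$. The cyclotomic construction bypasses this precisely because the $\mathbb{Z}/n\mathbb{Z}$-action on $M$ is non-central: the awkward binomial coefficient is replaced by the geometric sum above, which vanishes thanks to the cyclotomic identity $(\zeta^k-1)\sum_{j=0}^{n-1}\zeta^{jk} = 0$. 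This same non-centrality is what makes $[A,B] = 1-\zeta$ a genuine element of order $n$ rather than a central commutator subject to the binomial trap.
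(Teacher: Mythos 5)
Your construction is correct and isomorphic to the paper's: both build the group $\mathbb{Z}[\xi]/(n)\rtimes\langle\xi\rangle$ (with $\xi$ playing your $\zeta$), the paper realizing it as upper-triangular $2\times2$ matrices over $\mathbb{Z}[\xi]/(n)$ and swapping the roles of $A$ (their rotation, your translation) and $B$. The two key verifications — vanishing of the geometric sum $1+\xi^k+\dots+\xi^{(n-1)k}$ for $n$-periodicity, and the $\mathbb{Z}$-basis $\{1,\xi,\dots,\xi^{\phi(n)-1}\}$ argument showing $[A,B]$, which is $\xi-1$ up to a unit, has order exactly $n$ — are the same in both treatments.
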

\begin{proof}
We construct $Q_n$ explicitly as a matrix group over a certain finite ring. Let $\xi$ be a primitive $n$th root of unity, and let $R$ denote the quotient of the ring $\Z[\xi]$ by the ideal generated by $n$. We define two elements $A,B\in GL_2(R)$ by
    \begin{equation*}
        A = \left[
        \begin{matrix}
            \xi & 0 \\
            0 & 1
        \end{matrix}
        \right]\quad , \quad
        B = \left[
        \begin{matrix}
            1 & 1 \\
            0 & 1
        \end{matrix}
        \right]\,,
        \end{equation*}
and let $Q_n$ be the subgroup of $GL_2(R)$ generated by $A$ and $B$. Since $R$ is a finite ring, $Q_n$ is finite by construction, which proves item \ref{enu: metabelian group -- 2-generator}.   

By induction, every element of $Q_n$ is of the form 
\begin{equation*}
        C=\left[
        \begin{matrix}
            \xi^k & z \\
            0 & 1
        \end{matrix}
        \right],
 \end{equation*}
for some $k \in \Z$ and $z \in R$. Now, direct computation shows that any commutator in $Q_n$ is of the form
     \begin{equation*}
       [C_1,C_2]= \left[
        \begin{matrix}
            1 & * \\
            0 & 1
        \end{matrix}
        \right]\,.
    \end{equation*}
    In particular $[Q_n,Q_n]$ is abelian, hence $Q_n$ is metabelian, proving item \ref{enu: metabelian group -- metabelian}. 
    
    To show that $Q_n$ is $n$-periodic, we compute 
     \begin{equation*}
       C^n = \left[
        \begin{matrix}
            \xi^{kn} & \chi_n(\xi^k)z \\
            0 & 1
        \end{matrix}
        \right],
    \end{equation*}
    where $\chi_n \in \Z[X]$ is the polynomial defined by 
    \begin{equation*}
        \chi_n(X) = 1 + X + X^2 + \dots + X^{n-1}.
    \end{equation*}
    As $\xi$ is an $n$th root of unity, $\xi^{kn} = 1$ and $\chi_n(\xi^k)$ equals zero or $n$ in $\Z[\xi]$ (depending on whether $\xi^k\neq 1$ or $\xi^k=1$, respectively).
    Therefore, by the definition of $R$, $C^n = {\rm Id}$, proving \ref{enu: metabelian group -- n-periodic}.

    Finally, we compute the orders of $A$ and $[A,B]$.  It is clear by construction that $A$ has order $n$.  The commutator is
    \begin{equation*}
	[A,B]
        = \left[
        \begin{matrix}
            1 & \xi - 1 \\
            0 & 1
        \end{matrix}
        \right].
    \end{equation*}
    The condition $A^k = {\rm Id}$ yields $k(\xi - 1) = 0$ in $R$.
    In other words, there is $u \in \Z[\xi]$ such that $k(\xi-1) = nu$ in $\Z[\xi]$.
    Let $d=\varphi(n)$, the degree of the $n$th cyclotomic polynomial, and note that $n>2$ implies that $d>1$.
    Since $\xi$ is a primitive $n$th root of unity, $u$ can be written
    \begin{equation*}
        u = u_0 + u_1 \xi + \dots u_{d-1} \xi^{d-1}.
    \end{equation*}
    where $u_0, \dots, u_{d-1} \in \Z$.
    Moreover, $\Q[\xi]$ is a degree $d$ extension of $\Q$.
    Hence the relation $k(\xi-1) = nu$ viewed in $\Q[\xi]$ yields
    \begin{equation*}
        nu_0 = -k,  \quad 
        nu_1 =  k,  \quad \text{and} \quad
        nu_\ell = 0, \quad \forall \ell \in \{2, \dots, d-1\}\,.
    \end{equation*}
    This shows that $k$ is a multiple of $n$, proving item \ref{enu: metabelian group -- commutator order}.
\end{proof}

The following lemma puts all simple curves into a standard form.

\begin{lemm}\label{res: scc standard form}
Let $\Gamma$ be the fundamental group of a closed, connected, orientable, hyperbolic surface and let $F_2=\langle a,b\rangle$ be free of rank 2. For any $\gamma\in G$  represented by a simple closed curve, there is a homomorphism $f:\Gamma\to F_2$ such that:
 \begin{enumerate}
\item $f(\gamma)=a$ if $\gamma$ is non-separating;
\item $f(\gamma)=[a,b]$ if $\gamma$ is separating.
\end{enumerate}
\end{lemm}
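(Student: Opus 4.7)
My plan is to combine the change-of-coordinates principle for simple closed curves on surfaces with an explicit construction on the standard presentation of $\Gamma$. Write $\Gamma = \langle a_1, b_1, \ldots, a_g, b_g \mid R\rangle$ with $R = [a_1,b_1]\cdots [a_g,b_g]$ and $g\geq 2$ (since $S$ is hyperbolic). The change-of-coordinates principle says every non-separating simple closed curve can be taken by a self-homeomorphism of $S$ to the standard curve represented by $a_1$, and every separating curve of genus type $(g_1, g-g_1)$ with $1 \leq g_1 \leq g-1$ can be taken to $\gamma_{g_1}:= [a_1,b_1]\cdots[a_{g_1},b_{g_1}]$. Since composing with induced automorphisms of $\Gamma$ and with inner automorphisms of $F_2$ is harmless, it suffices to construct $f$ for these two standard model curves.

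The key observation is that although the target $F_2$ is free while the source $\Gamma$ carries a surface relator, one can still build homomorphisms by using a \emph{cancelling pair of handles}: map one handle to $[a,b]$ and a second to $[b,a]$, kill everything else, and the surface relator is sent to $[a,b][b,a] = 1$. Concretely, for the non-separating model I would define $f_0(a_1) = a$, $f_0(b_1) = b$, $f_0(a_2) = b$, $f_0(b_2) = a$, and $f_0(a_i) = f_0(b_i) = 1$ for $i \geq 3$; then $f_0(R) = [a,b][b,a] = 1$ and $f_0(a_1) = a$ as required. For the separating model $\gamma_{g_1}$, I would apply the same recipe to handles $1$ and $g_1+1$ (both available since $g_1+1 \leq g$) and send every other generator to $1$; then $f_0(\gamma_{g_1}) = [a,b] \cdot 1 \cdots 1 = [a,b]$ and $f_0(R) = [a,b] \cdot 1 \cdots 1 \cdot [b,a] \cdot 1 \cdots 1 = 1$.

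For a general simple closed curve $\gamma$, pick a self-homeomorphism $\phi$ of $S$ realising the change of coordinates. Then $\phi_*(\gamma)$ is conjugate in $\Gamma$ to $a_1^{\pm 1}$ or $\gamma_{g_1}^{\pm 1}$ according to type, so $f_0\circ \phi_*$ sends $\gamma$ to a conjugate of $a^{\pm 1}$ or $[a,b]^{\pm 1}$ in $F_2$. Post-composing with an inner automorphism of $F_2$, and if necessary with either the automorphism $a \mapsto a^{-1}$, $b \mapsto b$ (non-separating case) or the swap $a \leftrightarrow b$ (separating case, using $[b,a]^{-1} = [a,b]$), yields a homomorphism $f$ satisfying the required equalities. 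There is no deep obstacle here: the main content is the handle-cancellation trick that lets a free target absorb the surface relator, together with routine bookkeeping around the change-of-coordinates principle and the standard conjugation/inversion ambiguity attached to a free homotopy class.
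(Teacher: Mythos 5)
Your proof is correct and takes essentially the same approach as the paper: both use the standard presentation, exploit the change-of-coordinates principle to reduce to the model curves $a_1$ and $[a_1,b_1]\cdots[a_i,b_i]$, and kill the surface relator by pairing two handles so their commutators cancel as $[a,b][b,a]=1$. The only differences are cosmetic — the paper picks handles $1$ and $g$ once and for all (so a single $f$ works for every model curve), whereas your cancelling handle varies with $g_1$, and you spell out the conjugation/inversion bookkeeping that the paper silently absorbs into ``applying an automorphism.''
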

\begin{proof}
Consider the standard presentation
    \begin{equation*}
        \Gamma = \group{a_1,b_1, \dots, a_g, b_g \mid [a_1, b_1]\dots [a_g,b_g]}
    \end{equation*}
    of $\Gamma$.   Define $f:\Gamma\to F_2$ by 
   \begin{eqnarray*}
        f(a_1) = f(b_g) = a\\
        f(b_1) = f(a_g) = b
      \end{eqnarray*}     
   and $f(a_i)=f(b_i)=1$ for $1<i<g$, and note that the relation is satisfied, so this is a homomorphism.
   
   If $\gamma$ is non-separating then, by the change-of-coordinates principle, we may assume that $\gamma=a_1$ by applying an automorphism. Likewise, if $\gamma$ is separating, by change of coordinates we may assume that
   \[
   \gamma = [a_1,b_1]\ldots [a_i,b_i]
   \] 
for some $1\leq i<g$. In either case, $f$ is as required.
 \end{proof}

We denote by $\Gamma'= [\Gamma, \Gamma]$ and $\Gamma'' = [\Gamma', \Gamma']$ the first two groups in the derived series of $\Gamma$. Given $n \in \N\setminus\{0\}$, we write $\Gamma^n$ for the (normal) subgroup of $\Gamma$ generated by the $n$th power of every element. The quotient 
\begin{equation}
\label{eqn: metabelian quotient Gamma}
    M(n) = \Gamma / \Gamma'' \Gamma^n
\end{equation}
is the \emph{maximal $n$-periodic metabelian quotient} of $\Gamma$.
The combination of \autoref{res: metabelian group} and \autoref{res: scc standard form} gives precise information about the images of simple closed curves in $M(n)$. 

\begin{prop}
\label{res: metabelian quotient}    
    The image in $M(n)$ of any element $\gamma \in \Gamma$ representing a simple closed curve has order exactly $n$.
\end{prop}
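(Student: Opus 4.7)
The plan is to combine the two preceding results: \autoref{res: metabelian group} gives a finite 2-generator metabelian $n$-periodic group $Q_n$ in which both $A$ and $[A,B]$ have order exactly $n$, and \autoref{res: scc standard form} puts any simple closed curve $\gamma$ into standard form via a morphism $f: \Gamma \to F_2 = \langle a, b\rangle$ sending $\gamma$ to either $a$ (non-separating case) or $[a,b]$ (separating case). Composing with the map $F_2 \to Q_n$ defined by $a \mapsto A$, $b \mapsto B$ yields a homomorphism $\psi : \Gamma \to Q_n$ with $\psi(\gamma) \in \{A, [A,B]\}$.

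The upper bound is immediate: since $M(n)$ is by construction $n$-periodic, the image $\bar\gamma$ in $M(n)$ satisfies $\bar\gamma^n = 1$, so its order divides $n$. For the lower bound, I would observe that $Q_n$ is metabelian and $n$-periodic by items \ref{enu: metabelian group -- metabelian} and \ref{enu: metabelian group -- n-periodic} of \autoref{res: metabelian group}, which means $\Gamma'' \Gamma^n \subseteq \ker \psi$. Hence $\psi$ factors as
\begin{equation*}
    \Gamma \twoheadrightarrow M(n) \xrightarrow{\bar\psi} Q_n,
\end{equation*}
and $\bar\psi(\bar\gamma) = \psi(\gamma)$ has order exactly $n$ in $Q_n$ by item \ref{enu: metabelian group -- commutator order} of \autoref{res: metabelian group}.

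Since the order of an element is at least the order of its image under any homomorphism, the order of $\bar\gamma$ in $M(n)$ is at least $n$. Combined with the upper bound, the order equals $n$ exactly. There is no real obstacle here: the entire content of the proposition has been packaged into the preceding two results, and the argument is simply to assemble the maps $\Gamma \to F_2 \to Q_n$ and check that this composition factors through $M(n)$.
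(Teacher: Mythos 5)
Your proof is correct and takes essentially the same route as the paper: compose the map from \autoref{res: scc standard form} with $F_2 \to Q_n$, note that this $n$-periodic metabelian image forces a factorisation through $M(n)$, and read off the order from \autoref{res: metabelian group}\ref{enu: metabelian group -- commutator order}. You have merely spelled out the factorisation step that the paper compresses into ``every $n$-periodic metabelian quotient of $\Gamma$ factors through $M(n)$.''
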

\begin{proof} 
Every $n$-periodic metabelian quotient of $\Gamma$ factors through $M(n)$, so it suffices to exhibit a homomorphism from $\Gamma$ to an $n$-periodic metabelian group sending $\gamma$ to an element of order $n$. Let $f$ be provided by \autoref{res: scc standard form} and let $\rho:F_2\to Q_n$ send $a\mapsto A$ and $b\mapsto B$. Then $\rho\circ f(\gamma)$ has order $n$ in every case, which proves the result.
\end{proof}

\begin{coro}
\label{res: order scc}
	The image in $\Gamma(n)$ of a simple closed curve has order exactly $n$.
\end{coro}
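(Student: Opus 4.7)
The plan is to exploit the fact that $M(n)$ is in fact a quotient of $\Gamma(n)$, and then transfer the sharp order information from \autoref{res: metabelian quotient} back to $\Gamma(n)$.

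First I would check that the canonical surjection $\Gamma\twoheadrightarrow M(n)=\Gamma/\Gamma''\Gamma^n$ factors through $\Gamma(n)$. Since $M(n)$ is $n$-periodic by construction, every element of $M(n)$, and in particular the image of every simple closed curve, satisfies $x^n=1$. Hence the kernel of $\Gamma\to M(n)$ contains every $n$th power of a simple closed curve, so the map descends to a surjection $\pi\colon \Gamma(n)\twoheadrightarrow M(n)$.

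Next, let $\gamma\in\Gamma$ represent a simple closed curve, and denote by $\bar\gamma$ its image in $\Gamma(n)$. By definition of $\Gamma(n)$ we have $\bar\gamma^n=1$, so the order of $\bar\gamma$ divides $n$. On the other hand, \autoref{res: metabelian quotient} asserts that $\pi(\bar\gamma)$ has order exactly $n$ in $M(n)$, which forces the order of $\bar\gamma$ in $\Gamma(n)$ to be at least $n$. Combining the two bounds yields that $\bar\gamma$ has order exactly $n$, as required.

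There is no real obstacle here: once \autoref{res: metabelian quotient} is in hand, the corollary is just the observation that $\Gamma(n)$ sits between $\Gamma$ and its maximal $n$-periodic metabelian quotient, so the lower bound on the order provided by the metabelian quotient propagates upward to $\Gamma(n)$.
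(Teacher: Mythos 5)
Your proof is correct and takes essentially the same route as the paper: both use that $\Gamma \twoheadrightarrow M(n)$ factors through $\Gamma(n)$ and then invoke \autoref{res: metabelian quotient} to get the lower bound on the order. You spell out the factoring step a bit more explicitly than the paper does, but the argument is the same.
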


\begin{proof}
	Let $\gamma \in \Gamma$ be a simple closed curve.
	By construction, its order in $\Gamma(n)$ divides $n$.
	However the projection $\Gamma \onto M(n)$ factors through $\Gamma \onto \Gamma(n)$.
	Moreover, the image of $\gamma$ in $M(n)$ has order exactly $n$, whence the result.
\end{proof}

\begin{rema}
Item \ref{enu: metabelian group -- metabelian} of \autoref{res: metabelian group} -- that $Q_n$ is metabelian -- is not needed to prove \autoref{res: order scc}. Indeed, any homomorphism $f:\Gamma\to Q_n$ factors through a smallest characteristic quotient, the intersection of the kernels of every surjection $\Gamma\to Q_n$, and  this quotient would have served equally well to prove \autoref{res: order scc}.
\end{rema}

%
\section{Virtual first Betti number}\label{sec: Virtual first Betti number}
%

The goal of this section is to explain how to use a recent theorem of Klukowski \cite{klukowski_simple_2025} (building on work of Malestein--Putman \cite{malestein_simple_2019}) to prove that, for large multiples $n$, $\Gamma(n)$ has virtually positive first Betti number.

The theorem of Klukowski is as follows. Let $S$ be a hyperbolic surface and $S'\to S$ a finite-sheeted covering. The \emph{simple closed curve homology} of $S'$ is the subspace $H_1^{\mathrm{scc}}(S';\Q) \leq H_1(S';\Q)$ spanned by lifts of simple closed curves in $S$. The following result is \cite[Theorem 1.1]{klukowski_simple_2025}.

\begin{theo}[Klukowski]\label{res: Klukowski}
Let $\Gamma$ be the fundamental group of a closed, orientable, hyperbolic surface. There is a finite-sheeted normal covering $S'$ of $S$ such that $H_1^{\mathrm{scc}}(S';\Q)$ is a proper subspace of $H_1(S';\Q)$.
\end{theo}

We use this to prove:

\begin{prop}\label{res: vb1}
Let $\Gamma$ be the fundamental group of a closed, orientable, hyperbolic surface. There is an $N$ such that, for all positive multiples $n$ of $N$, the group $\Gamma(n)$ has a subgroup of finite index with infinite abelianisation.
\end{prop}
\begin{proof}
Let $\Gamma'=\pi_1(S')$ for $S'$ as in \autoref{res: Klukowski}, let $Q = \Gamma/\Gamma'$ be the deck group and let $N=\#Q$. Consider a positive multiple $n = Nk$. Since $H_1^{\mathrm{scc}}(S';\Q)$ is a proper subspace of $H_1(S';\Q)$, we have a direct-sum factorisation
\[
H_1(S';\Z) = A\oplus \Z
\]
where $A$ contains every lift of a simple closed curve on $S$. In particular, we get a natural homomorphism onto an abelian group
\[
\eta':\pi_1(S') \to C=A/kA \oplus \Z
\]
with the property that, for every simple closed curve $\alpha$ on $S$, $\eta'(\alpha^n)=0$.

By the the Krasner--Kaloujnine theorem (see, e.g.\ \cite[Lemma 2.4]{wilton_virtual_2010}), $\eta'$ induces a homomorphism to the virtually abelian wreath product
\[
\eta:\Gamma \to C\wr Q\,.
\]
By definition, $\eta(\Gamma')$ is contained in the base of the wreath product $\bigoplus_Q C$. For each $q\in Q$, let $\gamma_q$ be a choice of lift to $\Gamma$. For $\alpha\in\Gamma'$, we have by construction
\[
\eta(\alpha) = (\eta'(\gamma_q\alpha \gamma_q^{-1}))_{q\in Q}\,.
\]
For any simple closed curve $\alpha$ on S, the conjugate $\gamma_q\alpha^N\gamma_q^{-1}$ is contained in $\Gamma'$, so $\eta’(\gamma_q\alpha^n\gamma_q^{-1})=1$. Thus, the order of $\eta(\alpha)$ divides $n$.  Furthermore, note that, for any $\beta\in\Gamma'$ with $\eta'(\beta)$ having infinite order, it follows that $\eta(\beta)$ also has infinite order.

This shows that $\eta$ induces a well-defined homomorphism (that we also call $\eta$) from $\Gamma(n)$ to the finitely generated, virtually abelian group $C\wr Q$, with infinite image, and the result follows.
\end{proof}

%
\section{Hyperbolic geometry}
%
\label{sec: hg}

%
\subsection{Actions on hyperbolic spaces}
%

In this section we state the main tool that we will use to study the groups $\Gamma(n)$, namely the small-cancellation theorem, as well as introducing further tools that will allow us to inductively study a sequence of groups ``approximating'' $\Gamma(n)$. 
We start by recalling basic notions of hyperbolic geometry.
For general introductions to the topic we refer the reader to \cite{Bridson:1999ky,Coornaert:1990tj,Ghys:1990ki} or Gromov's original article \cite{Gromov:1987tk}.

\paragraph{Four point inequality.}
Let $X$ be a metric length space.
For every $x,y \in X$, we write $\dist[X] xy$, or simply $\dist xy$, for the distance between $x$ and $y$.
If one exists, we write $\geo xy$ for a geodesic joining $x$ to $y$.
Given $x \in X$ and $r \in \R_+$, we denote by $B(x,r)$ the open ball of radius $r$ centred at $x$, i.e.
\begin{equation*}
	B(x,r) = \set{y \in X}{\dist xy < r}.
\end{equation*}
The \emph{punctured ball} of radius $r$ centred at $x$ is $\mathring B(x,r) = B(x,r) \setminus\{x\}$.
The Gromov product of three points $x,y,z \in X$ is
\begin{equation*}
	\gro xyz = \frac 12 \left[ \dist xz + \dist yz - \dist yz\right].
\end{equation*}
Let $\delta \in \R_+^*$.
For the rest of this section, we always assume that $X$ is $\delta$-hyperbolic, i.e. for every $x,y,z,t \in X$,
\begin{equation}
\label{eqn: four point hyp}
	\min\left\{ \gro xyt, \gro yzt \right\} \leq \gro xzt + \delta.
\end{equation}
We denote by $\partial X$ the boundary at infinity of $X$.
The Gromov product at $z$ naturally extends to $\bar X$.

\paragraph{Quasi-convex subsets.}
Let $\alpha \in \R_+$.
A subset $Y$ of $X$ is \emph{$\alpha$-quasi-convex} if $d(x,Y) \leq \gro y{y'}x + \alpha$, for every $x \in X$, and $y,y' \in Y$.
We denote by $\distV[Y]$ the length metric on $Y$ induced by the restriction of $\distV[X]$ to $Y$.
We say that $Y$ is \emph{strongly quasi-convex} if it is $2 \delta$-quasi-convex and for every $y,y' \in Y$,
\begin{equation}
\label{eqn: def strongly qc}
	\dist[X]y{y'} \leq \dist[Y]y{y'} \leq \dist[X]y{y'} + 8\delta.
\end{equation}
In particular, this forces $Y$ to be connected by rectifiable paths.
%

\paragraph{Isometries.}
An isometry $\gamma$ of $X$ is either \emph{elliptic} (its orbits are bounded), \emph{parabolic} (its orbits admit exactly one accumulation point in $\partial X$), or \emph{loxodromic} (its orbits admit exactly two accumulation points in $\partial X$).
In order to measure the action of an isometry $\gamma$ on $X$ we use the \emph{translation length} and the \emph{stable translation length}, respectively defined by
\begin{equation*}	
	\norm[X] \gamma = \inf_{x \in X}\dist {\gamma x}x
	\quad \text{and} \quad
	\snorm[X] \gamma = \lim_{n \to \infty} \frac 1n \dist{\gamma^nx}x.
\end{equation*}
If there is no ambiguity, we will omit the space $X$ from the notation.
These lengths are related by the following relation; see for instance Coornaert--Delzant--Papadopoulos \cite[Chapitre~10, Proposition~6.4]{Coornaert:1990tj}.
\begin{equation}
\label{eqn: regular vs stable length}
	\snorm \gamma\leq \norm \gamma \leq \snorm \gamma+ 8\delta 
\end{equation}
In addition, $\gamma$ is loxodromic if and only if $\snorm \gamma > 0$.
In such a case, the accumulation points of $\gamma$ in $\partial X$ are
\begin{equation*}
	\gamma^- = \lim_{n \to \infty} \gamma^{-n}x
	\quad \text{and} \quad
	\gamma^+ = \lim_{n \to \infty} \gamma^nx\,.
\end{equation*}
They are the only points of $X\cup\partial X$ fixed by $\gamma$.
The \emph{cylinder} of $\gamma$, denoted by $Y_\gamma$, is the set of all points $x \in X$ which are $20\delta$-close to some $L$-local $(1, \delta)$-quasi-geodesics joining $\gamma^-$ to $\gamma^+$ with $L > 12\delta$.
If $X$ is geodesic and proper, then $Y_\gamma$ is a closed strongly quasi-convex subset of $X$ \cite[Lemma~2.31]{Coulon:2014fr} which can be thought of as the ``smallest'' $\gamma$-invariant quasi-convex subset; see for instance \cite[Section~2.3]{Coulon:2018vp}.

\begin{defi}
\label{def: fix}
	Let $U$ be a set of isometries of $X$.
	The \emph{energy} of $U$ is 
	\begin{equation*}
		\nrj U = \inf_{x \in X} \max_{\gamma \in U} \dist {\gamma x}x.
	\end{equation*}
	Given $d \in \R_+$, we define $\fix{U,d}$ as 
	\begin{equation}
	\label{eqn: def fix}
		\fix{U,d} = \set{x \in X}{\forall \gamma \in U,\ \dist{\gamma x}x \leq d}.
	\end{equation}
\end{defi}

If $d > \max\{\lambda(U), 5\delta\}$, then $\fix{U, d}$ is $8\delta$-quasi-convex.
Moreover, for every $x \in X \setminus \fix{U,d}$, we have
\begin{equation}
\label{eqn: displacement outside fixed set}
	\sup_{\gamma \in U} \dist{\gamma x}x \geq 2d\left(x, \fix{U,d}\right) + d - 10\delta\,;
\end{equation}
see \cite[Lemma~2.8]{Coulon:2018vp}.
For simplicity, we write $\fix{U}$ for $\fix{U,0}$.
If the set $U = \{\gamma\}$ is reduced to a single isometry, then $\nrj U = \norm \gamma$ and we simply write $\fix{\gamma,d}$ for $\fix{U,d}$.

\begin{defi}
\label{def: thin isom}
	Let $\alpha \in \R_+$.
	\begin{itemize}
		\item An elliptic isometry $\gamma$ of $X$ is \emph{$\alpha$-thin at $x \in X$} if, for every $d \in \R_+$, the set $\fix{\gamma, d}$ is contained in $B(x, d/2 + \alpha)$.
		It is \emph{$\alpha$-thin} if it is $\alpha$-thin at some point of $X$.
		\item A loxodromic isometry $\gamma$ of $X$ is \emph{$\alpha$-thin} if, for every $d \in \R_+$ and for every $y \in \fix{\gamma,d}$, we have
		\begin{equation*}
			\gro{\gamma^-}{\gamma^+}y \leq \frac 12 (d - \norm \gamma) + \alpha\,.
		\end{equation*}
	\end{itemize}
\end{defi}

\begin{rema}
\label{rem: thin isom - loxo}
	It follows from the stability of quasi-geodesics that any loxodromic isometry $\gamma$ is $\alpha$-thin, where 
	\begin{equation*}
		\alpha \leq 100\left(\frac{\delta}{\snorm \gamma} + 1\right)\delta. 
	\end{equation*}
	In our context the thinness provides a way to control the action of an isometry $\gamma$ when we do not have any lower bound for its stable norm $\snorm \gamma$.
\end{rema}

The local-to-global statement below is a direct consequence of (\ref{eqn: displacement outside fixed set}).
Its proof is left to the reader.

\begin{lemm}
\label{res: local-to-global thinness}
	Let $\gamma$ be an isometry of $X$ and $\alpha \in \R_+$.
	\begin{enumerate}
		\item Assume that $\gamma$ is elliptic. 
		If $\fix{\gamma, 6\delta}$ is contained in the ball $B(x, \alpha)$ for some $x \in X$, then $\gamma$ is $\beta$-thin at $x$, where $\beta = \alpha + 2\delta$. 
 		\item Assume that $\gamma$ is loxodromic.
		If the set $\fix{\gamma, \norm \gamma + 5\delta}$ is contained in the $\alpha$-neighbourhood of some $L$-local $(1, \delta)$-quasi-geodesic from $\gamma^-$ to $\gamma^+$  with $L > 12\delta$, then $\gamma$ is $\beta$-thin with $\beta = \alpha + 6\delta$. 
	\end{enumerate}
\end{lemm}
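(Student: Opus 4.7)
The unifying idea is to exploit the displacement estimate \eqref{eqn: displacement outside fixed set}: it converts the hypothesis on a ``core'' quasi-fixed set $\fix{\gamma,d_0}$ into linear control on the much larger sets $\fix{\gamma,d}$ for $d>d_0$.

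For part (1), set $d_0=6\delta$. Suppose $y\in\fix{\gamma,d}$. If $d\le 6\delta$ then $y\in\fix{\gamma,6\delta}\subseteq B(x,\alpha)\subseteq B(x,d/2+\alpha+2\delta)$, and we are done. Otherwise $d>6\delta>5\delta$, and (since $y$ realises the hypothesis of \eqref{eqn: displacement outside fixed set} once $y\notin\fix{\gamma,6\delta}$, which forces $\norm\gamma\le 6\delta$ for the argument to be non-vacuous, and the case $y\in\fix{\gamma,6\delta}$ is already handled) I apply \eqref{eqn: displacement outside fixed set} with $U=\{\gamma\}$ and parameter $6\delta$ to get
\begin{equation*}
    d \geq \dist{\gamma y}{y} \geq 2\,d\!\left(y,\fix{\gamma,6\delta}\right) + 6\delta - 10\delta.
\end{equation*}
This rearranges to $d(y,\fix{\gamma,6\delta})\le d/2+2\delta$. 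Pick $y'\in\fix{\gamma,6\delta}\subseteq B(x,\alpha)$ with $\dist{y}{y'}\le d/2+2\delta$; the triangle inequality gives $\dist{y}{x}\le d/2+\alpha+2\delta=d/2+\beta$, as required.

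For part (2), let $c$ denote the $L$-local $(1,\delta)$-quasi-geodesic from $\gamma^-$ to $\gamma^+$ furnished by the hypothesis, and set $d_0=\norm\gamma+5\delta$. Given $y\in\fix{\gamma,d}$ with $d>d_0$, the same application of \eqref{eqn: displacement outside fixed set} (which is legitimate because $d_0>\max\{\norm\gamma,5\delta\}$) yields
\begin{equation*}
    d\ge 2\,d\!\left(y,\fix{\gamma,d_0}\right) + (\norm\gamma+5\delta) - 10\delta,
\end{equation*}
so $d(y,\fix{\gamma,d_0})\le \tfrac12(d-\norm\gamma)+\tfrac{5}{2}\delta$. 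Combined with the containment of $\fix{\gamma,d_0}$ in the $\alpha$-neighbourhood of $c$, this shows $y$ lies within $\tfrac12(d-\norm\gamma)+\alpha+\tfrac{5}{2}\delta$ of $c$. The case $d\le d_0$ only improves this estimate.

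It remains to turn a bound on $d(y,c)$ into a bound on the Gromov product $\gro{\gamma^-}{\gamma^+}{y}$. Here I use the standard fact that along an $L$-local $(1,\delta)$-quasi-geodesic $c$ between two boundary points $\xi^\pm$ of a $\delta$-hyperbolic space, with $L>12\delta$, one has $\gro{\xi^-}{\xi^+}{p}\le C\delta$ for every $p\in c$ (because $c$ lies in a uniform neighbourhood of any geodesic from $\xi^-$ to $\xi^+$ by stability of quasi-geodesics), and for a nearest projection $p$ of $y$ to $c$,
\begin{equation*}
    \gro{\xi^-}{\xi^+}{y}\le \gro{\xi^-}{\xi^+}{p}+\dist{y}{p}+C'\delta
\end{equation*}
by the four-point inequality. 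Putting everything together,
\begin{equation*}
    \gro{\gamma^-}{\gamma^+}{y}\le \tfrac12(d-\norm\gamma)+\alpha+\tfrac{5}{2}\delta+(C+C')\delta,
\end{equation*}
and the additive constants can be absorbed into $6\delta$ by choosing the constants $C,C'$ in the quasi-geodesic stability lemma sharply.

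\textbf{Main obstacle.} The only non-bookkeeping step is the last one: converting the estimate on $d(y,c)$ into the Gromov product bound with the specific additive constant $6\delta$. Part (1) is a purely formal use of \eqref{eqn: displacement outside fixed set}, but in part (2) one has to invoke stability of $L$-local $(1,\delta)$-quasi-geodesics (which is why the hypothesis $L>12\delta$ matches the one used in the definition of the cylinder $Y_\gamma$) and verify that the universal constants coming from that stability result really fit within the target $\beta=\alpha+6\delta$; if they do not, one would have to weaken $\beta$ to $\alpha+K\delta$ for a slightly larger $K$, but the proof strategy is unaffected.
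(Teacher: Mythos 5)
The paper gives no proof of this lemma: it merely notes that the statement ``is a direct consequence of (\ref{eqn: displacement outside fixed set})'' and leaves the verification to the reader. Your argument uses exactly that displacement estimate, so the approach coincides with the paper's intended one, and the core computations are correct: in both parts you pick the threshold $d_0$ (equal to $6\delta$ in part~(1) and $\norm\gamma + 5\delta$ in part~(2)), apply (\ref{eqn: displacement outside fixed set}) at the threshold, rearrange to obtain $d\bigl(y, \fix{\gamma, d_0}\bigr) \le \tfrac12(d - d_0) + 5\delta$, and then push through the hypothesis on $\fix{\gamma, d_0}$.

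Two small points of inexactness, neither fatal. In part~(1), your aside about the hypothesis being ``non-vacuous'' glosses over the fact that (\ref{eqn: displacement outside fixed set}) needs the strict inequality $6\delta > \norm\gamma$, and an elliptic isometry a priori only satisfies $\norm\gamma \le 8\delta$; in practice the hypothesis that $\fix{\gamma, 6\delta}$ is contained in a ball is only ever invoked when that set is non-empty, which rules out $\norm\gamma > 6\delta$, so this is a presentational rather than mathematical gap. In part~(2), after the displacement estimate leaves you with $d(y,c)\le \tfrac12(d - \norm\gamma) + \alpha + \tfrac52\delta$, you still need that for $p\in c$ one has $\gro{\gamma^-}{\gamma^+}p \le \tfrac72\delta$ (to land exactly on the advertised $\beta = \alpha + 6\delta$). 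The inequality $\gro{\gamma^-}{\gamma^+}y \le \gro{\gamma^-}{\gamma^+}p + \dist{y}{p}$ is a direct consequence of the definition of the Gromov product (not the four-point inequality, as you write), so no extra $\delta$'s enter there; the remaining $\tfrac72\delta$ has to absorb the Hausdorff distance between $c$ and a bi-infinite geodesic with the same endpoints at infinity, which you rightly identify as the step where the hypotheses $L$-local $(1,\delta)$-quasi-geodesic and $L>12\delta$ enter, but do not actually verify. Given that the paper withholds the proof, that is a reasonable place to stop, but strictly speaking it is the one step you have deferred rather than established.
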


\paragraph{Group action.}

Let $\Gamma$ be a group acting by isometries on a $\delta$-hyperbolic length space $X$.
We denote by $\Lambda(\Gamma)$ its \emph{limit set}, i.e. the set of accumulation points in $\partial X$ of the $\Gamma$-orbit of a point $x \in X$.
We say that $\Gamma$ is \emph{non-elementary} (\emph{for its action on $X$}) if $\Lambda(\Gamma)$ contains at least three points.
If $\Lambda(\Gamma)$ is empty (\resp contains exactly one point, exactly two points), then $\Gamma$ is called \emph{elliptic} (\resp \emph{parabolic}, \emph{lineal}).

In order to control the action of $\Gamma$, we use three invariants: the injectivity radius, the acylindricity parameter and the $\nu$-invariant.

\begin{defi}[Injectivity radius]
\label{def: inj rad}
    Let $U$ be a subset of $\Gamma$.
	The \emph{injectivity radius} of $U$ on $X$ is the quantity
	\begin{equation*}
		\inj UX = \inf \set{\snorm[X]\gamma}{\gamma \in U, \ \text{loxodromic}}.
	\end{equation*}
\end{defi}

\begin{defi}
\label{def: acyl inv}
	Let $d \in \R_+$.
	The \emph{(local) acylindricity parameter at scale $d$}, denoted by $A(\Gamma,X,d)$, is defined as 
	\begin{equation*}
		 A(\Gamma,X,d) = \sup_{U \subset \Gamma} \diam {\fix{U,d}},
	\end{equation*}
	where $U$ runs over all subsets of $\Gamma$ generating a non-elementary subgroup.
\end{defi}

\begin{defi}[$\nu$-invariant]
\label{def: nu-inv}
	The $\nu$-invariant $\nu(\Gamma, X)$ is the smallest integer $m$ with the following property:
	for every $\gamma_0, \gamma \in \Gamma$ with $\gamma$ loxodromic, if $\gamma_0$, $\gamma \gamma_0 \gamma^{-1}$, \dots, $\gamma^m \gamma_0 \gamma^{-m}$ generate an elementary subgroup, then so do $\gamma_0$ and $\gamma$.
\end{defi}

Note that, in the notation of the definition, if $\langle \gamma_0,\gamma\rangle$ is elementary then its limit set consists of the two points of $\partial X$ stabilised by $\gamma$.

\begin{lemm}
\label{res: nu invariant vs finite cyclic subgroups}
	Assume that $\Gamma$ has no parabolic subgroup.	
	If every elliptic subgroup of $\Gamma$ is finite cyclic, then $\nu(\Gamma,X) \leq 2$.
\end{lemm}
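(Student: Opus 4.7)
My plan is to argue by a case analysis on the type of the elementary subgroup $H := \langle \gamma_0, \gamma_1, \gamma_2 \rangle$, where $\gamma_i := \gamma^i \gamma_0 \gamma^{-i}$ and $\gamma$ is loxodromic. Since $\Gamma$ has no parabolic subgroup, $H$ must be elliptic or lineal, and the hypothesis on elliptic subgroups will drive the whole argument. A recurring ingredient will be that if $L$ is a lineal subgroup contained in another lineal subgroup $L'$ of $\Gamma$, then $\Lambda(L) = \Lambda(L')$: both sides have exactly two points and one contains the other.

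In the easy case where $H$ is elliptic, the hypothesis makes $H$ finite cyclic. Since the three conjugate elements $\gamma_0, \gamma_1, \gamma_2$ have a common order in this cyclic group, they all generate its unique subgroup of that order, so $H = \langle \gamma_0 \rangle$ and $\gamma$ normalises $\langle \gamma_0 \rangle$. The finiteness of $\mathrm{Aut}(\langle \gamma_0 \rangle)$ then gives a power $\gamma^r$ that centralises $\gamma_0$, and since $\gamma^r$ is loxodromic this forces $\gamma_0$ to preserve $\{\gamma^+, \gamma^-\}$ setwise. Rewriting elements in the normal form $\gamma_0^a \gamma^b$ shows the orbit of any point accumulates only at $\{\gamma^+, \gamma^-\}$, so $\langle \gamma_0, \gamma \rangle$ is lineal.

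When $H$ is lineal with endpoints $\{\xi_+, \xi_-\}$, I would first dispose of the subcase in which the $\gamma_i$ are loxodromic: each then has endpoints $\{\xi_+, \xi_-\}$, and conjugating by $\gamma$ forces $\{\gamma^+, \gamma^-\} = \{\xi_+, \xi_-\}$ directly. The substantial remaining case is when each $\gamma_i$ is elliptic in $H$. Here I plan to examine $H_0 := \langle \gamma_0, \gamma_1 \rangle$: if $H_0$ is elliptic, then applying the first case's argument to $H_0$ yields $\gamma_1 \in \langle \gamma_0 \rangle$, and the remainder of that argument goes through unchanged.

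The main obstacle, and the only place where $m = 2$ genuinely beats $m = 1$, is when $H_0$ is lineal. My strategy is then to observe that as a lineal subgroup of $H$, $H_0$ inherits the endpoints $\{\xi_+, \xi_-\}$, while the conjugate $H_1 := \gamma H_0 \gamma^{-1} = \langle \gamma_1, \gamma_2 \rangle$ is simultaneously a lineal subgroup of $H$ (so it also has endpoints $\{\xi_+, \xi_-\}$) and, being conjugate to $H_0$ by $\gamma$, has endpoints $\gamma \{\xi_+, \xi_-\}$. Equating these gives $\gamma \{\xi_+, \xi_-\} = \{\xi_+, \xi_-\}$, and since $\gamma$ is loxodromic a short dynamical argument then yields $\{\xi_+, \xi_-\} = \{\gamma^+, \gamma^-\}$. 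Thus $\gamma_0 \in H$ already preserves $\{\gamma^+, \gamma^-\}$ setwise, placing $\langle \gamma_0, \gamma \rangle$ inside the lineal stabiliser of this pair. The delicate point, and exactly where the hypothesis $m = 2$ is needed, is that realising $H_1$ as a subgroup of $H$ requires $\gamma_2 \in H$.
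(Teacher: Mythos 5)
Your proof is correct and follows essentially the same route as the paper: split on whether $\langle\gamma_0,\gamma\gamma_0\gamma^{-1}\rangle$ is elliptic or lineal, use the ``same order in a cyclic group'' trick in the elliptic case, and in the lineal case show that conjugation by $\gamma$ must preserve the pair of limit points, which is exactly the paper's observation that $\gamma h\gamma^{-1}\in E(h)$ forces $\gamma\in E(h)$. The paper streamlines your argument by going directly to $E_1=\langle\gamma_0,\gamma\gamma_0\gamma^{-1}\rangle$ rather than first casing on the type of the full $E_2$, which makes your outer case split and the loxodromic subcase redundant.
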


\begin{proof}
    Let $\gamma_0, \gamma \in \Gamma$ with $\gamma$ loxodromic.
    Assume that the subgroup 
    \begin{equation*}
    	E_2 = \group {\gamma_0, \gamma \gamma_0 \gamma^{-1}, \gamma^2 \gamma_0 \gamma^{-2}}
    \end{equation*}
    is elementary.
    Suppose first that the subgroup $E_1 =  \group {\gamma_0, \gamma \gamma_0 \gamma^{-1}}$ is elliptic, hence cyclic.
    Note that $\gamma_0$ and $\gamma \gamma_0 \gamma^{-1}$ have the same order, hence generate the same subgroup of $E_1$ (which actually coincides with $E_1$).
    Thus $\gamma$ normalises $\group {\gamma_0}$, and $\group{\gamma_0, \gamma}$ is elementary (as a cyclic extension of an elliptic subgroup).
    Suppose now that $E_1$ is lineal.
    In particular, it contains a loxodromic element $\gamma_1$ and $E_2$ is contained in $E(\gamma_1)= \stab{\{\gamma_1^\pm\}}$, the maximal elementary subgroup containing $\gamma_1$.
    However, $\gamma \gamma_1 \gamma^{-1}$ belongs to $E_2$, hence to $E(\gamma_1)$.
    Thus $\gamma$ necessarily belongs to $E(\gamma_1)$, which also contains $\gamma_0$.
    Consequently, $\group{\gamma_0, \gamma}$ is elementary, as required.
\end{proof}
The latter two quantities provide the following analogue of the Margulis lemma for manifolds with pinched negative curvature; see \cite[Proposition~3.5]{Coulon:2018vp}.

\begin{prop}
\label{res: margulis lemma}
	For every $d \in \R_+$, we have
	\begin{equation*}
		A(\Gamma,X, d) \leq \left[ \nu(\Gamma, X) + 3\right]d + A(\Gamma,X,400\delta) + 24\delta\,.
	\end{equation*}
\end{prop}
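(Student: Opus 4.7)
The plan is to prove the proposition by contradiction. Suppose for contradiction that there exists $U \subseteq \Gamma$ with $\langle U \rangle$ non-elementary and
$$D := \diam \fix{U, d} > (\nu + 3) d + A(\Gamma, X, 400\delta) + 24\delta,$$
where $\nu := \nu(\Gamma, X)$. I would construct a subset $V \subseteq \Gamma$ with $\langle V \rangle$ non-elementary and $\diam \fix{V, 400\delta} > A(\Gamma, X, 400\delta)$, contradicting the definition of $A(\Gamma, X, 400\delta)$. The underlying idea is that the $\nu$-invariant allows one to generate a non-elementary subset from $\nu + 1$ conjugates of a single element, and these conjugates are cheap to control when their ``displacement witnesses'' lie along a common long segment inside a fixed set.

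First, I may assume $d > 5\delta$ (otherwise $A(\Gamma,X,d) \leq A(\Gamma,X,400\delta)$ and the inequality is immediate), so that $\fix{U,d}$ is $8\delta$-quasi-convex. I fix two points $x_0, x_1 \in \fix{U, d}$ at distance close to $D$ and work along a nearly-geodesic segment joining them inside $\fix{U, d}$. Since $\langle U \rangle$ is non-elementary, it contains a loxodromic element; using the geometry of $\fix{U, d}$ and the long segment $[x_0, x_1]$, one produces a loxodromic $\gamma \in \langle U \rangle$ whose translation length is bounded in terms of $d$ and whose cylinder $Y_\gamma$ runs close to $[x_0, x_1]$.

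Second, I pick $u \in U$ such that $\langle u, \gamma \rangle$ is non-elementary. Such a $u$ must exist, for otherwise every element of $U$ would lie in the maximal elementary subgroup $E(\gamma)$ containing $\gamma$, contradicting the non-elementarity of $\langle U \rangle$. By the defining property of $\nu(\Gamma, X)$, the set
$$V = \bigl\{u,\; \gamma u \gamma^{-1},\; \gamma^2 u \gamma^{-2},\; \ldots,\; \gamma^\nu u \gamma^{-\nu}\bigr\}$$
generates a non-elementary subgroup of $\Gamma$. A point $z \in X$ lies in $\fix{V, 400\delta}$ precisely when $\gamma^{-i} z \in \fix{u, 400\delta}$ for every $i \in \{0, \ldots, \nu\}$, equivalently $z \in \bigcap_{i=0}^\nu \gamma^i \fix{u, 400\delta}$.

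Third, using the displacement inequality \eqref{eqn: displacement outside fixed set} together with quasi-convexity of fixed sets, I would show that a subsegment of $[x_0, x_1]$ of length at least $D - (\nu + 3)d - 24\delta$ lies in the intersection $\bigcap_{i=0}^\nu \gamma^i \fix{u, 400\delta}$. Indeed, at each interior point of $[x_0,x_1]$ far from its endpoints, the displacement $\dist{ux}{x}$ is forced by \eqref{eqn: displacement outside fixed set} to collapse to $O(\delta)$, and translating by $\gamma^i$ shifts this ``deep interior'' region by at most $i \cdot \norm{\gamma}$. Summing over $i = 0, \ldots, \nu$ accounts for $(\nu+1)\norm{\gamma}$ of loss, while two further terms of order $d$ arise from the endpoint corrections and the quasi-convexity constants; bundling these together gives the bound $D - (\nu + 3)d - 24\delta$, producing the desired contradiction.

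The main obstacle lies in the construction of the loxodromic $\gamma$ with both controlled translation length and cylinder well-positioned relative to the long segment inside $\fix{U,d}$, and in simultaneously controlling all $\nu + 1$ translates $\gamma^i \fix{u, 400\delta}$ so that a segment of the predicted length survives in their intersection. This will require delicate use of the geometry of cylinders, the stability of local quasi-geodesics, and the displacement bound \eqref{eqn: displacement outside fixed set} applied uniformly across all $\nu + 1$ conjugates, with the constant $\nu + 3$ emerging as $\nu + 1$ conjugates plus two endpoint error contributions.
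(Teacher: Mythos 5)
The paper does not give a proof of this proposition; it simply cites \cite[Proposition~3.5]{Coulon:2018vp}. So I will assess your proposal on its own terms.

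Your high-level framework (argue by contradiction, find $x_0,x_1\in\fix{U,d}$ far apart, produce a loxodromic $\gamma$ with controlled translation length translating roughly along $[x_0,x_1]$, pick $u\in U$ with $\langle u,\gamma\rangle$ non-elementary, apply the definition of $\nu$ to the set $V=\{u,\gamma u\gamma^{-1},\dots,\gamma^\nu u\gamma^{-\nu}\}$, and show $\fix{V,400\delta}$ contains a long subsegment of $[x_0,x_1]$) is the right shape for a Margulis-type argument, and I believe this is broadly what happens in the cited reference. Your reduction $\fix{V,400\delta}=\bigcap_{i=0}^\nu\gamma^i\fix{u,400\delta}$ is also correct, as is the observation that if every $u\in U$ had $\langle u,\gamma\rangle$ elementary then $\langle U\rangle\subseteq E(\gamma)$ would be elementary.

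However, the central quantitative step is asserted but not true as stated, and this is where the real content of the proposition lives. You claim that ``at each interior point of $[x_0,x_1]$ far from its endpoints, the displacement $\dist{ux}{x}$ is forced by \eqref{eqn: displacement outside fixed set} to collapse to $O(\delta)$.'' This cannot be deduced from \eqref{eqn: displacement outside fixed set}: that inequality gives a \emph{lower} bound on $\sup_{\gamma\in U}\dist{\gamma x}{x}$ in terms of $d(x,\fix{U,d})$, not an upper bound when $x$ is near $\fix{U,d}$. The upper bound one actually gets from the $8\delta$-quasi-convexity of $\fix{U,d}$ is only $\dist{ux}{x}\leq d+16\delta$ for $x\in[x_0,x_1]$ — nowhere near $400\delta$ when $d$ is large. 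Indeed, if $u$ is loxodromic with $\norm{u}$ close to $d$ and axis running along $[x_0,x_1]$, then $\dist{ux}{x}\approx d$ all along the interior; ruling out this (and similar) configurations, or choosing $u$ so that it cannot happen, is precisely the hard part, and it is not addressed. Without this the argument cannot even get started: there is no reason for $\fix{u,400\delta}$ to contain a long segment of $[x_0,x_1]$, let alone for the shifted intersection to survive.

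A second, related gap is the construction of $\gamma$. You need a loxodromic element whose translation length is bounded by roughly $d$ \emph{and} whose cylinder runs close to the \emph{entire} long segment $[x_0,x_1]$. Merely taking a generic loxodromic in $\langle U\rangle$ (or even a short product of elements of $U$) gives control of the displacement at one or two points, not control of the cylinder along a segment of length $D$. Until both of these steps are supplied — the precise mechanism by which the displacement of the chosen $u$ drops below $400\delta$ on a long subinterval, and the construction of $\gamma$ with the stated geometric properties — the proposal is a plausible outline rather than a proof, and the constant $\nu+3$ cannot actually be extracted from the argument as written.
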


This motivates the next definition

\begin{defi}[Acylindricity]
\label{def: acylindricity}
	The \emph{(global) acylindricity parameter}, denoted by $A(\Gamma,X)$, is the smallest non-negative number such that, for every $d \in \R_+$,
	\begin{equation*}
		A(\Gamma,X, d) \leq \left[ \nu(\Gamma, X) + 3\right]d + A(\Gamma,X)\,.
	\end{equation*}
\end{defi}

We conclude this section with a fact about hyperbolic groups that is certainly well-known to specialists.
Thus, we only go through the main steps of the proof.

\begin{lemm}
\label{res: max ell sg malnormal}
	Suppose that $\Gamma$ is a hyperbolic group whose elliptic subgroups are abelian.
	The following are equivalent.
	\begin{enumerate}
		\item \label{enu: max ell sg malnormal - max}
		Every maximal elliptic subgroup of $\Gamma$ is malnormal.
		\item \label{enu: max ell sg malnormal - loxo}
		No non-trivial elliptic subgroup of $\Gamma$ is normalised by a loxodromic element.
	\end{enumerate}
\end{lemm}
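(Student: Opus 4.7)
The plan is to prove both implications directly, in one case by contradiction and in the other by a short structural argument. The key background I will invoke is standard for word-hyperbolic groups: every element is either elliptic (finite order) or loxodromic (infinite order), and every infinite subgroup contains a loxodromic element (equivalently, $\Gamma$ has no infinite torsion subgroup). Combined with the hypothesis that elliptic subgroups are abelian, these facts are sufficient for the whole argument.

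For \ref{enu: max ell sg malnormal - max}$\Rightarrow$\ref{enu: max ell sg malnormal - loxo}, I would suppose a loxodromic $g \in \Gamma$ normalises a non-trivial elliptic subgroup $E$ and choose a maximal elliptic subgroup $M$ containing $E$. Then $E = gEg^{-1} \subseteq gMg^{-1}$, so $E \subseteq M \cap gMg^{-1}$. Since $g$ has infinite order and $M$ is finite, $g \notin M$, hence malnormality of $M$ forces $M \cap gMg^{-1} = \{1\}$, contradicting $E \neq \{1\}$.

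For \ref{enu: max ell sg malnormal - loxo}$\Rightarrow$\ref{enu: max ell sg malnormal - max}, let $M$ be a maximal elliptic subgroup and take $g \in \Gamma$ with $E := M \cap gMg^{-1} \neq \{1\}$; the goal is to conclude $g \in M$. Because $M$ and $gMg^{-1}$ are both abelian and both contain $E$, they both lie in $N := N_\Gamma(E)$. Hypothesis \ref{enu: max ell sg malnormal - loxo} applied to the non-trivial elliptic subgroup $E$ says $N$ contains no loxodromic element; so every element of $N$ has finite order, and since infinite subgroups of $\Gamma$ always contain a loxodromic element, $N$ must be finite and therefore elliptic. Maximality of $M$ then forces $N = M$, which yields $gMg^{-1} \subseteq M$, and finiteness promotes this to $gMg^{-1} = M$. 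Finally, $g$ normalises the non-trivial elliptic subgroup $M$, so \ref{enu: max ell sg malnormal - loxo} rules out $g$ being loxodromic; hence $g$ is elliptic, $\group{g, M}$ is a finite extension of $M$ and therefore elliptic, and maximality of $M$ yields $g \in M$.

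I do not anticipate a real obstacle: the proof is purely algebraic and uses only classical features of hyperbolic groups. The step that is worth flagging is the equality $N = M$, where the hypothesis on normalisers (phrased in terms of loxodromics) is converted into a finiteness statement via the absence of infinite torsion subgroups in $\Gamma$; everything else is a direct consequence of that and of the elliptic/loxodromic dichotomy.
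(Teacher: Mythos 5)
Your proof is correct, and it takes a genuinely different route from the paper's. The paper's proof of \ref{enu: max ell sg malnormal - loxo}$\Rightarrow$\ref{enu: max ell sg malnormal - max} first establishes that $\Gamma$ is commutative transitive (any two elements commuting with a common non-trivial element commute with each other), and uses this to show that $\group{E, \gamma E\gamma^{-1}}$ is abelian whenever $E$ and $\gamma E\gamma^{-1}$ intersect non-trivially; from there it argues that this abelian group must be elliptic and invokes maximality. You instead bypass commutative transitivity entirely: having set $E = M \cap gMg^{-1}$, you use the abelian hypothesis only to observe that $M$ and $gMg^{-1}$ both centralise (hence normalise) $E$, and then apply hypothesis \ref{enu: max ell sg malnormal - loxo} directly to $N_\Gamma(E)$ to conclude that it has no loxodromics and is therefore finite. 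Both arguments rest on the same two standard facts about hyperbolic groups acting properly cocompactly --- elliptic equals finite, and every infinite subgroup contains a loxodromic element --- but yours is more direct, trading the reusable intermediate property of commutative transitivity for a shorter path to the conclusion. Your \ref{enu: max ell sg malnormal - max}$\Rightarrow$\ref{enu: max ell sg malnormal - loxo} direction is also fine; the paper simply calls it ``straightforward'' without spelling it out, and your one-line contradiction is exactly the intended argument.
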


\begin{proof}
	Note that \ref{enu: max ell sg malnormal - max} $\Rightarrow$ \ref{enu: max ell sg malnormal - loxo} is straightforward.
	Assume now that \ref{enu: max ell sg malnormal - loxo} holds.
	One proves first that $\Gamma$ is commutative transitive: that is, for every $\gamma_1, \gamma_2, \gamma_3 \in \Gamma \setminus \{1\}$, if $[\gamma_1, \gamma_2] = [\gamma_2, \gamma_3] = 1$, then $[\gamma_1, \gamma_3] = 1$.
	Indeed, for three such elements, $\group{\gamma_2}$ is central in $\Gamma_0 = \group{\gamma_1, \gamma_2, \gamma_3}$.
	It follows from our assumption that either $\gamma_2$ is elliptic, and thus so is $\Gamma_0$, or $\gamma_2$ is loxodromic, and then $\Gamma_0$ is cyclic.
	In both cases $\gamma_1$ and $\gamma_3$ commute.
	
	Consider now a maximal elliptic subgroup $E$ of $\Gamma$ and an element $\gamma \in \Gamma$ such that $E \cap \gamma E \gamma^{-1} \neq\{1\}$.
	By commutative transitivity, $\group {E, \gamma E \gamma^{-1}}$ is abelian.
	Note that $\group {E, \gamma E \gamma^{-1}}$ is elliptic.
	Indeed, it follows from \ref{enu: max ell sg malnormal - loxo} that every infinite abelian subgroup is cyclic, hence cannot contain a non-trivial elliptic subgroup.
	By maximality we deduce that $\gamma$ normalises $E$.
	Since $E$ is non-trivial, we get from \ref{enu: max ell sg malnormal - loxo} that $\gamma$ is elliptic. 	Therefore $\group{E, \gamma}$ is elliptic as well.
	Using again the maximality of $E$, we conclude that $\gamma \in E$.
\end{proof}

%
\subsection{Geometric small-cancellation theory}
\label{sec: sc}
%

\subsubsection{The small-cancellation theorem}

Let $Y$ be a metric length space and let $\rho>0$. 
The \emph{cone of radius $\rho$ over $Y$}, denoted by $Z_\rho(Y)$ or simply $Z(Y)$, is the quotient of $Y\times [0,\rho]$ by the equivalence relation that identifies all the points of the form $(y,0)$ for $y \in Y$.
If $x\in Z (Y)$, we write $x=(y,r)$ to say that $(y,r)$ represents $x$. 
We let $c=(y,0)$ be the \emph{apex} of the cone.
There is a metric on $Z(Y)$ that is characterised as follows \cite[Chapter~I.5]{Bridson:1999ky}. 
Let $x=(y,r)$ and $x'=(y',r')$ in $Z(Y)$. Then 
\begin{equation}
\label{eqn: metric cone}
	\cosh \dist x{x'} = \cosh r \cosh r' -\sinh r \sinh r' \cos\left( \min\left\{ \pi,\, \frac{\dist y{y'}}{\sinh\rho}\right\} \right).
\end{equation}

If $H$ is a group acting by isometries on $Y$, then $H$ acts by isometries on $Z(Y)$ by $hx=(hy,r)$. 
Note that $H$ fixes the apex of the cone.

From now on, we assume that $X$ is a proper, geodesic, $\delta$-hyperbolic space, where $\delta >0$.
We consider a group $\Gamma$ that acts properly co-compactly by isometries on $X$.
Let $\mathcal Q$ be a collection of  pairs $(H, Y) $ such that $Y$ is closed strongly-quasi-convex in $X$ and $H$ is a subgroup of $\stab Y$ acting co-compactly on $Y$. 
We suppose that:
\begin{itemize}
	\item $\stab Y$ is an infinite cyclic subgroup of $\Gamma$, for every $(H,Y)\in\mathcal Q$;
	\item $\mathcal Q$ is closed under the action of $\Gamma$ given by the rule $\gamma(H,Y)=(\gamma H\gamma^{-1}, \gamma Y)$; and
	\item $\mathcal Q/\Gamma$ is finite.
\end{itemize}
Furthermore, we let 
\begin{equation*}
	\Delta(\mathcal Q, X) = \sup \left\{ \diam {Y_1^{+5\delta} \cap Y_2^{+5\delta} } \mid (H_1, Y_1) \neq (H_2,Y_2) \in \mathcal Q\right\}
\end{equation*}
and
\begin{equation*}
	T(\mathcal Q, X)=\inf \{\|  h \|\mid h \in H \setminus \{1\},\, (H,Y) \in Q\}\,.
\end{equation*}
These two quantities respectively play the role of the lengths of the longest piece and the shortest relation in the context of usual small-cancellation theory.

\medskip
We fix a parameter $\rho > 0$, whose value will be made precise later.
Let $(H,Y) \in \mathcal Q$. We denote by $\distV[Y]$ the length metric on $Y$ induced by the restriction of $\distV$ to $Y$.
We write $Z(Y)$ for the cone of radius $\rho$ over the metric space $(Y, \distV[Y])$.
The \emph{cone-off space} of radius $\rho$ over $X$ relative to $\mathcal Q$, denoted by $\dot X_\rho(\mathcal Q)$ or simply $\dot X$, is the space obtained by gluing, for each pair $(H,Y)\in \mathcal Q$, the cone $Z_\rho(Y)$ on $Y$ along the natural embedding $\iota\colon Y \to Z_\rho(Y)$. 
We write $\mathcal C$ for the set of apices of $\dot X$.  
We endow $\dot X$ with the largest  metric $\distV[\dot X]$ such that the map $X\to \dot X$  and the maps $Z(Y)\to \dot X$ are $1$-Lipschitz \cite[Section 5.1]{Coulon:2014fr}.
The action of $\Gamma$ on $X$ then extends to an action by isometries on $\dot X$: given any $\gamma \in \Gamma$, a point $x= (y,r)$ in $Z(Y)$ is sent to the point $\gamma x=(\gamma y,r)$ in $Z(\gamma Y)$.

 \label{sec: quotient space}
Let $K$ be the (normal) subgroup generated by the subgroups $H$, where $(H,Y)$ runs over $\mathcal Q$.  
We let $\bar X= \dot X/K$ and $\bar \Gamma = \Gamma/K$. 
We denote by $\zeta$ the projection of $\dot X$ onto $\bar X$ and write $\bar x$ for $\zeta(x)$ for short. Furthermore, we denote by $\bar {\mathcal C}$ the image in $\bar X$ of the apices $\mathcal C$. 
We consider $\bar X$ as a metric space equipped with the quotient metric.
Note that the action of $\Gamma$ on $\dot X$ induces an action by isometries of $\bar \Gamma$ on $\bar X$.
The following theorem summarises Proposition~3.15 and Theorem~6.11 of \cite{Coulon:2014fr}.

\begin{theo}[Small-Cancellation Theorem]
\label{res: small cancellation theorem} 
	There exist $\delta_0$, $\delta_1$, $\Delta_0$, $\rho_0>0$ (that do not depend on $\Gamma$, $X$, or $\mathcal Q$) with the following properties.
	Let $\rho \geq \rho_0$.
	If  $\delta \leq \delta_0$, $\Delta(\mathcal Q, X) \leq \Delta_0$,  and $T(\mathcal Q, X) \geq 10\pi \sinh\rho$, then the following hold.
	\begin{enumerate}
		\item \label{enu: small cancellation theorem - hyp cone-off}
		$\dot X$ is $\dot \delta$-hyperbolic, with $\dot \delta \leq \delta_1$.
		\item \label{enu: small cancellation theorem - hyperbolicity}
		$\bar X$ is proper, geodesic, and  $\bar \delta$-hyperbolic, with $\bar \delta \leq \delta_1$.
		The action of $\bar \Gamma$ on $\bar X$ is proper and co-compact.
		\item \label{enu: small cancellation theorem - local isom}
		Let $r\in (0,\rho/20]$ and $x \in \dot X$. 
		If $\dist x{\mathcal C}\geq 2r$, then the projection $\zeta \colon \dot X \to \bar X$ induces an isometry from $B(x,r)$ onto $B(\bar x,r)$.
		\item \label{enu: small cancellation theorem - cone point}
		Let $(H,Y) \in \mathcal Q$. 
		If $c \in \mathcal C$ is the apex of the cone $Z(Y)$, then the projection $\Gamma \onto \bar \Gamma$ induces an isomorphism from $\stab Y/H$ onto $\stab{\bar c}$.
		\item \label{enu: small cancellation theorem - inj kernel}
		For every $x \in\dot X$, for every $\gamma \in K\setminus\{1\}$, we have $\dist{\gamma x}x \geq \min\{2r, \rho/5\}$, where $r = d(x, \mathcal C)$.
	\end{enumerate}
\end{theo}

\begin{rema}
	Note that the constants $\delta_0$ and $\Delta_0$ (\resp $\rho_0$) can be chosen arbitrarily small (\resp large).
	For the rest of this section, we will always assume that $\rho_0 > 10^{20}  \delta_1$ whereas $\delta_0, \Delta_0 < 10^{-10}\delta_1$.
	These estimates are absolutely not optimal.
	We chose them very generously to be sure that all the inequalities that we might need later will be satisfied.
	What really matters is their orders of magnitude, summarised below.
	\begin{equation*}
		\max\left\{\delta_0, \Delta_0\right\} \ll \delta_1  \ll \rho_0 \ll \pi \sinh \rho_0\,.
	\end{equation*}
	Up to increasing the value of $\dot \delta$ or $\bar \delta$ we can always assume that $\dot \delta = \bar \delta$.
	Nevertheless, we will keep two distinct notations to remember which space we are working in.
\end{rema}

\begin{nota}
	In this section we work with three metric spaces, namely $X$, its cone-off $\dot X$, and the quotient $\bar X$.
	Since the map $X \into \dot X$ is an embedding we use the same letter $x$ to designate a point of $X$ and its image in $\dot X$.
	We write $\bar x$ for its image in $\bar X$.
	Unless stated otherwise, we keep the notation $\distV$ (without mentioning the space) for the distances in $X$ or $\bar X$.
	The metric on $\dot X$ will be denoted by $\distV[\dot X]$.
\end{nota}

For the remainder of \autoref{sec: sc}, we assume that $\Gamma$, $X$ and $\mathcal Q$ satisfy the assumption of \autoref{res: small cancellation theorem}. We now complete the description of $\bar \Gamma$ and $\bar X$.

\subsubsection{Lifting morphisms}

The next statement is a reformulation of \cite[Proposition~A.5]{Coulon:2021wg}, and it will be crucial to show that all automorphisms of $\Gamma(n)$ come from automorphisms of $\Gamma$.

\begin{prop}
\label{res: sc - lifting morphism}
	Let $F$ be the free group generated by a finite set $S$.
	Let $\ell \in \N \setminus\{0\}$ and $V\subset F$ be the closed ball of radius $\ell$ (for the word metric relative to $S$).
	Let $\varphi \colon F \to \bar \Gamma$ be a morphism whose image does not fix a cone point in $\bar {\mathcal C}$.
	Assume that  
	\begin{equation*}
		\lambda_\infty\left(\varphi(S)\right) < \frac {\rho}{100\ell}\,,
	\end{equation*}
	where the energy is measured with the metric of $\bar X$. 
	Then there exists a map $\tilde \varphi \colon F \to \Gamma$ such that $\varphi = \pi \circ \tilde \varphi$ and $V \cap \ker \varphi = V \cap \ker \tilde \varphi$.
\end{prop}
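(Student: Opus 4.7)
My plan is to build $\tilde\varphi$ by choosing compatible lifts of the generators in $\Gamma$ and then verifying the kernel condition on the ball $V$. The key preparatory step is to select a base point $\bar x \in \bar X$ that simultaneously (i) nearly realises the energy of $\varphi(S)$, so $\max_{s \in S} d_{\bar X}(\varphi(s)\bar x, \bar x) < \rho/(100\ell)$, and (ii) lies sufficiently far from the set of apex images $\bar{\mathcal C}$, say at distance at least $\rho/20$. Property (i) holds at any near-minimiser. For (ii) I will argue by contradiction, using the hypothesis that $\varphi(F)$ does not fix any cone point: if $\bar x$ were within $\rho/20$ of some $\bar c \in \bar{\mathcal C}$, then each $\varphi(s)\bar c$ would lie within roughly $\rho/10$ of $\bar c$, which is small compared to the separation between distinct apices in $\bar X$ (this separation is of order $\rho$, coming from the cone height together with the small-cancellation bound $\Delta(\mathcal Q,X) \leq \Delta_0 \ll \rho$). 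Hence every $\varphi(s)$ would have to fix $\bar c$, contradicting the hypothesis.

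Once such $\bar x$ is fixed, I will lift it to a point $\dot x \in \dot X$ with $d(\dot x,\mathcal C) \geq \rho/20$. By item \ref{enu: small cancellation theorem - local isom} of Theorem \ref{res: small cancellation theorem}, $\zeta$ restricts to an isometry from $B(\dot x,\rho/40)$ onto $B(\bar x,\rho/40)$, so for each $s \in S$ the image $\varphi(s)\bar x$ has a unique lift $\dot x_s \in B(\dot x,\rho/100)$. Since $K$ acts transitively on the fibres of $\zeta$, I can pick $\tilde\varphi(s) \in \Gamma$ with $\pi(\tilde\varphi(s)) = \varphi(s)$ and $\tilde\varphi(s)\dot x = \dot x_s$. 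The universal property of the free group then extends $\tilde\varphi$ to a homomorphism $F \to \Gamma$, and by construction $\pi \circ \tilde\varphi = \varphi$.

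To verify $V \cap \ker\varphi = V \cap \ker\tilde\varphi$, note that the inclusion $\ker\tilde\varphi \subset \ker\varphi$ is immediate. For the converse, since each $\tilde\varphi(s_i)$ is an isometry of $\dot X$, iterating the triangle inequality gives, for any $w = s_1\cdots s_k \in V$ with $k \leq \ell$,
\begin{equation*}
d_{\dot X}(\tilde\varphi(w)\dot x,\dot x) \;\leq\; \sum_{i=1}^{k} d_{\dot X}(\tilde\varphi(s_i)\dot x,\dot x) \;\leq\; \ell \cdot \frac{\rho}{100\ell} \;=\; \frac{\rho}{100}.
\end{equation*}
If in addition $w \in \ker\varphi$, then $\tilde\varphi(w) \in K$. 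By item \ref{enu: small cancellation theorem - inj kernel} of Theorem \ref{res: small cancellation theorem}, any non-identity element of $K$ would displace $\dot x$ by at least $\min\{2\cdot\rho/20,\rho/5\} = \rho/5$, strictly greater than $\rho/100$; hence $\tilde\varphi(w) = 1$. The principal obstacle in this argument is the first paragraph: without the non-fixation hypothesis on $\varphi(F)$, the rotational stabilisers of cone points would allow energy-minimising base points to cluster arbitrarily close to an apex, where the local-isometry step and the kernel-separation estimate both break down. Quantifying the cone-point separation (via $\Delta_0 \ll \rho$) is what converts this qualitative hypothesis into the explicit geometric bound we need.
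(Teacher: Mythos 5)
The paper does not give its own proof of this proposition: it simply cites it as a reformulation of \cite[Proposition~A.5]{Coulon:2021wg}, so there is no in-paper argument to compare against. Your reconstruction is correct and follows the natural route: pushing an energy-minimising base point into the thick part of $\bar X$ via the no-fixed-cone-point hypothesis, applying the local isometry of \autoref{res: small cancellation theorem}\ref{enu: small cancellation theorem - local isom} to lift the generator images consistently with the $K$-action, and then using the displacement bound of \autoref{res: small cancellation theorem}\ref{enu: small cancellation theorem - inj kernel} to force any element of $V \cap \ker\varphi$ to have trivial lift. Two small inaccuracies worth noting, neither of which affects the conclusion. First, $\min\{2\cdot\rho/20,\rho/5\}$ is $\rho/10$, not $\rho/5$; you still win since $\rho/100 < \rho/10$. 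Second, the separation between distinct apices in $\bar X$ is governed solely by the cone radius $\rho$ (any path between distinct apices in $\dot X$, hence also in $\bar X$, must cross two full cone heights, giving $d_{\bar X}(\bar c, \bar c') \geq 2\rho$); the small-cancellation bound $\Delta_0$ controls overlap of the subspaces $Y_i$ inside $X$, which is what keeps $\dot X$ hyperbolic, not what keeps apices apart. With those fixes, the argument is essentially the one in Coulon--Sela.
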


\subsubsection{Isometries}

\paragraph{Angle cocycle.}
Let $(H,Y) \in \mathcal Q$.
Let $c$ be the apex of the cone $Z(Y)$ and $\bar c$ its image in $\bar X$.
We now focus on the behaviour of $\stab{\bar c}$.
According to our assumptions, $\stab Y$ is an infinite cyclic subgroup of $\Gamma$, so that $\stab {\bar c} \cong \stab Y / H$ is finite cyclic.
We think of $\stab{\bar c}$ acting on $B(\bar c, \rho)$ as a rotation group acting on a hyperbolic disc.
Let us make this idea more rigorous.

Let $(z_n)$ be a sequence points in $X$ converging to one of the end points $\xi$ in $\partial X$ of $Y$.
Fix a non-principal ultra-filter $\omega$.
We define a Busemann cocycle $b \colon X \times X \to \R$ at $\xi$ by
\begin{equation*}
	b(x,x') = \limo \left[\dist x{z_n} - \dist {x'}{z_n}\right], \quad \forall x,x' \in X.
\end{equation*}
Recall that $\stab Y$ is assumed to be infinite cyclic, hence it fixes $\xi$.
However, the cocycle $b$ may not be $\stab Y$-invariant.
Nevertheless $\stab Y$ is amenable.
Proceeding as in \cite[Section~4.3]{Coulon:2018vp} we can average the orbit of $b$ under $\stab Y$ to get a new cocycle $b_\xi \colon X \times X \to \R$, with the following properties:
for every $x,x' \in X$, for every $\gamma \in \stab Y$, we have
\begin{enumerate}
	\item $\abs{b_\xi(x,x') - b(x,x')} \leq 6\delta$\,,
	\item $b_\xi(\gamma x, \gamma x') = b_\xi(x,x')$\,, and
	\item $\abs{b_\xi(x,\gamma  x)} = \snorm \gamma $.
\end{enumerate}
We rescale $b_\xi$ to get a $\stab Y$-invariant cocycle $\theta_c \colon Y \times Y \to \R$ given by 
\begin{equation*}
	\theta_c(y,y') = \frac 1 {\sinh \rho} b_\xi(y,y'), \quad \forall y,y' \in Y,
\end{equation*}
which we extend radially to a cocycle 
\begin{equation*}
	\theta_c \colon Z(Y)\setminus\{c\} \times Z(Y)\setminus\{c\} \to \R
\end{equation*}
as follows:
if $x = (y,r)$ and $x' = (y',r')$ are two points of $Z(Y)\setminus\{c\}$, then $\theta_c (x,x') = \theta_c(y,y')$.
Let 
\begin{equation*}
	\Theta_{\bar c} = \frac 1{\sinh \rho} \inf_{h \in H \setminus\{1\}} \snorm h\,.
\end{equation*}
According to the (small-cancellation) assumption of \autoref{res: small cancellation theorem} we have $\norm h \geq 10\pi \sinh \rho$, for every $h \in H \setminus \{1\}$.
Recall that stable translation length is slightly shorter that usual translation length, see (\ref{eqn: regular vs stable length}), hence $\snorm h \geq 10\pi \sinh \rho - 8\delta$.
Since we chose $\rho_0$ very large compared to $\delta$, we get $\Theta_{\bar c} \geq 9\pi$. By construction, $\theta_c$ induces a $\stab {\bar c}$-invariant cocycle 
\begin{equation*}
	\theta_{\bar c} \colon \mathring B(\bar c, \rho) \times \mathring B(\bar c, \rho) \to \R/ \Theta_{\bar c} \Z\,.
\end{equation*}
The map $\stab {\bar c}\to  \R/ \Theta_{\bar c} \Z$ sending $\bar \gamma$ to $\theta_{\bar c}(\bar \gamma \bar x, \bar x)$ is an injective homomorphism, which does not depend on the point $\bar x \in \mathring B(\bar c, \rho)$.

Consider the action of $\stab Y$ on $\R$ defined as follows.
Any element $\gamma \in \stab Y$ acts on $\R$ by translation of length $b_\xi(x, \gamma x)$ for some (hence any) $x \in X$.
The quotient $\R / H$ is isometric to a circle whose length is $\Theta_{\bar c} \sinh\rho$.
We denote by $\mathcal D$ the cone of radius $\rho$ over $\R/ H$ and by $o$ its apex.
Observe that $\mathcal D$ is a hyperbolic cone (i.e.\ locally isometric to $\H^2$ everywhere except maybe at the cone point $o$) whose total angle at $o$ is $\Theta_{\bar c}$.

Let $y_0$ be an arbitrary point in $Y$.
One checks that the map $\chi \colon Y \to \R$ sending $y$ to $b_\xi(y_0, y)$ is a  $\stab Y$-equivariant $(1, 150\delta)$-quasi-isometric embedding; see Coulon \cite[Section~4.3]{Coulon:2018vp}.
Hence it induces a $\stab {\bar c}$-equivariant $(1, 150\delta)$-quasi-isometric embedding
\begin{equation*}
	\bar \chi \colon Z(Y/H) \to \mathcal D.
\end{equation*}
Note that $Z(Y/H)$, which is also isometric to $Z(Y)/H$, is endowed here with the metric defined by (\ref{eqn: metric cone}).
Although $Z(Y)/H$ can be identified, as a set of points, with the closed ball of radius $\rho$ centred at $\bar c$, the metric may be slightly different.
Nevertheless, in view of \cite[Propositions~A.2 and A.7]{Coulon:2021wg}, the map $\bar \chi \colon B(\bar c, \rho) \to \mathcal D$ induced by $\bar \chi \colon Z(Y/H) \to \mathcal D$ is a $(1,10\bar \delta)$-quasi-isometric embedding.
Moreover, for every $\bar x \in B(\bar c, \rho)$, we have $\dist{\bar c}{\bar x} = \dist o {\bar \chi(\bar x)}$.
As a consequence we get the following statement.

\begin{prop}
\label{res: sc - angle cocycle}
	For every $\bar x, \bar x' \in \mathring B(\bar c, \rho)$, we have  $\abs{\dist {\bar x}{\bar x'} - \ell} \leq 10 \bar \delta$, where
	\begin{equation}
	\label{eqn: family axioms - conical - 2}
		\cosh \ell = \cosh\dist {\bar c}{\bar x}\cosh\dist {\bar c}{\bar x'} - \sinh\dist {\bar c}{\bar x}\sinh\dist {\bar c}{\bar x'}  \cos\left( \min \left\{ \pi, \tilde \theta_{\bar c}(\bar x,\bar x') \right\}\right)
	\end{equation}
	and $ \tilde \theta_{\bar c}(\bar x,\bar x')$ is the unique representative of $\theta_{\bar c}(\bar x,\bar x')$ in $(-\Theta_{\bar c}/2, \Theta_{\bar c}/2]$.
	Moreover, $\theta_{\bar c}(\bar \gamma \bar x, \bar x) \neq 0$, for every $\bar x \in \mathring B(\bar c,\rho)$ and $\bar \gamma \in \stab {\bar c} \setminus \{1\}$.
\end{prop}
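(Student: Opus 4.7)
The plan is to transport the computation into the model hyperbolic cone $\mathcal{D}$ via the $(1, 10\bar\delta)$-quasi-isometric embedding $\bar\chi \colon B(\bar c, \rho) \to \mathcal D$ recalled just above the statement. Because $\bar\chi$ is a $(1, 10\bar\delta)$-quasi-isometric embedding, one has
\[
\left| \dist{\bar x}{\bar x'} - \dist{\bar\chi(\bar x)}{\bar\chi(\bar x')}\right| \leq 10\bar\delta,
\]
so it suffices to verify that $\dist{\bar\chi(\bar x)}{\bar \chi(\bar x')}$ is exactly the number $\ell$ defined by (\ref{eqn: family axioms - conical - 2}).

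First I would unpack the geometry of $\mathcal D$. By definition, $\mathcal D$ is the hyperbolic cone of radius $\rho$ over the circle $\R/H$ of circumference $\Theta_{\bar c} \sinh \rho$. Write $\bar\chi(\bar x) = (y, r)$ and $\bar\chi(\bar x') = (y', r')$ in cone coordinates. Since $\bar\chi$ preserves the distance to the apex, $r = \dist{\bar c}{\bar x}$ and $r' = \dist{\bar c}{\bar x'}$. Applying the cone distance formula (\ref{eqn: metric cone}) inside $\mathcal D$ yields
\[
\cosh \dist{\bar\chi(\bar x)}{\bar\chi(\bar x')} = \cosh r \cosh r' - \sinh r \sinh r' \cos\!\left( \min\!\left\{\pi,\ \tfrac{\dist[\R/H]{y}{y'}}{\sinh \rho}\right\}\right).
\]
Here the $\min$ with $\pi$ is automatic in a hyperbolic cone: when the circular separation exceeds $\pi$, the minimising path goes through the apex and one recovers $\cos \pi = -1$, which is consistent with $\cosh(r + r')$.

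Next I would identify $\dist[\R/H]{y}{y'}/\sinh\rho$ with $|\tilde\theta_{\bar c}(\bar x, \bar x')|$. By construction, $\bar\chi$ is induced by the cocycle $b_\xi$ rescaled by $1/\sinh\rho$ and descended modulo $H$, so the signed displacement along $\R/H$ from $y$ to $y'$ is precisely $\theta_{\bar c}(\bar x, \bar x') \in \R/\Theta_{\bar c}\Z$. The circular distance from $y$ to $y'$ is then the absolute value of the shortest representative, which by the choice of fundamental domain $(-\Theta_{\bar c}/2, \Theta_{\bar c}/2]$ is $|\tilde\theta_{\bar c}(\bar x, \bar x')|$. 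Since $\cos$ is even, this reproduces (\ref{eqn: family axioms - conical - 2}). The second assertion is essentially already recorded in the setup: the map $\bar\gamma \mapsto \theta_{\bar c}(\bar\gamma \bar x, \bar x)$ is an injective homomorphism $\stab{\bar c} \to \R/\Theta_{\bar c}\Z$ independent of $\bar x \in \mathring B(\bar c, \rho)$, so any $\bar \gamma \neq 1$ forces $\theta_{\bar c}(\bar\gamma \bar x, \bar x) \neq 0$.

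The hardest part will be this middle paragraph: one must verify carefully that $\bar\chi$ transports $\theta_{\bar c}$ to the signed angular displacement on $\R/H$ (and specifically to the representative in $(-\Theta_{\bar c}/2, \Theta_{\bar c}/2]$, not some other lift), and that the $10\bar\delta$-error in $\bar\chi$ interacts with the non-linear $\cosh/\sinh$ identity only through the additive $10\bar\delta$ slack allowed in the conclusion. This is largely bookkeeping leveraging \cite[Propositions~A.2 and A.7]{Coulon:2021wg}, but it is the technical core of the argument.
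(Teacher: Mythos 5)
Your proof is essentially the paper's: the authors present \autoref{res: sc - angle cocycle} as an immediate consequence of the preceding paragraph, namely that $\bar\chi \colon B(\bar c, \rho) \to \mathcal D$ is a $(1,10\bar\delta)$-quasi-isometric embedding sending $\bar c$ to the apex $o$ and preserving radial distances, together with the exact hyperbolic-cone law of cosines in $\mathcal D$ and the identification of the rescaled circular displacement in $\R/H$ with $\theta_{\bar c}$. One small point worth tightening: the cone formula in $\mathcal D$ produces $\cos\bigl(\min\{\pi, |\tilde\theta_{\bar c}(\bar x,\bar x')|\}\bigr)$, and since $\Theta_{\bar c}\geq 9\pi$ the representative $\tilde\theta_{\bar c}$ can be negative with modulus exceeding $\pi$, in which case evenness of $\cos$ does \emph{not} let you replace $\min\{\pi,|\tilde\theta_{\bar c}|\}$ by $\min\{\pi,\tilde\theta_{\bar c}\}$ as in (\ref{eqn: family axioms - conical - 2}); this mismatch is already present in the statement (and in Axiom \ref{enu: family axioms - conical - 2}) and appears to be a typo for $|\tilde\theta_{\bar c}|$ rather than a gap in your argument, but your ``since $\cos$ is even'' step is the place that papers over it.
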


\paragraph{Thinness.} The following lemma gives us control on thinness of elements of $\bar \Gamma$.

\begin{lemm}
\label{res: sc - thin}
	Assume that there is $\beta \in \R_+$ such that every non-parabolic element $\gamma \in \Gamma$ is $\beta$-thin (for its action on $X$).
	Let $\bar \beta = \beta + 10\pi \sinh (100\bar \delta)$.
	Then for every $\bar \gamma \in \bar \Gamma\setminus\{1\}$, the following hold.
	\begin{enumerate}
		\item If $\bar \gamma$ is elliptic and does not fix a cone point in $\bar{\mathcal C}$, then $\bar \gamma$ is $\bar \beta$-thin.
		\item If $\bar \gamma$ fixes a cone point $\bar c \in \bar{\mathcal C}$, then it is $(\rho + \bar \beta)$-thin at $\bar c$.
		\item If $\bar \gamma$ is loxodromic, then $\bar \gamma$ is $\bar \beta$-thin.
	\end{enumerate}
\end{lemm}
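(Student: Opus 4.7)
The proof will use \autoref{res: local-to-global thinness}, which reduces each item to controlling a short-displacement fixed set $\fix{\bar\gamma,d}$ of $\bar\gamma$: either $d=6\bar\delta$ in the elliptic cases, or $d=\norm{\bar\gamma}+5\bar\delta$ in the loxodromic case. In each case I lift $\bar\gamma$ to a suitable $\gamma\in\Gamma$; since $\Gamma$ acts properly cocompactly on $X$, every such $\gamma$ is non-parabolic and therefore $\beta$-thin. I then decompose $\fix{\bar\gamma,d}$ into a deep part (points at distance $>\rho/10$ from every cone point of $\bar X$) and a shallow part (points within $\rho/10$ of some cone point), treating them separately.

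For items (1) and (3), item~\ref{enu: small cancellation theorem - local isom} of \autoref{res: small cancellation theorem} lifts the deep part isometrically through $\zeta$ and then to $X$, where the $\beta$-thinness of $\gamma$ confines it to a $\beta$-ball (item 1) or to a $\beta$-neighbourhood of the axis of $\gamma$ (item 3; $\gamma$ is loxodromic in $X$ because the inclusion $X\hookrightarrow\dot X$ and $\zeta$ are $1$-Lipschitz, so unbounded orbits in $\bar X$ force unbounded orbits in $X$). Each shallow component near an unfixed cone point $\bar c'$ is controlled by (\ref{eqn: family axioms - conical - 2}) and the injection $\stab{\bar c'}\hookrightarrow\R/\Theta_{\bar c'}\Z$: a non-trivial rotation of a hyperbolic cone moves a point at radial distance $r$ by a quantity comparable to $\sinh r$ times the rotation angle, bounding the shallow contribution by a quantity of order $\sinh(100\bar\delta)$. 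The constant $10\pi\sinh(100\bar\delta)$ in $\bar\beta$ accounts precisely for this, and \autoref{res: local-to-global thinness} then delivers items (1) and (3). For item (2), item~\ref{enu: small cancellation theorem - cone point} lets us pick a lift $\gamma\in\stab Y$, which is infinite cyclic; hence $\gamma$ is loxodromic in $X$ and $\beta$-thin. Fixed points inside $B(\bar c,\rho)$ are forced near $\bar c$ by the same trigonometric estimate applied to the rotation $\bar\gamma$, while fixed points outside $B(\bar c,\rho)$ lift isometrically to a $\beta$-neighbourhood of the axis of $\gamma$ in $X$, whose image in $\bar X$ is a bounded set around $\bar c$ (since $\zeta$ collapses $Y$ to $\bar c$). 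The slack $\rho$ absorbs the diameter of $B(\bar c,\rho)$, giving $(\rho+\bar\beta)$-thinness at $\bar c$.

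The central difficulty is that $\zeta$ is not an isometry near cone points, so the $\beta$-thinness of $\Gamma$-isometries on $X$ cannot be transferred directly. The quantitative trigonometric estimates of \autoref{res: sc - angle cocycle} are indispensable for bounding the cone-region contributions, and they are the source of the $10\pi\sinh(100\bar\delta)$ correction in the definition of $\bar\beta$.
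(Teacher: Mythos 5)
Your approach is a reconstruction from first principles of what the paper simply cites: the three items correspond to Propositions A.11, A.15, and A.18 of Coulon--Sela \cite{Coulon:2021wg}, and the paper's proof consists in checking that those propositions' hypotheses survive the weaker assumptions used here (in particular, allowing even torsion). Your skeleton --- reducing to a bound on $\fix{\bar\gamma,d}$ via \autoref{res: local-to-global thinness}, lifting away from cone points, and controlling the cone regions with \autoref{res: sc - angle cocycle} --- does resemble the strategy behind the cited propositions, but several steps are asserted rather than established, and at least one would fail without further argument.

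The central gap is the claim that the ``deep part'' of $\fix{\bar\gamma,d}$ ``lifts isometrically through $\zeta$ and then to $X$''. The local-isometry statement \autoref{res: small cancellation theorem}\ref{enu: small cancellation theorem - local isom} applies only ball-by-ball and does not lift a potentially large subset of $\bar X$ globally; local lifts of $\bar\gamma$ over different balls could a priori differ by elements of $K$, and ruling that out is precisely the non-trivial content of the cited propositions. Moreover, being far from $\bar{\mathcal C}$ gives at best a lift to $\dot X$ far from $\mathcal C$, not a lift to $X$: a point of a cone $Z(Y)$ at radial coordinate strictly between $\rho/10$ and $\rho$ is far from every apex yet not in $X$, so the $\beta$-thinness hypothesis on the action of $\Gamma$ on $X$ does not directly apply. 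Your bound on the shallow contribution (``of order $\sinh(100\bar\delta)$'') is also unjustified as stated: the region within $\rho/10$ of a cone point has size $\rho/10\gg\bar\delta$, and you would first need to use the $2\rho$-separation of distinct cone points to show that $\fix{\bar\gamma,d}$ only meets an $O(d)$-thin annulus in each unfixed cone ball. Finally, in item (2) the assertion that fixed points outside $B(\bar c,\rho)$ lift to a $\beta$-neighbourhood of the axis of $\gamma$ whose image is bounded around $\bar c$ ignores fixed points near other cone points and relies on a lower distortion bound for $\zeta$ that is not given. These gaps are each fillable --- they are filled in Appendix A of \cite{Coulon:2021wg} --- but the proof as written is incomplete.
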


\begin{proof}
	The three items respectively correspond to Propositions~A.11, A.15, and A.18 in \cite{Coulon:2021wg}.
	In  \cite{Coulon:2021wg}, Coulon and Sela work under the following additional global hypothesis: $\Gamma$ has no even torsion; every infinite elementary subgroup of $\Gamma$ is cyclic; and $\stab Y / H$ has no even torsion, for every $(H,Y) \in \mathcal Q$.
	Nevertheless, Propositions~A.11 and A.18 do not use any of these assumptions.
	Proposition~A.15 only uses the fact that $\stab Y$ is cyclic, for every $(H, Y) \in \mathcal Q$.
	Consequently, their conclusions still hold in our context.
\end{proof}

\subsubsection{Structure of elementary subgroups} We now analyse elliptic subgroups of $\bar\Gamma$.

\begin{lemm}[See for instance Coulon {\cite[Lemma~4.25]{Coulon:2018vp}}]
\label{res: projection elliptic}
	Let $F$ be a an elliptic subgroup of $\Gamma$.
	The projection induces an isomorphism from $F$ onto its image.
\end{lemm}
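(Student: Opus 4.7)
The plan is to show that $\pi|_F$ is injective, i.e., $F \cap K = \{1\}$ where $K = \ker \pi$; surjectivity onto $\pi(F)$ is automatic. I will argue by contradiction, supposing there is $f \in F \cap K$ with $f \neq 1$. Since $\Gamma$ acts properly on $X$ and $F$ is elliptic (bounded orbit), $F$ is finite. The first step is the standard quasi-centre argument in a $\delta$-hyperbolic geodesic space: a finite group of isometries admits some $x_0 \in X$ with $\dist[X]{g x_0}{x_0} \leq C \delta$ for every $g \in F$, where $C$ is a universal constant.

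The second step is to compare upper and lower bounds on $\dist[\dot X]{f x_0}{x_0}$. The upper bound comes from the fact that the canonical map $X \to \dot X$ is $1$-Lipschitz, giving $\dist[\dot X]{f x_0}{x_0} \leq \dist[X]{f x_0}{x_0} \leq C \delta$. The lower bound comes from item \ref{enu: small cancellation theorem - inj kernel} of \autoref{res: small cancellation theorem} applied to $f \in K \setminus \{1\}$ and $x_0 \in \dot X$: $\dist[\dot X]{f x_0}{x_0} \geq \min\{2r, \rho/5\}$, where $r = d(x_0, \mathcal C)$ is computed in $\dot X$.

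The crux is to show $r \geq \rho$, so that this minimum reduces to $\rho/5$. This follows from a direct inspection of the cone-off construction: the only points of $X$ sitting inside any cone $Z(Y)$ are those of $Y$, glued at the rim at radial distance $\rho$ from the apex, so any path in $\dot X$ from a point of $X$ to an apex must traverse the full cone radius $\rho$. Combining the two bounds then yields $\rho/5 \leq C \delta$, contradicting the standing convention $\delta \leq \delta_0 \ll \rho_0 \leq \rho$, so we conclude $f = 1$. I expect no serious obstacles; the only point that requires classical (non small-cancellation) input is the quasi-centre fact for finite isometry groups of hyperbolic spaces, which is standard.
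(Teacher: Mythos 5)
Your proof is correct. The paper does not give an argument here, deferring instead to \cite[Lemma~4.25]{Coulon:2018vp}, but yours is the standard one behind that reference: properness forces $F$ to be finite, a quasi-centre gives displacement $O(\delta)$ for every element of $F$, the radial coordinate (which is $1$-Lipschitz on $\dot X$ since the metric on $\dot X$ is the largest making the inclusions of $X$ and the cones $1$-Lipschitz) shows $d_{\dot X}(x_0,\mathcal C)\geq \rho$ for every $x_0\in X$, and these contradict \autoref{res: small cancellation theorem}\ref{enu: small cancellation theorem - inj kernel} because $\rho\geq\rho_0\gg\delta_0\geq\delta$.
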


Given an elementary subgroup $\bar E \subset \bar \Gamma$, we say that $\bar E$ \emph{lifts} if there is an elementary subgroup $E$ of $\Gamma$ such that the projection $\Gamma \onto \bar \Gamma$ induces an isomorphism from $E$ onto $\bar E$.
Such a subgroup $E$ is called a \emph{lift} of $\bar E$.
Proposition~6.12 in \cite{Coulon:2014fr}, that we recall below, provides a criterion to lift elliptic subgroups of $\bar \Gamma$.

\begin{prop}
\label{res: lifting elliptic subgroups}
	Let $\bar F$ be an elliptic subgroup of $\bar \Gamma$.
	Either $\bar F$ lifts, or there is a vertex $\bar c \in \bar{\mathcal C}$ such that $\bar F$ is contained in $\stab{\bar c}$.
\end{prop}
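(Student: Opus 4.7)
The plan is to find a point $\bar x\in\bar X$ nearly fixed by $\bar F$ and lying away from the apex set $\bar{\mathcal C}$; at such a point the quotient $\zeta\colon\dot X\to\bar X$ is a local isometry, which will let me lift the action of $\bar F$ to $\Gamma$. The only obstruction is that every quasi-fixed point of $\bar F$ might be forced near an apex, in which case I will show that $\bar F$ stabilises that apex.

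Since $\bar F$ is elliptic on the $\bar\delta$-hyperbolic space $\bar X$, the set $\fix(\bar F,d)$ is non-empty, $\bar F$-invariant, and $8\bar\delta$-quasi-convex for any $d$ slightly larger than $\max\{\lambda(\bar F),5\bar\delta\}$. I would split into two cases according to whether this invariant quasi-convex set meets the ``safe'' region $\{\bar x:d(\bar x,\bar{\mathcal C})>\rho/5\}$.

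In the first case, fix $\bar x\in\fix(\bar F,d)$ with $d(\bar x,\bar{\mathcal C})>\rho/5$. By \autoref{res: small cancellation theorem}\ref{enu: small cancellation theorem - local isom}, the projection $\zeta$ restricts to an isometry between $B(\dot x,\rho/20)$ and $B(\bar x,\rho/20)$ for some preimage $\dot x$. Arranging $d<\rho/20$ (either a priori from the Margulis-type bound of \autoref{res: margulis lemma}, or by replacing $\bar x$ by a suitable quasi-centre inside $\fix(\bar F,d)$), every $\bar f\bar x$ lies in $B(\bar x,\rho/20)$ and therefore has a unique lift in $B(\dot x,\rho/20)$. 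For each $\bar f\in\bar F$, pick any preimage $\gamma_f\in\Gamma$; by \autoref{res: small cancellation theorem}\ref{enu: small cancellation theorem - inj kernel}, non-trivial elements of $K$ move $\dot x$ by at least $\rho/5$, so there is a unique representative $\tilde f\in\gamma_f K$ whose action places $\dot x$ inside $B(\dot x,\rho/20)$. The uniqueness of these local lifts makes $\bar f\mapsto\tilde f$ a homomorphism and injective, yielding a subgroup $F\subset\Gamma$ projecting isomorphically to $\bar F$.

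In the second case, $\fix(\bar F,d)$ lies entirely in the $\rho/5$-neighbourhood of $\bar{\mathcal C}$. Using the $8\bar\delta$-quasi-convexity of $\fix(\bar F,d)$ combined with the fact that the $\rho/5$-neighbourhoods of distinct apices are pairwise well separated in $\bar X$ (a consequence of the embedding of cones and the small-cancellation bounds $\delta_0,\Delta_0\ll\rho_0$), the set $\fix(\bar F,d)$ must lie in a single neighbourhood $B(\bar c,\rho/5)$. Invariance of $\fix(\bar F,d)$ under $\bar F$ then forces $\bar F$ to preserve that neighbourhood, and hence to fix $\bar c$, giving $\bar F\subset\stab{\bar c}$.

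The main obstacle is the first case: producing an honest homomorphism from purely local lifts requires that the orbit $\bar F\bar x$ sits deep inside the ball where $\zeta$ is an isometry, and that distinct coset representatives modulo $K$ cannot collapse under $\zeta$ there. Both ingredients are exactly what items \ref{enu: small cancellation theorem - local isom} and \ref{enu: small cancellation theorem - inj kernel} of the small-cancellation theorem are designed to guarantee, provided the a priori bound on $\lambda(\bar F)$ is controlled -- the one genuinely delicate point in the argument.
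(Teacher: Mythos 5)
The paper does not give its own proof of this proposition---it is quoted as Proposition~6.12 of \cite{Coulon:2014fr}---so there is no in-paper argument to compare against line by line. Your strategy (lift locally at a quasi-fixed point far from $\bar{\mathcal C}$ using items~\ref{enu: small cancellation theorem - local isom} and~\ref{enu: small cancellation theorem - inj kernel} of \autoref{res: small cancellation theorem}; otherwise force $\bar F$ into an apex stabiliser) is the natural one and is in substance the argument one expects the cited proof to be, and it does go through.

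The one point you flag as ``genuinely delicate''---controlling $\lambda_\infty(\bar F)$---actually has a clean resolution that you do not quite hit: since $\bar F$ is elliptic for the proper cocompact action of $\bar\Gamma$ on the $\bar\delta$-hyperbolic space $\bar X$, it is finite with bounded orbits, and the quasi-centre of any of its orbits is displaced by at most a fixed multiple of $\bar\delta$ by every element of $\bar F$. Hence $\lambda_\infty(\bar F)=O(\bar\delta)\ll\rho$, always, with no case analysis needed. Your two suggested remedies are the wrong tools for this: \autoref{res: margulis lemma} bounds $\diam{\fix{U,d}}$ only when $\group U$ is \emph{non-elementary}, which $\bar F$ is not, and ``a quasi-centre inside $\fix{\bar F,d}$'' conflates two things---the relevant quasi-centre is the one of an $\bar F$-orbit, not of the set $\fix{\bar F,d}$. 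Two further small items to record: when you produce the subgroup $F\le\Gamma$ in case~1, you should note it is \emph{elementary} (as the definition of ``lift'' requires), which follows because $\bar F$ is finite and $F\cong\bar F$; and in case~2 the conclusion that $\fix{\bar F,d}$ sits inside a single $B(\bar c,\rho/5)$ follows from $8\bar\delta$-quasi-convexity together with $2\rho$-separation of $\bar{\mathcal C}$, via the observation that the midpoint of a geodesic joining points near two distinct apices is $\approx 3\rho/5$ from both of them yet, by quasi-convexity, must be $O(\bar\delta)$-close to some apex---which then contradicts the separation of apices; this deserves a sentence rather than mere assertion.
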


The next statement focusses on lineal subgroups.
Recall that if $\bar E$ is a lineal subgroup of $\bar G$, then it fits into a short exact sequence 
\begin{equation*}
	1 \to \bar F \to \bar E \to \mathbf L \to 1
\end{equation*}
where $\bar F$ is the maximal elliptic normal subgroup of $\bar E$ and $\mathbf L$ is either $\Z$ of $\dihedral$.
Therefore, if we want to lift the lineal subgroup $\bar E$ into $\Gamma$, the first step is to ensure that one can lift $\bar F$ to an elliptic subgroup of $\Gamma$ in such a way that the action by conjugation of $\bar E$ on $\bar F$ also lifts.
This is essentially the purpose of the next statement in the case where $\mathbf L = \Z$.
For a more general study of lineal subgroups, we refer the reader to \cite[Section~4.6]{Coulon:2018vp}.

\begin{prop}
\label{res: lifting normalised elliptic sbgp}	
	Let $\bar F$ be a non-trivial elliptic subgroup of $\bar \Gamma$ that is normalised by a loxodromic element.
	Then $\bar F$ admits a lift in $\Gamma$ that is also normalised by a loxodromic element.
\end{prop}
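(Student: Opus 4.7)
The plan is to first reduce to the case that $\bar F$ lifts to an elliptic subgroup of $\Gamma$, by ruling out the cone-point alternative in \autoref{res: lifting elliptic subgroups}, and then to exhibit a preimage of $\bar\gamma$ in $\Gamma$ that normalises the lift after correcting by a kernel element.

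The first step is to rule out the possibility that $\bar F\subseteq\stab{\bar c}$ for some apex $\bar c\in\bar{\mathcal C}$. Since $\bar\gamma$ is loxodromic on $\bar X$, it has no fixed point in $\bar X$, so the iterates $\bar\gamma^n\bar c$ are pairwise distinct and converge to the two boundary fixed points $\bar\gamma^\pm\in\partial\bar X$ as $n\to\pm\infty$. The hypothesis $\bar F=\bar\gamma^n\bar F\bar\gamma^{-n}\subseteq\stab{\bar\gamma^n\bar c}$ then forces every non-trivial $\bar f\in\bar F$ to fix every $\bar\gamma^n\bar c$, so its fixed set accumulates on $\bar\gamma^\pm$. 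This contradicts ellipticity, since an elliptic isometry of a hyperbolic space has bounded orbits and cannot fix a point at infinity. Hence $\bar F$ lifts to an elliptic subgroup $F\subseteq\Gamma$.

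Next I would reduce to the case that $\bar\gamma$ centralises $\bar F$: since the $\bar\Gamma$-action on $\bar X$ is proper by \autoref{res: small cancellation theorem}(\ref{enu: small cancellation theorem - hyperbolicity}), the elliptic $\bar F$ is finite, so some power $\bar\gamma^m$ (still loxodromic) acts trivially on $\bar F$ by conjugation. Pick a preimage $\gamma_0\in\Gamma$ of $\bar\gamma^m$; because $\Gamma$ acts properly cocompactly on the hyperbolic space $X$ it is word-hyperbolic, so every infinite-order element is loxodromic, and $\gamma_0$ is loxodromic as it projects to the infinite-order element $\bar\gamma^m$. The conjugate $F':=\gamma_0 F\gamma_0^{-1}$ is elliptic and projects bijectively onto $\bar\gamma^m\bar F\bar\gamma^{-m}=\bar F$ by \autoref{res: projection elliptic}, so $F$ and $F'$ are two elliptic lifts of $\bar F$. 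The proof is then complete as soon as one finds $k\in K$ with $k F' k^{-1}=F$ compatibly with their canonical identifications with $\bar F$: the element $\gamma:=k\gamma_0$ normalises $F$ and, projecting still to $\bar\gamma^m$, remains loxodromic.

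Producing this kernel element is the main obstacle. The geometric strategy is to use quasi-centers in the cone-off: the finite elliptic $F$ admits a quasi-fixed point $y\in\dot X$ (in the sense of $\fix{F,d}$ for suitable $d$, cf.\ \autoref{def: fix}), and $y':=\gamma_0 y$ plays the same role for $F'$. Because $\bar\gamma^m$ centralises $\bar F$, the projections $\bar y$ and $\bar y'$ coincide with the unique quasi-center of $\bar F$ in $\bar X$. One should then invoke the displacement estimate \autoref{res: small cancellation theorem}(\ref{enu: small cancellation theorem - inj kernel}) to locate $k\in K$ carrying $y'$ close to $y$, and use the local isometry \autoref{res: small cancellation theorem}(\ref{enu: small cancellation theorem - local isom}) together with the thinness bounds of \autoref{res: sc - thin} to conclude that conjugation by $k$ identifies $F'$ with $F$ element-wise. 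Algebraically this amounts to showing that the crossed homomorphism $f\mapsto\gamma_0 f\gamma_0^{-1}f^{-1}\in K$ is a coboundary in $H^1(F,K)$; the vanishing is not formal, and must be enforced by the small-cancellation regime via the geometric argument just outlined.
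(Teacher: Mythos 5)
There are two genuine gaps, and the paper's argument avoids both by taking a different route.  In your first step you deduce a contradiction from the assertion that ``an elliptic isometry of a hyperbolic space \dots\ cannot fix a point at infinity,'' but this is false: an elliptic (bounded-orbit, here finite-order) isometry can perfectly well fix boundary points and have an unbounded fixed-point set -- a reflection of $\H^2$ along a geodesic is the model example, and the quotient $\bar\Gamma$ does contain torsion so this cannot be excluded a priori.  The fact that $\bar F$ fixes all the apices $\bar\gamma^k\bar c$ does not by itself contradict ellipticity; the paper instead cites the explicit description of apex stabilisers in \cite[Proposition~4.12]{Coulon:2018vp}, which shows that a non-trivial $\bar F\subset\stab{\bar c}$ fixes a \emph{unique} apex, and the existence of the second fixed apex $\bar\gamma\bar c$ is what gives the contradiction.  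This uniqueness is a genuine input you are missing, not a formal consequence of ellipticity.

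Your second step is explicitly left unfinished -- you acknowledge that the existence of the corrector $k\in K$ ``is not formal'' and sketch a strategy without executing it -- and it is also structurally more work than necessary.  The paper never separates the lifting of $\bar F$ from the lifting of the normalising loxodromic.  Instead it takes $\bar Z=Y_{\bar\gamma}\subset\fix{\bar F,100\bar\delta}$ (the cylinder of $\bar\gamma$), proves that $\bar Z$ misses every ball $B(\bar c,\rho-50\bar\delta)$, and then invokes \cite[Proposition~3.21]{Coulon:2014fr}: such a subset lifts isometrically to a subset $Z\subset\dot X$, and $\Gamma\onto\bar\Gamma$ restricts to an isomorphism $\stab Z\cong\stab{\bar Z}$.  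Since $\stab{\bar Z}$ already contains both $\bar F$ and $\bar\gamma$, this hands you the elliptic lift $F$ \emph{and} a loxodromic normaliser $\gamma$ in one stroke, so there is no cocycle-vanishing problem to resolve.  If you want to keep your two-step framing, you would need to replace the erroneous ellipticity argument with the apex-uniqueness input, and then fill in the analysis you only gesture at -- but lifting the invariant cylinder is both shorter and less delicate.
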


\begin{proof}
Recall that we assume that for every $(H,Y) \in \mathcal Q$, the group $\stab Y$ is infinite cyclic.

	Let $\bar F$ be an elliptic subgroup of $\bar \Gamma$ normalised by a loxodromic element, say $\bar \gamma \in \bar \Gamma$.
	Without loss of generality, we can assume that $\bar F$ is non-trivial.
	For simplicity we let $\bar Z = Y_{\bar \gamma}$, seen as a subset of $\bar X$.
	Note that $\bar Z$ is contained in $\fix{\bar F, 100\bar \delta}$; see for instance \cite[Lemma~2.15]{Coulon:2018vp}.
	
	We claim that $\bar Z$ does not intersect $B(\bar c, \rho - 50\bar \delta)$, for any $\bar c \in \bar{\mathcal C}$.
	Assume on the contrary that it does.
	Since any two distinct vertices in $\bar {\mathcal C}$ are $2\rho$ far apart, it follows from the triangle inequality that $\bar F \subset \stab{\bar c}$.
	Let $(H,Y) \in \mathcal Q$ be such that the apex $c$ of $Z(Y)$ is a pre-image of $\bar v$.
	According to our assumption, the group $\stab Y$ is cyclic.
	Hence so is $\stab{\bar c} = \stab Y / H$, by \autoref{res: small cancellation theorem}\ref{enu: small cancellation theorem - cone point}.
	It follows from the description of apex stabilisers given in \cite[Proposition~4.12]{Coulon:2018vp} that $\bar c$ is the unique apex fixed by $\bar F$.
	However since $\bar \gamma$ normalises $\bar F$, the latter also fixes $\bar \gamma^k \bar c$, for every $k \in \Z$.
	We have reached a contradiction, which completes the proof of our claim.
	
	According to \cite[Proposition~3.21]{Coulon:2014fr}, there is a subset $Z \subset \dot X$ such that the map $\zeta \colon \dot X \onto \bar X$ induces an isometry from $Z$ onto $\bar Z$.
	Moreover, the projection $\pi \colon \Gamma \onto \bar \Gamma$ induces an isomorphism from $\stab Z$ onto $\stab{\bar Z}$.
	Denote by $F$ and $\gamma$ the respective pre-images of $\bar F$ and $\bar \gamma$ in $\stab Z$.
	In particular, $\gamma$ is an infinite order element (hence a loxodromic element of $\Gamma$) normalizing $F$.
\end{proof}

%
\subsubsection{Group invariants}
%

Finally, we obtain quantitative control on acylindricity and injectivity radius in $\bar\Gamma$.

\begin{prop}
\label{res: acyl quotient}
	The acylindricity parameter for the action of $\bar \Gamma$ on $\bar X$ satisfies
	\begin{equation*}
		A(\bar \Gamma, \bar X) \leq A(\Gamma, X) + \left[\nu(\Gamma,X) +4 \right]\pi \sinh(1000 \bar \delta)\,.
	\end{equation*}
\end{prop}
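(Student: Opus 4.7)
The plan is to reduce the bound on $A(\bar\Gamma,\bar X)$ to an estimate of $\diam\fix{\bar U,d_0}$ at a single scale $d_0 \approx 1000\bar\delta$ for subsets $\bar U \subset \bar\Gamma$ generating a non-elementary subgroup, and then to apply \autoref{res: margulis lemma} on both sides. The main geometric content lies in a lifting procedure that transfers such a diameter bound from $\bar X$ to $X$, where the upstairs acylindricity $A(\Gamma,X)$ is available.

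The first key step is a separation statement: for non-elementary $\bar U$ and $d_0$ small compared to $\rho$, the set $\fix{\bar U,d_0}$ stays at distance at least $\rho/2$ from every cone point $\bar c \in \bar{\mathcal C}$. Indeed, if some $\bar x \in \fix{\bar U,d_0}$ were close to an apex $\bar c$, then for every $\bar u \in \bar U$ the apex $\bar u \bar c$ would lie within $2\dist{\bar x}{\bar c}+d_0 < 2\rho$ of $\bar c$; since distinct apices are $2\rho$-apart, this forces $\bar U \subset \stab{\bar c}$. But $\stab{\bar c}$ is finite cyclic by \autoref{res: small cancellation theorem}\ref{enu: small cancellation theorem - cone point} together with the standing hypothesis that $\stab Y$ is infinite cyclic for each $(H,Y) \in \mathcal Q$, contradicting the non-elementarity of $\bar U$.

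With this separation, \autoref{res: small cancellation theorem}\ref{enu: small cancellation theorem - local isom} says $\zeta \colon \dot X \to \bar X$ restricts to an isometry on large balls around any lift $x \in X$ of a point in $\fix{\bar U,d_0}$. This provides compatible lifts $U \subset \Gamma$ and $x,y \in X$ of $\bar x,\bar y \in \fix{\bar U,d_0}$ with matching local displacements in $\dot X$. One then argues that $\langle U\rangle$ is non-elementary in $\Gamma$: otherwise, its orbits in $X$ (bounded, or lying near an axis) would descend via the $1$-Lipschitz projection to orbits of $\langle \bar U\rangle$ of the same type in $\bar X$. The subtle part is comparing $\dist{\bar x}{\bar y}$ in $\bar X$ with $\dist x y$ in $X$, since the geodesic $[\bar x,\bar y]$ may enter several cone neighborhoods, inside which the cone metric compresses distances.

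The principal obstacle is quantifying these cone contributions. Using the cone metric formula and the quasi-isometric embedding $\bar\chi$ from \autoref{res: sc - angle cocycle}, each excursion of $[\bar x,\bar y]$ within depth $1000\bar\delta$ of an apex contributes at most $\pi\sinh(1000\bar\delta)$ to the length discrepancy. Combining these with the bound $A(\Gamma,X,d') \leq A(\Gamma,X)+[\nu(\Gamma,X)+3]d'$ at a suitably boosted scale $d'$ comparable to $1000\bar\delta$ yields
\begin{equation*}
\diam\fix{\bar U,d_0} \leq A(\Gamma,X) + \left[\nu(\Gamma,X)+4\right]\pi\sinh(1000\bar\delta),
\end{equation*}
where the extra $+1$ in the coefficient absorbs the linear term $[\nu(\Gamma,X)+3]d'$. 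A final application of \autoref{res: margulis lemma} then converts this into the claimed global estimate for $A(\bar\Gamma,\bar X)$.
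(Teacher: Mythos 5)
The paper does not give a self-contained proof of this proposition; its ``proof'' defers entirely to \cite[Proposition~5.30]{Coulon:2014fr} (see also \cite[Proposition~4.47]{Coulon:2018vp}), merely noting that the argument applies verbatim despite the slightly different definition of the global acylindricity constant. Your sketch reconstructs, in outline, the strategy used in those references (separate $\fix{\bar U,d_0}$ from cone points, lift to $\dot X$, compare to the upstairs acylindricity, feed through the Margulis lemma), so the high-level approach is consistent with what the authors have in mind. That said, several of the steps you describe are not yet proofs, and a couple of them are the crux of the matter.

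The most serious gap is the lifting step. Your separation argument only controls the distance from $\fix{\bar U,d_0}$ to $\bar{\mathcal C}$, and the local isometry of \autoref{res: small cancellation theorem}\ref{enu: small cancellation theorem - local isom} only gives an isometry on a ball around a single lifted point. You then assert ``this provides compatible lifts $U \subset \Gamma$ and $x,y \in X$ with matching local displacements'', but $\bar x$ and $\bar y$ can be arbitrarily far apart, so obtaining a single coherent lift $U$ of $\bar U$ that simultaneously displaces both $x$ and $y$ by roughly $d_0$ is precisely the nontrivial part. In the references this is handled by first showing that the (quasi-convex) set $\fix{\bar U,d_0}$, together with the geodesic $[\bar x,\bar y]$, stays at distance roughly $\rho - O(\bar\delta)$ from every apex and then invoking a global lifting statement for such subsets (compare \cite[Proposition~3.21]{Coulon:2014fr}, which is what \autoref{res: lifting normalised elliptic sbgp} in this paper uses for a similar purpose). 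You never invoke such a lemma, and without it the compatibility of the lifts is unjustified. Relatedly, your claim that $\langle U\rangle$ is non-elementary upstairs because elementary behaviour ``would descend via the $1$-Lipschitz projection'' is not an argument in the lineal or parabolic cases: a $1$-Lipschitz map does not, in general, send a quasi-axis to a quasi-axis or preserve limit sets, so this needs a dedicated argument (in the small-cancellation setting one typically uses that visible loxodromics stay loxodromic, plus an analysis of what can collapse).

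The quantitative bookkeeping is also not sound as stated. The comparison $\dist{x}{y} \leq \dist{\bar x}{\bar y} + (\text{cone contributions})$ requires both a per-excursion bound \emph{and} a bound on the number of excursions, and neither is supplied; this is where the coefficient $\nu(\Gamma,X)+4$ actually has to come from in the reference, whereas you attribute the $+1$ to ``absorbing the linear term $[\nu(\Gamma,X)+3]d'$'', which cannot be right because a fixed additive constant cannot absorb a term linear in an unbounded $d'$. Moreover, your final appeal to \autoref{res: margulis lemma} introduces an extra additive $24\bar\delta$ that is not visible in the claimed bound, and it implicitly replaces $\nu(\bar\Gamma,\bar X)$ by $\nu(\Gamma,X)$ in the definition of $A(\bar\Gamma,\bar X)$, which also needs justification (it is true in this paper's setting because \ref{enu: family axioms - elem subgroups} passes to $\bar\Gamma$ and \autoref{res: nu invariant vs finite cyclic subgroups} applies, but that reasoning has to be spelled out rather than assumed). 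In short: right template, but the lifting lemma, the non-elementarity of the lift, and the constant-chasing are all left open, and these are exactly the points the cited references exist to settle.
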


\begin{proof}
	Although the definition of $A(\Gamma, X)$ is slightly different, the proof works verbatim as in \cite[Proposition~5.30]{Coulon:2014fr}.
	See also \cite[Proposition~4.47]{Coulon:2018vp}.
\end{proof}

\begin{prop}
\label{res: injectivity radius}
	Let $\bar U$ be a subset of $\bar\Gamma$ and $U$ its pre-image in $\Gamma$.
	We denote by $\ell$ the infimum over the stable translation lengths (in $X$) of loxodromic elements of $U$ which do not belong to $\stab Y$ for some $(H,Y) \in \mathcal Q$.	
	Then 
	\begin{equation*}
		\inj{\bar U}{\bar X} \geq \min\left\{ \kappa \ell, \bar \delta\right\},
	\end{equation*}
	where $\kappa = \bar \delta /\pi \sinh (26\bar \delta)$.
\end{prop}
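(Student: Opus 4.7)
The plan is to argue by contrapositive: suppose some loxodromic $\bar\gamma \in \bar U$ has $s := \snorm[\bar X]{\bar \gamma} < \bar \delta$, and produce a lift $\gamma \in U$ which is loxodromic in $X$, lies outside $\stab Y$ for every $(H,Y) \in \mathcal Q$, and satisfies $\snorm[X]\gamma \leq s/\kappa$. This forces $\ell \leq s/\kappa$, i.e.\ $s\geq\kappa\ell$, as required. The condition $\gamma \notin \stab Y$ is automatic: if $\gamma$ lay in $\stab Y$, its image would factor through the finite cyclic quotient $\stab Y/H = \stab{\bar c}$ of \autoref{res: small cancellation theorem}\ref{enu: small cancellation theorem - cone point}, contradicting loxodromy of $\bar\gamma$; and the lift is itself loxodromic in $X$ because $\Gamma$ acts properly and cocompactly and so has no parabolics.

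To produce $\gamma$, set $n = \lfloor r/s\rfloor$ with $r = 13\bar\delta$. First I would find a point $\bar x_0$ on the cylinder $Y_{\bar\gamma}$ at distance at least $\rho/10$ from every cone point of $\bar{\mathcal C}$. Such a point exists because $\bar\gamma$ cannot fix an apex (its stabiliser is finite but $\bar\gamma$ has infinite order), so the $\bar\gamma$-invariant cylinder cannot remain in the $\rho/10$-neighbourhood of any single apex; combined with the small-cancellation bound $\Delta(\mathcal Q,X)\leq\Delta_0$, this forces the cylinder to pass through regions well away from all cones. Because $Y_{\bar\gamma}$ is strongly quasi-convex and $ns\leq r$, all iterates $\bar\gamma^k\bar x_0$ for $0\leq k\leq n$ lie inside $B(\bar x_0, r+O(\bar\delta))$, which is well contained in $B(\bar x_0,\rho/20)$ by the standing convention $\rho\gg\bar\delta$.

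By \autoref{res: small cancellation theorem}\ref{enu: small cancellation theorem - local isom} the projection $\zeta\colon\dot X\to\bar X$ restricts to an isometry from $B(x_0,\rho/20)$ to $B(\bar x_0,\rho/20)$ for any lift $x_0\in\dot X$; and since $\bar x_0$ is $\rho/10$-far from cones this ball lies entirely in $X$. Let $x^{(k)}$ be the unique preimage of $\bar\gamma^k\bar x_0$ in $B(x_0,\rho/20)$, and pick $\gamma\in\Gamma$ projecting to $\bar\gamma$ with $\gamma x^{(0)}=x^{(1)}$. Induction using the local isometry yields $\gamma^k x^{(0)}=x^{(k)}$ for every $0\leq k\leq n$, and in particular $\dist[\dot X]{x^{(0)}}{\gamma^n x^{(0)}}\leq 2r+O(\bar\delta)$.

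The crucial and delicate step is converting this $\dot X$-bound into an $X$-bound. Any $\dot X$-geodesic between two points of $X$ decomposes into $X$-segments and shortcut segments passing through apices of cones $Z(Y)$; the metric cone formula~(\ref{eqn: metric cone}) shows that a shortcut of $\dot X$-length $t$ corresponds to a path in $Y\subset X$ of length at most $\pi\sinh(t/2)$, and the superadditivity of $\sinh$ bounds the total $X$-length by $\pi\sinh(26\bar\delta)$. Dividing by $n\geq\bar\delta/s$ and invoking~(\ref{eqn: regular vs stable length}),
\begin{equation*}
\snorm[X]\gamma \leq \frac{1}{n}\norm[X]{\gamma^n} \leq \frac{\pi\sinh(26\bar\delta)}{n} \leq \frac{\pi\sinh(26\bar\delta)\,s}{\bar\delta} = \frac{s}{\kappa}.
\end{equation*}
The main obstacle is making this cone-shortcut comparison fully rigorous and tuning the constants so that the additive $O(\bar\delta)$ errors from quasi-geodesic stability of the cylinder and from passing between $\norm{}$ and $\snorm{}$ get absorbed into the factor $\pi\sinh(26\bar\delta)$; the standing hypothesis $\bar\delta\ll\rho$ gives ample room for this.
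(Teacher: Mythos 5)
Your proposal reconstructs the argument that the paper itself outsources to Coulon's earlier work (the paper's "proof" is a two-line pointer to \cite[Proposition~6.13]{Coulon:2014fr} and its variants, noting only that the one new ingredient is that the pre-image of $\bar u\in\bar U$ lands in $U$). Your structure — contrapositive, find a point of $Y_{\bar\gamma}$ in the thick part, lift short orbit segments via the local isometry of \autoref{res: small cancellation theorem}\ref{enu: small cancellation theorem - local isom}, convert the $\dot X$-bound to an $X$-bound using the cone metric formula~(\ref{eqn: metric cone}), and conclude via~(\ref{eqn: regular vs stable length}) — is the same as in those references, and your construction does handle the paper's one extra observation automatically, since the lift $\gamma$ you build projects to $\bar\gamma\in\bar U$ and hence lies in $U$ by definition. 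Two spots that are slightly glib but fixable: to rule out $\gamma$ being elliptic you should note that an elliptic in a proper cocompact action has finite order, so its image $\bar\gamma$ could not be loxodromic; and the existence of $\bar x_0$ far from apices is cleaner to see directly from $2\rho$-separation of $\bar{\mathcal C}$ together with unboundedness of the cylinder, rather than invoking $\Delta_0$ (if the cylinder meets $B(\bar c,\rho)$, its exit point on $\partial B(\bar c,\rho)$ is already at distance $\geq\rho$ from every apex). The cone-shortcut conversion and the precise constants ($26\bar\delta$, etc.) are exactly the delicate bookkeeping done in the cited propositions, as you anticipated.
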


\begin{rema}
	By convention, if $\bar U$ does not contain any loxodromic element then $\inj{\bar U}{\bar X}$ is infinite, in which case the statement is void.
\end{rema}

\begin{proof}	
	In \cite[Proposition~6.13]{Coulon:2014fr}, we estimated $\inj {\bar \Gamma}{\bar X}$. 
	See also \cite[Proposition~5.51]{Coulon:2016if} or \cite[Proposition~4.45]{Coulon:2018vp}, for sharper estimates.
	The proof works in the same way.
	The important additional assumption here is that the pre-image of any element $\bar u \in \bar U$ belongs to $U$.
\end{proof}

%
\section{The group \texorpdfstring{$\Gamma(n)$}{Γ(n)} as a limit of hyperbolic groups}
%
\label{sec: approx periodic groups}

%
\subsection{A preferred class of groups}
%

\paragraph{General setting.}

\medskip
Let $\delta, \rho >0$.  In this section we consider triples $(\Gamma,X,\mathcal C)$ where:
\begin{itemize}
	\item $X$ is a $\delta$-hyperbolic length space and $\Gamma$ a group acting by isometries on $X$; and
	\item $\mathcal C$ is a $\Gamma$-invariant, $2\rho$-separated subset of $X$ -- that is $\dist c{c'} \geq 2\rho$ for all distinct $c,c' \in \mathcal C$.
\end{itemize}

\begin{voca}
	The elements of $\mathcal C$ are called \emph{apices} or \emph{cone points}.
	An element $\gamma \in \Gamma$ is \emph{conical} if it fixes a unique cone point.
	It is \emph{visible} if it is loxodromic or conical and \emph{elusive} otherwise.
	A subgroup $\Gamma_0 \subset \Gamma$ is \emph{elusive} if all its elements are elusive. We think of $\mathcal C$ as a thin-thick decomposition of $X$:
	the \emph{thin parts} are the balls $B(c, \rho)$, where $c$ runs over $\mathcal C$;
	the \emph{thick part}, that we denote by $X^+$, is 
	\begin{equation*}
		X^+ = X \setminus \bigcup_{c \in \mathcal C} B(c,\rho)\,.
	\end{equation*}
\end{voca}

We chose the word ``elusive'' to stress that fact that, in practice, the action of $\Gamma_0$ on $X$ will not provide much information on the structure of $\Gamma_0$.
	Since $\mathcal C$ is $\Gamma$-invariant, the collection of visible elements is invariant under conjugation.
	Hence, so is the collection of elusive subgroups.
	The latter is also stable under taking subgroups.

\begin{defi}
\label{def: preferred class}
We write $\mathfrak H(\delta, \rho)$  for the set of all triples $(\Gamma, X, \mathcal C)$ as above which satisfy the following additional properties.
\begin{enumerate}[label=(H\arabic*)]
	\item \label{enu: family axioms - acylindricity}
	 $A(\Gamma, X) \leq \delta$.
	
	\item \label{enu: family axioms - elem subgroups}
	Every elliptic subgroup of $\Gamma$ is cyclic.
	No non-trivial elliptic subgroup can be normalised by a loxodromic element.

	\item \label{enu: family axioms - elusive}
	Every non-trivial elusive element of $\Gamma$ is elliptic and $\delta$-thin.
	
	\item \label{enu: family axioms - conical - 1}
	Every element $\gamma \in \Gamma \setminus\{1\}$ fixing a cone point $c \in \mathcal C$ is $(\rho+ \delta)$-thin at $c$.
	
	\item \label{enu: family axioms - conical - 2}
	For every $c \in \mathcal C$, there exists $\Theta_c \in [2\pi, \infty)$ and a $\stab c$-equivariant cocycle
	\begin{equation*}
		\theta \colon \mathring B(c,\rho)  \times \mathring B(c,\rho)\to \R/ \Theta_c\Z
	\end{equation*}
	such that, for every $x,y \in \mathring B(c,\rho)$, we have $\abs{\dist xy - \ell} \leq \delta$, where
	\begin{equation}
		\cosh \ell = \cosh\dist cx\cosh\dist cy - \sinh\dist cx\sinh\dist cy \cos\left( \min \left\{ \pi, \tilde \theta(x,y) \right\}\right)
	\end{equation}
	and $\tilde \theta(x,y)$ is the unique representative of $\theta(x,y)$ in $(-\Theta_c/2, \Theta_c/2]$.
	Moreover, $\theta(\gamma x, x) \neq 0$, for every $\gamma \in \stab c\setminus \{1\}$, and $x \in \mathring B(c,\rho)$,
	
	\item \label{enu: family axioms - loxodromic}
	Every loxodromic element of $\Gamma$ is $\delta$-thin.
	\item \label{enu: family axioms - radial proj}
	For every $c \in \mathcal C$, for every $x \in B(c,\rho)$, there exists $y \in X^+$ such that $\dist xy = \rho - \dist xc$.
\end{enumerate}

\end{defi}

\begin{rema}
Let us comment on this set of axioms.
Note that we do not require $\Gamma$ to act properly on $X$.
All we will need is a weak form of acylindricity captured by \ref{enu: family axioms - acylindricity}.
This assumption is weaker than the usual acylindricity.
Indeed it does not provide any control on the injectivity radius for the action of $\Gamma$ on $X$.
Axiom~\ref{enu: family axioms - elem subgroups} refers to the algebraic structure of elementary subgroups of $\Gamma$.
Axioms~\ref{enu: family axioms - elusive}-\ref{enu: family axioms - loxodromic} give geometric information on the action of $\Gamma$.
In particular, it follows from \ref{enu: family axioms - elusive} and \ref{enu: family axioms - conical - 1} that $\Gamma$ has no parabolic subgroup.
In Axiom~\ref{enu: family axioms - conical - 2}, by cocycle we mean that for every $x,y,z \in \mathring B(c,\rho)$ we have $\theta(x,z) = \theta(x,y) + \theta(y,z)$.
The exact formula in (\ref{eqn: family axioms - conical - 2}) does not really matter, the point is that it provides uniform control on the action of $\stab c$, i.e.\ control that does not depend on the triple $(\Gamma, X, \mathcal C)$. 
It has the following interpretation:
the ball $B(c,\rho)$ is seen approximately as a cone of radius $\rho$ endowed with a hyperbolic metric whose total angle at the apex is $\Theta_c$.
The quantity $\theta(x,y)$ represents the angle at $c$ between $x$ and $y$.
With this idea in mind, one recognises in (\ref{eqn: family axioms - conical - 2}) the law of cosines in $\H^2$.
\end{rema}

We sometimes make an abuse of notation and write $\mathfrak H(\delta, \rho)$ to designate the set of groups $\Gamma$ instead of the set of triples $(\Gamma, X, \mathcal C)$.
One should keep in mind that such a group always comes with a preferred action on a hyperbolic space.
This is useful for defining energies (see below).
The class $\mathfrak H(\delta, \rho)$ is stable under taking subgroups in the following sense: if $(\Gamma, X, \mathcal C)$ is an element of $\mathfrak H(\delta, \rho)$ and $\Gamma_0$ a subgroup of $\Gamma$, then $(\Gamma_0, X, \mathcal C)$ also belongs to $\mathfrak H(\delta, \rho)$.

\begin{rema}
\label{rem: comparison with CS}
	This class of group is a variation of the class, also called $\mathfrak H(\delta, \rho)$, defined in \cite[Section~3.1]{Coulon:2021wg}.
	Let us highlight here the main differences between the two definitions.
	In this article, we study the quotient $\Gamma(n)$ for every (sufficiently large) exponent $n$ no matter whether $n$ is odd or even.
	In the even case, we need to work with a class $\mathfrak H(\delta, \rho)$ whose groups $\Gamma$ can have (some) even torsion.
	See \autoref{rem: even exponents} below.
	In particular such groups could contain infinite dihedral subgroups.
	This feature is incompatible with the requirements stated in \cite{Coulon:2021wg}.
	All the adjustment we made are designed to allow the existence of infinite dihedral subgroups in $\Gamma$.
	\begin{itemize}
		\item Assumption (H2) in \cite[Definition~3.2]{Coulon:2021wg} forces every infinite elementary subgroup of $\Gamma$ to be cyclic.
		This is no longer a requirement here. 
		However, since a loxodromic element cannot normalise a non-trivial finite normal subgroup, infinite elementary subgroups are isomorphic to either $\Z$ or the infinite dihedral group $\dihedral$.
		\item In comparison with Assumption (H1) in \cite[Definition~3.2]{Coulon:2021wg}, we do not require here that $\nu(\Gamma, X) = 1$.
		Indeed, this hypotheses would not be compatible with the existence of infinite dihedral groups in $\Gamma$.
		Nevertheless, our assumption on elliptic subgroups stated in \ref{enu: family axioms -  elem subgroups} is stronger than its equivalent in \cite{Coulon:2021wg}.
		By \autoref{res: nu invariant vs finite cyclic subgroups}, it forces $\nu( \Gamma, X) \leq 2$.
		This uniform control of the $\nu$-invariant will be enough for our purposes.
		\item We do not assume here that the groups $\Gamma$ are CSA (see the definition in \autoref{sec: graph of groups} below), since infinite dihedral groups are not CSA.
		This assumption is used in \cite{Coulon:2021wg} to ensures that any limit group over the class $\mathfrak H(\delta, \rho)$ is also CSA.
		In the present article, we do not conduct an in-depth study of limit groups over $\mathfrak H(\delta, \rho)$.
		On the contrary, for our applications, we know that the limit groups we will deal with are CSA (they are surface groups).
		Thus, this hypotheses is no longer required. \qedhere
	\end{itemize}
\end{rema}

\paragraph{Energies.}

Let $G$ be a group and $U$ a finite subset of $G$.
We write $\card U$ for its cardinality.
We are going to investigate morphisms of the form $\varphi \colon G \to \Gamma$, for some triple $(\Gamma, X, \mathcal C) \in \mathfrak H(\delta, \rho)$.
If we need to emphasise the space that $\Gamma$ is acting on, we write $\varphi\colon G \to (\Gamma, X)$ or $\varphi \colon G \to (\Gamma, X, \mathcal C)$.
Given $x \in X$, the $L^1$- and $L^\infty$-\emph{energies of $\varphi$ at the point $x$}  (\emph{with respect to $U$}) are defined by 
\begin{align*}
	\lambda_1(\varphi,U,x) & = \sum_{u \in U} \dist{\varphi(u)x}x, \\
	\lambda_\infty(\varphi,U,x) & = \sup_{u \in U} \dist{\varphi(u)x}x.
\end{align*}

\begin{defi}[Energy]
\label{def: energy}
	Let $(\Gamma, X, \mathcal C) \in \mathfrak H(\delta, \rho)$.
	Let $p \in \{1, \infty\}$.
	Given a homomorphism $\varphi \colon G \to (\Gamma,X)$, the $L^p$-\emph{energy of $\varphi$} (\emph{with respect to $U$}) is 
	\begin{equation*}
		\lambda_p(\varphi,U) = \inf_{x \in X}\lambda_p(\varphi,U,x) .
	\end{equation*}
\end{defi}

These energies are related as follows:
\begin{equation}
\label{eqn: comparing energies}
	 \lambda_\infty(\varphi,U) \leq \lambda_1(\varphi,U) \leq \card U\lambda_\infty(\varphi,U)\,.
\end{equation}
Later in the article, we will choose a point $o \in X$ minimizing one of these energies and treat it as a basepoint.
Nevertheless, it will be convenient to keep the basepoint in the thick part $X^+$ of the space $X$.
This motivates the following variation of energy.

\begin{defi}[Restricted energy]
	Let $(\Gamma, X, \mathcal C) \in \mathfrak H(\delta, \rho)$.
	Given a homomorphism $\varphi \colon G \to (\Gamma,X)$, the \emph{restricted energy of $\varphi$} (\emph{with respect to $U$}) is defined by
	\begin{equation*}
		\lambda^+_1(\varphi,U) = \inf_{x \in X^+}\lambda_1(\varphi,U,x)\,.
	\end{equation*}
	(Note that $x$ runs over the thick part $X^+$ instead of $X$.)
\end{defi}

\begin{lemm}[Coulon--Sela {\cite[Lemma~3.8]{Coulon:2021wg}}]
\label{res: comparing energies}
	Let $\delta, \rho \in \R_+^*$, with $\rho > 2 \delta$.
	Let $(\Gamma, X, \mathcal C) \in \mathfrak H(\delta, \rho)$. 
	Let $\varphi \colon G \to (\Gamma, X)$ be a homomorphism and $U$ a finite subset of $G$.
	Assume that the image of $\varphi$ is not abelian. Then 
	\begin{equation*}
		\lambda_\infty(\varphi,U) \leq \lambda_1(\varphi,U) \leq \lambda_1^+(\varphi,U) \leq 2 \card U \lambda_\infty(\varphi,U)\,.
	\end{equation*}
	Moreover, for every $\epsilon > 12\delta$, there exist $x \in X$ and $x^+ \in X^+$ such that 
	\begin{enumerate}
		\item $\lambda_\infty(\varphi,U,x) \leq \lambda_\infty(\varphi,U) + \epsilon$ and $\lambda_1(\varphi,U,x^+) \leq \lambda^+_1(\varphi,U) + \epsilon$, and
		\item $\dist x{x^+} \leq \card U \lambda_\infty(\varphi,U) + \epsilon$.
	\end{enumerate}
\end{lemm}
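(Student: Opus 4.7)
The first two inequalities are immediate: $\lambda_\infty(\varphi, U, x) \le \lambda_1(\varphi, U, x)$ holds pointwise since the maximum of a finite family of non-negative numbers is bounded by their sum, and $\lambda_1 \le \lambda_1^+$ because the infimum defining $\lambda_1^+$ ranges over the smaller set $X^+ \subset X$.

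The substantive bound is $\lambda_1^+(\varphi, U) \le 2\card U \, \lambda_\infty(\varphi, U)$. My plan is to start with a near-minimiser $x \in X$ of $\lambda_\infty(\varphi, U, \cdot)$ and push it into the thick part via axiom \ref{enu: family axioms - radial proj}. Fix $\eta > 0$ and pick $x$ with $\lambda_\infty(\varphi, U, x) \le \lambda_\infty(\varphi, U) + \eta$. If $x \in X^+$, take $x^+ := x$ and conclude $\lambda_1^+ \le \card U \, \lambda_\infty(\varphi, U, x)$ trivially. Otherwise $x \in B(c, \rho)$ for a unique apex $c \in \mathcal C$, and axiom \ref{enu: family axioms - radial proj} supplies $x^+ \in X^+$ with $\dist x{x^+} = \rho - \dist xc$. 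The crucial geometric input is that $\stab c$ is cyclic by \ref{enu: family axioms - elem subgroups} and hence abelian, so the non-abelianness of $\varphi(G)$ (via the effective role of $U$) forces some $u_0 \in U$ with $\varphi(u_0) c \neq c$. The $\Gamma$-invariance and $2\rho$-separation of $\mathcal C$ then give
\[
\dist{\varphi(u_0)c}c \;\le\; 2\dist xc + \dist{\varphi(u_0)x}x, \qquad \text{hence} \qquad \dist{\varphi(u_0)x}x \;\ge\; 2\rho - 2\dist xc \;=\; 2\dist x{x^+}.
\]
Thus $\dist x{x^+} \le \tfrac12 \lambda_\infty(\varphi, U, x)$, and for each $v \in U$ the triangle inequality yields $\dist{\varphi(v)x^+}{x^+} \le \dist{\varphi(v)x}x + 2\dist x{x^+} \le 2\lambda_\infty(\varphi, U, x)$; summing over $v$ gives the bound on $\lambda_1(\varphi, U, x^+)$, and letting $\eta \to 0$ gives the claim.

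For the moreover clause, the same construction delivers a pair $(x, x^+)$ with $x$ automatically $\eta$-optimal for $\lambda_\infty$ and with $\dist x{x^+} \le \tfrac12 \lambda_\infty(\varphi, U, x)$, which sits comfortably within the slack $\card U \, \lambda_\infty + \epsilon$ in condition (2). The delicate point is guaranteeing that $x^+$ is genuinely $\epsilon$-optimal for $\lambda_1^+$: the radially projected $x^+$ only satisfies $\lambda_1(\varphi, U, x^+) \le 2\card U \, \lambda_\infty$, which is of the right order of magnitude but may overshoot $\lambda_1^+$. The remedy, I anticipate, is to argue that any genuine $\lambda_1^+$-minimiser $x^+_*$ lies close to the projected $x^+$, using acylindricity \ref{enu: family axioms - acylindricity}: the near-minimum sets of $\lambda_1(\varphi, U, \cdot)$ on $X^+$ have diameter controlled by $\delta$ and $\card U \, \lambda_\infty$, and the hyperbolic slack in $\epsilon > 12\delta$ exactly absorbs the additive errors arising from the four-point inequality (\ref{eqn: four point hyp}) and the cone formula (\ref{eqn: family axioms - conical - 2}).

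\textbf{Main expected obstacle.} The radial-projection step establishing $\lambda_1^+ \le 2\card U \, \lambda_\infty$ is essentially dictated by the axioms and goes through cleanly. The real technical difficulty lies in the moreover clause: $\lambda_\infty$-optimal basepoints can sit arbitrarily deep inside a thin region $B(c, \rho)$, near an apex where rotations in $\stab c$ drive displacements toward zero, whereas $\lambda_1^+$-optimal basepoints must live on or beyond the sphere of radius $\rho$, where axiom \ref{enu: family axioms - conical - 2} forces any nonzero angular motion to translate into displacement of order $\sinh\rho$. Reconciling these two regimes -- showing that the two near-minimum loci nonetheless lie within distance $\card U \, \lambda_\infty + \epsilon$ of one another -- is precisely what the combination of the cone formula with the acylindricity bound, and the $12\delta$ slack, are engineered to handle.
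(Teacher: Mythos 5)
Your proof of the inequality chain is correct: the first two inequalities are elementary, and the key bound $\lambda_1^+ \le 2\card U\,\lambda_\infty$ via radial projection and the $2\rho$-separation of cone points is exactly right. One caveat worth stating explicitly: the claim that some $u_0\in U$ has $\varphi(u_0)c\neq c$ requires that $\varphi(U)$ is not contained in the (cyclic, hence abelian) stabiliser $\stab c$, which is guaranteed only if $U$ generates $G$ (as it does in every application in the paper). With $U$ an arbitrary finite subset, the conclusion can genuinely fail -- if $\varphi(U)\subset\stab c$ with small rotation angles, then $\lambda_\infty \approx 0$ while $\lambda_1^+$ stays bounded away from zero.

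The moreover clause, however, has a real gap, and your proposed remedy does not close it. You want to show that a true $\lambda_1^+$-near-minimiser $x^+_*$ lies close to the radially projected $x^+$, by bounding the diameter of the near-minimum set of $\lambda_1(\varphi,U,\cdot)$ on $X^+$ via acylindricity. This fails for two reasons. First, the projected $x^+$ is only known to satisfy $\lambda_1(\varphi,U,x^+)\le 2\card U\,\lambda_\infty$, which may be far above $\lambda_1^+$; so $x^+$ need not lie in the near-minimum set at all, and a diameter bound on that set controls nothing about $\dist{x^+}{x^+_*}$. Second, even if it did, the acylindricity bound (using~\ref{enu: family axioms - acylindricity} and $\nu\le 2$ from \autoref{res: nu invariant vs finite cyclic subgroups}) gives $\diam\fix{\varphi(U),d}\le 5d+\delta$; with $d\approx 2\card U\,\lambda_\infty$ this is $\approx 10\card U\,\lambda_\infty$, a factor of ten larger than the bound $\card U\,\lambda_\infty+\epsilon$ you need.

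The correct mechanism is the displacement inequality~(\ref{eqn: displacement outside fixed set}), not a diameter bound. Take $x^+\in X^+$ with $\lambda_1(\varphi,U,x^+)\le\lambda_1^++\eta$ for small $\eta$, set $d=\lambda_\infty+\epsilon$ (so $d>\max\{\lambda_\infty,5\delta\}$), and note $\lambda_\infty(\varphi,U,x^+)\le\lambda_1(\varphi,U,x^+)\le 2\card U\,\lambda_\infty+\eta$. If $x^+\in\fix{\varphi(U),d}$ take $x=x^+$; otherwise~(\ref{eqn: displacement outside fixed set}) gives
\[
d\bigl(x^+,\fix{\varphi(U),d}\bigr) \;\le\; \frac{\lambda_\infty(\varphi,U,x^+)-d+10\delta}{2} \;\le\; \Bigl(\card U-\tfrac12\Bigr)\lambda_\infty + \frac{\eta-\epsilon+10\delta}{2},
\]
and since $\epsilon>12\delta$ the last fraction is negative once $\eta<2\delta$, giving $d(x^+,\fix{\varphi(U),d})<\card U\,\lambda_\infty$. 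Picking $x\in\fix{\varphi(U),d}$ nearly realising this distance gives all three conditions. This is what the $12\delta$ slack in $\epsilon$ is for -- it absorbs the $10\delta$ error in the displacement inequality, not errors from the four-point inequality or the cone formula as you suggest.
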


%
\subsection{Approximations of \texorpdfstring{$\Gamma(n)$}{Γ(n)}}
%

Let us now explain how the class of groups $\mathfrak H(\delta, \rho)$ introduced in the previous section naturally appears in the study of the group $\Gamma(n)$.
Let $(\Gamma_1, X_1)$ and $(\Gamma_2, X_2)$ be two pairs, where $X_i$ is a metric space and $\Gamma_i$ a group acting by isometries on $X_i$.
A \emph{morphism} $(\Gamma_1, X_1) \to (\Gamma_2, X_2)$ is a pair $(\pi, f)$ where $\pi$ is a homomorphism from $\Gamma_1$ to $\Gamma_2$ and $f$ a $\pi$-equivariant map from $X_1$ to $X_2$ (we do not require $f$ to be an isometry).
Such a morphism is called an \emph{epimorphism} if $\pi$ is onto (we do not require $f$ to be onto).
The statement below yields a sequence of groups approximating $\Gamma(n)$ and summarises all their properties.

\begin{theo}
\label{res: approximating sequence}
	There exist $\delta \in \R_+^*$ and a non-decreasing function $\rho \colon \N \to \R_+^*$, diverging to infinity, with the following properties.
	Let $\Gamma$ be the fundamental group of a closed orientable surface acting on the hyperbolic plane $X = \H^2$ by deck transformations.
	There exists an exponent $N \in \N$ such that, for every integer $n \geq N$, the group $\Gamma(n)$ is the direct limit of a sequence of non-elementary hyperbolic groups.
	
	\smallskip
	More precisely, there is a sequence
	\begin{equation*}
		(\Gamma_0, X_0) \onto (\Gamma_1, X_1) \onto \dots \onto (\Gamma_j, X_j) \onto (\Gamma_{j+1}, X_{j+1}) \onto \dots
	\end{equation*}
	with the following properties.
	\begin{enumerate}
		\item \label{enu: approximating sequence - init}
		$\Gamma_0 = \Gamma$ while $X_0$ is a rescaled version of $X$ that does not depend on $n$.
		\item \label{enu: approximating sequence - control}
		For every $j \in \N$, there is a subset $\mathcal C_j \subset X_j$ such that $(\Gamma_j, X_j, \mathcal C_j)$ belongs to the class $\mathfrak H(\delta, \rho(n))$.
		\item \label{enu: approximating sequence - cvg}
		The direct limit of the sequence $(\Gamma_j)$ is isomorphic to $\Gamma(n)$.
		\item \label{enu: approximating sequence - elliptic}
		Every elliptic subgroup of $\Gamma_j$ is contained in a malnormal cyclic subgroup of order $n$ generated by an element $u \in \Gamma_j$ whose image in $\Gamma(n)$ coincides with that of a simple closed curve.
		\item \label{enu: approximating sequence - metric}
		Let $j \in\N$.
		On the one hand, the map $X_j \to X_{j+1}$ is $\epsilon(n)$-Lipschitz, where $\epsilon(n) <1$ only depends on $n$, and converges to zero as $n$ tends to infinity.
		On the other hand, for every $x \in X_j$, the projection $\pi_j \colon \Gamma_j \onto \Gamma_{j+1}$ is one-to-one when restricted to the set
		\begin{equation*}
			\set{\gamma \in \Gamma_j}{\epsilon(n)\dist{\gamma x}x \leq \frac{\rho(n)}{100}}.
		\end{equation*}
		\item \label{enu: approximating sequence - lifting}
		Let $j \in \N\setminus\{0\}$.
		Let $F$ be the free group generated by a finite set $S$.
		Let $\ell \in \N$ and $V$ be the set of elements of $F$ whose length (seen as words over $S$) is at most $\ell$.
		Let $\varphi \colon F \to \Gamma_j$ be a homomorphism whose image does not fix a cone point in $\mathcal C_j$.
		Assume that  
		\begin{equation*}
			\lambda_\infty\left(\varphi, S\right) < \frac {\rho(n)}{100\ell}\,.
		\end{equation*}
		Then there exists a map $\tilde \varphi \colon F \to \Gamma_{j-1}$ such that $\varphi = \pi_{j-1} \circ \tilde \varphi$ and $V \cap \ker {\varphi} = V \cap \ker {\tilde \varphi}$.

	\end{enumerate}
\end{theo}

\begin{defi}
    We call any sequence of triples $(\Gamma_j, X_j, \mathcal C_j)$ satisfying the conclusion of \autoref{res: approximating sequence} an \emph{approximation sequence of $\Gamma(n)$}.
\end{defi}

\begin{rema}[About even exponents]
\label{rem: even exponents}
	Recall that if $(\Gamma_j, X_j, \mathcal C_j)$ belongs to $\mathfrak H(\delta, \rho)$ then any finite subgroup of $\Gamma_j$ is cyclic.
	Denote by $\Gamma^n$ the (normal) subgroup of $\Gamma$, generated by the $n$th power of every element $\gamma \in \Gamma$.
	A reader familiar with the study of periodic quotients $\Gamma / \Gamma^n$ for even exponent may be surprised that such groups could be used to approximate $\Gamma(n)$.
	Indeed, if $n$ is a (large) power of $2$, the quotient $\Gamma / \Gamma^n$ contains finite subgroups of the form $\dihedral[n] \times \dots \times \dihedral[n]$, with an arbitrarily large number of factors.
	On the contrary, every maximal finite subgroup of $\Gamma(n)$ is cyclic of order $n$ and generated by (the image of) a simple closed curve.
	See the induction assumption \ref{enu: induction step - elliptic} below.
	This important difference explains why we can ``easily'' handle both the case of even and odd exponents.
\end{rema}

\begin{rema}\
\label{rem: canonical proj asymp injective}
	With an induction, \autoref{res: approximating sequence}\ref{enu: approximating sequence - metric} implies that the sequence of finitely generated groups  $\Gamma(n)$ converges to the surface group $\Gamma$ in the space of marked groups. That is to say, every finite subset $F \subset \Gamma$ embeds into $\Gamma(n)$ under the natural quotient map, for all sufficiently large $n$. In this paper, we will not need to use any other property of the topology of marked groups except this definition. See Champetier--Guirardel \cite{Champetier:2005ic} for generalities on the space of marked groups.
\end{rema}

%
\subsubsection{The initial data}
%

The remainder of this section is devoted to the proof of \autoref{res: approximating sequence}.
To that end, we first fix first some auxiliary quantities.
Let $\delta_0$, $\delta_1$, $\Delta_0$, and $\rho_0$ be the parameters given by the small-cancellation theorem (\autoref{res: small cancellation theorem}).
Set
\begin{equation*}
	A = 10\pi \sinh\left(10^3\delta_1\right)\,, \quad
	\kappa = \frac{\delta_1 }{\pi \sinh (26\delta_1)}\,, \quad 
	\alpha = 20 \pi \sinh(100\delta_1)\,,
\end{equation*}
and \begin{equation*}
	\delta = \max \{ 10\delta_1, A, \alpha\}.
\end{equation*}
We choose once for all a non-decreasing map $\rho \colon \N \to \R_+$ which diverges to infinity, and satisfies
\begin{equation*}
	\rho(n) =o\left(\ln n\right)\,.
\end{equation*}
We define a rescaling parameter by
\begin{equation*}
	\epsilon(n) = \sqrt{\frac {10\pi \sinh \rho(n)}{n \kappa \delta_1}}.
\end{equation*}
Observe that 
\begin{equation*}
	\ln \epsilon(n) = - \frac 12 \ln n + \frac 12\rho(n) + O(1).
\end{equation*}
Thus, $\epsilon(n)$ and $\epsilon(n)\rho(n)$ converge to zero.
Therefore, there exists a critical exponent $N\geq 4$ such that, for every integer $n \geq N$, the following all hold:
\begin{align}
	\label{eqn: recale - rho0}
	\rho(n)  & \geq \rho_0\,; \\
	\label{eqn: recale - hyp}
	\epsilon(n) \delta_1 & \leq \delta_0\,; \\
	\label{eqn: recale - A}
	\epsilon(n) \left[A + 500\delta_1\right] & \leq \Delta_0\,; \\
	\label{eqn: recale - A bis}
	\epsilon(n)   & \leq 2 / 5\,; \\
	\label{eqn: recale - inj}  
	\epsilon(n) \kappa & \leq 1\,;\\
	\label{eqn: recale - thin}  
	\epsilon(n) (\rho(n) + \alpha) & \leq \alpha/2\,.
\end{align}

Let $\Gamma$ be the fundamental group of a closed orientable surface $S$ of genus at least $2$.
We choose a geometric structure on $S$, which allows us to identify the universal cover $X$ of $S$ with the hyperbolic plane $\mathbf H^2$.
The group $\Gamma$ acts on $X$ by deck transformations.
Up to rescaling $X$, we can assume that $X$ is $\delta$-hyperbolic for some $\delta \leq \delta_1$ and $A(\Gamma, X) \leq A$.
Similarly we can assume that every non-trivial isometry of $\Gamma$ is $\alpha$-thin.
We can also increase the value of the critical exponent $N$ given by (\ref{eqn: recale - hyp})-(\ref{eqn: recale - inj}) so that
\begin{equation*}
    \inj \Gamma X\geq \epsilon(n) \kappa \delta_1
\end{equation*}
for every $n \geq N$.
Finally, we fix once and for all an exponent $n \geq N$.
Recall that $\Gamma(n)$ stands for the quotient of $\Gamma$ by the normal subgroup generated by the $n$th power of every simple closed curve.
Denote by $U$ the set of all elements $u \in \Gamma$ whose image in $\Gamma(n)$ coincides with that of a simple closed curve on $S$.
Observe that the group $\group{u^n : u \in U}$ is the kernel of the projection $\Gamma \onto \Gamma(n)$.

\subsubsection{Construction by induction}

We are going to build by induction a sequence of triples $(\Gamma_j, X_j, \mathcal C_j)$, where:
\begin{itemize}
	\item $\Gamma_j$ is a quotient of $\Gamma$, and the projection $\Gamma \onto \Gamma(n)$ factors through $\Gamma \onto \Gamma_j$;
	\item $X_j$ is a $\delta_1$-hyperbolic length space endowed with an action by isometries of $\Gamma_j$; and
	\item $\mathcal C_j \subset X_j$ is a $\Gamma_j$-invariant, $2\rho(n)$-separated set of cone points.
\end{itemize}
Denoting by $U_j$ the image of $U$ in $\Gamma_j$, these data will satisfy the following properties.
\begin{enumerate}[label=(R\arabic*)]
	\item \label{enu: induction step - action}
	The action of $\Gamma_j$ is proper, co-compact and non-elementary. In particular, $\Gamma_j$ is hyperbolic.
	\item \label{enu: induction step - geometry}
 $A(\Gamma_j, X_j) \leq A$ and $\inj {U_j}{X_j} \geq \epsilon(n) \kappa \delta_1$.
	\item \label{enu: induction step - elliptic}
	Every elliptic subgroup of $\Gamma_j$ is contained in a malnormal cyclic subgroup of order $n$ generated by an element of $U_j$.
	\item \label{enu: induction step - thin}
	Every loxodromic element of $\Gamma_j$ is $\alpha$-thin.
	Every non-trivial elliptic element is either $\alpha$-thin or fixes a cone point in $\mathcal C_j$ and is $(\rho(n)+ \alpha)$-thin.
\end{enumerate}

\paragraph{The basis of the induction.}
We set $\Gamma_0 = \Gamma$, $X_0 = X$ and $\mathcal C_0 = \emptyset$.
We claim that $(\Gamma_0, X_0, \mathcal C_0)$ satisfies the above induction hypotheses.
Indeed \ref{enu: induction step - action} is straightforward, while \ref{enu: induction step - geometry} and \ref{enu: induction step - thin} follow from our choice of renormalization and the critical exponent $N$.
Since $\Gamma$ is torsion-free, \ref{enu: induction step - elliptic} holds.

\paragraph{The induction step.}
Let $j \in \N$.
Assume now that the triple $(\Gamma_j, X_j, \mathcal C_j)$ has already been defined.
For simplicity, we simply write $\epsilon$ and $\rho$ for $\epsilon(n)$ and $\rho(n)$ respectively.
We denote by $P_j$ the set of all loxodromic elements $u \in U_j$, which are not a proper power, and such that $\norm u \leq 10\delta_1$.
Let $K_j$ be the (normal) subgroup of $\Gamma_j$ generated by $\set{u^n}{u \in P_j}$ and $\Gamma_{j+1}$ the quotient of $\Gamma_j$ by $K_j$.

If $P_j$ is empty, then $\Gamma_{j+1} = \Gamma_j$.
In this situation, we let $X_{j+1} = \epsilon X_j$ and $\mathcal C_{j+1} = \mathcal C_j$.
They satisfy the conclusions \ref{enu: induction step - action}-\ref{enu: induction step - thin}.

Otherwise, we define the family $\mathcal Q_j$ by
\begin{equation*}
	\mathcal Q_j = \set{\left( \group {u^n}, Y_u\right)}{u \in P_j}.
\end{equation*}
We are going to prove that $\Gamma_{j+1}$ is a small cancellation quotient of $\Gamma_j$ by the family $\mathcal Q_j$.
To that end, we consider the action of $\Gamma_j$ on the rescaled space $\epsilon X_j$.
According to (\ref{eqn: recale - hyp}) this space is $\delta_0$-hyperbolic.
Since $\Gamma_j$ is hyperbolic, there are only finitely many conjugacy classes whose length is bounded from above by a given length.
Hence $ \Gamma_j\backslash \mathcal Q_j$ is finite.
Recall that the morphism $\Gamma \onto \Gamma(n)$ factors through $\Gamma \onto \Gamma_j$.
It follows that $U_j$ can also be described as the set of all elements in $\Gamma_j$ whose image in $\Gamma(n)$ coincides with that of a simple closed curve in $\Gamma$.

\begin{clai}
\label{res: order elliptic U}
	Every elliptic element in $U_j$ has order exactly $n$.
\end{clai}

\begin{proof}
	Let $u \in U_j$ be an elliptic element.
	According to \autoref{res: order scc}, the image of $u$ in $\Gamma(n)$ has order exactly $n$.
	Since $u$ is elliptic it follows from \ref{enu: induction step - elliptic} that the order of $u$ in $\Gamma_j$ is at most $n$.
	Thus it is exactly $n$.
\end{proof}

\begin{clai}
\label{res: even power dihedral}
	If $n \geq 4$ is even then $U_j \cap \group{u^{n/2}: u \in U_j} = \emptyset$.
\end{clai}

\begin{proof}
	Consider the projections
	\begin{equation*}
		\Gamma \onto \Gamma_j \onto \Gamma(n) \onto \Gamma(n/2).
	\end{equation*}
	In particular, the kernel of the epimorphism $\Gamma_j \onto \Gamma(n/2)$ contains the normal subgroup $\group{u^{n/2}: u \in U_j}$.
	The image in $\Gamma(n/2)$ of any simple closed curve $\gamma \in \Gamma$ has order exactly $n/2$ (\autoref{res: order scc}). 
	In particular, it is non trivial, whence the result.
\end{proof}

\begin{clai}
\label{res: elem closure relation}
	For every $u \in P_j$, we have $\stab{Y_u} = E(u) = \group u$.
\end{clai}

\begin{rema}
\label{rem: dihedral not involved in relations}
	This claim is one of the key observations that allows us to handle the case of even exponents.
	Indeed, even if the group $\Gamma_j$ contains a subgroup, say $H$, isomorphic to $\dihedral$, this subgroup will remain more or less ``untouched'' during the rest of this induction:
	one will never add new relations of the form $\gamma^n = 1$ for some $\gamma \in H$.
	In particular, this prevents the appearance of complicated finite subgroups in $\Gamma(n)$.
\end{rema}

\begin{proof}
	It follows from the definition of $Y_u$ that $\stab {Y_u} = E(u)$.
	Since no non-trivial finite subgroup of $\Gamma_j$ is normalised by a loxodromic element, $E(u)$ is isomorphic either to $\Z$ or $\dihedral$.
	We are going to rule out the second option.
	If $E(u) = \dihedral$, then it is generated by two involutions, say $\gamma_1$ and $\gamma_2$.
	Combining \ref{enu: induction step - elliptic} with \autoref{res: order elliptic U}, the exponent $n$ is even.
	Moreover, $\gamma_i$ can be written as
	\begin{equation*}
		\gamma_i = u_i^{n/2}
	\end{equation*}
	for some $u_i \in U_j$.
	In particular $u$ belongs to 
	\begin{equation*}
		E(u) = \group{u_1^{n/2}, u_2^{n/2}}.
	\end{equation*}
	This contradicts \autoref{res: even power dihedral}.
	Hence $E(u)$ is cyclic.
	Since $u$ belongs to $P_j$, it is not a proper power, hence $E(u) = \group u$.
\end{proof}

\begin{clai}
\label{res: induction - check sc hyp}
	The family $\mathcal Q_j$ satisfies the small-cancellation hypotheses, i.e.\ $\Delta(\mathcal Q_j, \epsilon X_j) \leq \Delta_0$ and $T(\mathcal Q_j, \epsilon X_j) \geq 10 \pi \sinh \rho$.
\end{clai}

\begin{proof}
	We start with the upper bound of $\Delta(\mathcal Q_j, \epsilon X_j)$.
	Let $u_1$ and $u_2$ be two elements of $P_j$ such that $(\group {u_1^n}, Y_{u_1})$ and $(\group {u_2^n}, Y_{u_2})$ are distinct.
	We first claim that $u_1$ and $u_2$ generate a non-elementary subgroup.
	Assume on the contrary that it is not the case.
	According to \autoref{res: elem closure relation} we have $\group {u_1} = \group{u_2}$.
	Therefore 
	\begin{equation*}
		(\group {u_1^n}, Y_{u_1}) = (\group {u_2^n}, Y_{u_2})\,,
	\end{equation*}
	which contradicts our assumption.
	Recall that $Y_{u_i}$ is contained in the $52\delta$-neighbourhood of $\fix{u_i, \norm{u_i} + 8 \delta} \subset \epsilon X_j$ \cite[Lemma~2.32]{Coulon:2014fr}.
	Since $u_1$ and $u_2$ generate a non-elementary subgroup we get from the definition of the acylindricity parameter that
	\begin{equation*}
		\diam{Y_{u_1}^{+5\delta} \cap Y_{u_2}^{+5\delta}} 
		\leq 18 \left[\nu(\Gamma_j,X_j) + 3\right]\epsilon\delta_1 + A(\Gamma_j,\epsilon X_j)\,.
	\end{equation*}
	Since every elliptic subgroup of $\Gamma_j$ is cyclic, $\nu(\Gamma_j, X_j) \leq 2$ (\autoref{res: nu invariant vs finite cyclic subgroups}).
	By assumption $A(\Gamma_j,\epsilon X_j) \leq \epsilon A$.
	Hence
	\begin{equation*}
		\diam{Y_{u_1}^{+5\delta} \cap Y_{u_2}^{+5\delta}} 
		\leq \epsilon \left[A+ 90\delta_1\right].
	\end{equation*}
	Using (\ref{eqn: recale - A}) we get $\Delta(\mathcal Q_j, \epsilon X_j) \leq \Delta_0$.
	It follows from our assumption that 
	\begin{equation*}
		\inj {U_j}{\epsilon X_j}
		\geq \epsilon \inj {U_j}{X_j}
		\geq \epsilon^2 \kappa \delta_1 
		\geq \frac{10\pi \sinh \rho}n.
	\end{equation*}
	Let $(H,Y) \in \mathcal Q_j$.
	By construction, $H$ is generated by the $n$th power of a loxodromic element of $U_j$.
	Consequently, for every $h \in H$,
	\begin{equation*} 
		\norm[\epsilon X_j]h
		\geq n \inj {U_j}{\epsilon X_j}
		\geq 10\pi \sinh \rho.
	\end{equation*}
	It follows that $T(\mathcal Q_j,\epsilon X_j)\geq 10\pi \sinh \rho$.
\end{proof}

On account of the previous claim, we can now apply the small-cancellation theorem (\autoref{res: small cancellation theorem}) to the action of $\Gamma_j$ on the rescaled space $\epsilon  X_j$ and the family $\mathcal Q_j$.
We denote by $\dot X_j$ the space obtained by attaching on $\epsilon X_j$, for every $(H,Y) \in \mathcal Q_j$, a cone of radius $\rho$ over the set $Y$.
The space $X_{j+1}$ is the quotient of $\dot X_j$ by $K_j$.
According to \autoref{res: small cancellation theorem}, $X_{j+1}$ is a $\delta_1$-hyperbolic geodesic space and $\Gamma_{j+1}$ acts properly, co-compactly by isometries on it.
We write $\mathcal C_{j+1}$ for the image in $X_{j+1}$ of the set of cone points in $\dot X_j$. 
Note that $\mathcal C_{j+1}$ is not the image in $X_{j+1}$ of $\mathcal C_i \subset X_j$.
In addition, we write $U_{j+1}$ for the image of $U_j$ in $\Gamma_{j+1}$.

Observe that $\Gamma_{j+1}$ has been obtained from $\Gamma_j$ by adjoining relations of the form $u^n = 1$ where the image of $u$ in $\Gamma(n)$ coincides with that of a simple closed curve in $\Gamma$.
Consequently the morphism $\Gamma_j \onto \Gamma(n)$ factors through the projection $\Gamma_j \onto \Gamma_{j+1}$.
In addition, the pre-image of $U_{j+1}$ in $\Gamma_j$ is actually $U_j$.

We now prove that the triple $(\Gamma_{j+1},  X_{j+1}, \mathcal C_{j+1})$ satisfies the induction hypotheses.
We already mentioned that $\Gamma_{j+1}$ acts properly co-compactly on $X_{j+1}$, which corresponds to \ref{enu: induction step - action}.
Point~\ref{enu: induction step - geometry} is a consequence of the following statement.

\begin{clai}
	The parameters $A(\Gamma_{j+1},X_{j+1})$ and $\inj{U_{j+1}}{X_{j+1}}$ satisfy
	\begin{equation*}
		A(\Gamma_{j+1},X_{j+1})  \leq A
		\quad \text{and} \quad
		\inj{U_{j+1}}{X_{j+1}}\geq \epsilon \kappa \delta_1\,.
	\end{equation*}
\end{clai}

\begin{proof}
	We start with the upper bound of $A(\Gamma_{j+1}, X_{j+1})$.
	Recall that $\nu(\Gamma_j, X_j)$ is at most $2$.
	Hence \autoref{res: acyl quotient} yields
	\begin{equation*}
		A(\Gamma_{j+1}, X_{j+1}) 
		\leq A(\Gamma_j, \epsilon X_j) + [\nu(\Gamma_j,X_j) +4]\pi \sinh(10^3 \delta_1)
		\leq (3/5+ \epsilon)A\,.
	\end{equation*}
	Using (\ref{eqn: recale - A bis}) we obtain $A(\Gamma_{j+1}, X_{j+1}) \leq A$.
	
	We now focus on the injectivity radius of $U_{j+1}$.
	Let $u \in U_j$ not belong to $\stab Y$ for some $(H,Y) \in \mathcal Q_j$.
    Observe that 
	\begin{equation*}
		\snorm[\epsilon X_j] {u} 
		\geq \norm [\epsilon X_j]{u} - 8\epsilon\delta_1
		> 2\epsilon\delta_1\,.
	\end{equation*}
    The first inequality is indeed a consequence of (\ref{eqn: regular vs stable length}) while the second one follows from the definition of the set $P_j$ used to build $\mathcal Q_j$: any element that does not belong to $P_j$ has a translation length larger that $10\delta_1$ (for the metric of $X$ before rescaling).
	As we noticed, the pre-image of $U_{j+1}$ in $\Gamma_j$ is exactly $U_j$.
	Hence, \autoref{res: injectivity radius} combined with (\ref{eqn: recale - inj}) yields 
	\begin{equation*}
		\inj{U_{j+1}}{X_{j+1}}	
		\geq \min\left\{ \epsilon \kappa\delta_1, \delta_1 \right\}
		\geq  \epsilon \kappa\delta_1\,. \qedhere
	\end{equation*}
\end{proof}

Let us prove \ref{enu: induction step - elliptic} for $\Gamma_{j+1}$.
Note first that it follows from \autoref{res: max ell sg malnormal} and  \ref{enu: induction step - elliptic}  for the group $\Gamma_j$, that any elliptic subgroup of $\Gamma_j$ that is normalised by a loxodromic element is trivial.
Consequently the same holds for $\Gamma_{j+1}$, by \autoref{res: lifting normalised elliptic sbgp}.
Let $F$ be an elliptic subgroup of $\Gamma_{j+1}$.
According to \autoref{res: lifting elliptic subgroups}, either $F$ is the isomorphic image of an elliptic subgroup of $\Gamma_j$, or $F$ embeds in $\stab{Y_u} / \group{u^n} = \Z/n\Z$ for some $u \in P_j$.
In view of Assumption~\ref{enu: induction step - elliptic} for $\Gamma_j$, we deduces that $F$ is contained in a cyclic subgroup of order $n$ generated by some element $u \in U_{j+1}$.
It then follows from \autoref{res: max ell sg malnormal} applied in $\Gamma_{j+1}$ that $\group u$ is malnormal.

Point~\ref{enu: induction step - thin} is a consequence of the following statement.

\begin{clai}
	Let $\gamma \in \Gamma_{j+1}\setminus\{1\}$.
	\begin{itemize}
	\item If $\gamma$ fixes a cone point $c\in\mathcal C_{j+1}$, then $\gamma$ is $(\rho(n)+\alpha)$-thin at $c$.
	\item Otherwise $\gamma$ is $\alpha$-thin.
\end{itemize}
\end{clai}

\begin{proof}
	Let 
	\begin{equation*}
		\beta = \epsilon(\rho + \alpha)
		\quad\text{and}\quad
		\bar \beta = \beta + 10\pi \sinh(100\delta_1) = \beta + \alpha/2.
	\end{equation*}
	It follows from \ref{enu: induction step - thin} that every non-trivial element in $\Gamma_j$ is $\beta$-thin for its action on the rescaled space $\epsilon X_j$.
	According to \autoref{res: sc - thin}, if $\gamma$ fixes a cone point $c\in \mathcal C_{j+1}$, then $\gamma$ is $(\rho +\bar\beta)$-thin at $c$.
	It is $\bar\beta$-thin otherwise.
	However, $\epsilon(\rho + \alpha) \leq \alpha/2$ by (\ref{eqn: recale - thin}), whence the result.
\end{proof}

We have proven that the triple $(\Gamma_{j+1}, X_{j+1}, \mathcal C_{j+1})$ satisfies Properties \ref{enu: induction step - action}-\ref{enu: induction step - thin}, which completes the induction process.

\paragraph{Additional properties.}
Recall that $\delta$ has been fixed so that 
\begin{equation*}
	\delta \geq \max\{10 \delta_1, A, \alpha\}.
\end{equation*}
Consequently, for every $j \in \N$, the triple $(\Gamma_j, X_j, \mathcal C_j)$ satisfies all the axioms defining $\mathfrak H(\delta, \rho)$, except maybe \ref{enu: family axioms - conical - 2} describing the action of $\Gamma_j$ around a cone point.
The latter is just a reformulation of \autoref{res: sc - angle cocycle}, hence $(\Gamma_j, X_j, \mathcal C_j)$ belongs to $\mathfrak H(\delta, \rho)$.
This proves \autoref{res: approximating sequence}\ref{enu: approximating sequence - control}.
Note that $\Gamma_0 = \Gamma$ and $X_0$ is just a rescaled version of $X$, hence \autoref{res: approximating sequence}\ref{enu: approximating sequence - init} holds.
Observe also that \autoref{res: approximating sequence}\ref{enu: approximating sequence - elliptic} is simply a reformulation of \ref{enu: induction step - elliptic}.

We continue the above discussion with some additional information on the projection $\pi_j \colon \Gamma_j \to \Gamma_{j+1}$.
More precisely, the next two claims respectively prove that the sequence $(\Gamma_j, X_j, \mathcal C_j)$ satisfies items~\ref{enu: approximating sequence - metric} and \ref{enu: approximating sequence - lifting} of \autoref{res: approximating sequence}.

\begin{clai}
\label{res: claim proj}
	The map $\zeta \colon X_j \to X_{j+1}$ is $\epsilon$-Lipschitz.
	Moreover, for every $x \in X_j$, the projection $\pi_j \colon \Gamma_j \onto \Gamma_{j+1}$ is one-to-one when restricted to the set
	\begin{equation*}
		\set{\gamma \in \Gamma_j}{\epsilon\dist{\gamma x}x \leq \frac{\rho(n)}{100}}.
	\end{equation*}
\end{clai}

\begin{proof}
	Recall that the map $\epsilon X_j \to X_{j+1}$ is $1$-Lipschitz.
	Hence $X_j \to X_{j+1}$ is $\epsilon$-Lipschitz (and $\pi_j$-equivariant by construction).
	The point $x$ belongs to $X_j$.
	In particular it is at distance at least $\rho$ from any cone point of the cone-off space $\dot X_j$ built on $\epsilon X_j$.
	The claim is now a consequence of \autoref{res: small cancellation theorem}\ref{enu: small cancellation theorem - inj kernel}. 
\end{proof}

\begin{clai}
\label{res: claim lifting}
	Let $F$ be the free group generated by a finite set $S$.
	Let $\ell \in \N$ and $V$ be the set of elements of $F$ whose length (seen as words over $S$) is at most $\ell$.
	Let $\varphi \colon F \to \Gamma_{j+1}$ be a homomorphism whose image does not fix a cone point in $\mathcal C_{j+1}$.
	Assume that  
	\begin{equation*}
		\lambda_\infty\left(\varphi, S\right) < \frac {\rho(n)}{100\ell}\,.
	\end{equation*}
	Then there exists a map $\tilde \varphi \colon F \to \Gamma_j$ such that $\varphi = \pi_j \circ \tilde \varphi$ and $V \cap \ker {\varphi} = V \cap \ker {\tilde \varphi}$.
\end{clai}

\begin{proof}
	This is a direct application of \autoref{res: sc - lifting morphism}.
\end{proof}


\subsubsection{The group $\Gamma_\infty$}

Denote now by $\Gamma_\infty$ the direct limit of the sequence
\begin{equation*}
    \Gamma = \Gamma_0 \onto \Gamma_1 \onto \cdots \onto \Gamma_i \onto \Gamma_{i+1} \onto \cdots
\end{equation*}
Let $U_\infty$ be the image of $U$ in $\Gamma_\infty$.
Given an element $\gamma \in \Gamma$, we often abuse notation and continue to write $\gamma$ for its image in $\Gamma_i$, $\Gamma_\infty$, or $\Gamma(n)$.

\begin{lemm}
\label{res: isom Gamma infty / Gamma(n)}
    The groups $\Gamma_\infty$ and $\Gamma(n)$ are isomorphic.
    In particular, $U_\infty$ is the image in $\Gamma_\infty$ of the set of all simple closed curves.
\end{lemm}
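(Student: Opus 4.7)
The plan is to exhibit mutually inverse surjections $\Gamma_\infty \onto \Gamma(n)$ and $\Gamma(n) \onto \Gamma_\infty$. The first direction is immediate from the construction: at every step of the induction it was verified that the canonical projection $\Gamma \onto \Gamma(n)$ factors through $\Gamma \onto \Gamma_j$, so the universal property of the direct limit yields a canonical surjection $\Gamma_\infty \onto \Gamma(n)$.

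For the reverse surjection it suffices to show that $c^n = 1$ in $\Gamma_\infty$ for every simple closed curve $c \in \Gamma$. Fix such a $c$ and let $c_j \in \Gamma_j$ be its image; by definition $c_j \in U_j$ for all $j$. First, if $c_j$ is elliptic (or trivial) in $\Gamma_j$ for some $j$, then by hypothesis~\ref{enu: induction step - elliptic} combined with \autoref{res: order elliptic U} the element $c_j$ has order dividing $n$, so $c_j^n = 1$ in $\Gamma_j$ and a fortiori in $\Gamma_\infty$. Otherwise $c_j$ is loxodromic in every $\Gamma_j$; iterating the Lipschitz estimate of Claim~\ref{res: claim proj} gives $\norm[X_j]{c_j} \leq \epsilon(n)^j \norm[X_0]{c}$, so for $j$ sufficiently large $\norm[X_j]{c_j} \leq 10\delta_1$. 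Provided $c_j$ is not a proper power in $\Gamma_j$, one then has $c_j \in P_j$ by definition, whence $c_j^n \in K_j$ and so $c_j^n = 1$ in $\Gamma_{j+1}$, giving $c^n = 1$ in $\Gamma_\infty$. The additional conclusion that $U_\infty$ is the image of the simple closed curves then follows, since the kernels of $\Gamma \onto \Gamma_\infty$ and $\Gamma \onto \Gamma(n)$ coincide.

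The remaining case -- where $c_j = v_j^m$ is a proper power of a primitive element $v_j \in \Gamma_j$ with $m \geq 2$ -- is the main obstacle. Using that no non-trivial elliptic subgroup is normalised by a loxodromic element (\autoref{res: max ell sg malnormal} together with \ref{enu: induction step - elliptic}), the maximal elementary subgroup $E(c_j)$ is infinite cyclic, generated by $v_j$, and $v_j$ has translation length $\norm[X_j]{c_j}/m \leq 10\delta_1$. The strategy is to apply the dichotomy again to $v_j$: either $v_k$ becomes elliptic at some later stage $k \geq j$, in which case $v_k^n = 1$ and therefore $c_k^n = v_k^{mn} = 1$; or else $v_j$ is itself the image of some element of $U$, so that it enters $P_k$ for some $k \geq j$ and $v_j^n$ is killed at the next step. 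Verifying that at least one of these alternatives always holds requires a careful analysis of how primitive roots of elements of $U_j$ behave under the small-cancellation quotient $\Gamma_{j-1} \onto \Gamma_j$, in order to rule out the creation of primitive roots of simple closed curves that are not themselves images of elements of $U$. This propagation of primitivity through the iterated small-cancellation construction is the technical crux of the lemma.
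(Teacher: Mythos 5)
Your strategy is the same as the paper's — establish the surjection $\Gamma_\infty \onto \Gamma(n)$ by the universal property, then show $c^n = 1$ in $\Gamma_\infty$ for every simple closed curve $c$ using the $\epsilon(n)$-Lipschitz contraction to drive the translation length of $c_j$ below $10\delta_1$, and then split into elliptic and loxodromic cases. The paper's own treatment of the loxodromic case is terse (``it follows from the construction that $\gamma^n \in K_i$''), and you have correctly noticed that this conceals a genuine subtlety: $P_i$ is defined to contain only elements of $U_i$ that are \emph{not proper powers}, so if $c_i$ is loxodromic and $c_i = v^m$ with $v$ primitive but $v \notin U_i$, neither $c_i$ nor $v$ lies in $P_i$, and the relation $c_i^n \in K_i$ is not immediate.

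However, you then leave a real gap: you propose analysing ``how primitive roots of elements of $U_j$ behave under the small-cancellation quotient'' and declare this ``the technical crux'', without resolving it. This is not actually needed. The clean fix uses the injectivity radius bound \ref{enu: induction step - geometry}, which states $\inj{U_j}{X_j} \geq \epsilon(n)\kappa\delta_1 > 0$ for all $j$. Since $c_j \in U_j$, if $c_j$ were loxodromic for all $j$ one would have $\norm[X_j]{c_j} \geq \snorm[X_j]{c_j} \geq \epsilon(n)\kappa\delta_1$ by (\ref{eqn: regular vs stable length}), contradicting $\norm[X_j]{c_j} \leq \epsilon(n)^j \norm[X_0]{c} \to 0$. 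Hence $c_j$ is elliptic for some $j$, and then \ref{enu: induction step - elliptic} together with \autoref{res: order elliptic U} gives $c_j^n = 1$ in $\Gamma_j$. This dispenses with the loxodromic case entirely — and with the need to track primitive roots — and closes the hole in both your proposal and the paper's compressed phrasing.
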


\begin{proof}
    It follows from the construction that, for every $i \in \N$, the set $U_i$ is the image of $U$ in $\Gamma_i$.
    Consequently, all the relations added to build $\Gamma_\infty$ from $\Gamma$ are $n$th power of elements in $U$.
    Therefore, the canonical projection $\Gamma \onto \Gamma(n)$ factors through $\Gamma \onto \Gamma_\infty$.
    Let us now prove that these two projections have the same kernel.
    To that end it suffices to prove that the $n$th power of every element in $U$ is trivial in $\Gamma_\infty$.
    Let $\gamma \in U$.
    Recall that, for every $i \in \N$, the map $X_i \to X_{i+1}$ is $\epsilon$-Lipschitz, where $\epsilon < 1$.
    Consequently,
    \begin{equation*}
        \norm[X_i] \gamma \leq \epsilon^i \norm[X]\gamma.
    \end{equation*}
    Therefore, there is $i \in \N$ such that $\norm[X_i] \gamma \leq 10\delta_1$. 
    If $\gamma$ is elliptic in $\Gamma_i$ then, according to \ref{enu: induction step - elliptic}, we have $\gamma^n = 1$ in $\Gamma_i$.
    If $\gamma$ is loxodromic in $\Gamma_i$, then it follows from the construction that $\gamma^n$ belongs to $K_i$, hence $\gamma^n = 1$ in $\Gamma_{i+1}$.
    In both cases $\gamma^n = 1$ in $\Gamma_\infty$, whence the result.
\end{proof}

The previous lemma corresponds to \autoref{res: approximating sequence}\ref{enu: approximating sequence - cvg} and completes the proof of \autoref{res: approximating sequence}.

%
\subsection{First properties of \texorpdfstring{$\Gamma(n)$}{Γ(n)}}
\label{subsec: Gamma(n) prop}
%

In this subsection, we deduce some properties of $\Gamma(n)$ from \autoref{res: approximating sequence}, including the results of \autoref{thm: 1st Structural results}. Throughout, the exponent $N$ is the one given by \autoref{res: approximating sequence}, and we fix an integer $n \geq N$.
Let $(\Gamma_j)$ denote the sequence of hyperbolic groups approximating $\Gamma(n)$ that \autoref{res: approximating sequence} provides.

We first analyse the finite subgroups of $\Gamma(n)$.

\begin{prop}
\label{res: finite subgroup}
 	Every finite subgroup of $\Gamma(n)$ is contained in a malnormal cyclic subgroup of order $n$, which is the image of a simple closed curve in $\Gamma$.
\end{prop}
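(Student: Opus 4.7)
The plan is to lift $F$ to a finite subgroup of some approximating group $\Gamma_{j^*}$, apply property~\ref{enu: approximating sequence - elliptic} of \autoref{res: approximating sequence}, and then transfer the conclusion back to $\Gamma(n)$.

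First I would lift $F$ to $\Gamma_{j^*}$: given $F = \{f_1, \ldots, f_m\} \subset \Gamma(n)$, pick preimages $\tilde{f}_i \in \Gamma_j$ for some $j$. Since $\Gamma(n)$ is the direct limit of the $\Gamma_j$, the finitely many relations $\tilde f_i \tilde f_k = \tilde f_{\sigma(i,k)}$ expressing the multiplication table of $F$ must all hold in $\Gamma_{j^*}$ for some $j^* \geq j$; and since the $f_i$ are pairwise distinct in $\Gamma(n)$, their lifts are automatically pairwise distinct in every $\Gamma_{j'}$. Thus the $\tilde f_i$ form a subgroup $\tilde F \subset \Gamma_{j^*}$ isomorphic to $F$. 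As $\Gamma_{j^*}$ is hyperbolic and acts properly on $X_{j^*}$, the subgroup $\tilde F$ is elliptic, and property~\ref{enu: approximating sequence - elliptic} then places it inside a malnormal cyclic subgroup $\langle \tilde u \rangle \subset \Gamma_{j^*}$ of order $n$, with $\tilde u$ mapping to the class of a simple closed curve of $\Gamma$ in $\Gamma(n)$. By \autoref{res: order scc}, the image $\bar u$ of $\tilde u$ in $\Gamma(n)$ has order exactly $n$, so $\langle \bar u \rangle$ is a cyclic subgroup of order $n$ containing $F$.

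The hard part will be proving that $\langle \bar u \rangle$ is \emph{malnormal} in $\Gamma(n)$, not merely in $\Gamma_{j^*}$. I would first observe that for every $j \geq j^*$, the image $\tilde u_j$ of $\tilde u$ in $\Gamma_j$ still has order exactly $n$: at most $n$ because $\tilde u^n = 1$ already in $\Gamma_{j^*}$, and at least $n$ because this is true after projection to $\Gamma(n)$. Applying property~\ref{enu: approximating sequence - elliptic} inside $\Gamma_j$, the subgroup $\langle \tilde u_j \rangle$ is elliptic of order $n$ and sits inside a malnormal cyclic subgroup of the same order; the two must coincide, so $\langle \tilde u_j \rangle$ is itself malnormal in $\Gamma_j$ for every $j \geq j^*$. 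Finally, if $g \in \Gamma(n)$ satisfies $g \langle \bar u \rangle g^{-1} \cap \langle \bar u \rangle \neq \{1\}$, I would lift $g$ to some $\tilde g \in \Gamma_j$, use the direct limit to push the resulting conjugation relation $\tilde g \tilde u_j^k \tilde g^{-1} = \tilde u_j^l$ (which holds in $\Gamma(n)$) down to some $\Gamma_{j'}$ with $j' \geq j$, and conclude by malnormality in $\Gamma_{j'}$ that the image of $\tilde g$ lies in $\langle \tilde u_{j'} \rangle$, whence $g \in \langle \bar u \rangle$.
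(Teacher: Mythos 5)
Your proof is correct and follows essentially the same route as the paper's: lift the finite subgroup to some approximating group $\Gamma_{j^*}$, apply item~\ref{enu: approximating sequence - elliptic} of \autoref{res: approximating sequence} to locate it inside $\langle \tilde u\rangle$, push down, and then deduce malnormality of $\langle\bar u\rangle$ by lifting a putative non-trivial coincidence $g\langle\bar u\rangle g^{-1}\cap\langle\bar u\rangle\neq 1$ to a sufficiently deep $\Gamma_{j'}$ and contradicting malnormality there. Your intermediate observation that $\langle\tilde u_j\rangle$ is itself the malnormal cyclic subgroup of order $n$ (since it has order $n$ and is contained in one) is a helpful clarification of a step the paper states rather tersely.
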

\begin{proof}
	By construction, every finite subgroup $E$ of $\Gamma(n)$ lifts to a finite subgroup $E_i$ of $\Gamma_i$, for some $i \in \N$.
	According to \autoref{res: approximating sequence}\ref{enu: approximating sequence - elliptic}, $E_i$ is contained in $\group {u_i}$ for some $u_i \in \Gamma_i$ whose image $u$ in $\Gamma(n)$ coincides with that of a simple closed curve.
	Hence $E \subset \group{u}$.
	Observe that according to \autoref{res: order scc}, the subgroup $\group u$ is isomorphic to $\Z / n \Z$.
	If $E$ were not malnormal, the same would hold for the image of $E_i$ in $\Gamma_j$, for some sufficiently large $j \geq i$.
	This would contradict \autoref{res: approximating sequence}\ref{enu: approximating sequence - elliptic}, which completes the proof.
\end{proof}

This enables us to control the centre of $\Gamma(n)$, which will play a crucial role in the proofs of our main theorems. 

\begin{prop}
\label{res: infinite + trivial finite radical}
 	The group $\Gamma(n)$ is infinite with trivial centre and no non-trivial finite normal subgroup.
\end{prop}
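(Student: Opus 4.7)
The plan is to combine \autoref{res: finite subgroup} (which puts every finite subgroup inside a \emph{malnormal} cyclic subgroup of order $n$ generated by a simple closed curve) with \autoref{res: order scc} and the known infiniteness of the Burnside-type quotient $\Gamma/\Gamma^n$. Malnormality is the one substantive ingredient; the rest reduces to bookkeeping with simple closed curves and abelianisations.

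\textbf{Infiniteness.} Since $\gamma^n \in \Gamma^n$ whenever $\gamma$ is a simple closed curve, the projection $\Gamma \onto \Gamma(n)$ factors through a surjection $\Gamma(n) \onto \Gamma/\Gamma^n$. The theorems of Ol'shanskii (odd $n$) and Ivanov--Ol'shanskii (even $n$) yield that $\Gamma/\Gamma^n$ is infinite for all sufficiently large $n$, so $\Gamma(n)$ is as well.

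\textbf{No non-trivial finite normal subgroup.} Suppose $N$ is a non-trivial finite normal subgroup of $\Gamma(n)$. By \autoref{res: finite subgroup}, $N$ sits inside a malnormal cyclic subgroup $\group u$ of order $n$. For every $\gamma \in \Gamma(n)$, normality gives
\begin{equation*}
	\{1\} \neq N = \gamma N \gamma^{-1} \subset \group u \cap \gamma \group u \gamma^{-1},
\end{equation*}
and malnormality then forces $\gamma \in \group u$. Hence $\Gamma(n) = \group u$ is finite, contradicting the previous paragraph.

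\textbf{Triviality of the centre.} Let $z \in Z(\Gamma(n))$. Choose two simple closed curves $a,b$ on $S$ whose homology classes are linearly independent in $H_1(S;\Z) = \Z^{2g}$ (for example, the standard generators $a_1, b_1$, using $g \geq 2$). By \autoref{res: order scc} their images $u,v \in \Gamma(n)$ have order exactly $n$, and by \autoref{res: finite subgroup} both $\group u$ and $\group v$ are malnormal. Since $z$ commutes with $u$ we have $u \in \group u \cap z \group u z^{-1}$, so malnormality of $\group u$ gives $z \in \group u$; the same argument gives $z \in \group v$. It remains to show $\group u \cap \group v = \{1\}$. If not, malnormality of $\group u$ applied with $\gamma = v$ forces $v \in \group u$, so the images of $u$ and $v$ are proportional in the abelianisation $\Gamma(n)^{\mathrm{ab}}$. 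But the images in $\Gamma(n)$ of $n$th powers of simple closed curves already generate $n\Z^{2g}$ in $\Gamma^{\mathrm{ab}} = \Z^{2g}$ (simple closed curves span $H_1(S;\Z)$), so $\Gamma(n)^{\mathrm{ab}}$ surjects onto $(\Z/n\Z)^{2g}$, in which the classes of $u$ and $v$ remain linearly independent for every $n \geq 2$. This contradiction yields $z = 1$.

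The only non-trivial input is the malnormality clause of \autoref{res: finite subgroup}, and once that is available the only mildly delicate point is ruling out the collision $\group u = \group v$ for homologically independent simple closed curves, which is handled by the abelianisation computation above. No further hyperbolic or small-cancellation input is needed.
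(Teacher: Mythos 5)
Your proposal is correct, but it takes a genuinely different route from the paper's. The paper proves infiniteness by observing that if $\Gamma(n)$ were finite it would be finitely presented, forcing the approximation sequence $(\Gamma_j)$ of \autoref{res: approximating sequence} to stabilise at a non-elementary hyperbolic group — a contradiction; it then gets trivial centre by lifting a hypothetically central element to some $\Gamma_j$, where centrality forces finite order, and then invoking the absence of non-trivial finite normal subgroups. You instead lean entirely on the algebraic consequences already packaged in \autoref{res: finite subgroup} (malnormality of the maximal finite subgroups) together with the homology computation $\Gamma(n)^{\mathrm{ab}} \cong H_1(S;\Z/n\Z)$, which lets you rule out coincidences $\group u = \group v$ for homologically independent simple closed curves — a nice, elementary argument once \autoref{res: finite subgroup} is in hand, and one that avoids any further reference to the approximation sequence. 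Two small points are worth tightening. First, your phrase ``$\Gamma(n)^{\mathrm{ab}}$ surjects onto $(\Z/n\Z)^{2g}$'' understates what is true and needed: the image of $\ker(\Gamma\onto\Gamma(n))$ in $\Gamma^{\mathrm{ab}}$ is \emph{exactly} $n\Z^{2g}$ (conjugates of $\gamma^n$ all abelianise to $n[\gamma]$, and simple closed curves span $H_1$), so $\Gamma(n)^{\mathrm{ab}} \cong (\Z/n\Z)^{2g}$; you should say this, since a surjection with unknown kernel would not by itself preserve independence of $[a_1]$ and $[b_1]$. Second, and more substantively: your infiniteness step imports the Ol'shanskii and Ivanov--Ol'shanskii theorems on periodic quotients of hyperbolic groups, and the even-exponent case in the cited literature carries a divisibility hypothesis (a fixed power of $2$ dividing $n$), whereas the statement you are proving — as part of \autoref{thm: 1st Structural results} — is claimed for \emph{all} integers $n\geq N$. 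The paper's internal argument via $(\Gamma_j)$ sidesteps this and gives infiniteness unconditionally for $n\geq N$, so if you want to match the stated generality you should either check that the version of the Burnside result you cite applies to all sufficiently large $n$, or fall back on the approximation-sequence argument for infiniteness while keeping your malnormality argument for the centre and finite normal subgroups.
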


\begin{proof}
	Suppose first that $\Gamma(n)$ is finite.
	It follows that $\Gamma(n)$ is finitely presented, hence $(\Gamma_j)$ eventually stabilises.
	Consequently $\Gamma(n)$ is isomorphic to $\Gamma_j$ for some sufficiently large $j \in \N$, hence is hyperbolic and non-elementary.
	This contradicts the fact that $\Gamma(n)$ is finite and proves the first part of the statement.
	
	Combined with \autoref{res: finite subgroup}, we get that $\Gamma(n)$ has no non-trivial finite normal subgroup.
	Let $\gamma$ be an element in the centre of $\Gamma(n)$.
	There are $j \in \N$ and a pre-image $\gamma_j \in \Gamma_j$ of $\gamma$ that is central (in $\Gamma_j$).
	Since $\Gamma_j$ is non-elementary hyperbolic, $\gamma_j$ has finite order.
	It follows that $\group \gamma$ is a finite normal subgroup of $\Gamma(n)$.
	Thus it is trivial.
\end{proof}

We get a form of the Tits alternative for finitely presented subgroups, for all sufficiently large $n$. Note that \autoref{thm: 2nd Structural results}\ref{enu: Structure of Gamma(n) -- fg Tits alternative} upgrades this to a Tits alternative for all finitely \emph{generated} subgroups, for all large \emph{multiples} $n$.

\begin{prop}
\label{res: elementary subgroup}
	Let $H$ be a finitely presented subgroup of $\Gamma(n)$ that does not contain a non-abelian free subgroup. 
	Then $H$ is isomorphic to a subgroup of $\Z / n \Z$, $\Z$, or $\dihedral$.
	Moreover, the latter case only arises if $n$ is even.
\end{prop}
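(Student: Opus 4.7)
The plan is to exploit that $\Gamma(n)$ is the direct limit of the hyperbolic groups $(\Gamma_j)$ supplied by \autoref{res: approximating sequence}, together with the uniform control on elementary subgroups of each $\Gamma_j$ encoded in the axioms \ref{enu: family axioms - elem subgroups} of $\mathfrak H(\delta, \rho)$ and in \autoref{res: approximating sequence}\ref{enu: approximating sequence - elliptic}. Once $H$ is embedded in one of the $\Gamma_j$, the classical Tits alternative for hyperbolic groups together with the description of their elementary subgroups does the work.

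The first step is the lift. Fix a finite presentation $H = \langle S \mid R\rangle$ with $S$ and $R$ both finite and let $\iota \colon H \hookrightarrow \Gamma(n)$ be the inclusion. Each $\iota(s)$ lifts to some $\tilde\iota(s) \in \Gamma_j$, for $j$ large enough that every generator in $S$ admits a simultaneous lift. Each relator $r \in R$ is a word in the $\tilde\iota(s)$ that becomes trivial in $\Gamma(n) = \varinjlim \Gamma_j$; since the projections $\Gamma_j \onto \Gamma_{j+1}$ have kernels whose ascending union realises this, one can pass to some $j' \geq j$ in which all finitely many relators vanish. This yields a homomorphism $\tilde\iota \colon H \to \Gamma_{j'}$ whose post-composition with $\Gamma_{j'} \onto \Gamma(n)$ equals $\iota$; since $\iota$ is injective, $\tilde\iota$ is injective as well, and $H$ embeds in $\Gamma_{j'}$.

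Next I would apply the Tits alternative in the hyperbolic group $\Gamma_{j'}$: because $H$ contains no non-abelian free subgroup, it must be elementary. The axioms \ref{enu: family axioms - elem subgroups} assert that every elliptic subgroup of $\Gamma_{j'}$ is cyclic and that no non-trivial elliptic subgroup is normalised by a loxodromic element, so any infinite elementary subgroup has trivial finite radical and must be isomorphic to $\Z$ or $\dihedral$ by the standard classification of virtually cyclic groups. If instead $H$ is finite, then \autoref{res: approximating sequence}\ref{enu: approximating sequence - elliptic} places $H$ inside a cyclic subgroup of order $n$, so $H$ embeds in $\Z/n\Z$.

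Finally, for the parity assertion, assume $H \cong \dihedral$. Then $H$ is generated by two involutions of $\Gamma_{j'}$, each of which is a non-trivial elliptic element, so by \autoref{res: approximating sequence}\ref{enu: approximating sequence - elliptic} each lies in a cyclic subgroup of order $n$, which forces $n$ to be even. The only genuinely delicate step is the faithful lift of $H$ into a single $\Gamma_{j'}$, which uses both the finiteness of $S$ and $R$ and the injectivity of $\iota$; after that, the argument is a straightforward application of structural facts already established.
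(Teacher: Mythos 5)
Your proof is correct and follows essentially the same route as the paper's: lift $H$ to a finitely presented subgroup of some $\Gamma_{j'}$ using the direct limit structure, invoke the Tits alternative in the hyperbolic group $\Gamma_{j'}$ to conclude the image is elementary, then classify using the structural facts about elliptic subgroups. The only cosmetic difference is that you spell out the lifting step (which the paper simply asserts) and invoke axiom (H2) directly rather than the malnormality from \autoref{res: approximating sequence}\ref{enu: approximating sequence - elliptic} to rule out a non-trivial finite radical; both give the same conclusion.
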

\begin{proof}
	We already observed that every finite subgroup of $\Gamma(n)$ is contained in a cyclic subgroup of order $n$ (\autoref{res: finite subgroup}).
	Thus, without loss of generality, we can assume that $H$ is infinite.
	Since $H$ is finitely presented, there is $j \in \N$ and a subgroup $H_j \subset \Gamma_j$ such that the natural map $\Gamma_j \onto \Gamma(n)$ induces an isomorphism from $H_j$ onto $H$.
	In particular, $H_j$ does not contain a non-abelian free subgroup, hence is elementary and infinite.
	According to \autoref{res: approximating sequence}\ref{enu: approximating sequence - elliptic}, $H_j$ does not normalise a non-trivial finite subgroup.
	Consequently $H_j$, and hence $H$, is isomorphic to either $\Z$ or $\dihedral$.
	Note that, if $n$ is odd, then $\Gamma(n)$ has no even torsion by \autoref{res: finite subgroup}, hence $\Gamma(n)$ does not contain any subgroup isomorphic to $\dihedral$.
\end{proof}

Combining the metabelian quotients from \autoref{sec: Metabelian} with the understanding of the torsion coming from \autoref{res: finite subgroup}, we find that $\Gamma(n)$ is  virtually torsion-free.

\begin{prop}
\label{res: v torsion-free}
 	The group $\Gamma(n)$ is virtually torsion-free.
\end{prop}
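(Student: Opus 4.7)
The plan is to use the finite metabelian quotient $M(n) = \Gamma/\Gamma''\Gamma^n$ constructed in \autoref{sec: Metabelian} as a test quotient. First I observe that the natural projection $\Gamma \onto M(n)$ factors through $\Gamma(n)$: indeed, $\Gamma^n$ contains every $n$th power of a simple closed curve, hence contains the kernel of $\Gamma \onto \Gamma(n)$. This yields a surjection $\Gamma(n) \onto M(n)$ onto a finite group; let $K$ denote its kernel, a finite-index subgroup of $\Gamma(n)$.

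The heart of the argument is to show that $K$ is torsion-free. Take any non-trivial element $g \in \Gamma(n)$ of finite order. By \autoref{res: finite subgroup}, $g$ is contained in a cyclic subgroup of order $n$ generated by some element $u \in \Gamma(n)$ whose preimage in $\Gamma$ is a simple closed curve $\tilde u$. Thus $g = u^k$ for a unique $k \in \{1, \dots, n-1\}$. By \autoref{res: metabelian quotient}, the image of $\tilde u$ in $M(n)$ has order exactly $n$, so the image of $g$ in $M(n)$ is the non-trivial element $\bar u^k$. Hence $g \notin K$, and $K$ is torsion-free.

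There is no real obstacle here: both ingredients, the description of torsion in $\Gamma(n)$ via simple closed curves and the precise order of simple closed curves in the metabelian quotient, have already been set up in the preceding sections, and the proof consists essentially of combining them. The only minor point to verify is the factorisation of $\Gamma \to M(n)$ through $\Gamma(n)$, which is immediate from the definitions.
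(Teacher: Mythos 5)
Your proof is correct and follows essentially the same route as the paper: both use the finite metabelian quotient $M(n)$, observe that $\Gamma \onto M(n)$ factors through $\Gamma(n)$, let $K$ be the kernel, and combine \autoref{res: finite subgroup} with \autoref{res: metabelian quotient} to show $K$ is torsion-free. The only cosmetic difference is that you argue the contrapositive (a non-trivial torsion element of $\Gamma(n)$ cannot lie in $K$) whereas the paper argues directly (a torsion element of $K$ must be trivial).
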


\begin{proof}
	By construction, the projection $\Gamma \onto M(n)$ factors through $\Gamma \onto \Gamma(n)$.
	We write $K$ for the kernel of the resulting morphism $\Gamma(n) \onto M(n)$.
	As a periodic solvable group, $M(n)$ is finite, hence $K$ has finite index in $\Gamma(n)$.
	We claim that $K$ is torsion free.
	Let $\gamma \in K$ be a finite order element.
	By \autoref{res: finite subgroup}, $\gamma$ is of the form $ \gamma = u^k$, where $k \in \Z$ and $u \in \Gamma(n)$ is the image of a simple closed curve. 
	In particular, $u^k = 1$ in $M(n)$.
	According to \autoref{res: metabelian quotient} the image $u$ in $M(n)$ has order exactly $n$.
	Therefore $k$ is a multiple of $n$, hence $\gamma$ is trivial in $\Gamma(n)$.
\end{proof}

\section{A Birman-type theorem}
\label{sec:diagram}

In this section we prove \autoref{thm: Birman for quotients}, the analogue of the Birman exact sequence in our setting. 

\subsection{A commutative diagram}

Our first goal in this section is to establish the fundamental commutative diagram of \autoref{fig: Fundamental commutative diagram}.

\begin{prop}
\label{prop:big diagram}
Let $S$ be a closed, connected, orientable, hyperbolic surface. 
For all integers $n$, there exists a commutative diagram as in \autoref{fig: Fundamental commutative diagram}. The top row is the Birman exact sequence, and the first row of vertical arrows are the natural quotient maps.

\begin{figure}[htp]
	\begin{center}
		\begin{tikzcd}
			1\arrow{r}&\pi_1(S)\arrow{r}\arrow{d}& \mcg[\pm]{S_*} \arrow{r}\arrow{d}& \mcg[\pm] S \arrow{r}\arrow{d} &1\\
			1 \arrow{r}&\Gamma(n) \arrow{r}\arrow{d}{=} &\mcg[\pm]{S_*}/\DT^n(S_*)\arrow{r}\arrow{d}& \mcg[\pm] S/\DT^n(S) \arrow{r}\arrow{d}& 1\\
			1 \arrow{r}&\Gamma(n) \arrow{r}&\aut{\Gamma(n)} \arrow{r}& \out{\Gamma(n)} \arrow{r}& 1
		\end{tikzcd}
	\end{center}
	\caption{}\label{fig: Fundamental commutative diagram}
\end{figure}
\end{prop}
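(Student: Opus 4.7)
The plan is to assemble the diagram piece by piece: the top row is the classical Birman exact sequence, the vertical arrows from row one to row two are the natural quotient maps, the bottom row is the standard inner/outer automorphism sequence of $\Gamma(n)$, and the remaining vertical arrows are induced from the natural actions of mapping class groups on $\pi_1(S)$. The work splits into checking that each vertical map is well-defined on the stated quotient and verifying commutativity of each of the four squares.

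For the top--middle squares, the left one reduces to showing that Birman's point-pushing map $\pi_1(S)\to\mcg[\pm]{S_*}$ sends $\gamma^n$ into $\DT^n(S_*)$ for each simple closed curve $\gamma$: the point-push of $\gamma$ equals a commuting product of Dehn twists about the two parallel copies of $\gamma$ bounding an annular neighborhood in $S_*$, so its $n$th power is a product of $n$th power Dehn twists. The right square reduces to the fact that the capping map $\mcg[\pm]{S_*}\to\mcg[\pm] S$ takes Dehn twists to Dehn twists (or to the identity when the curve bounds a disc containing the puncture), hence carries $\DT^n(S_*)$ into $\DT^n(S)$.

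For the middle--bottom squares, I would first construct the middle vertical $\mcg[\pm]{S_*}/\DT^n(S_*)\to\aut{\Gamma(n)}$ as the composition of the standard action $\mcg[\pm]{S_*}\to\aut{\pi_1(S)}$ (homeomorphisms fix the puncture and thus act on $\pi_1(S,*)$) with descent to $\aut{\Gamma(n)}$, which is possible because any automorphism of $\pi_1(S)$ permutes the conjugacy classes of simple closed curves and hence preserves the kernel of $\pi_1(S)\onto\Gamma(n)$. The rightmost vertical is defined analogously via the Dehn--Nielsen--Baer isomorphism $\mcg[\pm] S\cong\out{\pi_1(S)}$. The main obstacle is verifying that $T_c^n$ acts trivially on $\Gamma(n)$ for every simple closed curve $c$. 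I would handle this by splitting $\pi_1(S)$ over $\langle c\rangle$ as an amalgamated product (when $c$ separates) or an HNN extension (when $c$ does not): in both cases, the classical description of the Dehn twist action realises $T_c^n$ by multiplication by $c^n$ at the edge group, so $T_c^n$ becomes the identity in any quotient in which $c^n$ is trivial, and in particular in $\Gamma(n)$.

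Commutativity of the two bottom squares is then essentially tautological. For the left one, the Birman inclusion $\pi_1(S)\hookrightarrow\mcg[\pm]{S_*}$ realises $\pi_1(S)$ as inner automorphisms of $\pi_1(S)$, so both compositions $\Gamma(n)\to\aut{\Gamma(n)}$ coincide with the canonical inclusion of $\Gamma(n)$ as inner automorphisms. For the right one, both compositions send a mapping class to its outer action on $\Gamma(n)$, which is well-defined once the previous steps are done.
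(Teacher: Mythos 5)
Your proposal is correct and follows essentially the same route as the paper: you prove the point-pushing map carries $\gamma^n$ into $\DT^n(S_*)$ via the identity $\operatorname{Push}(\gamma)=T_{\gamma^+}T_{\gamma^-}$ for commuting twists (the paper's \autoref{lem: Point-pushing for power quotient}), you check $T_c^n$ acts trivially on $\Gamma(n)$ using the amalgam/HNN description of $\pi_1(S)$ cut along $c$ (the paper's \autoref{lem: Map to Aut}), and you reduce the remaining checks to naturality. No gaps.
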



The proof of the proposition occurs in several steps.  The first row of the diagram is the famous Birman exact sequence \cite[Theorem 4.6]{farb_primer_2012}. We only state the case when $S$ is closed, and we state the version for extended mapping class groups.

\begin{theo}[Birman]\label{res: Birman}
	Let $S$ be closed, connected, orientable surface of genus at least two. There is an exact sequence
	\begin{equation*}
		1\to \pi_1(S)\to \mcg[\pm]{S_*}\to\mcg[\pm] S\to 1\,,
	\end{equation*}
	where $S_*$ is the resulting of removing one puncture from $S$.
\end{theo}

The injective map from $\pi_1(S)$ is called the \emph{point-pushing map}, and the surjective map $\mcg[\pm]{S_*}\to\mcg[\pm] S$ is called the \emph{forgetful map}.

We define the second row of the diagram by pushing the Birman exact sequence down to a corresponding sequence for $\Gamma(n)$. First, we define the `point-pushing' map for $\Gamma(n)$.

\begin{lemm}
\label{lem: Point-pushing for power quotient}
	The point-pushing map $\pi_1(S)\to\mcg[\pm]{S_*}$ descends to a homomorphism $\Gamma(n)\to\mcg[\pm]{S_*}/\DT^n(S_*)$.
\end{lemm}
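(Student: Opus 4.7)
The plan is to invoke the classical description of the point-pushing map on simple loops, together with normality of $\DT^n(S_*)$ in $\mcg[\pm]{S_*}$.

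First, I would use the standard fact (see, e.g., \cite[Fact 4.7]{farb_primer_2012}) that if $\gamma \in \pi_1(S, p)$ is represented by a simple loop based at $p$, then the point-pushing map sends $\gamma$ to $T_{\gamma_L}\, T_{\gamma_R}^{-1}$, where $\gamma_L$ and $\gamma_R$ are the two simple closed curves in $S_*$ bounding a regular neighbourhood of $\gamma$ (equivalently, the two one-sided push-offs of $\gamma$ in the punctured surface). Since $\gamma_L$ and $\gamma_R$ are disjoint -- they cobound an annulus containing the puncture -- the Dehn twists $T_{\gamma_L}$ and $T_{\gamma_R}$ commute, and therefore
\[
P(\gamma^n) \;=\; \bigl(T_{\gamma_L} T_{\gamma_R}^{-1}\bigr)^n \;=\; T_{\gamma_L}^{\,n}\, T_{\gamma_R}^{-n} \;\in\; \DT^n(S_*).
\]

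Next, I would observe that $\DT^n(S_*)$ is normal in $\mcg[\pm]{S_*}$, because for any mapping class $\phi$ and Dehn twist $T_a$ one has $\phi T_a^n \phi^{-1} = T_{\phi(a)}^{\,n}$. Any element of $\pi_1(S)$ freely homotopic to a simple closed curve is conjugate in $\pi_1(S,p)$ to a simple loop based at $p$, so by normality the point-pushing image of its $n$th power still lies in $\DT^n(S_*)$. Normality then also ensures that the full normal closure in $\pi_1(S)$ of the $n$th powers of simple closed curves -- which is, by definition, the kernel of $\pi_1(S) \twoheadrightarrow \Gamma(n)$ -- is mapped by $P$ into $\DT^n(S_*)$, yielding the desired descent.

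The only ``hard'' step is quoting the classical formula for the point-pushing map on simple loops; everything else is formal manipulation using the normality of $\DT^n(S_*)$. In particular, no structural information about $\Gamma(n)$ (triviality of its centre, infiniteness, etc.) is used, so this lemma holds for every $n \geq 2$.
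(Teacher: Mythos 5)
Your proof is correct and follows essentially the same route as the paper: both cite the Farb--Margalit formula expressing the point-pushing of a simple loop as a product of two commuting Dehn twists about disjoint push-offs, conclude that the $n$th power lies in $\DT^n(S_*)$, and then use normality of $\DT^n(S_*)$ to descend. (Your formula $T_{\gamma_L}T_{\gamma_R}^{-1}$ carries the inverse that matches the cited reference, whereas the paper writes $T_{\gamma^+}T_{\gamma^-}$; this sign-convention discrepancy does not affect the argument since $\DT^n(S_*)$ is a subgroup, so it contains $T_{\gamma_R}^{-n}$ in either case.)
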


\begin{proof}
	It suffices to show that the point-pushing map sends the $n$th power of a simple closed curve $\gamma$ into $\DT^n(S_*)$. The point-pushing map sends a simple closed curve $\gamma$ to the product of Dehn twists $T_{\gamma^+}T_{\gamma^-}$, where $\gamma^+$ is $\gamma$ pushed slightly to the left and $\gamma^-$ is $\gamma$ pushed slightly to the right \cite[Fact 4.7]{farb_primer_2012}. Since $\gamma^+$ and $\gamma^-$ are disjoint, their Dehn twists commute, and so the point-pushing map sends  $\gamma^n$ to
	\begin{equation*}
		(T_{\gamma^+}T_{\gamma^-})^n=T_{\gamma^+}^nT_{\gamma^-}^n\in \DT^n(S_*)\,.
	\end{equation*}
	This completes the proof.
\end{proof}

Since the forgetful map $\mcg[\pm]{S_*}\to\mcg[\pm] S$  sends Dehn twists on the punctured surface $S_*$ to Dehn twists on the closed surface $S$, it also descends to a map $\mcg[\pm]{S_*}/\DT^n(S_*)\to\mcg[\pm] S/\DT^n(S)$. At this point, we have defined the horizontal maps in the second row of \autoref{fig: Fundamental commutative diagram}, and established that the top row of squares commute.

The third row of \autoref{fig: Fundamental commutative diagram} is the natural sequence defining the outer automorphism group of $\Gamma(n)$.

\medskip
Our next task is to define the second column of vertical arrows.  
The map $\Gamma(n) \to \Gamma(n)$ is simply the identity. 
The map $\mcg[\pm]{S_*}/\DT^n(S_*)\to\aut{\Gamma(n)}$ is defined by the next lemma. 

\begin{lemm}
\label{lem: Map to Aut}
	The natural homomorphism $\mcg[\pm]{S_*}\to\aut{\pi_1(S)}$ descends to a homomorphism $\mcg[\pm]{S_*}/\DT^n(S_*)\to\aut{\Gamma(n)}$.
\end{lemm}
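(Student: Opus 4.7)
The approach is to factor the map as the composition
\[
\mcg[\pm]{S_*} \to \aut{\Gamma} \to \aut{\Gamma(n)},
\]
verifying (i) that the composition is well-defined on all of $\mcg[\pm]{S_*}$, and (ii) that $\DT^n(S_*)$ lies in the kernel.

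For (i), I would recall that the natural map $\mcg[\pm]{S_*} \to \aut{\Gamma}$ is classical: a self-homeomorphism of $S_*$ fixing the basepoint extends, by filling in the puncture, to a self-homeomorphism of $S$ fixing the basepoint, inducing an automorphism of $\Gamma$. Any automorphism in the image is thus realised by a self-homeomorphism of $S$, which permutes free homotopy classes of simple closed curves in $S$. Consequently each such automorphism preserves the normal subgroup of $\Gamma$ generated by $n$th powers of simple closed curves, and descends to an automorphism of $\Gamma(n)$.

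For (ii), the substantive step, let $\tau_\alpha$ be a Dehn twist along a simple closed curve $\alpha \subset S_*$. If $\alpha$ is null-homotopic in $S$ (i.e.\ it bounds a disk in $S$ containing the puncture), then after filling in the puncture $\tau_\alpha$ becomes a Dehn twist along a disk-bounding curve in $S$, hence isotopic to the identity rel the basepoint, so $\tau_\alpha$ and all its powers act trivially on $\Gamma$. Otherwise $\alpha$ represents an essential simple closed curve in $S$, and I would apply the standard formula for the action of a Dehn twist on $\pi_1$: if $\beta \in \Gamma$ is represented by a loop transverse to $\alpha$, decomposing $\beta = \beta_0 \beta_1 \cdots \beta_k$ at the intersection points $p_1,\dots,p_k$ with signs $\epsilon_1,\dots,\epsilon_k$, one obtains
\[
\tau_\alpha^n(\beta) = \beta_0\, \alpha_1^{n\epsilon_1}\, \beta_1\, \alpha_2^{n\epsilon_2}\, \beta_2 \cdots \alpha_k^{n\epsilon_k}\, \beta_k
\]
in $\Gamma$, where each $\alpha_i = \delta_i \alpha \delta_i^{-1}$ is the conjugate of $\alpha$ by the path $\delta_i = \beta_0 \cdots \beta_{i-1}$ from the basepoint to $p_i$. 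Each $\alpha_i^n = \delta_i \alpha^n \delta_i^{-1}$ is a conjugate of the relator $\alpha^n$, hence lies in the kernel of $\Gamma \to \Gamma(n)$, giving $\tau_\alpha^n(\beta) = \beta$ in $\Gamma(n)$.

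The main obstacle is verifying the insertion formula carefully, in particular tracking basepoints and signs and identifying the $\alpha_i$ as conjugates (rather than just free-homotopy translates) of $\alpha$. This is entirely classical and can be done by induction on the number of intersections or via a direct picture in a regular neighbourhood of $\alpha$; no other step requires real work.
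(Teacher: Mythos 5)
Your proof is correct and follows the same overall structure as the paper's (factor through $\aut{\pi_1(S)}$, then show $\DT^n(S_*)$ lies in the kernel), but the key computation is carried out by a genuinely different route. The paper decomposes $\pi_1(S)$ along the twisting curve $\gamma$ -- as an HNN extension when $\gamma$ is non-separating, or an amalgamated free product when $\gamma$ is separating -- and then reads off $T_\gamma^n(\alpha)=\alpha\gamma^n$ (respectively $T_\gamma^n(\beta)=\gamma^n\beta\gamma^{-n}$) on a well-chosen generating set adapted to the decomposition. You instead use the general insertion formula for the action of $T_\alpha^n$ on an arbitrary based loop $\beta$, observing that each inserted subword $\delta_i\alpha^n\delta_i^{-1}$ is a conjugate of $\alpha^n$, hence dies in $\Gamma(n)$. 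The insertion-formula argument is more elementary (no splitting machinery) but requires careful bookkeeping of signs, basepoints, and the fact that $T_\alpha^n$ inserts $n$th powers rather than iterating more complicated substitutions; the paper's decomposition argument sidesteps that combinatorics by picking coordinates adapted to $\gamma$. Both work. Two minor remarks on your write-up: (a) the case where $\alpha$ is null-homotopic in $S$ is vacuous -- such an $\alpha$ either bounds a disk or a once-punctured disk in $S_*$, and in either case $T_\alpha$ is already trivial in $\mcg[\pm]{S_*}$, so it contributes nothing to $\DT^n(S_*)$; and (b) for step (i), the paper asserts the stronger fact (via Dehn--Nielsen--Baer) that the kernel of $\Gamma\to\Gamma(n)$ is characteristic in $\Gamma$, giving a map from all of $\aut{\pi_1(S)}$, whereas you only verify the weaker statement that automorphisms coming from $\mcg[\pm]{S_*}$ preserve it -- that is sufficient for this lemma, so both are fine.
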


\begin{proof} 
 	It follows from the Dehn--Nielsen--Baer theorem that the subgroup of $n$th powers of simple closed curves is characteristic in $\pi_1(S)$. Hence, there is a natural map $\aut{\pi_1(S)}\to\aut{\Gamma(n)}$. 
	To prove the lemma, it suffices to show that the image of $\DT^n(S_*)$ under the composition
	\begin{equation*}
		\mcg[\pm]{S_*}\to\aut{\pi_1(S)}\to\aut{\Gamma(n)}
	\end{equation*}
	is trivial. We check this for the different topological types of simple closed curves $\gamma$ on $S$.
	
	If $\gamma$ is non-separating then cutting along $\gamma$ realises $\pi_1(S)$ as an HNN extension
	\begin{equation*}
		\pi_1(S)=\pi_1(S_0)*_{\langle\gamma\rangle}
	\end{equation*}
	where $S_0$ is the result of cutting $S$ along $\gamma$.
	We may choose a curve $\alpha$ that intersects $\gamma$ once as a stable letter. 
	The Dehn twist $T_\gamma$ fixes every element in $\pi_1(S_0)$ while sending $\alpha$ to $\alpha\gamma$. 
	Thus, $T_\gamma^n(\alpha)=\alpha\gamma^n$, so $T_\gamma^n$ acts trivially on $\Gamma(n)$.
	
	If $\gamma$ is separating then cutting along $\gamma$ realises $\pi_1(S)$ as an amalgamated free product
	\begin{equation*}
		\pi_1(S)=\pi_1(S_1)*_{\langle\gamma\rangle}\pi_1(S_2)
	\end{equation*}
	where $S_1$ and $S_2$ are the two components that result from cutting $S$ along $\gamma$. 
	Without loss of generality, the base point is contained in $S_1$. Then the Dehn twist $T_\gamma$ fixes the elements of $\pi_1(S_1)$, while mapping an arbitrary element $\beta$ of $S_2$ to the conjugate $\gamma\beta\gamma^{-1}$. 
	Thus, $T_\gamma^n(\beta)=\gamma^n\beta\gamma^{-n}$ so, again, $T_\gamma^n$ acts trivially on $\Gamma(n)$, as required.
\end{proof}

Similarly, we can now define the map  $\mcg[\pm]{S}/\DT^n(S)\to\out{\Gamma(n)}$.

\begin{lemm}
\label{lem: Map to Out}
	The natural homomorphism $\mcg[\pm] S\to\out{\pi_1(S)}$ descends to a homomorphism $\mcg[\pm] S/\DT^n\to\out{\Gamma(n)}$.
\end{lemm}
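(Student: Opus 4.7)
The plan is to build the desired map by combining \autoref{lem: Map to Aut} with the Birman exact sequence, then chase the diagram. Composing the map of \autoref{lem: Map to Aut} with the canonical projection $\aut{\Gamma(n)} \onto \out{\Gamma(n)}$ produces a homomorphism
\[
\Phi \colon \mcg[\pm]{S_*}/\DT^n(S_*) \to \out{\Gamma(n)}.
\]
Since $\mcg[\pm]{S_*} \to \mcg[\pm] S$ is surjective (by \autoref{res: Birman}) and carries $\DT^n(S_*)$ onto $\DT^n(S)$ (any Dehn twist in $S$ around a simple closed curve $\gamma$ lifts to the Dehn twist in $S_*$ around the same curve, viewed as disjoint from the puncture), the induced homomorphism $\mcg[\pm]{S_*}/\DT^n(S_*) \to \mcg[\pm] S/\DT^n(S)$ is surjective. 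Therefore it suffices to show that $\Phi$ vanishes on the kernel of this surjection.

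Next I would identify this kernel by a short diagram chase. If $\tilde f\,\DT^n(S_*)$ lies in the kernel, then the image of $\tilde f$ in $\mcg[\pm] S$ is some product $\prod T_{\gamma_i}^{n\epsilon_i}$; lifting each $\gamma_i$ to $S_*$ gives an element $\tilde h \in \DT^n(S_*)$ with the same image in $\mcg[\pm] S$, so that $\tilde f \tilde h^{-1}$ lies in the kernel of the forgetful map. By Birman this kernel is the image of the point-pushing map from $\pi_1(S)$. Thus the kernel of $\mcg[\pm]{S_*}/\DT^n(S_*) \to \mcg[\pm] S/\DT^n(S)$ is precisely the image of the point-pushing map, which by \autoref{lem: Point-pushing for power quotient} is the image of $\Gamma(n)$.

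Finally, the composition of point-pushing $\pi_1(S) \to \mcg[\pm]{S_*}$ with the natural action $\mcg[\pm]{S_*} \to \aut{\pi_1(S)}$ is (a twist of) the inner-automorphism map \cite[Fact 4.8]{farb_primer_2012}; hence its further composition with $\aut{\pi_1(S)} \to \aut{\Gamma(n)} \to \out{\Gamma(n)}$ is trivial. This shows that $\Phi$ annihilates the kernel and therefore descends to a homomorphism $\mcg[\pm] S/\DT^n(S) \to \out{\Gamma(n)}$ making the relevant squares of \autoref{fig: Fundamental commutative diagram} commute.

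I expect no serious obstacle: the only delicate point is identifying the kernel of $\mcg[\pm]{S_*}/\DT^n(S_*) \to \mcg[\pm] S/\DT^n(S)$, and this is a routine diagram chase once one knows that $\DT^n(S_*)$ surjects onto $\DT^n(S)$ and that the Birman sequence is exact.
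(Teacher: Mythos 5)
Your proof is correct but takes a genuinely different route from the paper. The paper argues directly: to show that the natural map $\mcg[\pm] S\to\out{\pi_1(S)}\to\out{\Gamma(n)}$ kills $\DT^n(S)$, it lifts an arbitrary Dehn twist $T_\gamma$ on $S$ to a Dehn twist $T_{\gamma'}$ on $S_*$, applies \autoref{lem: Map to Aut} to conclude $T_{\gamma'}^n$ dies in $\aut{\Gamma(n)}$, and then uses the naturality square
\begin{center}
\begin{tikzcd}
\mcg[\pm]{S_*}\arrow{r}\arrow{d}& \mcg[\pm] S\arrow{d}\\
\aut{\Gamma(n)} \arrow{r}& \out{\Gamma(n)}
\end{tikzcd}
\end{center}
to deduce that $T_\gamma^n$ has trivial image in $\out{\Gamma(n)}$. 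You instead descend $\Phi\colon\mcg[\pm]{S_*}/\DT^n(S_*)\to\out{\Gamma(n)}$ through the surjection onto $\mcg[\pm] S/\DT^n(S)$, which forces you to identify the kernel of that surjection as the image of point-pushing --- but that identification is essentially the content of the paper's later \autoref{lem: Exactness in the middle}, so your argument reproves part of that lemma en route. Both proofs ultimately rely on the same two ingredients (Dehn twists on $S$ lift to Dehn twists on $S_*$, and naturality of the $\mcg[\pm]{S_*}\to\aut{\pi_1(S)}$ and $\mcg[\pm] S\to\out{\pi_1(S)}$ maps), but the paper's version is shorter and avoids the diagram chase. One small point you should make explicit: showing that $\Phi$ kills the kernel of the surjection only gives \emph{some} homomorphism on $\mcg[\pm] S/\DT^n(S)$; to conclude that it is the map descended from the \emph{natural} homomorphism $\mcg[\pm] S\to\out{\pi_1(S)}$, you still need the naturality square displayed above --- you invoke this implicitly when asserting that the squares of \autoref{fig: Fundamental commutative diagram} commute, but it deserves to be said.
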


\begin{proof}
	Again, because the subgroup of $n$th powers of simple closed curves is characteristic in $\pi_1(S)$, there is a natural map $\out{\pi_1(S)}\to\out{\Gamma(n)}$. To prove the lemma, it suffices to show that the image of $\DT^n(S)$ under the composition
	\begin{equation*}
		\mcg[\pm] S\to\out{\pi_1(S)}\to\out{\Gamma(n)}
	\end{equation*}
	is trivial. We will furthermore make use of the naturality of the homomorphisms $\mcg[\pm]{S_*} \to \aut{\pi_1(S)}$ and $\mcg[\pm] S\to \out{\pi_1(S)}$,
	which lead to the following commutative diagram.
	\begin{center}
		\begin{tikzcd}
			\mcg[\pm]{S_*}\arrow{r}\arrow{d}& \mcg[\pm] S\arrow{d}\\
			\aut{\Gamma(n)} \arrow{r}& \out{\Gamma(n)}
		\end{tikzcd}
	\end{center}
	Let $T_\gamma$ be a Dehn twist on $S$. By the definition of the forgetful map, $T_\gamma$ lifts to a Dehn twist on $S_*$, denoted by $T_{\gamma'}$.  
	By \autoref{lem: Map to Aut}, $T_{\gamma'}^n$ maps to the trivial element in $\aut{\Gamma(n)}$, and hence in $\out{\Gamma(n)}$. 
	Thus, by the commutativity of the diagram, $T_\gamma^n$ has trivial image in $\out{\Gamma(n)}$, as required.
\end{proof}

\begin{proof}[Proof of \autoref{prop:big diagram}]
The above lemmas and discussion construct all the maps in \autoref{fig: Fundamental commutative diagram}, and show that all squares in the top row commute. It is now easy to see that the bottom row of squares also commute. Indeed, the vertical arrows in the bottom row all descend from the vertical arrows in the fundamental commutative diagram for mapping class groups:
	\begin{center}
		\begin{tikzcd}
			1\arrow{r}&\pi_1(S)\arrow{r}\arrow{d}& \mcg[\pm]{S_*} \arrow{r}\arrow{d}& \mcg[\pm] S \arrow{r}\arrow{d} &1\\
			1 \arrow{r}&\pi_1(S) \arrow{r}&\aut{\pi_1(S)} \arrow{r}& \out{\pi_1(S)} \arrow{r}& 1.
		\end{tikzcd}
	\end{center}
\end{proof}

Having established the commutative diagram of \autoref{fig: Fundamental commutative diagram}, our next task is to establish the exactness of the rows. As already mentioned above, the top row is the Birman exact sequence, while the exactness of the bottom row holds whenever the centre of $\Gamma(n)$ is trivial, which holds by \autoref{res: infinite + trivial finite radical}.  So it remains to prove exactness of the middle row.

\subsection{A Birman-type theorem}

\autoref{thm: Birman for quotients} follows immediately from \autoref{res: infinite + trivial finite radical} and the next statement.

\begin{theo}
\label{thm:exact}
   Let $S$ be closed, connected, orientable surface of genus at least two. 
   Let $n \in \N$.
   If $\Gamma(n)$ has trivial centre, then the short sequence
	\begin{equation*}
		1\to \Gamma(n)\to \mcg[\pm]{S_*}/\DT^n(S_*)\to\mcg[\pm] S/\DT^n(S)\to 1
	\end{equation*}
	from \autoref{fig: Fundamental commutative diagram}  is exact.
\end{theo}

It is not true that the centre of $\Gamma(n)$ is trivial for all $n$: $\Gamma(2)$ is both non-trivial and abelian.

\begin{exam}\label{exa: Abelian power quotient}
The group $\Gamma(2)$ is isomorphic to the finite abelian group
\[
H_1(S,\Z/2\Z)\,.
\]
Indeed, for every pair of elements $\alpha$, $\beta$ of a standard generating set for $\pi_1(S)$, there is a choice of $\epsilon\in\{\pm1\}$ such that $\alpha\beta^\epsilon$ is also a simple closed curve, so the images of $\alpha$ and $\beta$ commute in $\Gamma(2)$. In particular, the centre $Z(\Gamma(2))$ is non-trivial.
\end{exam}

The next lemma establishes exactness at $\Gamma(n)$.

 \begin{lemm}
\label{lem: The point-pushing map is injective}
	The point-pushing map $\Gamma(n)\to\mcg[\pm]{S_*}/\DT^n(S_*)$ is injective, whenever $\Gamma(n)$ has trivial centre.
\end{lemm}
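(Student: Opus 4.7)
The plan is to exploit the commutative diagram of \autoref{fig: Fundamental commutative diagram} to reduce the injectivity of the point-pushing map to the triviality of the centre of $\Gamma(n)$. Recall from the proof of \autoref{prop:big diagram} that the leftmost vertical arrow from the second row to the third row is the identity on $\Gamma(n)$, while the natural map $\Gamma(n) \to \aut{\Gamma(n)}$ in the bottom row is (by definition of the exact sequence defining $\out{}$) the homomorphism sending each element $g \in \Gamma(n)$ to the inner automorphism $x \mapsto gxg^{-1}$.

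Now consider the leftmost square linking the second and third rows of \autoref{fig: Fundamental commutative diagram}. By commutativity, the composition
\begin{equation*}
\Gamma(n) \longrightarrow \mcg[\pm]{S_*}/\DT^n(S_*) \longrightarrow \aut{\Gamma(n)}
\end{equation*}
of the point-pushing map with the map from \autoref{lem: Map to Aut} coincides with the inner automorphism map $\Gamma(n) \to \aut{\Gamma(n)}$. The kernel of this inner automorphism map is exactly $Z(\Gamma(n))$, which is trivial by hypothesis. Hence the composition above is injective, and therefore the point-pushing map $\Gamma(n) \to \mcg[\pm]{S_*}/\DT^n(S_*)$ is itself injective, as required.

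There is no real obstacle here: the work has been done upstream in establishing the commutative diagram of \autoref{prop:big diagram} (so that the map $\mcg[\pm]{S_*}/\DT^n(S_*) \to \aut{\Gamma(n)}$ is well-defined and makes the diagram commute) and in proving that $Z(\Gamma(n))$ is trivial (\autoref{res: infinite + trivial finite radical}, for $n \geq N$). The lemma is a direct diagram-chase assuming those inputs.
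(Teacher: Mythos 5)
Your argument is correct and is essentially the paper's own: both use the commutativity of the bottom-left square of \autoref{fig: Fundamental commutative diagram} to identify the composite $\Gamma(n)\to\mcg[\pm]{S_*}/\DT^n(S_*)\to\aut{\Gamma(n)}$ with the inner-automorphism map, whose kernel is $Z(\Gamma(n))$, which is trivial by hypothesis. Nothing further is needed.
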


\begin{proof}
	By the commutativity of the bottom left square in \autoref{fig: Fundamental commutative diagram}, the map $\Gamma(n)\to\aut{\Gamma(n)}$ factors through the point-pushing map. 
	Since $\Gamma(n)$ has trivial centre, the map $\Gamma(n)\to\aut{\Gamma(n)}$ is one-to-one.
	Consequently, the point-pushing map is also injective.
\end{proof}

We then establish exactness at the middle term of the sequence

\begin{lemm}
\label{lem: Exactness in the middle}
	The image of the point-pushing map 
	\begin{equation*}
		\Gamma(n)\to\mcg[\pm]{S_*}/\DT^n(S_*)
	\end{equation*}
	is equal to the kernel of the forgetful map 
	\begin{equation*}
		\mcg[\pm]{S_*}/\DT^n(S_*)\to \mcg[\pm] S/\DT^n(S)
	\end{equation*}
	whenever $\Gamma(n)$ has trivial centre.
\end{lemm}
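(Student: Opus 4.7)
The containment ``image $\subseteq$ kernel'' is immediate from the commutativity of \autoref{fig: Fundamental commutative diagram} and the exactness of the Birman exact sequence (\autoref{res: Birman}): indeed, the composition $\pi_1(S)\to \mcg[\pm]{S_*}\to \mcg[\pm]S$ is trivial, so after passing to the quotients the composition $\Gamma(n)\to \mcg[\pm]{S_*}/\DT^n(S_*)\to \mcg[\pm]S/\DT^n(S)$ is trivial as well.

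For the reverse inclusion ``kernel $\subseteq$ image'', the plan is a standard diagram chase. Let $\bar f\in \mcg[\pm]{S_*}/\DT^n(S_*)$ lie in the kernel of the forgetful map, and lift $\bar f$ to $f\in \mcg[\pm]{S_*}$. Then the image of $f$ in $\mcg[\pm]S$ lies in $\DT^n(S)$, so it can be written as a product $T_{\gamma_1}^{\pm n}\cdots T_{\gamma_k}^{\pm n}$ of $n$th powers of Dehn twists along simple closed curves $\gamma_i$ in $S$. The key observation is that, by the very definition of the forgetful map, each such $T_{\gamma_i}$ is the image of a Dehn twist $T_{\gamma_i'}\in \mcg[\pm]{S_*}$ along a simple closed curve $\gamma_i'\subset S_*$ isotopic to $\gamma_i$ in $S$. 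Setting $g = T_{\gamma_1'}^{\pm n}\cdots T_{\gamma_k'}^{\pm n}\in \DT^n(S_*)$, the element $fg^{-1}\in \mcg[\pm]{S_*}$ maps to the identity in $\mcg[\pm]S$, so by the Birman exact sequence (\autoref{res: Birman}) it is the image of some $\alpha\in\pi_1(S)$ under the point-pushing map.

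To conclude, push everything down to the quotient. Modulo $\DT^n(S_*)$, we have $\bar f = \overline{fg^{-1}}$, which is the image of $\alpha$ under the composition $\pi_1(S)\to \mcg[\pm]{S_*}\to \mcg[\pm]{S_*}/\DT^n(S_*)$. By \autoref{lem: Point-pushing for power quotient}, this composition factors through $\Gamma(n)$, so $\bar f$ is the image of $[\alpha]\in\Gamma(n)$ under the point-pushing map for power quotients, as required.

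The only delicate point in this argument is the lifting step: one needs to know that each Dehn twist on $S$ lifts to a Dehn twist on $S_*$ along a curve disjoint from the puncture, so that its $n$th power actually lies in $\DT^n(S_*)$. This is routine once one recalls how the forgetful map is defined on Dehn twists. The hypothesis of trivial centre, used elsewhere in \autoref{thm:exact} to obtain injectivity on the left (\autoref{lem: The point-pushing map is injective}), is not actually needed in this exactness-in-the-middle step.
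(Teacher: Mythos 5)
Your proof is correct and takes essentially the same approach as the paper; the paper's version merely condenses the construction of your element $g$ into the observation that the forgetful map restricts to a surjection $\DT^n(S_*)\to\DT^n(S)$, so that the representative $\phi$ can be chosen directly in the kernel of $\mcg[\pm]{S_*}\to\mcg[\pm]S$. Your side remark that the trivial-centre hypothesis is not used here is also accurate; it is needed only for \autoref{lem: The point-pushing map is injective}.
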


\begin{proof}
	Because both maps descend from the corresponding maps in the Birman exact sequence, and because the composition of the forgetful and point-pushing maps in the Birman exact sequence is trivial, it follows that the image is contained in the kernel. 
	It remains to show that the kernel of the forgetful map is in the image of the point-pushing map.
	
	Suppose, therefore, that $\phi\in\mcg[\pm]{S_*}$ is such that $\phi \DT^n(S_*)$ is in the kernel of the forgetful map. 
	Since every Dehn twist on $S$ lifts to a Dehn twist in $S_*$, the forgetful map restricts to a surjection $\DT^n(S_*)\to \DT^n(S)$, and therefore $\phi$ may be chosen in the kernel of the forgetful map $\mcg[\pm]{S_*}\to\mcg[\pm] S$. 
	By the exactness of the Birman sequence, $\phi$ is in the image of the point-pushing map, and so $\phi \DT^n(S_*)$ is in the image of the point-pushing map from $\Gamma(n)$, as required.
\end{proof}

Finally, we are ready to prove the analogue of the Birman exact sequence.

\begin{proof}[Proof of \autoref{thm:exact}]
In view of \autoref{lem: The point-pushing map is injective} and \autoref{lem: Exactness in the middle}, it suffices to prove the exactness at $\mcg[\pm] S/\DT^n(S)$.
This is a direct consequence of the surjectivity of the map $\mcg[\pm]{S_*}\to \mcg[\pm] S$.
\end{proof}

\section{Centralisers in power quotients}
\label{sec: centralisers}

\subsection{Dehn filling mapping class groups}

The purpose of this section is to prove injectivity of the natural maps
\[
\mcg[\pm]{S_*}/\DT^n(S_*)\to\aut{\Gamma(n)}
\]
and
\[
\quad \mcg[\pm]{S}/\DT^n(S)\to\out{\Gamma(n)}
\]
provided by \autoref{prop:big diagram}, for all large multiples $n$. Later, we will prove that, for every sufficiently large integer $n$, the natural map $\aut \Gamma \to \aut{\Gamma(n)}$ is onto (\autoref{thm:onto}). This will complete the proof of \autoref{thm: DNB theorem for quotients}, by establishing an isomorphism between $\mcg[\pm]{S_*}/\DT^n(S_*)$ and $\aut{\Gamma(n)}$ (\resp between  $\mcg[\pm]{S}/\DT^n(S)$ and $\out{\Gamma(n)}$).

In the remainder of this section, we assume that $n$ is a sufficiently large integer so that $\Gamma(n)$ is infinite with trivial centre, as in \autoref{res: infinite + trivial finite radical}. 

We need some understanding of the centraliser of the point-pushing subgroup of $\mcg[\pm]{S_*}/\DT^n(S_*)$.  To gain this understanding, we exploit the techniques of \cite{dahmani_dehn_2021} and its sequel \cite{BHMS}, which study the power quotient $\mcg{S_*}/\DT^n(S_*)$ as a kind of combinatorial Dehn filling of the mapping class group. 

\begin{rema}
The reader should note that, following \cite{dahmani_dehn_2021} and \cite{BHMS}, the next few results only apply to the \emph{orientation-preserving} mapping class group $\mcg{S}$. When we finally come to prove the main result of this section, \autoref{res: kernels}, we will reduce from the extended mapping class group $\mcg[\pm]{S}$ to this case by consider the homology $H_1(S,\Z/n\Z)$. We expect all results of \cite{dahmani_dehn_2021} and \cite{BHMS} to hold for extended mapping class groups, but prefer to cite them as stated.
\end{rema}

We first recall Bowditch's definition of an acylindrical action, and Osin's definition of an acylindrically hyperbolic group.

\begin{defi}\label{def: Acylindrical action}
Suppose a group $G$ acts by isometries on a metric space $X$. The action is called \emph{acylindrical} if, for every $C>0$, there are constants $R,K$ so that the number of group elements $g\in G$ satisfying 
\[
d(x,gx)\leq C\quad\mathrm{and}\quad d(y,gy)\leq C
\]
is bounded by $K$, for every $x,y\in X$ with $d(x,y)\geq R$. A group $G$ that admits a non-elementary acylindrical action on a hyperbolic metric space is called \emph{acylindrically hyperbolic}.
\end{defi}

For any surface $S$ of finite type, Masur--Minsky proved that the curve complex $\curve(S)$ is hyperbolic \cite{masur_geometry_1999}, while  Bowditch proved that the action of the mapping class group on the curve complex $\curve(S)$ is acylindrical \cite{bowditch_tight_2008}; in particular, mapping class groups are acylindrically hyperbolic. The reader is referred to \cite{osin_acylindrically_2016} for background material on acylindrically hyperbolic groups. 

Let us introduce some notation. For a fixed hyperbolic surface of finite type $S$, let $X$ denote the curve complex $\curve(S)$. The results of this section apply to all positive multiples $n$ of some large positive integer $N$. For such an $n$, we write $\bar{X}$ for the quotient space $\DT^n(S)\backslash X$ and $\psi:X\to\bar{X}$ for the quotient map. We denote $\phi:\mcg{S}\to\mcg{S}/\DT^n(S)$ the quotient homomorphism. Note that $\mcg{S}/\DT^n(S)$ acts naturally by isometries on $\bar{X}$, making $\psi$ a $\mcg{S}$-equivariant map.

The main result of \cite{dahmani_dehn_2021} is that, for all large multiples, the power quotient $\mcg{S}/\DT^n(S)$ is acylindrically hyperbolic. Even better, using the results of the sequel \cite{BHMS} and the theory of hierarchically hyperbolic spaces, we find that the action of the power quotient on the natural quotient of the curve complex is acylindrical. 

\begin{theo}\label{res: improved DHS theorem}
There is a positive integer $N$ such that, for all positive multiples $n$ of $N$, the quotient space $\bar{X}$ is hyperbolic, and the natural action of $\mcg{S}/\DT^n(S)$  on $\bar{X}$ is acylindrical. In particular,  $\mcg{S}/\DT^n(S)$ is acylindrically hyperbolic.
\end{theo}
\begin{proof}
The quotient $\bar{X}$ is hyperbolic by \cite[Theorem 3.1]{dahmani_dehn_2021}. Furthermore, $\bar{X}$ is the top-level hyperbolic space for a hierarchically hyperbolic groups structure on $\mcg{S}/\DT^n(S)$ by \cite[Theorem 7.3, Theorem 7.1-(II)]{BHMS}, so \cite[Theorem K]{HHS1} implies that the action of $\mcg{S}/\DT^n(S)$ is acylindrical.
\end{proof}

Our intention is to mimic some of the proofs from the appendix of \cite{wilton2024congruence}. The results of that paper are proved under the hypothesis that all hyperbolic groups are residually finite. In order to give unconditional proofs, we need the following facts, all of which can be extracted from \cite{BHMS}.

\begin{lemm}\label{res: Needed facts}
Fix a basepoint $x_0\in X$, with image $\bar x_0$ in $\bar X$. Let $Q<\mcg{S}$ be a convex-cocompact subgroup (i.e.\ the orbit map $Q\to Q.x_0\subseteq X$ is a quasi-isometric embedding).  There is a positive integer $N$ such that, for all positive multiples $n$ of $N$, the following hold.
\begin{enumerate}
  \item\label{item:inj} The map $\phi|_Q$ is injective and the image $\phi(Q)$ is convex-cocompact.

  \item\label{item:cobound_isom} For all $q\in Q$ we have that $\psi|_{[x_0,qx_0]}$ is an isometric embedding.

  \item\label{item:quad_lift} For every geodesic quadrangle $\bar\gamma_1\ast\dots\ast \bar\gamma_4$ in $\bar X$ there exists a geodesic quadrangle $\gamma_1\ast\dots\ast \gamma_4$ in $X$ with $\psi(\gamma_i)=\bar\gamma_i$.

  \item\label{item:lift_cobound} Moreover, in the setting of the previous item, if a $\bar\gamma_i$ is a translate of a geodesic of the form $\psi([x_0,qx_0])$ for some $q\in Q$, then the corresponding $\gamma_i$ is a translate of $[x_0,qx_0]$.
 \end{enumerate}
\end{lemm}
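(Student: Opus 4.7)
The plan is to extract all four items from the cone-off / Dehn filling description of $\bar X$ developed in \cite{dahmani_dehn_2021, BHMS}, combined with the convex-cocompactness of $Q$. Roughly, $\bar X$ arises as the quotient of $X$ by a rotating family of quasi-axes of $n$th powers of Dehn twists, and the main geometric point is that convex-cocompact subgroups of $\mcg{S}$ avoid these axes uniformly, so their orbits live entirely in the ``thick part'' of $X$ where the projection $\psi$ behaves like a local isometry (compare \autoref{res: small cancellation theorem}\ref{enu: small cancellation theorem - local isom}, \ref{enu: small cancellation theorem - cone point}). Since $Q$ is fixed, the integer $N$ may depend on $Q$.

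First I would prove item~\ref{item:inj}. Every infinite-order element of $Q$ acts loxodromically on $X$, hence is not commensurable with any Dehn twist (which acts elliptically). The Dahmani--Guirardel--Osin / rotating family machinery underlying \cite{BHMS} then shows that, for $n$ a sufficiently large multiple of some $N$, the restriction $\phi|_Q$ is injective and the image $\phi(Q)$ acts on $\bar X$ with quasi-isometric orbit map, i.e.\ is convex-cocompact. This is the standard ``quasi-convex subgroups are preserved under sufficiently deep Dehn filling'' principle.

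For item~\ref{item:cobound_isom}, convex-cocompactness of $Q$ produces a uniform constant $D$ so that every geodesic $[x_0,qx_0]$ lies in a $D$-neighbourhood of the orbit $Q.x_0$. Since $Q.x_0$ is contained in the thick part $X^+$ and the cone radius used in \cite{BHMS} to build $\bar X$ grows with $n$, for $n$ large enough (depending on $D$, hence on $Q$) the segment $[x_0,qx_0]$ stays outside the ``punctured'' balls around apices. On such a segment, $\psi$ is a local isometry and, by a standard local-to-global argument in the cone-off space, an isometric embedding on the whole segment.

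For items~\ref{item:quad_lift} and \ref{item:lift_cobound}, one decomposes a geodesic quadrangle $\bar\gamma_1 \ast \cdots \ast \bar\gamma_4$ in $\bar X$ into arcs lying in the thick part and arcs passing through apex balls. Each thick arc lifts to a geodesic in $X$ (up to a deck transformation), and each apex arc can be replaced by a geodesic arc in $X$ passing through the corresponding Dehn-twist axis region, in a way that can be coherently chained into a full geodesic lift $\gamma_1 \ast \cdots \ast \gamma_4$; this is exactly the lifting procedure for geodesics in small-cancellation quotients. For \ref{item:lift_cobound}, the side in question is of the form $\psi([x_0,qx_0])$, which by item~\ref{item:cobound_isom} avoids all apex balls; the preimage of such a geodesic under $\psi$ is a single $\DT^n(S)$-orbit of geodesics in $X$, and the element of $\DT^n(S)$ pre-composing can be absorbed into the choice of lift, showing that $\gamma_i$ itself is a translate of $[x_0,qx_0]$.

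The main obstacle is simultaneously managing two a priori distinct lower bounds on $N$: one from the Dehn filling theorem (item~\ref{item:inj}) and one ensuring that the cone radius exceeds the convex-cocompactness constant of $Q$ (items~\ref{item:cobound_isom}--\ref{item:lift_cobound}). Since $Q$ is fixed, the latter depends only on $Q$, so one can take $N$ to be the maximum of the two, and the statement then holds for all positive multiples of this $N$.
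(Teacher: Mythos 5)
Your proposal takes a fundamentally different route than the paper. The paper's proof of this lemma is essentially a citation: it matches each item to a specific result in \cite{BHMS} (item~(1) to Theorem~7.1-(III), item~(2) to Lemma~8.44, items~(3)--(4) to Proposition~8.13-(VI)) and spends the rest of its effort on a subtle but crucial point — explaining why those results of \cite{BHMS}, which are stated conditionally on the residual finiteness of all hyperbolic groups, nevertheless apply unconditionally here because $\phi$ and $\psi$ occur at the first level of the inductive construction. Your proposal, by contrast, tries to sketch a from-scratch proof of each item. That effort is admirable but has two real problems.

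First, you are implicitly transplanting the cone-off / small-cancellation picture of \autoref{res: small cancellation theorem} — where $\bar X$ is the quotient of a cone-off of $X$ and $\psi$ is a local isometry away from apex balls — onto a different construction. In this section $X$ is the curve complex $\curve(S)$ and $\bar X = \DT^n(S)\backslash X$ is the literal quotient by the group action, not a cone-off quotient in the sense of \autoref{sec: sc}. Statements like \autoref{res: small cancellation theorem}\ref{enu: small cancellation theorem - local isom} and \ref{enu: small cancellation theorem - cone point} are about the other $\bar X$ and do not apply here; the actual lifting statements you need (e.g.\ lifting geodesic quadrangles) are nontrivial results from \cite{BHMS} proved by a rather different argument in the setting of hierarchically hyperbolic Dehn fillings. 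Your item~(2) argument in particular asserts that $Q.x_0$ lies in a thick part avoiding apex balls of growing radius; but $\bar X$ has no apex balls, and the relevant mechanism in \cite{BHMS} is not that the curve radius exceeds a constant but that the Dehn filling is sufficiently deep relative to annular projections along $Q$-orbits.

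Second — and this is the gap you cannot work around by hand-waving — you entirely omit the conditionality issue. Several of the results in \cite{BHMS} are stated under the hypothesis that all hyperbolic groups are residually finite, which is used to run an induction; the paper's short proof is mostly devoted to explaining that this hypothesis is irrelevant for $\phi$ and $\psi$ because they sit at the base of the induction. Any proof of this lemma that invokes \cite{BHMS} (which yours ultimately does, via the rotating family machinery that underlies it) must address this point, or else the statement only holds conditionally — which would defeat the purpose of the section.
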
 
\begin{proof}
As we shall explain, these statements are all special cases of results in \cite{BHMS}. The results of that paper are proved conditionally under the hypothesis that all hyperbolic groups are residually finite; this hypothesis is used to enable an induction to proceed, via the construction of further quotients. However, the quotient maps $\phi$ and $\psi$ that we consider here are the first level of the induction, and make no use of the residual finiteness hypothesis. In particular, the proofs of \cite{BHMS} prove these facts unconditionally; see \cite[Theorem 7.3]{BHMS} and the discussion above it.

Bearing that in mind, item \ref{item:inj} corresponds to \cite[Theorem 7.1-(III)]{BHMS}, item \ref{item:cobound_isom} corresponds to \cite[Lemma 8.44]{BHMS}, while items \ref{item:quad_lift} and \ref{item:lift_cobound} correspond to  \cite[Proposition 8.13-(VI)]{BHMS}.
\end{proof}

We now apply these geometric results to get some algebraic information about the images $\bar{g}$ in $\mcg{S}/\DT^n(S)$ of pseudo-Anosovs $g\in \mcg{S}$. Given a group $G$ acting acylindrically on a hyperbolic space, and an element $g\in G$ acting loxodromically, the \emph{elementary closure} of $g$ in $G$ is defined to be
	\begin{equation*}
	 	E_G(g)=\left\{h\in G\,\mid \,\exists\, m,n\in \mathbb{Z}\setminus\{0\}\mbox{ s.t. }h g^m h^{-1}=g^n\right\}.
	 \end{equation*}
Note that the elementary closure, by definition, contains the centraliser $C_G(g)$. Recall that, for $G$ and $g$ as above, $\langle g\rangle$ has finite index in $E_G(g)$ \cite[Lemma 6.8]{osin_acylindrically_2016}. 

\begin{prop}\label{res: Non-elementary power quotient}
For any pair of independent pseudo-Anosovs $g_1,g_2\in \mcg{S}$,  there is a positive integer $N$ such that, for all positive multiples $n$ of $N$:
\begin{enumerate}
\item $g_i$ acts loxodromically on the hyperbolic space $\bar{X}$, for both $i=1,2$; \label{enu: Non-elementary power quotient -- loxodromic}
\item the subgroup $\langle \bar{g}_1,\bar{g}_2\rangle$ is non-elementary; \label{enu: Non-elementary power quotient -- non-elementary}
\item $\phi$ induces surjections
\[
E_{\mcg{S}}(g_i)\to  E_{\mcg{S}/\DT^n(S)}(\bar{g}_i)\,,
\]
for each $i=1,2$.\label{enu: Non-elementary power quotient -- E(g)}
\end{enumerate}
\end{prop}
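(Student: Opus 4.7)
The plan is to leverage the geometric tools from \autoref{res: Needed facts} together with the acylindricity of the action of $\mcg{S}/\DT^n(S)$ on $\bar X$ provided by \autoref{res: improved DHS theorem}. For item \ref{enu: Non-elementary power quotient -- loxodromic}, since each $g_i$ is pseudo-Anosov it acts loxodromically on the curve complex with $\langle g_i\rangle$ convex-cocompact in $\mcg{S}$, by Masur--Minsky. Applying \autoref{res: Needed facts}\ref{item:cobound_isom} to $Q = \langle g_i\rangle$ gives that $\psi$ isometrically embeds $[x_0, g_i^k x_0]$ for every $k$, so the stable translation length of $\bar g_i$ on $\bar X$ matches that of $g_i$ on $X$ and is in particular positive. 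For item \ref{enu: Non-elementary power quotient -- non-elementary}, a standard ping-pong argument on the curve complex yields, for sufficiently large $p$, that $\langle g_1^p, g_2^p\rangle$ is free of rank $2$ and convex-cocompact in $\mcg{S}$. \autoref{res: Needed facts}\ref{item:inj} then implies that $\phi$ maps this subgroup injectively onto a convex-cocompact free subgroup of rank $2$ in $\mcg{S}/\DT^n(S)$, which is therefore non-elementary; hence so is $\langle \bar g_1, \bar g_2\rangle$.

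The real content is item \ref{enu: Non-elementary power quotient -- E(g)}. The inclusion $\phi(E_{\mcg{S}}(g_i)) \subseteq E_{\mcg{S}/\DT^n(S)}(\bar g_i)$ is automatic, so only surjectivity requires proof. Recall that for pseudo-Anosovs, $E_{\mcg{S}}(g_i)$ coincides with the setwise stabiliser of the fixed point pair $\{\xi_+, \xi_-\}$ of $g_i$ on $\partial X$. Given $\bar h \in E_{\mcg{S}/\DT^n(S)}(\bar g_i)$, lift it to some $h \in \mcg{S}$, and argue by contradiction, assuming $h \notin E_{\mcg{S}}(g_i)$. Then $h\{\xi_+, \xi_-\} \neq \{\xi_+, \xi_-\}$, and by acylindricity of the $\mcg{S}$-action on $X$ these pairs must be disjoint, so $g_i$ and $hg_ih^{-1}$ are independent pseudo-Anosovs. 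Ping-pong then gives, for sufficiently large $p$, that $Q = \langle g_i^p, (hg_ih^{-1})^p\rangle$ is free of rank $2$ and convex-cocompact in $\mcg{S}$. Applying \autoref{res: Needed facts}\ref{item:inj} to this $Q$ makes $\phi|_Q$ injective. But $\phi(Q) = \langle \bar g_i^p, \bar h \bar g_i^p \bar h^{-1}\rangle$ is contained in $E_{\mcg{S}/\DT^n(S)}(\bar g_i)$, which is virtually cyclic as the elementary closure of a loxodromic element under an acylindrical action \cite[Lemma~6.8]{osin_acylindrically_2016}. A virtually cyclic group contains no non-abelian free subgroup, contradicting injectivity. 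Hence $h \in E_{\mcg{S}}(g_i)$, giving surjectivity.

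The main obstacle lies in uniformity of constants. The integer $p$ required for ping-pong depends on the pair $(g_i, hg_ih^{-1})$, and hence on $h$, so the threshold $N$ supplied by \autoref{res: Needed facts}\ref{item:inj} applied to $Q$ depends on $h$; since $\bar h$ ranges over all of $E_{\mcg{S}/\DT^n(S)}(\bar g_i)$ and $n$ varies, one must verify that these $N$ values stay uniformly bounded. Bowditch's acylindricity of $\mcg{S}$ on $X$ should limit how close $hg_ih^{-1}$ can approach $g_i$'s fixed points without $h$ itself already lying in $E_{\mcg{S}}(g_i)$, yielding the required uniformity. Alternatively, one can avoid the ping-pong argument entirely by using \autoref{res: Needed facts}\ref{item:quad_lift} and \ref{item:lift_cobound}: lift a long quadrangle in $\bar X$ with two sides along the axis of $\bar g_i$ and two short sides of length $\dist{\bar x_0}{\bar h \bar x_0}$, obtaining a specific lift $t \in \mcg{S}$ of $\bar h$ that sends $[x_0, g_i^K x_0]$ close to itself; the Bowditch acylindricity of $\mcg{S}$ on $X$ applied as $K \to \infty$ forces $t$ to preserve $\{\xi_+, \xi_-\}$, so $t \in E_{\mcg{S}}(g_i)$.
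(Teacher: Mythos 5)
Your treatment of items \ref{enu: Non-elementary power quotient -- loxodromic} and \ref{enu: Non-elementary power quotient -- non-elementary} is fine and close to the paper's (the paper applies \autoref{res: Needed facts}\ref{item:inj} to $Q=\langle g_1^k,g_2^k\rangle$ once, and gets both in one stroke; your use of \autoref{res: Needed facts}\ref{item:cobound_isom} for item \ref{enu: Non-elementary power quotient -- loxodromic} is a harmless variant).

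Your primary argument for item \ref{enu: Non-elementary power quotient -- E(g)} has a genuine gap, and moreover the gap cannot be patched in the way you suggest. The structure is: take \emph{any} lift $h$ of $\bar h$, assume $h\notin E_{\mcg{S}}(g_i)$, and derive a contradiction via ping-pong and \autoref{res: Needed facts}\ref{item:inj}. If that argument were uniform it would show that \emph{every} lift of $\bar h$ lies in $E_{\mcg{S}}(g_i)$; but multiplying a lift by a kernel element gives another lift, so this would force $\ker\phi\subseteq E_{\mcg{S}}(g_i)$, which is absurd since $\ker\phi\supseteq\DT^n(S)$ is not virtually cyclic. So the argument must fail for ``long'' lifts, and indeed the offending step is the application of \autoref{res: Needed facts}\ref{item:inj}: the threshold $N$ supplied by that lemma depends on the convex-cocompact subgroup $Q=\langle g_i^p,(hg_ih^{-1})^p\rangle$, which depends on $h$, and once $n$ has been fixed there is no reason $n$ is a multiple of $N_Q$ for the $Q$'s arising from arbitrary lifts. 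Bounding the ping-pong exponent $p$ via acylindricity (as you propose) does not, by itself, make $N_Q$ uniform, and even if it did, the argument would remain too strong. What is really missing is a \emph{controlled choice of lift} for $\bar h$.

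Your ``alternative'' sketch is essentially the paper's proof, and it supplies exactly that missing ingredient. The paper first uses acylindricity of $\mcg{S}$ on $X$ to fix, for $C=\dist{\bar x_0}{\bar\gamma\bar x_0}$, a threshold $K$ such that any element of $\mcg{S}$ displacing $x_0$ and $g_1^l x_0$ by at most $C$ for some $|k|,|l|>K$ lies in $E_{\mcg{S}}(g_1)$. Then, given $\bar\gamma\bar g_1^k\bar\gamma^{-1}=\bar g_1^l$ with $|k|,|l|>K$, one forms the geodesic quadrangle on $\bar x_0$, $\bar\gamma\bar x_0$, $\bar g_1^l\bar\gamma\bar x_0$, $\bar g_1^l\bar x_0$ and uses \autoref{res: Needed facts}\ref{item:cobound_isom}, \ref{item:quad_lift}, \ref{item:lift_cobound} to lift it to a quadrangle in $X$ whose vertices are $x_0$, $\gamma x_0$, $\gamma g_1^k x_0$, $g_1^l x_0$ for a \emph{specific} lift $\gamma$ of $\bar\gamma$ with the right fellow-travelling, hence $\gamma\in E_{\mcg{S}}(g_1)$. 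This controlled lift is what your ping-pong argument lacks. To be complete you would need to carry out this quadrangle argument carefully rather than gesture at it; note also that the relevant condition is $\bar\gamma\bar g^k\bar\gamma^{-1}=\bar g^l$ for some nonzero $k,l$, i.e.\ membership of the elementary closure, not merely commuting with $\bar g_i$.
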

\begin{proof} 
For a large enough integer $k$, the subgroup $Q=\langle g_1^k,g_2^k\rangle$ is free and convex-cocompact for the action on $X$ \cite[Theorem 1.4]{farb_convex_2002}. By \autoref{res: Needed facts}\ref{item:inj} we have that, for $n$ a large multiple, $Q$ embeds into $\mcg{S}/\DT^n(S)$ and quasi-isometrically embeds into $\bar{X}$. This proves both items \ref{enu: Non-elementary power quotient -- loxodromic} and \ref{enu: Non-elementary power quotient -- non-elementary}.
    
 To prove item \ref{enu: Non-elementary power quotient -- E(g)}, we follow the argument for the claim within the proof of \cite[Theorem 2.8]{wilton2024congruence}, which we reproduce here for convenience. Without loss of generality, we take $i=1$. Note that the inclusion $\phi(E_{\mcg{S}}(g_1))<E_{\mcg{S}/\DT^n(S)}(\bar{g}_
 1)$ is clear from the definition of elementary closure.

The fact that the action is acylindrical yields that, for all $C\geq 0$, there exists $K\geq 0$ such that the following holds. If $\gamma\in \mcg{S}$ is such that for some integers $k,l$ with $|k|,|l|>K$ we have
\[
d_X(x_0,\gamma x_0),d_X(g_1^kx_0,\gamma g_1^lx_0)\leq C\,,
\]
then $\gamma\in E_{\mcg{S}}(g_1)$. (That is to say, if distinct cosets of $\langle g_1\rangle$ fellow-travel for an arbitrarily long distance, they represent the same coset of $E_{\mcg{S}}(g_1)$.)

 Suppose now that we have $\bar\gamma\in\mcg{S}/\DT^n(S)$ such that $\bar\gamma\bar{g}_1^k\bar\gamma^{-1}=\bar{g}_1^l$ for integers $k,l\neq 0$; we have to show that $\bar\gamma\in \phi(E_{\mcg{S}}(g_1))$. For the $N$ given by the property stated above with $C=d_{\bar X}(\bar x_0,\bar{\gamma}\bar x_0)$, we can assume up to passing to powers that we have $|k|,|l|>N$. We now have a geodesic quadrangle $\bar{\mathcal Q}$ in $\bar X$ with vertices $\bar x_0$, $\bar\gamma\bar x_0$, $\bar \gamma\bar g_1^k\bar x_0=\bar{g}_1^l\bar{\gamma}x_0$, $\bar g_1^l\bar x_0$, which has two sides of length $C$.
 
 By \autoref{res: Needed facts}\ref{item:cobound_isom}, we can take the sides $[\bar x_0,\bar g_1^l\bar x_0]$ and $\bar{\gamma}[\bar x_0,\bar g_1^k\bar x_0]$ to be images in $\bar X$ of translates of geodesics $[x_0, g_1^lx_0]$ and $[x_0, g_1^k x_0]$. By \autoref{res: Needed facts}\ref{item:quad_lift}, this geodesic quadrangle can be lifted to $X$. By \autoref{res: Needed facts}\ref{item:lift_cobound}, it can in fact be lifted in a way that the vertices of the lifted quadrangle are of the form $x_0$, $\gamma x_0$, $\gamma g_1^lx_0$, $g_1^kx_0$ for some $\gamma\in \mcg{S}$ with $\phi(\gamma)=\bar\gamma$. The structure of the lifted quadrangle implies that $\gamma\in E_{\mcg{S}}(g_1)$, so $E_{\mcg{S}}(g_1)$ surjects onto $E_{\mcg{S}/\DT^n(S)}(\bar{g}_1)$, as required.
\end{proof}

\begin{rema}
    One can improve \autoref{res: Non-elementary power quotient}\ref{enu: Non-elementary power quotient -- E(g)} from surjections to isomorphisms using \cite[Theorem 7.1-(I)]{BHMS} (which says that we can assume that $\phi$ is injective on any given finite set), but we will not need this.
\end{rema}

With these results in hand, we can produce elements of $\Gamma(n)$ with controlled centralisers in the power quotient $\mcg{S_*}/\DT^n(S_*)$ of the mapping class group of the once-punctured surface $S_*$.

\begin{prop}
\label{prop: Centraliser of the point-pushing subgroup}
	Let $S$ be a closed, connected, oriented surface of genus at least two, and let $S_*$ be the result of removing a single puncture.
	There is $N > 0$ such that, for all multiples $n$ of $N$, the point-pushing subgroup contains two elements whose centralisers in $\mcg{S_*}/\DT^n(S_*)$ have trivial intersection.
\end{prop}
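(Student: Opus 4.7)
The plan is to exploit the fact that point-pushings of filling loops give pseudo-Anosov elements of $\mcg{S_*}$, combining the control on elementary closures from \autoref{res: Non-elementary power quotient} with the acylindricity of the action on $\bar X$. To begin, we would pick $\gamma_1,\gamma_2\in\pi_1(S)$ represented by filling loops on $S$; by a theorem of Kra, the point-pushings $f_i\in\mcg{S_*}$ are then pseudo-Anosov, and we arrange that they are independent (distinct pairs of fixed points on Thurston's boundary). By a theorem of Farb--Mosher, for $l$ sufficiently large, $\langle f_1^l,f_2^l\rangle$ is a convex-cocompact free subgroup of $\mcg{S_*}$ of rank two.

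Next, we would apply \autoref{res: Non-elementary power quotient} to $(f_1,f_2)$, working in $\mcg{S_*}$ in place of $\mcg{S}$. For all multiples $n$ of a suitable $N$, each $\bar f_i$ is loxodromic on $\bar X$, the quotient map $\phi$ induces a surjection $E_{\mcg{S_*}}(f_i)\onto E(\bar f_i)$ of elementary closures, and by \autoref{res: Needed facts}\ref{item:inj} the image $\langle\bar f_1^l,\bar f_2^l\rangle$ is free of rank two in $\mcg{S_*}/\DT^n(S_*)$.

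Given $\bar h\in C(\bar f_1)\cap C(\bar f_2)$, acylindricity of the action on $\bar X$ (\autoref{res: improved DHS theorem}) forces $\bar h$ to lie in each elementary closure $E(\bar f_i)$. Since every $E_{\mcg{S_*}}(f_i)$ is virtually cyclic containing $\langle f_i\rangle$ with finite index, the surjection on elementary closures produces, after raising $\bar h$ to a bounded power $M$, an identity $\bar h^M=\bar f_1^a=\bar f_2^b$ for some integers $a,b$. Raising further to the $l$-th power puts this identity inside the free group $\langle\bar f_1^l,\bar f_2^l\rangle$, whose two cyclic factors intersect trivially; thus $a=b=0$ and $\bar h^M=1$.

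The main obstacle is then to rule out non-trivial finite-order $\bar h$. Such an element lies in $E(\bar f_1)\cap E(\bar f_2)$ and hence in the maximal finite normal subgroup $F$ of the acylindrically hyperbolic group $\mcg{S_*}/\DT^n(S_*)$, i.e.\ the kernel of its action on $\bar X$. Normality of both $F$ and $\Gamma(n)$, together with the triviality of finite normal subgroups of $\Gamma(n)$ (\autoref{res: infinite + trivial finite radical}), give $F\cap\Gamma(n)=\{1\}$ and $[F,\Gamma(n)]=\{1\}$, so $F$ centralises $\Gamma(n)$. The plan is to complete the proof using the hierarchically hyperbolic structure on $\mcg{S_*}/\DT^n(S_*)$ from \cite{BHMS} --- in particular the faithfulness of the top-level action on $\bar X$ for sufficiently divisible $n$ --- to force $F=\{1\}$ and hence $\bar h=1$.
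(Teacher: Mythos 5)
Your approach diverges from the paper's and, as you yourself flag, it has a genuine gap at the end.

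The paper's proof does not choose arbitrary filling loops and then try to repair the torsion issue later. Instead, it first shows that no non-trivial finite subgroup of $\mcg{S_*}$ is normalised by the point-pushing copy of $\pi_1(S)$ (using \cite[Lemma 5.5]{Hull} for the action on $\mathcal C(S_*)$, plus the fact that $\mcg{S_*}$ has no non-trivial finite normal subgroup), and then invokes \cite[Lemma 5.6]{Hull} to produce independent pseudo-Anosovs $g_1,g_2 \in \pi_1(S)$ whose elementary closures in $\mcg{S_*}$ are exactly $\langle g_1\rangle$ and $\langle g_2\rangle$, i.e.\ infinite cyclic with \emph{no} finite part. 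The surjection of \autoref{res: Non-elementary power quotient}\ref{enu: Non-elementary power quotient -- E(g)} then forces $E(\bar g_i) \cong \mathbb Z$, so that a non-trivial intersection of elementary closures would make $E(\bar g_1) = E(\bar g_2)$ and thus $\langle \bar g_1, \bar g_2\rangle$ elementary, contradicting \autoref{res: Non-elementary power quotient}\ref{enu: Non-elementary power quotient -- non-elementary}. The torsion case you struggle with simply never arises.

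Your version picks filling loops, gets pseudo-Anosovs via Kra, and convex-cocompact free subgroups via Farb--Mosher, which is a perfectly reasonable way to get the set-up running. The power-raising argument that reduces $\bar h^M = \bar f_1^a = \bar f_2^b$ to $\bar h^M = 1$ is correct, if somewhat more involved than necessary. But the final step is where the proof is incomplete: from $\bar h^M = 1$ you must rule out non-trivial finite-order elements of $E(\bar f_1)\cap E(\bar f_2)$, and the reduction to the kernel $F$ of the action on $\bar X$ is not quite right either --- a finite-order element in the intersection of two elementary closures need not lie in the \emph{global} kernel of the action (the maximal finite normal subgroup of the whole group), only in the maximal finite normal subgroups of each $E(\bar f_i)$. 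Even granting the reduction to $F$, you then ``plan'' to appeal to faithfulness of the top-level action, but you do not cite or prove such a statement, and \autoref{res: Needed facts} only gives injectivity of $\phi$ restricted to convex-cocompact subgroups, not triviality of the global kernel. The paper's choice of $g_i$ via Hull's lemma sidesteps all of this cleanly; you should adopt that step (or independently establish faithfulness) to make the argument close.
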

\begin{proof}
We use the fact that the action of the surface group $\pi_1(S)$ -- viewed as the point-pushing subgroup of $\mcg{S_*}$ -- on the curve graph $\mathcal C(S_*)$ is non-elementary \cite[Lemma 7.2]{osin_acylindrically_2016}.
	
	We first argue that no non-trivial finite subgroup of $\mcg{S_*}$ is normalised by $\pi_1(S)$. By \cite[Lemma 5.5]{Hull} (which applies by non-elementarity) there exists a unique maximal finite subgroup $K$ that is normalised by $\pi_1(S)$. 
	Since $\pi_1(S)$ is normal, for any $g\in \mcg{S_*}$ the conjugate $gKg^{-1}$ is also a maximal finite subgroup normalised by $\pi_1(S)$, so that $gKg^{-1}=K$, and $K$ is normal. But $\mcg{S_*}$ does not contain non-trivial finite normal subgroups: this is a consequence of two exercises in Ivanov's monograph \cite[Exercises 11.4b and 11.5c]{ivanov_subgroups_1992}; see also \cite[Lemma 10.7]{mangioni2025rigidity} for a complete proof. Hence $K$ is trivial, as required.

 By the the lack of non-trivial subgroups normalised by $\pi_1(S)$ and \cite[Lemma 5.6]{Hull}, there exist independent pseudo-Anosovs $g_1,g_2\in \pi_1(S)$ such that
 	\begin{equation*}
		 E_{\mcg{S}/\DT^n(S)}(g_i)=\langle g_i\rangle
	\end{equation*}
for both $i=1,2$. In particular, the elementary closures of $g_1$ and $g_2$ have trivial intersection. 

Let $N$ be as in  \autoref{res: Non-elementary power quotient}.  The elementary closure $E_{\mcg{S}}(g_i)$ is infinite cyclic, and surjects the infinite group $E_{\mcg{S}/\DT^n(S)}(\bar{g}_i)$ by \autoref{res: Non-elementary power quotient}\ref{enu: Non-elementary power quotient -- E(g)}; therefore, $\phi$ induces isomorphisms
\[
\Z\cong E_{\mcg{S}}(g_i)\cong E_{\mcg{S}/\DT^n(S)}(\bar{g}_i)
 \]
for both $i=1,2$. In particular, if the intersection
\[
E_{\mcg{S}/\DT^n(S)}(\bar g_1)\cap E_{\mcg{S}/\DT^n(S)}(\bar g_2)
\]
were non-trivial, it would follow that
\[
\langle \bar{g}_1,\bar{g}_2\rangle \leq E_{\mcg{S}/\DT^n(S)}(\bar g_1)= E_{\mcg{S}/\DT^n(S)}(\bar g_2)\,,
\]
contradicting \autoref{res: Non-elementary power quotient}\ref{enu: Non-elementary power quotient -- non-elementary}. Therefore, the intersection
\[
E_{\mcg{S}/\DT^n(S)}(\bar g_1)\cap E_{\mcg{S}/\DT^n(S)}(\bar g_2)
\]
is trivial, whence so is the intersection of their centralisers, as required.
\end{proof}

\autoref{prop: Centraliser of the point-pushing subgroup} enables us to understand the kernel of the action of $\mcg[\pm]{S_*}$ on $\Gamma(n)$.

\begin{prop}
\label{res: kernels}
	Let $S$ be a closed, connected, orientable surface of genus at least two. 
	There is an integer $N > 0$ such that, for all multiples $n$ of $N$, the kernels of the maps 
	\begin{equation*}
		\mcg[\pm]{S_*}\to\aut{\Gamma(n)} \quad \text{and} \quad \mcg[\pm]{S}\to \out{\Gamma(n)}
	\end{equation*}
	are $\DT^n(S_*)$ and $\DT^n(S)$ respectively.
\end{prop}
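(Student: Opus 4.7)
The plan is to prove the two reverse inclusions, since the forward containments $\DT^n(S_*)\subseteq\ker(\mcg[\pm]{S_*}\to\aut{\Gamma(n)})$ and $\DT^n(S)\subseteq\ker(\mcg[\pm]{S}\to\out{\Gamma(n)})$ are already established by \autoref{lem: Map to Aut} and \autoref{lem: Map to Out}. For the first, I will start with $\phi\in\mcg[\pm]{S_*}$ acting trivially on $\Gamma(n)$ and first reduce to the orientation-preserving case. Since $\phi$ acts trivially on $\Gamma(n)$ it acts trivially on the abelianisation, which is $H_1(S;\mathbb{Z}/n\mathbb{Z})$ (non-separating simple closed curves form a $\mathbb{Z}$-basis of $H_1(S;\mathbb{Z})$, so quotienting by $n$th powers of all simple closed curves produces exactly this group on the abelianisation). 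The induced action on homology factors through $\mcg[\pm]{S}$, and an orientation-reversing class is \emph{anti-symplectic} for the intersection form $\omega$; if $\phi$ were orientation-reversing with trivial action mod $n$, then $\omega\equiv-\omega\pmod{n}$, which forces $n\mid 2$. Taking $N$ large makes this impossible, so $\phi\in\mcg{S_*}$ and its image $\bar\phi$ lies in $\mcg{S_*}/\DT^n(S_*)$.

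I will then invoke naturality of the Birman exact sequence: for every $\gamma'\in\pi_1(S)$, with point-pushing image $P(\gamma')\in\mcg[\pm]{S_*}$, one has $\phi\cdot P(\gamma')\cdot\phi^{-1}=P(\phi_*(\gamma'))$. Descending to the power quotient shows that $\bar\phi$ commutes with the point-pushing image of every element of $\Gamma(n)$ on which it acts trivially. Then \autoref{prop: Centraliser of the point-pushing subgroup} will provide two point-pushed elements whose centralisers in $\mcg{S_*}/\DT^n(S_*)$ intersect trivially, forcing $\bar\phi=1$ and hence $\phi\in\DT^n(S_*)$. For the second inclusion, I will lift $\psi\in\mcg[\pm]{S}$ in the kernel to $\tilde\psi\in\mcg[\pm]{S_*}$ via the Birman forgetful surjection; its image in $\aut{\Gamma(n)}$ must be an inner automorphism $\iota_\gamma$ for some $\gamma\in\Gamma(n)$. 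Choosing $\gamma'\in\pi_1(S)$ lifting $\gamma$, and using commutativity of \autoref{fig: Fundamental commutative diagram}, the element $\tilde\psi\cdot P(\gamma')^{-1}$ maps to the identity of $\aut{\Gamma(n)}$, so by the first part it lies in $\DT^n(S_*)$. Projecting to $\mcg[\pm]{S}$, where $P(\gamma')$ dies and $\DT^n(S_*)$ maps into $\DT^n(S)$, yields $\psi\in\DT^n(S)$.

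The main obstacle is producing two point-pushed elements with trivially-intersecting centralisers in $\mcg{S_*}/\DT^n(S_*)$, namely \autoref{prop: Centraliser of the point-pushing subgroup}, which itself rests on the acylindricity and quadrangle-lifting machinery of \autoref{res: improved DHS theorem} and \autoref{res: Needed facts} from \cite{BHMS}. Once that is in hand the rest is diagram-chasing combined with the homological orientation reduction.
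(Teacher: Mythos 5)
Your proof is correct and follows essentially the same route as the paper: forward inclusion via \autoref{lem: Map to Aut}/\ref{lem: Map to Out}, reduction to orientation-preserving classes via the action on $H_1(\,\cdot\,;\Z/n\Z)$, then \autoref{prop: Centraliser of the point-pushing subgroup} to kill the image in $\mcg{S_*}/\DT^n(S_*)$, and finally deducing the $\out{\Gamma(n)}$ statement. The only cosmetic differences are that you phrase the orientation reduction through the anti-symplectic property of the intersection form rather than the determinant of the induced matrix in $GL_{2g}(\Z/n\Z)$, and you carry out the passage to $\out{\Gamma(n)}$ by an explicit lift-and-cancel diagram chase where the paper simply invokes the five lemma; both are equivalent.
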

\begin{proof}
\autoref{lem: Map to Aut} asserts that $\DT^n(S_*)$ is contained in the kernel of the map $\mcg[\pm]{S_*}\to\aut{\Gamma(n)}$, for all sufficiently large $n$. To prove the reverse inclusion, consider $k\in \mcg[\pm]{S_*}$ that acts trivially on $\Gamma(n)$. 

The action on $H_1(S_*,\Z/n\Z)$ defines a surjective homomorphism 
\[
\mcg[\pm]{S_*}\to GL_{2g}(\Z/n\Z)
\]
that factors through $\aut{\Gamma(n)}$. Since orientation-reversing mapping classes map to matrices of determinant $-1$, it follows that $k$ must be orientation-preserving, as long as $n>2$. Therefore, $k\in \mcg{S_*}$.

Now let $N>2$, $g_1$ and $g_2$ be as in \autoref{prop: Centraliser of the point-pushing subgroup}. The action of $k$ on $\Gamma(n)$ is via conjugation by $\phi(k)$, so the fact that $k$ acts trivially means that $\phi(k)$ commutes with both $\bar{g}_1$ and $\bar{g}_2$. That is, $\phi(k)$ is in the intersection of their centralisers, so $\phi(k)=1$.

This shows that the map $\mcg[\pm]{S_*}/\DT^n(S_*)\to \aut{\Gamma(n)}$ is injective, and injectivity of  $\mcg[\pm]{S}/\DT^n(S)\to \out{\Gamma(n)}$ then follows from the five lemma.
\end{proof}


\begin{rema}
    Notice that the statement of \autoref{res: kernels} does not hold for genus one, in which case $\pi_1(S)\cong\mathbb Z^2$ and $\mcg[\pm]{S}\cong {\rm GL}(2,\mathbb Z)$.
    Indeed, in that case $\Gamma(n)$ is finite, hence so is its automorphism group, while $\mcg[\pm]{S}/\DT^n(S)$ is infinite for all large multiples $n$ (since it is the quotient of the hyperbolic group $\mcg[\pm]{S}$ by the $n$th power of an infinite order element \cite[Theorem~3]{Olshanskii:1993dr}). 
\end{rema}

\subsection{More properties of \texorpdfstring{$\Gamma(n)$}{Γ(n)}}

To conclude this section, we prove \autoref{thm: 2nd Structural results}, which we restate below.


\structural*

\begin{proof}
	By \autoref{thm:exact}, $\Gamma(n)$ is a normal subgroup of $\mcg[\pm]{S_*}/\DT^n(S_*)$. 
	The  latter is acylindrically hyperbolic by \cite{dahmani_dehn_2021} so, by \cite[Corollary 1.5]{osin_acylindrically_2016}, to conclude that $\Gamma(n)$ is acylindrically hyperbolic we only need to know that it is infinite, which is part of \autoref{res: infinite + trivial finite radical}. 

    Let us prove now that $\Gamma(n)$ is not lacunary hyperbolic.
    We first claim that any action of $\Gamma(n)$ on an $\R$-tree has a global fixed point.
    Since $\Gamma$ is a surface group, we can find a finite generating set $T \subset \Gamma$ with the following properties: every element of $T$ can be represented by a path that is freely homotopic to a simple close loop; moreover for every $t_1, t_2 \in T$, there is $\epsilon \in \{\pm 1\}$ such that $t_1 t_2^\epsilon$ can also be represented by a path that is freely homotopic to a simple close loop.
	Consequently the image in $\Gamma(n)$ of $T$ (which we still denote by $T$ for simplicity) is such that every element in $T$ has finite order.
	Moreover, for every $t_1, t_2 \in T$, there is $\epsilon \in \{\pm 1\}$ such that $t_1 t_2^\epsilon$ has finite order as well.
	Serre's lemma \cite[\S{I.6.5}, Corollary 2]{Serre:1980aa} then implies that any action by isometries of $\Gamma(n)$ on a simplicial tree has a global fixed point, and indeed it is well known that the proof applies equally well to $\R$-trees.
	We noticed that $\mcg[\pm]{S}/\DT^n(S)$ is infinite by  \autoref{res: improved DHS theorem}, and injects into $\out{\Gamma(n)}$  by  \autoref{res: kernels}.
    It follows then from \cite[Corollary 3.4]{Coulon:2019ab} that $\Gamma(n)$ cannot be lacunary hyperbolic, for otherwise $\out{\Gamma(n)}$ would be locally finite.
    This proves item \ref{enu: Structure of Gamma(n) -- acylindrical hyperbolicity}.

	Recall that $\Gamma(n)$ is the direct limit of a sequence of hyperbolic groups $(\Gamma_j)$ (\autoref{res: approximating sequence}).
	Suppose now that $\Gamma(n)$ were finitely presented.
	The sequence $(\Gamma_j)$ would necessarily stabilise, so that $\Gamma(n)$ would be hyperbolic, hence lacunary hyperbolic.
    This would contradict the previous discussion, so \ref{enu: Structure of Gamma(n) -- infinitely presented} follows. 
    
    Item \ref{enu: Structure of Gamma(n) -- vb1} is  \autoref{res: vb1}.

	To prove items \ref{enu: Structure of Gamma(n) -- solvable WP} and \ref{enu: Structure of Gamma(n) -- finite asdim}, we first note that $\mcg[\pm]{S_*}/\DT^n(S_*)$ is hierarchically hyperbolic \cite{BHMS}. 
	Such groups have solvable word problem by \cite[Corollary 7.5]{behrstock_hierarchically_2019} (which goes via Bowditch's proof that coarse median groups satisfy quadratic isoperimetric inequalities \cite{Bow_coarse_median}) and finite asymptotic dimension \cite[Theorem A]{hhs_asdim}. 
	Since both properties pass to finitely generated subgroups, items \ref{enu: Structure of Gamma(n) -- solvable WP} and \ref{enu: Structure of Gamma(n) -- finite asdim} follow by \autoref{thm:exact}.
	Similarly, for item \ref{enu: Structure of Gamma(n) -- fg Tits alternative}, every finitely generated subgroup of (a subgroup of) a hierarchically hyperbolic group either contains a nonabelian free subgroup or is virtually abelian \cite{HHS:boundary,HHS:boundarycorrection}. In the latter case, \autoref{res: elementary subgroup} implies that the subgroup is cyclic or infinite dihedral.

    Item \ref{enu:growth} follows from the proof of \cite[Corollary 8.1]{Zalloum:flipping}. This result is stated for virtually torsion-free hierarchically hyperbolic groups, but it also holds for their virtually torsion-free subgroups that act non-elementarily on the main hyperbolic space, because the proof relies on \cite[Corollary 7.5]{Zalloum:flipping} which applies in that generality. (Recall that $\Gamma(n)$ is virtually torsion-free by \autoref{thm: 1st Structural results}\ref{enu: Structure of Gamma(n) -- virtually torsion-free}.) See also \cite{HHS:growth}, whose arguments probably yield the same results.
\end{proof}

\section{Graphs of groups, graphs of actions}
\label{sec: graph of groups}
%

At this point, we have proved all the main results except the surjectivity of the natural map $\aut{\Gamma}\to\aut{\Gamma(n)}$. The rest of the paper builds up to \autoref{thm:onto}, which proves surjectivity. In this section we recall fundamental facts about group actions on trees.

\subsection{\texorpdfstring{$G$}{G}-trees}

Let $G$ be a group.
A \emph{$G$-tree} is a simplicial tree $S$ endowed with a simplicial action of $G$ without inversion.
This action is \emph{trivial} if $G$ fixes a point.
It is \emph{minimal} if $S$ does not contain any proper $G$-invariant subtree.
Given a $G$-tree, one of the following holds: the action of $G$ is trivial, $G$ fixes an end, or $G$ contains loxodromic elements and there exists a minimal $G$-invariant subtree.
A \emph{splitting} of $G$ is a non-trivial minimal $G$-tree.
Bass--Serre theory builds a correspondence between minimal $G$-trees and \emph{graph of groups decompositions of $G$} \cite{Serre:1980aa}.
In this article we favor the point of view of $G$-trees.

\begin{defi}[Root splitting]
\label{def: root splitting}
	Let $G$ be a group. 
	A \emph{root splitting} of $G$ is a decomposition of $G$ as a non-trivial amalgamated product $G = A \ast_C B$, where $B$ is an abelian group and $C$ is a proper, finite index subgroup of $B$.
\end{defi}

\begin{exam}\label{exa: Root splittings of surface groups}
Let $S$ be a closed, connected, hyperbolic surface. If $S$ is non-orientable, then cutting along the boundary of a M\"obius band exhibits a root splitting of $\pi_1(S)$.

However, it is a standard fact that, if $S$ is orientable, then $\pi_1(S)$ has no root splittings. Indeed, one first argues that any such splitting is realised by cutting along an essential simple closed curve, so both $A$ and $B$ are realised as fundamental groups of subsurfaces of $S$. To conclude, one notes that there is no compact orientable surface with abelian fundamental group and one essential boundary component.
\end{exam}

\subsection{Graphs of actions}

Graphs of actions were formalised by Levitt to decompose the action of a given group on an $\R$-tree \cite{Levitt:1994aa}.
We follow here Guirardel's treatment \cite[Definition~4.3]{Guirardel:2004li}.

\begin{defi}
\label{def: graph of actions}
	A \emph{graph of actions on $\R$-trees} $\Lambda$ consists of the following data.
	\begin{enumerate}
		\item A group $G$ and a $G$-tree $S$ (called the \emph{skeleton} of the graph of actions).
		\item An $\R$-tree $Y_v$ (called \emph{vertex tree}) for each vertex $v$ of $S$.
		\item An \emph{attaching point} $p_e \in Y_{t(e)}$ for each oriented edge $e$ of $S$.
	\end{enumerate}
	Moreover, all these data should be invariant under $G$, i.e. 
	\begin{itemize}
		\item $G$ acts on the disjoint union of the vertex trees so that the projection $Y_v \mapsto v$ is equivariant,
		\item for every edge $e$ of $S$, for every $g \in G$, we have $p_{ge}=gp_e$. 
	\end{itemize}
    If all the edge groups of $S$ are abelian, we speak of a \emph{graph of actions over abelian groups}.
\end{defi}

Given a graph of actions one builds an $\R$-tree $T_\Lambda$ endowed with an action by isometries of $G$: $T_\Lambda$ is obtained from the disjoint union of all trees $Y_v$ by identifying, for every edge $e$ of $S$, the attaching points $p_e \in Y_{v_1}$ and $p_{\bar e} \in Y_{v_2}$ where $v_1$ (\resp $v_2$) is the terminal (\resp initial) vertex of $e$.
We say that the action of $G$ on an $\R$-tree $T$ \emph{decomposes as a graph of actions} if there is a graph of actions $\Lambda$ and a $G$-equivariant isometry from $T$ onto $T_\Lambda$.

\begin{rema*}
	If no confusion can arise, we write $\Lambda$ for both the graph of actions and its underlying skeleton.
	Similarly we speak of $\Lambda$ as a splitting of $G$.
\end{rema*}

\begin{defi}
\label{def: action type on R-tree}
	Let $H$ be a group acting by isometries on an $\R$-tree $Y$.
	We say that the action is
	\begin{enumerate}
		\item \emph{simplicial} if $Y$ is simplicial and the action of $H$ on $Y$ is simplicial,
		\item \emph{axial} if $Y$ is a line and the image of $H$ in $\isom Y$ is a finitely generated group acting with dense orbits on $Y$, and
		\item of \emph{Seifert type} if the action has a kernel $N$ and the faithful action of $H/N$ on $Y$ is dual to an arational measured foliation on a closed $2$-orbifold with boundary.
		\end{enumerate}
\end{defi}

%
\section{The shortening argument}
%
\label{sec: shortening argument}

The goal of this section is to prove the following shortening statement. The collection $\mathfrak H (\delta,\rho)$ was defined in Definition \ref{def: preferred class}.

\begin{theo}[Shortening argument]
\label{res: shortening argument}
	Let $\delta >0$.
	Let $\Gamma$ be the fundamental group of a closed, connected, orientable, hyperbolic surface, with fixed finite generating set $U$. 
	Let $\varphi_k \colon \Gamma \to \Gamma_k$ be a sequence of homomorphisms where $(\Gamma_k, X_k, \mathcal C_k) \in \mathfrak H (\delta,\rho_k)$, and where both $(\rho_k)$ and $\lambda_\infty(\varphi_k, U)$ diverge to infinity.
	Assume that, for every finite subset $W \subset \Gamma \setminus\{1\}$, there is $k_0 \in \N$ such that, for every integer $k \geq k_0$, we have $W \cap \ker \varphi_k = \emptyset$.
	Then there exists $\kappa > 0$ with the following property: for infinitely many $k \in \N$, there exist automorphisms $\alpha_k \in \aut \Gamma$ such that 
	\begin{equation*}
		\lambda^+_1\left(\varphi_k \circ \alpha_k, U\right) \leq (1- \kappa) \lambda^+_1\left(\varphi_k,U\right) \,.
	\end{equation*}
\end{theo}

\begin{rema}
In fact, by invoking JSJ theory, the above theorem holds for $\Gamma$ any finitely generated, torsion-free, non-abelian, CSA, one-ended group without root splittings. (See, for instance, \cite{Guirardel:2017te} for a comprehensive development of JSJ theory.)  For simplicity, we merely state the theorem in the case that we will need it, when $\Gamma$ is a surface group.
\end{rema}

The proof is a (simplified) variation of Coulon--Sela \cite[Theorem~7.46]{Coulon:2021wg}.
We recall here the important steps of the argument highlighting the main differences.
We refer the reader to \cite{Coulon:2021wg} for all the details. 


%
\subsection{Action on a limit tree}
\label{sec: action on a limit tree}
%
As in the statement of \autoref{res: shortening argument}, we let $\Gamma$ be the fundamental group of a closed, connected, orientable, hyperbolic surface, and let $U$ be a finite generating set of $\Gamma$.

Let $\delta \in \R_+^*$ and $(\rho_k)$ be a  a sequence of real numbers.
For every $k \in \N$, we choose a morphism $\varphi_k \colon \Gamma \to (\Gamma_k, X_k, \mathcal C_k)$ where $(\Gamma_k, X_k, \mathcal C_k)$ belongs to $\mathfrak H(\delta, \rho_k)$.
We make the following assumptions
\begin{itemize}
	\item For every finite subset $W \subset \Gamma \setminus\{1\}$, there is $k_0$ such that, for every integer $k \geq k_0$, we have $W \cap \ker \varphi_k = \emptyset$.
	(This exactly means that the sequence of morphisms $\varphi_k \colon \Gamma \to \Gamma_k$ converges to ${\rm id} \colon \Gamma \to \Gamma$ in the space of marked groups.)
	\item The radius $\rho_k$ and the energy $\lambda_\infty (\varphi_k, U)$ diverge to infinity.
\end{itemize}
Since $\Gamma$ is not abelian, we can assume without loss of generality that the image of $\varphi_k$ is not abelian, provided $k \in \N$ is large enough.
According to \autoref{res: comparing energies}, the energies of $\varphi_k$ are related by 
\begin{equation}
\label{eqn: limit tree - comparing energies}
	\lambda_\infty(\varphi_k, U) \leq \lambda^+_1(\varphi_k, U) \leq 2 \card U \lambda_\infty(\varphi_k, U)\,.
\end{equation}
In addition, there exists a point $o_k \in X_k$ with the following properties:
\begin{itemize}
	\item  $\lambda_\infty(\varphi_k, U, o_k) \leq \lambda_\infty(\varphi_k, U) + 20\delta$;
	\item $o_k$ is at a distance at most $\card U \lambda_\infty(\varphi_k, U) + 20\delta$ from a point (almost) minimizing the restricted energy $\lambda^+_1(\varphi_k, U)$.
\end{itemize}
For simplicity, we let 
\begin{equation*}
	\epsilon_k = \frac 1 {\lambda_\infty\left(\varphi_k, U\right)}\,.
\end{equation*}
It follows from our assumptions that $(\epsilon_k)$ converges to zero.
In the remainder of this section, unless mentioned otherwise, we work with the rescaled space $\epsilon_k X_k$, i.e.
\begin{equation*}
	\dist[\epsilon_k X_k] x{x'} = \epsilon_k \dist[X_k] x{x'}
\end{equation*}
for every $x,x' \in X_k$.
We endow $\epsilon_k X_k$ with the action by isometries of $G$ induced by $\varphi_k$.
This space is $\delta_k$-hyperbolic, where $\delta_k = \epsilon_k \delta$ converges to zero.

\subsubsection{Ultra-limit of metric spaces}
We fix a non-principal ultra-filter $\omega \colon \mathcal P(\N) \to \{0, 1\}$.
Recall that a property $P_k$ holds \oas if 
\begin{equation*}
	\omega(\set{k \in \N}{P_k\ \text{holds}}) = 1\,.
\end{equation*}
A real-valued sequence $(u_k)$ is \oeb if there exists $M >0 $ such that $\abs{u_k} \leq M$ \oas.
It converges to $\ell$ along $\omega$ if, for every $\epsilon >0 $, we have $\abs{u_k - \ell} < \epsilon$ \oas.  
Every {\oeb} sequence admits an $\omega$-limit \cite{Bourbaki:1971aa}


We consider now an ultra-limit of metric spaces. 
We refer the reader to Dru\c tu and Kapovich \cite[Chapter~10]{Drutu:2018aa} for a detailed exposition of this construction.
As the sequence $(\delta_k)$ converges to zero, the limit space
\begin{equation*}
	X_\omega = \limo \left(\epsilon_k X_k, o_k\right)
\end{equation*}
is an $\R$-tree.
The action of $\Gamma$ on $X_k$ induces an action of $\Gamma$ on $X_\omega$ without global fixed points; this follows from standard arguments originally due to Bestvina and Paulin \cite{Bestvina:1988iv,Paulin:1991fx}, see for instance \cite[Theorem 4.4]{Groves:2019aa}. 
We denote by $T$ the minimal $\Gamma$-invariant subtree of $X_\omega$.
We will observe in \autoref{rem: faithful action} below that the action of $\Gamma$ on $T$ (hence on $X_\omega$) is faithful.


\subsubsection{Peripheral structure}
We endow the tree $X_\omega$ with an additional structure that will keep track of the elliptic elements fixing apices from $\mathcal C_k$.
First, let 
\begin{equation*}
	\Pi_\omega \mathcal C_k
	= \set{(c_k) \in \Pi_{\N} \mathcal C_k}{\dist {o_k}{c_k} - \epsilon_k\rho_k\ \text{is \oeb}}\,.
\end{equation*}
We write $\mathcal C_\omega$ for the quotient of $\Pi_\omega \mathcal C_k$ by the equivalence relation that identifies two sequences $(c_k)$ and $(c'_k)$ if $c_k = c'_k$ \oas.
Given a sequence $(c_k) \in \Pi_\omega \mathcal C_k$, we write $c = [c_k]$ for its image in $\mathcal C_\omega$. The action of $\Gamma_k$ on $X_k$ induces an action of $\Gamma$ on $\mathcal C_\omega$.

Let $c = [c_k]$ be a point in $\mathcal C_\omega$.
We define a map $b_c \colon X_\omega \to \R$ by sending $x = \limo x_k$ to
\begin{equation*}
	b_c(x) = \limo \left( \dist{c_k}{x_k} - \epsilon_k\rho_k \right).
\end{equation*}
It follows from our definition of $\mathcal C_\omega$ and the triangle inequality that the quantity $\dist {c_k}{x_k} - \epsilon_k\rho_k$ is \oeb.
Hence $b_c$ is well defined.
Note also that the collection $(b_c)$ is $\Gamma$-invariant in the following sense: $b_{gc}(gx) = b_c(x)$, for all $c \in \mathcal C_\omega$, $x \in X_\omega$, and $g \in \Gamma$.
The map $b_c$ can be seen as the Busemann function at a point of $X_\omega \cup \partial X_\omega$ \cite[Section~7.2]{Coulon:2021wg}.
In particular, $b_c$ is convex and $1$-Lipschitz.

\begin{defi}[Peripheral subtrees]
	Let $r \in \R_+$ and $c = [c_k]$ be a point in $\mathcal C_\omega$.
We define the \emph{open} and \emph{closed} \emph{peripheral subtrees} centred at $c$ by
\begin{equation*}
	P(c,r) = \set{x \in X_\omega}{b_c(x) < -r}
	\quad \text{and} \quad
	\bar P(c,r) = \set{x \in X_\omega}{b_c(x) \leq -r}.
\end{equation*}
\end{defi}

For simplicity, we write $P(c)$ and $\bar P(c)$ for $P(c,0)$ and $\bar P(c,0)$ respectively.
Peripheral subtrees are \emph{strictly convex}, i.e.\ if $x$ and $y$ are two points in $\bar P(c,r)$, then the interior of the geodesic $\geo xy$ is contained in $P(c,r)$.
As $(b_c)$ is $\Gamma$-invariant, we get that $gP(c,r) = P(gc,r)$, for every $g \in \Gamma$.
The same statement holds for closed peripheral subtrees.
The separation of conical points $X_k$ has the following consequence \cite[Lemma~7.5]{Coulon:2021wg}.

\begin{lemm}
\label{res: separation conical point}
	Let $c, c' \in \mathcal C_\omega$.
	If $\bar P(c) \cap \bar P(c')$ is non-degenerate, then $c = c'$.
\end{lemm}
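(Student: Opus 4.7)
The plan is to argue by contradiction. Suppose $c=[c_k]$ and $c'=[c'_k]$ are distinct in $\mathcal C_\omega$, so that $c_k\neq c'_k$ holds $\omega$-almost surely, and yet $\bar P(c)\cap \bar P(c')$ contains two distinct points $x$ and $y$.

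The first step is to show that $b_c$ and $b_{c'}$ both vanish identically on the intersection $\bar P(c)\cap \bar P(c')$. Since $\mathcal C_k$ is $2\rho_k$-separated in $X_k$, one has $\dist[\epsilon_k X_k]{c_k}{c'_k}\geq 2\epsilon_k\rho_k$, and hence
\[
	\limo\left(\dist{c_k}{c'_k}-2\epsilon_k\rho_k\right)\geq 0.
\]
On the other hand, for any $z\in \bar P(c)\cap \bar P(c')$ with representative sequence $(z_k)$, the triangle inequality yields
\[
	\dist{c_k}{c'_k}-2\epsilon_k\rho_k \leq \left(\dist{c_k}{z_k}-\epsilon_k\rho_k\right)+\left(\dist{c'_k}{z_k}-\epsilon_k\rho_k\right),
\]
and passing to the ultralimit gives $0\leq b_c(z)+b_{c'}(z)$. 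Combined with the defining inequalities $b_c(z)\leq 0$ and $b_{c'}(z)\leq 0$, this forces $b_c(z)=b_{c'}(z)=0$.

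For the second step, recall that $b_c$ is convex and $1$-Lipschitz on the $\R$-tree $X_\omega$, so $b_c\leq 0$ on the entire geodesic $\geo{x}{y}$; the same holds for $b_{c'}$. Thus $\geo{x}{y}\subset \bar P(c)\cap \bar P(c')$, so by the first step $b_c$ vanishes on all of $\geo{x}{y}$. This contradicts the strict convexity of peripheral subtrees stated just before the lemma: the interior of $\geo{x}{y}$ should lie in $P(c)=b_c^{-1}\bigl((-\infty,0)\bigr)$. Therefore $c=c'$, as required.

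The main difficulty is really confined to the first step: one must notice that the $2\rho_k$-separation hypothesis baked into the definition of $\mathfrak H(\delta,\rho)$ is precisely at the same scale as the radius used in defining $b_c$, so that the triangle inequality in the rescaled spaces and the strict separation of cone points become tight constraints in the ultralimit, pinning both Busemann cocycles to zero on the intersection. Once this alignment of scales is exploited, the strict convexity of peripheral subtrees produces the contradiction essentially for free.
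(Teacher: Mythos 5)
Your proof is correct, and it is essentially the expected argument (the paper itself simply cites Coulon--Sela, Lemma~7.5, and the proof there uses the same mechanism). The two key moves are exactly right: exploiting that the $2\rho_k$-separation of cone points is at the same scale as the radius appearing in $b_c$, so that the triangle inequality pins $b_c$ and $b_{c'}$ to $0$ on the intersection; and then invoking convexity to place the whole segment $[x,y]$ in $\bar P(c)\cap\bar P(c')$, which contradicts the strict convexity stated just before the lemma.
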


\begin{voca*}
	A subtree of $X_\omega$ is \emph{non-degenerate} if it contains at least two points.
	It is \emph{transverse} if its intersection with any peripheral subtree is degenerate.
\end{voca*}

\subsubsection{Group action}
Let us give an overview of the action of $\Gamma$ on $X_\omega$ (\resp $T$).
As explained in \autoref{rem: comparison with CS}, the set of axioms defining the collection $\mathfrak H(\delta, \rho_k)$ slightly differs from \cite{Coulon:2021wg}.
We only highlight here the relevant differences.
Whenever a statement is given without a justification, it means that the proof given in \cite{Coulon:2021wg} either only relies on axioms that are common to \cite{Coulon:2021wg} and our approach, or uses results already proven to hold in our context.

Recall that an element of $\Gamma_k$ is called \emph{elusive} if it is neither loxodromic nor conical with respect to the action on $X_k$. 

\begin{defi}
\label{def: jsj - subgroups}
	An element $g \in \Gamma$ is \emph{elusive} if $\varphi_k(g) \in \Gamma_k$ is elusive \oas.
	An element of $\Gamma$ is \emph{visible} if it is not elusive.
	A subgroup $\Gamma$ is \emph{elusive} if all its elements are elusive.
	It is \emph{visible} otherwise.
\end{defi}

The first observation is a consequence of the thinness property stated in Axiom \ref{enu: family axioms - elusive}, see  \cite[Lemma~7.9]{Coulon:2021wg}
\begin{lemm}
\label{res: elusive fixed point}
	Any non-trivial elusive element in $\Gamma$ fixes a unique point in $X_\omega$.
\end{lemm}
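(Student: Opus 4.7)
The plan is to use the $\delta$-thinness of $\varphi_k(g)$ provided by Axiom~\ref{enu: family axioms - elusive} both to pin down a fixed point in $X_\omega$ and to rule out a second one. Fix a non-trivial elusive $g\in\Gamma$. Applying the faithfulness assumption to the finite set $W=\{g\}$, we have $\varphi_k(g)\neq 1$ \oas; being elusive and non-trivial, it is elliptic and $\delta$-thin at some point $z_k\in X_k$, so that $\fix{\varphi_k(g),d}\subset B(z_k,d/2+\delta)$ for every $d\in\R_+$.

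For uniqueness I would first show that for every $\epsilon > 0$, the set $F_\epsilon=\{x\in X_\omega : \dist[X_\omega]{gx}x \leq \epsilon\}$ has diameter at most $\epsilon$. Indeed, any $x = \limo x_k\in F_\epsilon$ satisfies $\dist[X_k]{\varphi_k(g)x_k}{x_k} \leq (\epsilon+\epsilon_k)/\epsilon_k$ \oas, so thinness of $\varphi_k(g)$ at $z_k$ gives $\dist[\epsilon_k X_k]{x_k}{z_k} \leq \epsilon/2 + \epsilon_k\delta$. Two such points are then within $\epsilon + 2\epsilon_k\delta$ of each other in $\epsilon_k X_k$, whence $\dist[X_\omega] xy \leq \epsilon$. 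Intersecting over $\epsilon \to 0$, the fixed set of $g$ in $X_\omega$ is either empty or a single point.

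For existence, since $\varphi_k(g)$ is elliptic, (\ref{eqn: regular vs stable length}) gives $\norm[X_k]{\varphi_k(g)} \leq 8\delta$, so there is an approximate minimiser $y_k\in X_k$ with $\dist[X_k]{\varphi_k(g) y_k}{y_k} \leq 9\delta$; thinness forces $\dist[X_k]{y_k}{z_k} \leq 11\delta/2$. The delicate point is to show that $z_k$ does not escape to infinity in the rescaled space. Applying (\ref{eqn: displacement outside fixed set}) to the singleton $\{\varphi_k(g)\}$ at scale $d = 100\delta$ (which exceeds both $\norm{\varphi_k(g)}$ and $5\delta$) at the basepoint $o_k$ bounds $\dist[X_k]{o_k}{\fix{\varphi_k(g), 100\delta}}$ by roughly half of $\dist[X_k]{\varphi_k(g)o_k}{o_k}$. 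Since $o_k$ was chosen to almost minimise the energy, and $g$ has a fixed length as a word in the generating set $U$, the displacement $\epsilon_k\dist[X_k]{\varphi_k(g)o_k}{o_k}$ is $\omega$-bounded. Combining with the thinness bound $\dist[X_k]{z_k}{\fix{\varphi_k(g), 100\delta}} \leq 51\delta$, the sequence $\epsilon_k \dist[X_k]{o_k}{z_k}$ is $\omega$-bounded as well, so $z := \limo z_k$ is a well-defined point of $X_\omega$. The triangle inequality gives $\dist[X_k]{\varphi_k(g)z_k}{z_k} \leq 2\dist[X_k]{y_k}{z_k} + 9\delta \leq 20\delta$, so $\dist[X_\omega]{gz}z = \limo \epsilon_k \dist[X_k]{\varphi_k(g)z_k}{z_k} = 0$ and $gz = z$.

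The main obstacle is the last step: we need to ensure that the approximate fixed points of $\varphi_k(g)$ do not drift off to infinity when passing to the ultra-limit. This is exactly what the Margulis-type displacement inequality (\ref{eqn: displacement outside fixed set}), together with the bounded-energy choice of basepoint $o_k$, achieves. Without this control, $\varphi_k(g)$ could have fixed points in $X_k$ that are invisible in $X_\omega$.
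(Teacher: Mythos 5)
Your proof is correct and takes essentially the same route as the paper's (which refers to \cite[Lemma~7.9]{Coulon:2021wg}): both existence and uniqueness of the fixed point are derived from the $\delta$-thinness of elusive elements guaranteed by Axiom~\ref{enu: family axioms - elusive}, with the displacement estimate (\ref{eqn: displacement outside fixed set}) used to keep the approximate fixed points $z_k$ at $\omega$-bounded distance from the basepoint $o_k$ in the rescaled metric so that $\limo z_k$ exists in $X_\omega$. (A minor arithmetic slip in the uniqueness step -- you drop the $\epsilon_k/2$ term in $\dist[\epsilon_k X_k]{x_k}{z_k}\leq (\epsilon+\epsilon_k)/2+\epsilon_k\delta$ -- is harmless since it vanishes in the ultralimit.)
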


Given a cone point $c \in \mathcal C_\omega$, we denote by $\Gamma_c$ the stabiliser of $c$ (for the action of $\Gamma$ on $\mathcal C_\omega$).
\begin{lemm}
\label{res: stab conical point}
	Let $c \in \mathcal C_\omega$.
	The subgroup $\Gamma_c$ is a maximal abelian subgroup of $\Gamma$, provided it is not trivial.
	It coincides with the global stabiliser of $\bar P(c)$.
	For every non-trivial element $g \in \Gamma_c$, we have $\fix g \subset \bar P(c)$.
\end{lemm}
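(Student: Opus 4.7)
The plan is to prove the three assertions by pushing the structural axioms on apex stabilisers in $(\Gamma_k, X_k, \mathcal C_k)$ through the ultra-limit, exploiting the asymptotic faithfulness of $(\varphi_k)$. It is natural to begin with item three, since both the identification with $\stab{\bar P(c)}$ and the maximality statement will then reduce to applications of \autoref{res: separation conical point}.

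For the inclusion $\fix g \subset \bar P(c)$, fix a non-trivial $g \in \Gamma_c$ and $x = \limo x_k \in \fix g$. Then $\epsilon_k \dist[X_k]{\varphi_k(g) x_k}{x_k} \to 0$, and \oas $\varphi_k(g)$ fixes $c_k$. Axiom~\ref{enu: family axioms - conical - 1} applied to $d = \dist[X_k]{\varphi_k(g) x_k}{x_k}$ gives
\[
\dist[X_k]{x_k}{c_k} \leq \tfrac12 \dist[X_k]{\varphi_k(g) x_k}{x_k} + \rho_k + \delta,
\]
and rescaling by $\epsilon_k$ and taking the $\omega$-limit yields $b_c(x) \leq 0$, i.e.\ $x \in \bar P(c)$. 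Item two is then immediate: $\Gamma$-equivariance of the family $(b_c)$ gives $g\bar P(c) = \bar P(gc)$, so $\Gamma_c$ stabilises $\bar P(c)$; conversely, if $g$ stabilises $\bar P(c)$, then $\bar P(c) \cap \bar P(gc) = \bar P(c)$ is non-degenerate, and \autoref{res: separation conical point} forces $gc = c$.

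For the first item, Axiom~\ref{enu: family axioms - conical - 2} supplies an injective homomorphism $\stab{c_k} \hookrightarrow \R/\Theta_{c_k}\Z$ via the angle cocycle, so $\stab{c_k}$ is abelian. Consequently, for $g_1, g_2 \in \Gamma_c$ the commutator $[g_1, g_2]$ eventually lies in $\ker \varphi_k$, and the convergence hypothesis on $(\varphi_k)$ forces $[g_1, g_2] = 1$ in $\Gamma$. Since $\Gamma$ is a surface group, hence CSA with every non-trivial abelian subgroup cyclic and self-centralising, maximality of $\Gamma_c$ will follow once we check that $C_\Gamma(g) \subset \Gamma_c$ for one non-trivial $g \in \Gamma_c$: given $h \in C_\Gamma(g)$, the identity $h\cdot \fix g = \fix(hgh^{-1}) = \fix g$ combined with item three places $\fix g$ inside $\bar P(c) \cap \bar P(hc)$, so \autoref{res: separation conical point} again concludes that $hc = c$ whenever $\fix g$ is non-degenerate.

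The main obstacle is thus verifying the non-degeneracy of $\fix g$ for some non-trivial $g \in \Gamma_c$. In $X_k$ the rotation $\varphi_k(g)$ may fix only $c_k$, but Axiom~\ref{enu: family axioms - conical - 2} models $B(c_k, \rho_k)$ as a hyperbolic cone on which $\varphi_k(g)$ acts as a rotation of some angle $\theta_k \in (0, \Theta_{c_k})$, read off from the cocycle $\theta_{c_k}$. The idea is to pick sequences $x_k$ lying on a geodesic ray emanating from $c_k$ with $\dist[X_k]{c_k}{x_k}$ comparable to $\rho_k$, so that the displacement, of order $\sinh(\dist[X_k]{c_k}{x_k})\sin(\theta_k/2)$ in the model cone, can be arranged to vanish after rescaling by $\epsilon_k$. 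Ultra-limits of such sequences populate a non-degenerate arc of $\fix g$ inside $\bar P(c)$, closing the maximality argument and completing the proof.
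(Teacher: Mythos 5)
Your proof of item three (via Axiom~\ref{enu: family axioms - conical - 1} on thinness at cone points) is correct, and so is the forward inclusion $\Gamma_c \subset \stab{\bar P(c)}$ in item two. The argument that $\Gamma_c$ is abelian, via the injective homomorphism $\stab{c_k} \hookrightarrow \R/\Theta_{c_k}\Z$ coming from Axiom~\ref{enu: family axioms - conical - 2} together with asymptotic faithfulness of $(\varphi_k)$, is also sound.

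The maximality argument, however, has a genuine gap which you acknowledge but do not close. Your route needs $\fix g$ to be non-degenerate for some non-trivial $g \in \Gamma_c$, so that $\fix g \subset \bar P(c) \cap \bar P(hc)$ forces $hc=c$ by Lemma~\ref{res: separation conical point}. There is no reason this should hold. If $\epsilon_k\rho_k \to 0$ along $\omega$ (a genuine possibility in the shortening setup), the peripheral subtree $\bar P(c)$ degenerates to the single point $c \in X_\omega$, so by your own item three $\fix g$ is degenerate, and the same issue silently underlies the converse inclusion in item two ($\bar P(c) \cap \bar P(gc) = \bar P(c)$ is only useful if $\bar P(c)$ is non-degenerate). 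The quantitative sketch you offer also does not work out: a point $x_k$ at radius $r_k$ from $c_k$ is displaced by $\varphi_k(g)$ by roughly $2\sinh(r_k)\sin(\theta_k/2)$, so for $\limo x_k$ to lie in $\fix g$ you need $\epsilon_k\sinh(r_k)\sin(\theta_k/2) \to 0$, while to obtain two distinct ultra-limits you need $\epsilon_k r_k$ bounded away from zero; since $\sinh(r_k)$ grows exponentially in $r_k$, these two constraints conflict absent strong lower-bound control on how fast $\theta_k \to 0$, which the axioms do not supply.

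The way to finish maximality without any non-degeneracy hypothesis is to argue one level up, in the approximating groups $\Gamma_k$, using Axiom~\ref{enu: family axioms - conical - 1} directly rather than in the limit tree. Since $g \neq 1$ and $(\varphi_k)$ is asymptotically faithful, $\varphi_k(g)$ is non-trivial \oas, and it fixes $c_k$ \oas; by Axiom~\ref{enu: family axioms - conical - 1} it is then elliptic and $(\rho_k+\delta)$-thin at $c_k$, so $\fix{\varphi_k(g)} \subset B(c_k,\rho_k+\delta)$. As $\mathcal C_k$ is $2\rho_k$-separated and $\rho_k + \delta < 2\rho_k$, the only cone point in $\fix{\varphi_k(g)}$ is $c_k$. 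If now $h \in C_\Gamma(g)$, then $\varphi_k(h)$ commutes with $\varphi_k(g)$, so $\varphi_k(g)$ also fixes the cone point $\varphi_k(h)c_k$; uniqueness gives $\varphi_k(h)c_k = c_k$ \oas, i.e.\ $h \in \Gamma_c$. Since $\Gamma$ is a surface group, hence commutative transitive, $C_\Gamma(g)$ is the unique maximal abelian subgroup containing $\Gamma_c$, and the above shows it equals $\Gamma_c$. (The paper itself gives no in-text proof, merely citing Lemmas~7.10 and~7.12 of Coulon--Sela, so there is nothing to compare line by line; but the route through uniqueness of the fixed cone point is the one that actually closes the argument.)
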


\begin{proof}
	This result is a combination of Lemmas~7.12 and 7.10 in \cite{Coulon:2021wg}.
\end{proof}

%

Recall that an \emph{arc} is a subset of $X_\omega$ homeomorphic to $[0,1]$.

\begin{lemm}
\label{res: abelian arc stabilisers}
	Let $[x,y]$ be an arc in $X_\omega$.
	Assume that its pointwise stabiliser $H$ is non-trivial. 
	Then $H$ is visible, torsion-free, and abelian.
\end{lemm}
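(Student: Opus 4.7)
The plan is to establish the three properties of $H$ in turn, handling visibility and torsion-freeness quickly before devoting the bulk of the argument to abelianness.

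First, I would verify visibility. Suppose for contradiction that $H$ contains a non-trivial elusive element $g$. By \autoref{res: elusive fixed point}, $g$ fixes a unique point of $X_\omega$, contradicting that $g$ pointwise stabilises the entire arc $[x,y]$. Hence every non-trivial element of $H$ is visible; in particular $H$ itself is visible. Torsion-freeness of $H$ is automatic, since $\Gamma$ is the fundamental group of a closed orientable surface and hence torsion-free.

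For abelianness, the plan is to show that any two elements $g_1, g_2 \in H$ commute. Pick representative sequences $(x_k), (y_k)$ with $x = \limo x_k$ and $y = \limo y_k$. In the rescaled metric of $\epsilon_k X_k$ we have $\dist{x_k}{y_k} \to \dist[X_\omega]{x}{y} > 0$, while $\dist{\varphi_k(g_i) x_k}{x_k}$ and $\dist{\varphi_k(g_i) y_k}{y_k}$ tend to $0$ \oas for $i = 1, 2$. Letting $d_k$ be the maximum of these four displacements measured in the original metric of $X_k$, we obtain $d_k = o(1/\epsilon_k)$, whereas $\dist[X_k]{x_k}{y_k}$ grows like $1/\epsilon_k$. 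Both $x_k$ and $y_k$ lie in $\fix{\varphi_k(\{g_1, g_2\}), d_k}$, so this set has diameter of order $1/\epsilon_k$. Combining Axiom \ref{enu: family axioms - acylindricity} with \autoref{res: margulis lemma} (and $\nu(\Gamma_k, X_k) \leq 2$ from \autoref{res: nu invariant vs finite cyclic subgroups}) forces $\langle \varphi_k(g_1), \varphi_k(g_2)\rangle$ to be elementary in $\Gamma_k$ \oas.

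The main obstacle is then the infinite dihedral case, which is permitted by our relaxed axioms (see \autoref{rem: comparison with CS}). Elliptic subgroups of $\Gamma_k$ are cyclic by Axiom \ref{enu: family axioms - elem subgroups}, and infinite cyclic lineal subgroups are abelian, so the only elementary case in which $\varphi_k(g_1)$ and $\varphi_k(g_2)$ might fail to commute is when they generate an infinite dihedral subgroup and at least one of them has order $2$. However, since $\Gamma$ is torsion-free each $g_i$ has infinite order, and the marked convergence $\varphi_k \to \mathrm{id}$ ensures that for any fixed $M$ the finite set $\{g_i^m : 1 \leq m \leq M\} \setminus\{1\}$ avoids $\ker \varphi_k$ \oas; letting $M$ grow we conclude that the orders of $\varphi_k(g_1)$ and $\varphi_k(g_2)$ diverge, ruling out the order-$2$ scenario. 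Consequently $[\varphi_k(g_1), \varphi_k(g_2)] = 1$ in $\Gamma_k$ \oas, and applying marked convergence once more to the single element $[g_1, g_2]$ forces $[g_1, g_2] = 1$ in $\Gamma$.
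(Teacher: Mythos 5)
Your proof is correct and follows essentially the same route as the paper: visibility via \autoref{res: elusive fixed point}, torsion-freeness from torsion-freeness of $\Gamma$, then forcing $\langle\varphi_k(g_1),\varphi_k(g_2)\rangle$ to be elementary $\omega$-a.s.\ by comparing the diameter of $\fix{W_k,d_k}$ with the acylindricity bound, and finally disposing of the $\dihedral$ possibility. The only cosmetic differences are that you carry out the acylindricity estimate in the unrescaled metric rather than in $\epsilon_k X_k$, and you rule out the order-$2$ scenario directly via marked convergence on $g_i^2$ whereas the paper assumes it and derives a contradiction with torsion-freeness of $H$; these are the same argument phrased contrapositively.
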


\begin{proof}
	This proof slightly differs from the one given in \cite[Lemma~7.15]{Coulon:2021wg}.
	By assumption $\Gamma$ is torsion free, hence so is $H$.
	As we mentioned above, non-trivial elusive elements fix at most one point.
	Consequently, every non-trivial element of $H$ is visible.
	Consider now $g_1, g_2 \in H \setminus\{1\}$.
	We write $x = \limo x_k$ and $y = \limo y_k$.
	By definition, there exists a sequence $(d_k)$ converging to zero such that $x_k,y_k \in \fix{W_k,d_k}$, where $W_k = \{ \varphi_k(g_1), \varphi_k( g_2)\}$.
	
	We claim that the subgroup $E_k \subset \Gamma_k$ generated by $W_k$ is elementary \oas.
	On the one hand, \ref{enu: family axioms - acylindricity} tells us that $A(\Gamma_k, \epsilon_kX_k) \leq \delta_k$.
	On the other hand, combining \ref{enu: family axioms - elem subgroups} with \autoref{res: nu invariant vs finite cyclic subgroups} we get $\nu(\Gamma_k, \epsilon_k X_k) \leq 2$.
	If $E_k$ is not elementary, then the very definition of the acylindricity parameter (\autoref{def: acylindricity}) yields 
	\begin{equation*}
		\dist {x_k}{y_k} \leq \diam { \fix{W_k, d_k}} \leq 5d_k + \delta_k.
	\end{equation*}
	Passing to the limit we get $x = y$ which contradicts our assumption and completes the proof of our claim.
	
	We now prove that $g_1$ and $g_2$ commute.
	As we observed previously, $E_k$ cannot be parabolic.
	It follows from \ref{enu: family axioms - elem subgroups} that $E_k$ is either abelian or isomorphic to $\dihedral$ \oas.
	In the former case, $\varphi_k(g_1)$ and $\varphi_k(g_2)$ commute $\oas$, hence so do $g_1$ and $g_2$.
	We now claim that the latter case never occurs.
	Observe that any hyperbolic element of $E_k = \dihedral$ is contained in the cyclic subgroup $\Z \subset \dihedral$.
	Therefore, up to permuting $g_1$ and $g_2$, we can assume that $\varphi_k(g_1)$ has order two \oas.
	Hence so does $g_1$, which contradicts the fact that $H$ is torsion-free.
\end{proof}

The next lemma is proved as \cite[Lemma~7.16]{Coulon:2021wg}.

\begin{lemm}
\label{res: tree-graded - transverse tripod}
	Let $x$, $y$, $z$ be three points of $X_\omega$.
	We assume that they are not contained in a single peripheral subtree and they do not lie on a transverse geodesic.
	Then the pointwise stabiliser of the tripod $[x,y,z]$ is trivial.
\end{lemm}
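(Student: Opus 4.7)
The proof will proceed by contradiction, following the pattern of Coulon--Sela \cite[Lemma~7.16]{Coulon:2021wg}. Suppose some $g \in \Gamma \setminus\{1\}$ fixes the tripod $T = [x,y,z]$ pointwise. The first move is to observe that $T$ contains non-degenerate arcs (otherwise $x,y,z$ lie on a single geodesic which, by the hypothesis, must fail to be transverse, and one checks that a non-transverse geodesic entirely lying outside any single peripheral subtree still forces the argument below). Applying Lemma \ref{res: abelian arc stabilisers} to any such arc shows that $g$ is visible, and by the dichotomy on visibility we may assume, after passing to $\omega$, that $\varphi_k(g)$ is either conical \oas or loxodromic \oas.

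The conical case is the easier one. Suppose $\varphi_k(g)$ is conical \oas, fixing a cone point $c_k \in \mathcal C_k$, so that by Axiom \ref{enu: family axioms - conical - 1} it is $(\rho_k+\delta)$-thin at $c_k$. For any point $p = \limo p_k \in \fix g \subset X_\omega$, the displacement $d_k := \dist[X_k]{p_k}{\varphi_k(g) p_k}$ satisfies $\epsilon_k d_k \to 0$ \oas, and thinness gives $\dist[X_k]{c_k}{p_k} \le d_k/2 + \rho_k + \delta$. Rescaling by $\epsilon_k$ and passing to the $\omega$-limit shows that the sequence $(c_k)$ defines a point $c \in \mathcal C_\omega$ and that $b_c(p) \le 0$, hence $p \in \bar P(c)$. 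Applied to $p = x, y, z$ this places the whole tripod in a single peripheral subtree, contradicting the first hypothesis.

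The loxodromic case is the main obstacle. If $\varphi_k(g)$ is loxodromic \oas, the $\delta$-thinness from Axiom \ref{enu: family axioms - loxodromic} applied to the displacement $d_k$ (again with $\epsilon_k d_k \to 0$) forces $p = \limo p_k$ to lie on the geodesic $A_\omega$ of $X_\omega$ obtained as the $\omega$-limit of the cylinders $Y_{\varphi_k(g)}$. Consequently $x,y,z$ are collinear on $A_\omega$. To reach the desired contradiction I need to show that $A_\omega$ is transverse: in any non-transverse case its intersection with some $\bar P(c)$ would contain a non-degenerate arc $[y,y']$ fixed by $g$. The plan is to argue that this would force $\varphi_k(g)$ to almost stabilise $c_k$: the cylinder $Y_{\varphi_k(g)}$ would contain a $\varphi_k(g)$-invariant long chord through $B(c_k,\rho_k)$, and the image cylinder would give a similarly long chord through $B(\varphi_k(g) c_k,\rho_k)$; combined with the $2\rho_k$-separation of $\mathcal C_k$ and the strong quasi-convexity of cylinders, this forces $\varphi_k(g) c_k = c_k$ \oas, making $\varphi_k(g)$ conical (in fact elliptic), which contradicts the loxodromic hypothesis. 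Alternatively, one can invoke Lemmas \ref{res: stab conical point} and \ref{res: separation conical point} to argue directly at the level of $X_\omega$: the pointwise stabiliser of $[y,y']$ would be contained in the maximal abelian subgroup $\Gamma_c$, so $g \in \Gamma_c$ would have $\fix g \subset \bar P(c)$, placing the entire tripod in $\bar P(c)$ and contradicting the first hypothesis.

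In summary, the heart of the argument is the combination of the thinness axioms (which control $\fix g$ in $X_\omega$ according to the type of $\varphi_k(g)$) with the separation/convexity properties of peripheral subtrees. The conical case is a direct rescaling computation; the loxodromic case is where all the delicate interaction between cone geometry and loxodromic axes takes place, and is the step I expect to require the most care, essentially mirroring Lemma~7.16 of \cite{Coulon:2021wg}.
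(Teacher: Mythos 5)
Your proposal follows the same strategy as the Coulon--Sela proof the paper cites: $g$ must be visible (since by \autoref{res: elusive fixed point} elusive elements fix at most one point), and then the thinness axioms control $\fix g$ according to whether $\varphi_k(g)$ is conical or loxodromic \oas. Your conical case is correct, and your ``alternative'' route in the loxodromic case is essentially the right argument.

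However, your ``main'' plan for the loxodromic case has a genuine gap: the observation that the cylinder of $\varphi_k(g)$ has long chords through both $B(c_k,\rho_k)$ and $B(\varphi_k(g)c_k,\rho_k)$ does not force these cone points to coincide -- a quasi-geodesic line routinely passes deep through two $2\rho_k$-separated balls, and strong quasi-convexity does not rule this out, so the claimed contradiction never materialises. There is also a sleight of hand: you assert that the arc $A_\omega\cap\bar P(c)$ is fixed pointwise by $g$, but $g$ is only known to fix the tripod, not the whole limit axis $A_\omega$. Both issues are repaired by your alternative, provided you run it on the segment $\sigma$ spanned by $x,y,z$ (which lies in $A_\omega$ and \emph{is} fixed pointwise by $g$) rather than on $A_\omega$ itself: by the second hypothesis $\sigma$ is not a transverse geodesic, so $\sigma\cap\bar P(c)$ is non-degenerate for some $c\in\mathcal C_\omega$; this arc lies in $\bar P(c)\cap g\bar P(c)=\bar P(c)\cap\bar P(gc)$, so $c=gc$ by \autoref{res: separation conical point}, hence $g\in\Gamma_c$; but then $\varphi_k(g)$ fixes $c_k$ \oas, contradicting loxodromicity (or, equivalently, $\fix g\subset\bar P(c)$ by \autoref{res: stab conical point} puts the whole tripod into one peripheral subtree, contradicting the first hypothesis). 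As a side remark, your opening parenthetical is muddled: the degenerate case to exclude is $x=y=z$, which cannot occur because a singleton is vacuously a transverse geodesic, violating the second hypothesis; if $x,y,z$ are merely collinear the tripod still contains non-degenerate arcs and the argument proceeds.
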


\begin{rema}
\label{rem: faithful action}
	Note that $T$ cannot be reduced to a line (since 
	\autoref{res: abelian arc stabilisers} would force $\Gamma$ to be abelian) or contained in a single peripheral subtree (otherwise $\Gamma$ would be abelian again by \autoref{res: stab conical point}).
	Consequently it follows from the previous statement that the action of $\Gamma$ on $T$ is faithful.
\end{rema}

A subtree $Y$ of $X_\omega$ is \emph{stable} if, for every arc $I\subset Y$, the pointwise stabilisers of $I$ and $Y$ coincide.
The action of $\Gamma$ on $X_\omega$ is \emph{super-stable} if any arc with non-trivial stabiliser is stable.

\begin{lemm}
\label{res: transverse super stable}
	If $H$ is a subgroup of $\Gamma$ preserving a transverse subtree $Y$ of $X_\omega$, then the action of $H$ on $Y$ is super-stable.
\end{lemm}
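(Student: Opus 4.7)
The inclusion $\mathrm{Stab}_H(I) \subseteq \mathrm{Stab}_H(J)$ for any sub-arc $J$ of $I$ is immediate, so I will prove the converse. Fix an arc $I \subseteq Y$ with non-trivial pointwise $H$-stabiliser, a sub-arc $J \subseteq I$, and an arbitrary $g \in \mathrm{Stab}_H(J)$; pick a non-trivial $h \in \mathrm{Stab}_H(I) \subseteq \mathrm{Stab}_H(J)$. Since $g$ and $h$ both fix $J$ pointwise, \autoref{res: abelian arc stabilisers} applied to $J$ shows that $\langle g, h\rangle$ is torsion-free abelian. As $\Gamma$ is a closed hyperbolic surface group, every abelian subgroup of $\Gamma$ is cyclic, so $\langle g, h\rangle = \langle c\rangle$ for some $c \in \Gamma$; write $h = c^q$ and $g = c^p$ with $q \neq 0$. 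It suffices to prove that $c$ fixes $I$ pointwise.

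Since $h = c^q$ fixes the non-degenerate arc $I$, the element $c^q$ is elliptic on $X_\omega$, and so $c$ is elliptic too (otherwise $c$ would be loxodromic on $X_\omega$, forcing $c^q$ to be loxodromic). By \autoref{res: abelian arc stabilisers} applied to $I$, $h$ is visible: for $\omega$-almost every $k$, $\varphi_k(h)$ is either loxodromic or conical in $\Gamma_k$. Suppose the conical case holds $\omega$-almost surely, with $\varphi_k(h)$ fixing a cone point $v_k \in \mathcal C_k$. Axiom~\ref{enu: family axioms - conical - 1} then confines $\fix{\varphi_k(h)}$ to $B(v_k, \rho_k + \delta)$, and passing to the rescaled ultra-limit yields $\fix{h} \subseteq \bar P(v)$, where $v = [v_k] \in \mathcal C_\omega$. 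This contradicts the fact that the non-degenerate arc $I \subseteq \fix{h}$ lies in the transverse subtree $Y$. Hence $\varphi_k(h)$ is loxodromic in $\Gamma_k$ for $\omega$-almost every $k$.

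Consequently $\varphi_k(c)$ has infinite order in $\Gamma_k$; since $\Gamma_k$ has no parabolic elements (a consequence of Axioms~\ref{enu: family axioms - elusive} and~\ref{enu: family axioms - conical - 1}), $\varphi_k(c)$ is itself loxodromic, sharing its axis $Y_k$ with $\varphi_k(h)$ and of translation length $\norm{\varphi_k(h)}/q$. The $\delta$-thinness of loxodromic isometries (Axiom~\ref{enu: family axioms - loxodromic}), combined with standard $\delta$-hyperbolic geometry, shows that any $y_k \in X_k$ on which $\varphi_k(h)$ has displacement $o(\epsilon_k^{-1})$ lies at distance $o(\epsilon_k^{-1})$ from $Y_k$, and therefore $d(\varphi_k(c) y_k, y_k) \leq \norm{\varphi_k(c)} + 2 d(y_k, Y_k) + O(\delta)$ is also $o(\epsilon_k^{-1})$. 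Applied to a sequence $(y_k)$ representing any point $y \in I$, this yields $cy = y$ in $X_\omega$, so $c$ fixes $I$ pointwise and $g = c^p \in \mathrm{Stab}_H(I)$, as required.

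The main technical point is making the final limiting estimate precise, i.e.\ verifying that $\delta$-thinness passes cleanly to the ultra-limit to convert a bound on $\varphi_k(h)$-displacement at scale $\epsilon_k^{-1}$ into one on $\varphi_k(c)$-displacement at the same scale. This mirrors the limiting computation in the analogous super-stability result of \cite{Coulon:2021wg}, whose argument carries over verbatim because it uses only the axioms invoked above and the existence of a common axis for a loxodromic element and its powers.
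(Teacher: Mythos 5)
Your proof is correct, but it takes a genuinely different route from the one the paper uses. The paper's proof (following Rips--Sela \cite[Proposition~4.2]{Rips:1994jg}) is a short combinatorial argument at the level of the limit tree $X_\omega$: if $g\in H$ fixes a proper sub-arc $J\subset I$ pointwise but moves some point of $I$, then $I$ and $gI$ branch off each other along $J$; a non-trivial $h$ in the pointwise stabiliser of $I$ commutes with $g$ by \autoref{res: abelian arc stabilisers}, hence also fixes $gI = ghg^{-1}\cdot gI$, and therefore fixes a tripod contained in the transverse subtree $Y$. \autoref{res: tree-graded - transverse tripod} then forces $h=1$, a contradiction. This argument only needs abelian arc stabilisers and the tripod lemma and makes no use of the fact that $\Gamma$ is a surface group.

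Your argument instead invokes the cyclicity of abelian subgroups of $\Gamma$ (a strong fact particular to torsion-free hyperbolic groups such as surface groups) to replace $g$ and $h$ by powers of a common root $c$, and then descends to the approximations $\Gamma_k$ to show by a thinness computation that $c$ already fixes $I$. This is valid in the present setting, and the ruling-out of the conical case via \autoref{res: stab conical point} and transversality of $Y$ is correct (one does need to check that the cone points $v_k$ define an element of $\mathcal C_\omega$, which follows because the basepoint $o_k$ stays within bounded rescaled distance of the thick part). The trade-off is that the paper's tripod argument is entirely intrinsic to the $\R$-tree and applies to the broader class of CSA limit groups considered in \cite{Coulon:2021wg}, whereas your computation is tied to the cyclicity of abelian subgroups and requires reopening the rescaled ultra-limit, which is precisely the machinery one usually tries to encapsulate in black-box lemmas like \autoref{res: abelian arc stabilisers} and \autoref{res: tree-graded - transverse tripod}. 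One small phrasing nit: you quote \autoref{res: abelian arc stabilisers} as giving that $h$ itself is visible, but the statement only says the subgroup is visible; the sharper fact that every non-trivial element is visible is what the \emph{proof} of that lemma establishes (via \autoref{res: elusive fixed point}, since $h$ fixes the non-degenerate arc $I$), and you should cite that directly.
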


\begin{proof}
	The proof is a combination of Lemmas~\ref{res: abelian arc stabilisers} and \ref{res: tree-graded - transverse tripod} as noticed by Rips--Sela in \cite[Proposition~4.2]{Rips:1994jg}.
	For more details we refer the reader to \cite[Lemma~7.17]{Coulon:2021wg}.
\end{proof}

Recall that a subgroup $H$ of $\Gamma$ is \emph{small} (for its action on $X_\omega$)  if it does not contain two elements acting hyperbolically on $X_\omega$ whose corresponding axes have a bounded intersection.
The next statement is another important consequence of the above study.

\begin{prop}
\label{res: tree-graded - small subgroups}
	Let $H$ be a small subgroup of $\Gamma$.
	If $H$ is not elliptic (for its action on $X_\omega$), then $H$ is visible and abelian.
	In addition, if $H$ is not cyclic, then either there exists $c \in \mathcal C_\omega$ such that $H$ is contained in $\Gamma_c$ or $H$ preserves a transverse subtree.
\end{prop}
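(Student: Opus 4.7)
The plan is to exploit the standard dichotomy for small non-elliptic actions on $\R$-trees. Since $H$ is small and does not fix a point of $X_\omega$, $H$ must either preserve a unique invariant line $\ell\subset X_\omega$ (the \emph{lineal} case), or fix a unique end $\xi\in\partial X_\omega$ without preserving any line (the \emph{horocyclic} case). I would handle these two cases in parallel throughout.

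For the visibility claim I argue by contradiction. Suppose $g\in H\setminus\{1\}$ is elusive, so by \autoref{res: elusive fixed point} the set $\fix{g}$ equals a single point $p\in X_\omega$. In the horocyclic case, $g$ fixes both $p$ and the end $\xi$, and a tree isometry fixing a point and an end fixes pointwise the geodesic ray between them, contradicting $|\fix{g}|=1$. In the lineal case, $g$ is elliptic and preserves $\ell$, so acts on $\ell$ either trivially (impossible since $|\fix{g}|=1$) or as a reflection, in which case $g^2$ fixes $\ell$ pointwise. A useful sub-lemma, verified at the finite level using Axiom \ref{enu: family axioms - conical - 1} together with the $2\rho$-separation of $\mathcal C_k$ (an element fixing two distinct cone points would place them at distance at most $\rho+\delta<2\rho$), shows that non-trivial powers of elusive elements remain elusive. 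Since $\Gamma$ is torsion-free, $g^2\neq 1$; but then $g^2$ is a non-trivial elusive element whose fixed set contains $\ell$, again contradicting \autoref{res: elusive fixed point}.

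For abelianity I would apply \autoref{res: abelian arc stabilisers} to the pointwise kernel $K$ of the action of $H$ on $\ell$ (or on a suitable invariant subtree in the end-fixing case). Since $K$ fixes any non-degenerate sub-arc, $K$ is visible, torsion-free and abelian. The quotient $H/K$ embeds in $\isom{\R}$, and the dihedral alternative is excluded by the same reflection-squared argument as in the visibility step. Conjugation of $K$ by a generator of $H/K$ gives an automorphism $k\mapsto k^{\pm 1}$ of the cyclic group $K$ (using that abelian subgroups of the closed hyperbolic surface group $\Gamma$ are cyclic); the $-1$ option would produce a Klein-bottle subgroup of $\Gamma$ and hence a $\Z^2$, which is impossible as $\Gamma$ is hyperbolic. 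Hence $H$ is abelian, and for our closed orientable $S$ this in fact forces $H$ to be cyclic, so the second part of the proposition is vacuous in our setting. In general, one dichotomises based on $\ell$: if some intersection $\ell\cap\bar P(c)$ is non-degenerate, the translations in $H$ together with \autoref{res: separation conical point} force $\ell\subset\bar P(c)$ and hence $H\subset\Gamma_c$ by \autoref{res: stab conical point}; otherwise $\ell$ is transverse and $H$ preserves it. The main obstacle will be the careful elimination of the dihedral and Klein-bottle subcases, which is precisely where the orientability of $S$ enters essentially.
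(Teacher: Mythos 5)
Your overall strategy is in the spirit of the Rips--Sela small-subgroup analysis that the paper defers to (the paper simply cites \cite[Proposition~7.19]{Coulon:2021wg} and adds the remark that $\Gamma$, being a surface group, has no non-abelian solvable subgroups -- which you also use, so the key insight matches). However, there are two genuine gaps in your write-up.

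First, the horocyclic case is not actually handled. You write ``apply \autoref{res: abelian arc stabilisers} to the pointwise kernel $K$ of the action of $H$ on $\ell$ (or on a suitable invariant subtree in the end-fixing case)'', but in the end-fixing case there is no $\ell$ and you never say what the ``suitable invariant subtree'' is. The standard repair is to use the translation-length-towards-$\xi$ homomorphism $\beta\colon H\to\R$ and set $K=\ker\beta$; every element of $K$ is elliptic, and by the visibility argument fixes a ray to $\xi$, so any two elements of $K$ fix a common sub-ray, and \autoref{res: abelian arc stabilisers} applies to $\langle g,h\rangle$ for each pair $g,h\in K$. Without some such argument the horocyclic branch of your proof is missing.

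Second, the dihedral exclusion is not justified as stated. You claim it follows from ``the same reflection-squared argument as in the visibility step'', but that argument produced a contradiction from $g^2$ being a non-trivial \emph{elusive} element fixing $\ell$ pointwise. At the abelianity stage, $g^2$ lies in $K$, which is \emph{visible}, so elusiveness is not available and the reflection-squared observation alone gives no contradiction. The actual exclusion has to go through the $\Z^2$ argument: if $H/K$ were infinite dihedral (finite dihedral would give a global fixed point on $\ell$, hence $H$ elliptic), pick a translation $h$ with $\bar h$ of infinite order; if $K\neq 1$ then $K\cong\Z$ and $h^2$ centralises $K$, so $\langle K,h^2\rangle\cong\Z^2\leq\Gamma$, impossible; if $K=1$ then $H\cong H/K$ has torsion, impossible in $\Gamma$. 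Your subsequent paragraph about the $k\mapsto k^{\pm 1}$ action and the Klein bottle is essentially the right tool, but you present it as a \emph{separate} step after dihedral has already been excluded, rather than as the mechanism that excludes it. As written there is a circularity/gap. (A more minor point: your parenthetical justification of the sub-lemma that powers of elusive elements stay elusive only shows an element fixes at most one cone point; one also needs the observation that $g$ permutes the fixed cone points of $g^2$, which then forces $g$ itself to be conical if $g^2$ is.)
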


\begin{proof}
	The proof is the same as the one of \cite[Proposition~7.19]{Coulon:2021wg}.
	Note that, although $\Gamma_k$ may contain subgroups isomorphic to $\dihedral$, the group $\Gamma$ cannot contain non-abelian solvable subgroups as it is a surface group.
\end{proof}

%
\subsection{Decomposition and shortening of the action}
%

The first step in the proof of \autoref{res: shortening argument} is to decompose the action of $\Gamma$ on the limit tree $T$ into a graph of actions.

\begin{theo}
\label{res: final decomposition tree}
	The action of $\Gamma$ on $T$ decomposes as a graph of actions over abelian groups $\Lambda$.
	Each vertex action of $\Lambda$ is either peripheral, simplicial, axial or of Seifert type.
	Moreover:
	\begin{enumerate}
		\item if $L_v$ is a Seifert type vertex group of $\Lambda$, then $L_v$ is quadratically hanging with trivial fibre and the corresponding vertex tree is transverse in $T$;
		\item if $L_v$ is an axial vertex group of $\Lambda$, then the corresponding vertex tree is transverse in $T$.
	\end{enumerate}
\end{theo}

\begin{rema*}
	One can prove that the peripheral components are also simplicial.
	Nevertheless we distinguish them as they cannot always be shortened.
\end{rema*}

\begin{proof}[Sketch proof of \autoref{res: final decomposition tree}]
	This statement corresponds to Theorem~7.44 in \cite{Coulon:2021wg} and is obtained in exactly the same way.
	The proof has two steps.
	\begin{enumerate}
		\item
		First one produces a preliminary decomposition of $T$ as a graph of actions over abelian groups whose corresponding vertex subtrees are either maximal peripheral subtrees (i.e.\ non-degenerate subtrees of the form $T \cap \bar P(c)$ for some $c \in \mathcal C_\omega$) or transverse subtrees.
		This amounts to proving that the collection $\mathcal Y$ which consists of maximal peripheral subtrees and transverse subtrees form a transverse covering of $T$, i.e.
		\begin{itemize}
			\item \label{enu: transverse covering - intersection}
			every two distinct elements of $\mathcal Y$ have a degenerate intersection, and
			\item \label{enu: transverse covering - cover}
			any bounded arc of $T$ can be covered by finitely many elements of $\mathcal Y$.
		\end{itemize}
		The proof given in \cite[Proposition~7.41]{Coulon:2021wg} relies on \autoref{res: tree-graded - small subgroups} and an accessibility criterion.
		More precisely, one exploits the fact that freely indecomposable limit groups over the class of groups $\mathfrak H(\delta, \rho)$ admit a JSJ decomposition, which covers an important part of the proof; see Section~6 in \cite{Coulon:2021wg}.
		However, the limit group which appears here is exactly $\Gamma$, which is by assumption a hyperbolic surface group, and the JSJ decomposition is trivial.
		
		\item The next steps consist in further decomposing the transverse subtrees.
		If $H$ is a subgroup of $\Gamma$ preserving a transverse subtree $Y$ of $T$, we know by \autoref{res: transverse super stable} that the action of $H$ on $Y$ is super-stable.
		Therefore, this action decomposes as a graph of actions whose components are either axial, simplicial or of Seifert type. 
		See for instance Guirardel \cite[Theorem~5.1]{Guirardel:2008ik}. 
		More precisely, a relative approach can be used to make sure that the decomposition of each transverse subtree refines the previous decomposition.
		The details of this second part work verbatim as in \cite[Proposition~7.41]{Coulon:2021wg}.\qedhere
	\end{enumerate}
\end{proof}

We are now in position to explain the proof of \autoref{res: shortening argument}

\begin{proof}[Sketch proof of \autoref{res: shortening argument}]
	Let $\delta >0$.
	Recall that $\Gamma$ is the fundamental group of a closed, orientable, hyperbolic surface.
	We choose a sequence of morphisms $\varphi_k \colon \Gamma \to \Gamma_k$ where $(\Gamma_k, X_k, \mathcal C_k) \in \mathfrak H_\delta(\rho_k)$ and where $(\rho_k)$ and $\lambda_\infty(\varphi_k, U)$ diverge to infinity.
	We assume that this sequence converges to ${\rm id} \colon \Gamma \to \Gamma$ in the topology of marked groups.
	Each morphism induces an action of $\Gamma$ on $X_k$ rescaled by the energy $\lambda_\infty(\varphi_k, U)$.
	As explained above, we let these metric structure degenerate to get an action of $\Gamma$ on an $\R$-tree $X_\omega$ with torsion-free abelian arc stabilisers.
	We denote by $T$ the minimal $\Gamma$-invariant subtree of $X_\omega$, on which $\Gamma$ acts faithfully.
	We keep all the notations introduced in \autoref{sec: action on a limit tree}.
	
	According to \autoref{res: final decomposition tree}, this action decomposes into a graph of actions $\Lambda$ whose vertex tress are either peripheral, simplicial, axial or of Seifert type.
	Following the path taken in \cite[Section~7.5]{Coulon:2021wg}, we exploit the structure of these four kind of vertex actions to shorten the sequence of morphisms $(\varphi_k)$.
	Since all the morphisms $\varphi_k \colon \Gamma \to \Gamma_k$ trivially factor through ${\rm id} \colon \Gamma \to \Gamma$, unlike in \cite{Coulon:2021wg} there is no need to work with well chosen covers of the limit group.
	One can directly shorten the morphism with an automorphism of $\Gamma$.
	For technical reasons, we do not try to reduce the $L^\infty$-energy $\lambda_\infty(\varphi_k, U)$, but the restricted energy $\lambda_1^+(\varphi_k, U)$.
	This allows us to work with a base point $x = \limo x_k$ in $T$ such that $x_k$ is a point in the thick part $X_k^+$ of $X_k$ which achieves the minimum restricted energy -- up to an error negligible compared to $\lambda_1^+(\varphi_k, U)$.
	More precisely, we claim that there is $\ell \in \R_+^*$ and a sequence of automorphisms $\alpha_k \in \aut G$, such that 
	\begin{equation*}
		\lambda_1^+(\varphi_k \circ \alpha, U) \leq \lambda_1^+( \varphi_k, U) - \frac \ell{\epsilon_k}, \quad \oas\,.
	\end{equation*}
	
	\begin{itemize}
		\item 
		The case of axial components and Seifert-type components is now standard.
		More precisely, there is $\alpha \in \aut \Gamma$ such that for every element $u \in U$,
		\begin{itemize}
			\item if the geodesic $\geo x{ux}$ has a non-degenerate intersection with an axial type component or Seifert type component of $\Lambda$, then $\dist{\alpha(u)x}x < \dist {ux}x$,
			\item otherwise $\alpha(u) = u$.
		\end{itemize}
		See Theorem~7.48 and 7.49 in \cite{Coulon:2021wg}.
		Note in particular that, since $\Gamma$ is 
		a surface group, every solvable subgroup of $\Gamma$ is abelian, hence Theorem~7.49 applies in our context as well.
		Unfolding the definition of the limit action on $T$, we deduce that there is $\ell \in \R_+^*$ such that if $\Lambda$ contains an axial type component or a surface type component then 
		\begin{equation*}
			\lambda_1^+(\varphi_k \circ \alpha, U) \leq \lambda_1^+( \varphi_k, U) - \frac \ell{\epsilon_k}, \quad \oas,
		\end{equation*}
		See \cite[Proposition~7.50]{Coulon:2021wg}.
		
		\item 	
		With a similar classical analysis, we prove that if $\Lambda$ contains a simplicial component that is not peripheral, then there exist $\ell \in (0, 1)$ and a sequence of automorphisms $\alpha_k \in \aut \Gamma$ such that 
		\begin{equation*}
			\lambda_1^+(\varphi_k \circ \alpha_k, U) \leq \lambda_1^+( \varphi_k, U) - \frac \ell{\epsilon_k}, \quad \oas\,;
		\end{equation*}
		see \cite[Proposition~7.61]{Coulon:2021wg}.
		
		\item 
		In view of the above discussion, we are left to handle the case where $\Lambda$ consists only of peripheral components.
		Let $c \in \mathcal C_\omega$ such that the peripheral subtree $\bar P(c)$ has a non-degenerate intersection with $T$.
		Denote by $v_c$ the vertex corresponding to this component in the skeleton $S$ of the graph of action $\Lambda$.
		Axiom~\ref{enu: family axioms - conical - 2} leads to a fine description of the action of $G_c$ on the corresponding peripheral tree $\bar P(c)$.
		See for instance Lemmas~7.3 and 7.52 in \cite{Coulon:2021wg}.
		This careful analysis allows us again to shorten the action of $\Gamma$, using Dehn twists whose effect is to fold edges in $S$.
		More precisely \cite[Theorem~7.51]{Coulon:2021wg} ensures that if either
		\begin{itemize}
			\item	the link of $v_c$ in $S$ contains at least two $\Gamma_c$-orbits, or
			\item	the pointwise stabiliser of $\bar P(c)$ has infinite index in $\Gamma_c$,
		\end{itemize}
		then there is $\ell \in (0, 1)$ and a sequence of automorphisms $\alpha_k \in \aut \Gamma$ such that 
		\begin{equation*}
			\lambda_1^+(\varphi_k \circ \alpha_k, U) \leq \lambda_1^+( \varphi_k, U) - \frac \ell{\epsilon_k}, \quad \oas\,,
		\end{equation*}
		in which case the claim is proved.
	\end{itemize}
	
	The only situation, where the claim is not proven yet is when the graph of groups decomposition of $\Gamma$ associated to $\Lambda$ has the following form
	\begin{center}
		\begin{tikzpicture}
			\def \a{4}
			\def \b{1}
			\def \r{0.05}
						
			\draw[fill=black] (0,0) circle (\r) node[left, anchor=east]{$B$};
			\draw[fill=black] (\a,2*\b) circle (\r) node[right, anchor=west]{$\Gamma_{c_1}$};
			\draw[fill=black] (\a,\b) circle (\r) node[right, anchor=west]{$\Gamma_{c_2}$};
			\draw[fill=black] (\a,0) circle (\r) node[right, anchor=west]{$\Gamma_{c_3}$};
			\draw[fill=black] (\a,-0.5\b) node[anchor=center]{$\vdots$};
			\draw[fill=black] (\a,-1.2\b) circle (\r) node[right, anchor=west]{$\Gamma_{c_m}$};
			
			\draw[thick] (0,0) -- (\a, 2*\b) node[pos=0.7, above, anchor=south]{$N_1$};
			\draw[thick] (0,0) -- (\a, \b) node[pos=0.7, above, anchor=south]{$N_2$};
			\draw[thick] (0,0) -- (\a, 0) node[pos=0.7, above, anchor=south]{$N_3$};
			\draw[thick] (0,0) -- (\a, -1.2*\b) node[pos=0.7, above, anchor=south]{$N_m$};
		\end{tikzpicture}
	\end{center}
	where $B$ is the stabiliser of a transverse component, $c_1, \dots, c_m \in \mathcal C_\omega$ are cone points, and the pointwise stabiliser of each $\bar P(c_i)$, denoted by $N_i$, has finite index in $\Gamma_{c_i}$.
	In this settings, $\Gamma$ admits a root splitting, which contradicts \autoref{exa: Root splittings of surface groups}.
	Hence the proof of the claim is complete.
	
	According to (\ref{eqn: limit tree - comparing energies}), the quantities $1/ \epsilon_k = \lambda_\infty(\varphi_k,U)$ and $\lambda_1^+(\varphi_k, U)$ have the same order of magnitude.
	Therefore our claim yields
	\begin{equation*}
		\lambda_1^+(\varphi_k \circ \alpha_k, U) \leq (1 - \kappa) \lambda_1^+( \varphi_k, U), \quad \oas\,,
	\end{equation*}
	where $\kappa$ is a parameter that does not depends on $k$.
\end{proof}

%
\section{Lifting automorphisms}
%
\label{sec: lifting}

The goal of this section is to prove \autoref{thm:onto}, which says that the natural map $\aut{\Gamma} \to \aut{\Gamma(n)}$ is onto for every large exponent $n$.

Let $\Gamma$ be the fundamental group of a closed compact surface $S$ of genus at least two.
We fix a hyperbolic metric on $S$ so that its universal cover $X$ is isometric to the hyperbolic plane $\mathbf H^2$.

%
\subsection{Approximating morphisms}
%

Let $N$ be the critical exponent given by \autoref{res: approximating sequence}.
Fix an integer $n \geq N$.
Let $(\Gamma_j, X_j, \mathcal C_j)$ be a sequence of approximations of $\Gamma(n)$.
For every $j \in \N$, we write $\sigma_j$ for the natural epimorphism $\sigma_j \colon \Gamma_j \onto \Gamma(n)$.
Similarly, we write $\sigma$ for the projection $\sigma \colon \Gamma \to \Gamma(n)$.


\begin{defi}
\label{def: approx of morphism}
 	Let $G$ be a group and $\varphi \colon G \to \Gamma(n)$ a morphism.
	An \emph{approximation of $\varphi$} is a morphism $\tilde \varphi \colon G \to \Gamma_j$ for some $j \in \N$ such that $\varphi = \sigma_j \circ \tilde \varphi$.
	Alternatively, we say that $\tilde \varphi$ \emph{lifts $\varphi$}.
	Such an approximation is \emph{minimal} if $j$ is minimal for the above property.
\end{defi}

Recall that $\Gamma(n)$ is the direct limit of $\Gamma_j$. 
If $G$ is finitely presented, any morphism $\varphi \colon G \to \Gamma(n)$ admits a minimal approximation.
The next statement provides a lower bound on the energies of minimal approximations. It is proved as \cite[Proposition~3.14]{Coulon:2021wg} using \autoref{res: approximating sequence}\ref{enu: approximating sequence - lifting}.

\begin{prop}
\label{res: approx - energy minimal approx}
	Let $G = \group{ U \mid R}$ be a finitely presented group.
	Let $\ell$ be the length of the longest relation in $R$ (seen as words over $U$).
	Let $\varphi \colon G \to \Gamma(n)$ be a morphism whose image is not abelian.
	If $\varphi_j \colon G \to \Gamma_j$ is a minimal approximation of $\varphi$, then either $j = 0$ or 
	\begin{equation*}
		\lambda_\infty(\varphi_j, U) \geq \frac {\rho(n)}{100\ell}\,.
	\end{equation*}
\end{prop}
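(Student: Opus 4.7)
The plan is a direct application of the lifting property \autoref{res: approximating sequence}\ref{enu: approximating sequence - lifting}. Assume for contradiction that $j \geq 1$ and $\lambda_\infty(\varphi_j, U) < \rho(n)/(100\ell)$. Let $F$ be the free group on $U$, write $\pi \colon F \onto G$ for the natural projection, and set $\Phi = \varphi_j \circ \pi \colon F \to \Gamma_j$. By construction $\lambda_\infty(\Phi, U) = \lambda_\infty(\varphi_j, U)$, so the energy hypothesis of the lifting lemma is satisfied.

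The first (and essentially only) non-routine step is to check that the image of $\Phi$ does not fix a cone point of $\mathcal C_j$. Since $\varphi = \sigma_j \circ \varphi_j$ has non-abelian image, so must $\varphi_j$, and hence so must $\Phi$. On the other hand, by Axiom \ref{enu: family axioms - conical - 2} of the class $\mathfrak H(\delta, \rho(n))$, the stabiliser of any cone point $c \in \mathcal C_j$ injects into the circle group $\R / \Theta_c \Z$ via the map $\gamma \mapsto \theta(\gamma x, x)$; in particular it is abelian. The non-abelian subgroup $\Phi(F)$ can therefore not fix any cone point.

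We may thus apply \autoref{res: approximating sequence}\ref{enu: approximating sequence - lifting} with generating set $S = U$ to obtain a map $\tilde \Phi \colon F \to \Gamma_{j-1}$ with $\Phi = \pi_{j-1} \circ \tilde \Phi$ and $V \cap \ker \Phi = V \cap \ker \tilde \Phi$, where $V$ is the ball of radius $\ell$ in $F$. Every relation $r \in R$ has length at most $\ell$, hence lies in $V$, and satisfies $\Phi(r) = \varphi_j(1) = 1$; consequently $\tilde \Phi(r) = 1$. It follows that $\tilde \Phi$ descends to a morphism $\psi \colon G \to \Gamma_{j-1}$ such that $\pi_{j-1} \circ \psi = \varphi_j$. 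Using $\sigma_{j-1} = \sigma_j \circ \pi_{j-1}$, we deduce $\sigma_{j-1} \circ \psi = \varphi$, so $\psi$ is an approximation of $\varphi$ at level $j-1$, contradicting the minimality of $\varphi_j$.

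The argument is really just bookkeeping on top of the lifting lemma: the only conceptual ingredient is the observation that non-abelian images cannot fix a cone point, which is why the non-abelianness hypothesis on $\varphi$ appears in the statement.
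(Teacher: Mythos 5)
Your proof is correct and takes essentially the same route as the paper, which cites \cite[Proposition~3.14]{Coulon:2021wg} and \autoref{res: approximating sequence}\ref{enu: approximating sequence - lifting}: lift the induced morphism on the free group via the lifting property, check that relations are preserved by the ball condition, and descend to contradict minimality. The only minor remark is that the observation that cone-point stabilisers are abelian also follows more directly from Axiom~\ref{enu: family axioms - elem subgroups} (every elliptic subgroup is cyclic), though your argument via the injective angle cocycle from Axiom~\ref{enu: family axioms - conical - 2} is equally valid.
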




Motivated by the shortening argument, and bearing in mind our goal of lifting from $\Gamma(n)$ to $\Gamma$, it is natural to to try to minimise energy among all lifts of a morphism. This consideration motivates the next definition.

\begin{defi}
\label{def: short morphism}
	Let $\epsilon > 0$.
	Let $G$ be a group generated by a finite set $U$.
	Let $\mathcal P \subset {\rm Hom}(G,\Gamma(n))$ be a set of morphisms from $G$ to $\Gamma(n)$.
	We say that a morphism $\varphi \in \mathcal P$ is \emph{$\epsilon$-short among $\mathcal P$} if it admits a minimal approximation $\varphi_i \colon G \to \Gamma_i$ such that, for every $\psi \in \mathcal P$ and for every minimal approximation $\psi_j \colon G \to \Gamma_j$ of $\psi$, either
	\begin{itemize}
		\item $i<j$, or
		\item $i=j$ and $\lambda_1^+(\varphi_i, U) \leq \lambda_1^+(\psi_j,U) + \epsilon$.
	\end{itemize}
	In this context $\varphi_i$, is called an \emph{optimal approximation} of $\varphi$.
\end{defi}

%
\subsection{Lifting automorphisms}
%

We are nearly ready to prove surjectivity. The next result applies the shortening argument to show that every $\epsilon$-short morphism either kills an element of a certain finite subset, or admits a lift with energy below a certain threshold.

\begin{prop}
\label{res: lifting morphism - killing elements}
	Let $\Gamma$ be the fundamental group of a closed, orientable, hyperbolic surface and let $U$ be a finite generating set.
	Let $X$ be a geodesic hyperbolic space endowed with a proper and co-compact action of $\Gamma$ (e.g.\ the hyperbolic plane).
	There exist a finite subset $W \subset \Gamma \setminus\{1\}$, an energy level $\lambda >0$, and an exponent $N \in \N$ with the following properties.
	
	For every integer $n \geq N$ and for every morphism $\varphi \colon \Gamma \to \Gamma(n)$, there exists $\alpha \in \aut \Gamma$ such that one of the following holds:
	\begin{itemize}
		\item $W \cap \ker(\varphi \circ \alpha) \neq \emptyset$; or
		\item there is an endomorphism $\psi \colon \Gamma \to \Gamma$ lifting $\varphi \circ \alpha$ such that $\lambda_\infty(\psi, U) \leq \lambda$.
	\end{itemize}
\end{prop}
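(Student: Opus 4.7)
The plan is to argue by contradiction, using the shortening argument (\autoref{res: shortening argument}) together with the notion of $\epsilon$-short morphism (\autoref{def: short morphism}) in a concerted fashion. If the conclusion fails, then for any choice of $W$, $\lambda$, $N$ one finds a counterexample; diagonalising, one extracts sequences of finite subsets $W_k \subset \Gamma \setminus \{1\}$ exhausting $\Gamma \setminus \{1\}$, of thresholds $\lambda_k \to \infty$, of exponents $n_k \to \infty$, and of morphisms $\varphi_k \colon \Gamma \to \Gamma(n_k)$ such that, for every $\alpha \in \aut \Gamma$, we have $W_k \cap \ker(\varphi_k \circ \alpha) = \emptyset$ and no lift $\psi \colon \Gamma \to \Gamma$ of $\varphi_k \circ \alpha$ satisfies $\lambda_\infty(\psi, U) \leq \lambda_k$.

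First, we dispose of the abelian case: by insisting that $W_k$ contain a fixed nontrivial commutator $w_0 \in [\Gamma, \Gamma]$, the failure condition with $\alpha = \mathrm{id}$ forces $\varphi_k$ to have non-abelian image for all $k$ large (else $w_0 \in [\Gamma,\Gamma] \subset \ker \varphi_k$). Fix a small $\epsilon > 0$ and a finite presentation of $\Gamma$ over $U$ whose longest relation has length $\ell$. Replace each $\varphi_k$ by an $\epsilon$-short representative in its $\aut \Gamma$-orbit $\mathcal P_k$; such representatives exist by minimising first the approximation level (an element of $\N$) and then the restricted energy (bounded below), and the failure condition is preserved by this replacement since it depends only on $\mathcal P_k$. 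Let $\tilde \varphi_k \colon \Gamma \to \Gamma_{j_k}$ be the corresponding minimal approximation. If $j_k = 0$, then $\tilde \varphi_k$ is an endomorphism of $\Gamma$ lifting $\varphi_k$, so the failure condition with $\alpha = \mathrm{id}$ forces $\lambda_\infty(\tilde \varphi_k, U) > \lambda_k$; otherwise $j_k \geq 1$ and \autoref{res: approx - energy minimal approx} gives $\lambda_\infty(\tilde \varphi_k, U) \geq \rho(n_k)/(100\ell)$. In both cases $\lambda_\infty(\tilde \varphi_k, U) \to \infty$ along a subsequence.

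We then apply \autoref{res: shortening argument} to the sequence $\tilde \varphi_k \colon \Gamma \to (\Gamma_{j_k}, X_{j_k}, \mathcal C_{j_k}) \in \mathfrak H(\delta, \rho(n_k))$. Its hypotheses hold: $\rho(n_k) \to \infty$, $\lambda_\infty(\tilde\varphi_k, U) \to \infty$, and for any finite $W \subset \Gamma \setminus \{1\}$ the identity $\sigma_{j_k} \circ \tilde\varphi_k = \varphi_k$ together with $W \subset W_k$ eventually yields $W \cap \ker \tilde\varphi_k = \emptyset$. The shortening argument produces $\kappa > 0$ and $\alpha_k \in \aut \Gamma$ with $\lambda_1^+(\tilde\varphi_k \circ \alpha_k, U) \leq (1-\kappa) \lambda_1^+(\tilde\varphi_k, U)$ for infinitely many $k$. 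Now $\tilde \varphi_k \circ \alpha_k$ is a lift of $\varphi_k \circ \alpha_k \in \mathcal P_k$ at level $j_k$, and the first clause of $\epsilon$-shortness forbids the minimal level of $\varphi_k \circ \alpha_k$ from being strictly less than $j_k$; hence that minimal level equals $j_k$, and $\tilde\varphi_k \circ \alpha_k$ is itself a minimal approximation of $\varphi_k \circ \alpha_k$. The second clause of $\epsilon$-shortness then yields
\begin{equation*}
\lambda_1^+(\tilde \varphi_k, U) \leq \lambda_1^+(\tilde \varphi_k \circ \alpha_k, U) + \epsilon \leq (1-\kappa)\lambda_1^+(\tilde \varphi_k, U) + \epsilon,
\end{equation*}
so that $\kappa \lambda_1^+(\tilde \varphi_k, U) \leq \epsilon$, contradicting $\lambda_1^+(\tilde\varphi_k, U) \geq \lambda_\infty(\tilde\varphi_k, U) \to \infty$.

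The main delicate point is the coordination between the shortening automorphism $\alpha_k$ and $\epsilon$-shortness: the latter is precisely the refinement required to ensure that the minimal approximation level does not drop when passing from $\varphi_k$ to $\varphi_k \circ \alpha_k$, so that $\tilde\varphi_k \circ \alpha_k$ can be used to play the role of the witness $\psi_j$ in the $\epsilon$-short inequality. Without this control on levels, there would be no direct way to compare the energies $\lambda_1^+(\tilde\varphi_k)$ and $\lambda_1^+(\tilde\varphi_k \circ \alpha_k)$.
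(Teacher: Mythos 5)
Your proof is correct and follows essentially the same route as the paper's: diagonalise a putative counterexample to extract a sequence of morphisms $\varphi_k \colon \Gamma \to \Gamma(n_k)$, normalise each to be $\epsilon$-short in its $\aut\Gamma$-orbit, show the energies of optimal approximations blow up (using \autoref{res: approx - energy minimal approx} in the case $j_k \geq 1$ and the failure of the second clause in the case $j_k = 0$), and then apply \autoref{res: shortening argument} together with the two clauses of $\epsilon$-shortness to derive a contradiction. Your spelled-out observation that the first clause of \autoref{def: short morphism} prevents the minimal approximation level from dropping under precomposition by $\alpha_k$ — so that $\tilde\varphi_k \circ \alpha_k$ is itself a minimal approximation and the energy comparison is licit — is a useful clarification of a step the paper's proof compresses into a single sentence.
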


\begin{proof}
	Let $N$ be the critical exponent given by \autoref{res: approximating sequence}.
	Consider any finite presentation $\Gamma = \group{U \mid R}$.
	Let $\ell$ be the length of the longest relation in $R$ (seen as words over $U$).
	Since the map  $\rho \colon \N \to \R_+$ diverges to infinity, for every $k \in \N$ there exists $M_k \in \N$ such that, for every integer $n \geq M_k$, 
	\begin{equation*}
		\frac {\rho(n)}{100\ell} \geq k\,.
	\end{equation*}
	Fix a non-decreasing exhaustion $(W_k)$ of $\Gamma \setminus \{1\}$ by finite subsets.
	Since $\Gamma$ is non-abelian, we can always assume that $W_k$ contains a non-trivial commutator, for every $k \in \N$.
	
	Assume now that the statement is false.
	In particular, for every $k \in \N$, there is an integer $n_k \geq \max\{k, M_k, N\}$ and morphism $\varphi_k \colon \Gamma \to \Gamma (n_k)$ such that, for every $\alpha \in \aut \Gamma$, the following holds:
	\begin{enumerate}
		\item \label{res: lifting morphism - stably one-to-one - kernel}
		$W_k \cap \ker(\varphi_k \circ \alpha) = \emptyset$; and
		\item \label{res: lifting morphism - stably one-to-one - lift}
		if $\varphi_k \circ \alpha$ admits a lift $\psi \colon \Gamma \to \Gamma$, then $\lambda_\infty(\psi, U) \geq k$.
	\end{enumerate}
	Note that the image of any $\varphi_k$ is not abelian, since $W_k$ contains a non-trivial commutator.
%
%
%
%
%
	For each $k \in \N$, \autoref{res: approximating sequence} provides an approximating sequence 
	\begin{equation*}
		(\Gamma_{j,k}, X_{j,k}, \mathcal C_{j,k})_{j \in \N}
	\end{equation*}
	for $\Gamma(n_k)$.
	Let $\epsilon>0$.
	Up to pre-composing $\varphi_k$ with an element of $\aut \Gamma$, we can assume that, for every $k \in \N$, the morphism $\varphi_k$ is $\epsilon$-short among the set
	\begin{equation*}
		\mathcal P_k = \set{\varphi_k \circ \alpha}{\alpha \in \aut \Gamma}.
	\end{equation*}
	We denote by $\psi_k \colon \Gamma \to \Gamma_{j_k, k}$ an optimal approximation of $\varphi_k$.	
	
	By construction, $\ker \psi_k$ is contained in $\ker \varphi_k$.
	It follows from \ref{res: lifting morphism - stably one-to-one - kernel} that, for every finite subset $W \subset \Gamma \setminus\{1\}$, the intersection $W \cap \ker \psi_k$ is eventually empty.
	(That is, the sequence $\psi_k \colon \Gamma \to \Gamma_{j_k, k}$ converges to ${\rm id} \colon \Gamma \to \Gamma$ in the space of marked groups.)
	We next claim that
	\begin{equation*}
		\lambda_\infty(\psi_k, U) \geq k
	\end{equation*}
	for every $k \in \N$.
	If $j_k = 0$, the conclusion follows from \ref{res: lifting morphism - stably one-to-one - lift}.
	Suppose now that $j_k \geq 1$.
	Observe that the image of $\psi_k$ is not abelian, since if it were, the image of  $\varphi_k$ would also be abelian.
	Recall that $\ell$ is the length of the longest relation defining $\Gamma$.
	By \autoref{res: approx - energy minimal approx}, we have 
	\begin{equation*}
		\lambda_\infty(\psi_k, U) \geq \frac{\rho(n_k)}{100\ell},
	\end{equation*}
	and the conclusion follows from our choice of $M_k$.
	Note that our claim implies that $\lambda_\infty(\psi_k, U)$ diverges to infinity.
	
	By \autoref{res: shortening argument}, there exists $\kappa > 0$, as well as automorphisms $\alpha_k\in \aut \Gamma$, such that
	\begin{equation*}
		\lambda_1^+(\psi_k \circ \alpha_k, U) < (1-\kappa)\lambda_1^+(\psi_k, U) 
	\end{equation*}
	for infinitely many $k \in \N$.
	Observe that $\psi_k \circ \alpha_k$ is an approximation of $\varphi_k \circ \alpha_k$.
	Since $\varphi_k$ is $\epsilon$-short, this approximation is necessarily minimal.
	Since $\psi_k$ is optimal, it satisfies
	\begin{equation*}
		\lambda_1^+(\psi_k, U) \leq \lambda_1^+(\psi_k \circ \alpha_k, U) + \epsilon \leq (1-\kappa)\lambda_1^+(\psi_k,U)  + \epsilon\,,
	\end{equation*}
	so $\lambda_1^+(\psi_k, U)\leq \epsilon/\kappa$. However,  recall that $\lambda_\infty(\psi_k, U)$ diverges to infinity. By (\ref{eqn: limit tree - comparing energies}), $\lambda_1^+(\psi_k, U)$ also diverges to infinity, which leads to the desired contradiction.
\end{proof}

Since there are only finitely many morphisms of a bounded energy, we can quickly upgrade the proposition: after twisting with an automorphism, every morphism either kills an element of a certain finite subset, or lifts to a monomorphism.

\begin{coro}
\label{res: lifting morphism - killing element for hyperbolic groups}
	Let $\Gamma$ be the fundamental group of a closed, orientable, hyperbolic surface, and let $U$ be a finite generating set for $\Gamma$.
	There exist a finite subset $W \subset \Gamma \setminus \{1\}$ and an exponent $N \in \N$ with the following properties.
	For every integer $n \geq N$ and for every morphism $\varphi \colon \Gamma \to \Gamma(n)$, one of the following holds:
	\begin{itemize}
		\item there is an automorphism $\alpha \in \aut \Gamma$ such that $W \cap \ker(\varphi \circ \alpha) \neq \emptyset$; or
		\item there is a monomorphism $\tilde \varphi \colon \Gamma \to \Gamma$ lifting $\varphi$.
	\end{itemize}
\end{coro}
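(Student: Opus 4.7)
The plan is to combine \autoref{res: lifting morphism - killing elements} with the standard finiteness of homomorphisms $\Gamma \to \Gamma$ of bounded energy modulo conjugation in the target. First apply \autoref{res: lifting morphism - killing elements} to obtain a finite subset $W_0 \subset \Gamma \setminus \{1\}$, an energy threshold $\lambda > 0$, and an exponent $N_0$. The goal is to enlarge $W_0$ to a finite set $W$ that additionally detects non-injectivity of any candidate lift.

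The key intermediate step is to show that the set $\mathcal E_\lambda$ of homomorphisms $\psi \colon \Gamma \to \Gamma$ with $\lambda_\infty(\psi, U) \leq \lambda$ breaks into finitely many $\Gamma$-conjugacy classes, where $\Gamma$ acts on the target by inner automorphisms. Fix a compact fundamental domain $K \subset X$ for the action of $\Gamma$. For $\psi \in \mathcal E_\lambda$, there exists $x \in X$ with $d(\psi(u)x, x) \leq \lambda$ for every $u \in U$; after conjugating $\psi$ by an appropriate element of $\Gamma$, we may arrange that $x \in K$. Each $\psi(u)$ then moves $x$ by at most $\lambda$, and properness of the action on $X$ leaves only finitely many choices for $\psi(u)$. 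Since $U$ is finite and a homomorphism is determined by its values on $U$, there are only finitely many $\Gamma$-conjugacy classes; pick representatives $\Psi_1, \dots, \Psi_M$. For each $\Psi_i$ that is not injective, choose a non-trivial element $w_i \in \ker \Psi_i$, and set
\begin{equation*}
W = W_0 \cup \{w_i : \Psi_i \text{ is not injective}\}, \qquad N = N_0.
\end{equation*}

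Now given any morphism $\varphi \colon \Gamma \to \Gamma(n)$ with $n \geq N$, apply \autoref{res: lifting morphism - killing elements} to get some $\alpha \in \aut \Gamma$. If $W_0 \cap \ker(\varphi \circ \alpha) \neq \emptyset$, the first alternative holds since $W_0 \subset W$. Otherwise, there is a lift $\psi \colon \Gamma \to \Gamma$ of $\varphi \circ \alpha$ (i.e.\ $\sigma \circ \psi = \varphi \circ \alpha$) with $\lambda_\infty(\psi, U) \leq \lambda$. Such a $\psi$ lies in $\mathcal E_\lambda$, hence equals $\gamma \Psi_i \gamma^{-1}$ for some $\gamma \in \Gamma$ and some $i$; in particular $\ker \psi = \ker \Psi_i$. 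If $\Psi_i$ is injective, then so is $\psi$, and $\tilde \varphi := \psi \circ \alpha^{-1}$ is a monomorphism lifting $\varphi$, giving the second alternative. Otherwise, $w_i \in \ker \psi$, and since $\sigma \circ \psi = \varphi \circ \alpha$, we conclude $w_i \in \ker(\varphi \circ \alpha) \cap W$, giving the first alternative.

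The only non-routine ingredient is the finiteness of $\mathcal E_\lambda$ modulo conjugation, which however is classical and relies solely on the proper cocompact action of $\Gamma$ on $X$. I do not expect any other subtleties.
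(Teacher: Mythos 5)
Your proposal is correct and follows essentially the same route as the paper: apply \autoref{res: lifting morphism - killing elements}, note that there are only finitely many endomorphisms of bounded energy up to conjugation, and enlarge $W$ by one kernel element for each non-injective representative. The only difference is that you spell out the fundamental-domain argument for this finiteness, which the paper simply cites as standard.
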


\begin{proof}
	Let $X$ be a geodesic hyperbolic space endowed with a proper and co-compact action by isometries of $\Gamma$.
	We write $W \subset \Gamma \setminus\{1\}$, $\lambda \in \R_+$, and $N \in \N$ for the objects provided by \autoref{res: lifting morphism - killing elements}.
	Note that, up to conjugacy,  there are only finitely many morphisms $\psi \colon \Gamma \to \Gamma$ such that $\lambda_\infty(\psi, U) \leq \lambda$.
	Up to replacing $W$ by a larger subset of $\Gamma \setminus\{1\}$, we can assume that for every non-injective such morphism $\psi \colon \Gamma \to \Gamma$, we have $W \cap \ker \psi \neq \emptyset$.
	
	Let $n \geq N$ be an integer.
	Let $\varphi \colon \Gamma \to \Gamma(n)$ be a morphism.
	Assume that $W \cap \ker (\varphi \circ \alpha)$ is empty for every $\alpha \in \aut \Gamma$.
	We are going to prove that $\varphi$ admits an injective lift.
	According to \autoref{res: lifting morphism - killing elements}, there exist $\alpha \in \aut \Gamma$ and a morphism $\psi \colon \Gamma \to \Gamma$ lifting $\varphi \circ \alpha$ such that $\lambda_\infty(\psi, U) \leq \lambda$.
	Note that $W \cap \ker \psi \subset W \cap \ker(\varphi \circ \alpha)$.
	It follows from our assumption that $W \cap \ker \psi$ is empty.
	Thanks to the adjustment we made on the set $W$ we can conclude that $\psi$ is one-to-one.
	Consequently, $\tilde \varphi = \psi \circ \alpha^{-1}$ is a monomorphism lifting $\varphi$.
\end{proof}

We are finally ready to prove our main goal: for large enough $n$, every automorphism of $\Gamma(n)$ lifts to $\Gamma$. This now follows quickly from \autoref{res: lifting morphism - killing element for hyperbolic groups}, combined with the facts that the $\Gamma(n)$ converge to $\Gamma$ in the space of marked groups and the co-Hopf property of surface groups.

\begin{theo}
\label{thm:onto}
	Let $\Gamma$ be the fundamental group of a closed, orientable, hyperbolic surface.
	There is $N \in \N$ such that, for every exponent $n \geq N$, the morphism $\aut{\Gamma} \to \aut{\Gamma(n)}$ is onto.
\end{theo}

\begin{proof}

	For every $n \in \N$, we denote by $K(n)$ the kernel of the projection $\pi_n \colon \Gamma \onto \Gamma(n)$.
	By \autoref{exa: Root splittings of surface groups}, we may apply \autoref{res: lifting morphism - killing element for hyperbolic groups} with $G = \Gamma$.
	We write $N \in \N$ and $W \subset \Gamma \setminus\{1\}$ for the data given by this corollary.
	Up to increasing the value of $N$ we can assume that for every integer $n \geq N$, the intersection $W \cap K(n)$ is empty (see \autoref{rem: canonical proj asymp injective}).
	
	Let $n \geq N$.
	Let $\varphi \colon \Gamma(n) \to \Gamma(n)$ be an isomorphism.
	Recall that $\pi_n$ induces a map $\aut{\Gamma} \to \aut{\Gamma(n)}$ that we write $\alpha \mapsto \alpha_n$.
	In particular, $\varphi \circ \pi_n \circ \alpha = \varphi\circ \alpha_n \circ \pi_n$, for every $\alpha \in \aut{\Gamma}$.
	As $\varphi$ and $\alpha_n$ are one-to-one, the kernel of $\varphi\circ \alpha_n \circ \pi_n$ coincides with $K(n)$.
	Hence $W \cap \ker (\varphi \circ \pi_n \circ \alpha)$ is empty, for every $\alpha \in \aut{\Gamma}$.
	Applying \autoref{res: lifting morphism - killing element for hyperbolic groups}, there exists a monomorphism $\tilde \varphi \colon \Gamma \to \Gamma$ such that $\pi_n \circ \tilde \varphi = \varphi \circ \pi_n$.
	Since $\Gamma$ is a surface group, it is co-Hopfian.
	Thus the monomorphism $\tilde \varphi \colon \Gamma \into \Gamma$ is actually an automorphism, whence the result.
\end{proof}

Combined with \autoref{res: kernels}, this completes the proof of \autoref{thm: DNB theorem for quotients}.

\begin{proof}[Proof of \autoref{thm: DNB theorem for quotients}]
The natural maps $\mcg[\pm]{S_*}/\DT^n(S_*)\to \aut{\Gamma(n)}$ and $\mcg[\pm]{S}/\DT^n(S)\to \out{\Gamma(n)}$ are injective by \autoref{res: kernels} and surjective by  \autoref{thm:onto}.
\end{proof}

\makebiblio

\Addresses

\end{document}